\numberwithin{equation}{section}
\let\old@tocline\@tocline
\let\section@tocline\@tocline
\newcommand{\subsection@dotsep}{4.5}
\newcommand{\subsubsection@dotsep}{4.5}
     \leaders\hbox{$\m@th
        \mkern \subsection@dotsep mu\hbox{.}\mkern \subsection@dotsep mu$}\hfill
\let\subsection@tocline\@tocline
\let\@tocline\old@tocline
     \leaders\hbox{$\m@th
        \mkern \subsubsection@dotsep mu\hbox{.}\mkern \subsubsection@dotsep mu$}\hfill
\let\subsubsection@tocline\@tocline
\let\@tocline\old@tocline
\let\old@l@subsection\l@subsection
\let\old@l@subsubsection\l@subsubsection
\def\@tocwriteb#1#2#3{%
  \begingroup
    \@xp\def\csname #2@tocline\endcsname##1##2##3##4##5##6{%
      \ifnum##1>\c@tocdepth
      \else \sbox\z@{##5\let\indentlabel\@tochangmeasure##6}\fi}%
    \csname l@#2\endcsname{#1{\csname#2name\endcsname}{\@secnumber}{}}%
  \endgroup
  \addcontentsline{toc}{#2}%
    {\protect#1{\csname#2name\endcsname}{\@secnumber}{#3}}}%
\newlength{\@tocsectionindent}
\newlength{\@tocsubsectionindent}
\newlength{\@tocsubsubsectionindent}
\newlength{\@tocsectionnumwidth}
\newlength{\@tocsubsectionnumwidth}
\newlength{\@tocsubsubsectionnumwidth}
\newcommand{\settocsectionnumwidth}[1]{\setlength{\@tocsectionnumwidth}{#1}}
\newcommand{\settocsubsectionnumwidth}[1]{\setlength{\@tocsubsectionnumwidth}{#1}}
\newcommand{\settocsubsubsectionnumwidth}[1]{\setlength{\@tocsubsubsectionnumwidth}{#1}}
\newcommand{\settocsectionindent}[1]{\setlength{\@tocsectionindent}{#1}}
\newcommand{\settocsubsectionindent}[1]{\setlength{\@tocsubsectionindent}{#1}}
\newcommand{\settocsubsubsectionindent}[1]{\setlength{\@tocsubsubsectionindent}{#1}}
\renewcommand{\l@section}{\section@tocline{1}{\@tocsectionvskip}{\@tocsectionindent}{}{\@tocsectionformat}}%
\renewcommand{\l@subsection}{\subsection@tocline{2}{\@tocsubsectionvskip}{\@tocsubsectionindent}{}{\@tocsubsectionformat}}%
\renewcommand{\l@subsubsection}{\subsubsection@tocline{3}{\@tocsubsubsectionvskip}{\@tocsubsubsectionindent}{}{\@tocsubsubsectionformat}}%
\newcommand{\@tocsectionformat}{}
\newcommand{\@tocsubsectionformat}{}
\newcommand{\@tocsubsubsectionformat}{}
\def\csname toc@1format\endcsname{\@tocsectionformat}
\def\csname toc@2format\endcsname{\@tocsubsectionformat}
\def\csname toc@3format\endcsname{\@tocsubsubsectionformat}
\newcommand{\settocsectionformat}[1]{\renewcommand{\@tocsectionformat}{#1}}
\newcommand{\settocsubsectionformat}[1]{\renewcommand{\@tocsubsectionformat}{#1}}
\newcommand{\settocsubsubsectionformat}[1]{\renewcommand{\@tocsubsubsectionformat}{#1}}
\newlength{\@tocsectionvskip}
\newcommand{\settocsectionvskip}[1]{\setlength{\@tocsectionvskip}{#1}}
\newlength{\@tocsubsectionvskip}
\newcommand{\settocsubsectionvskip}[1]{\setlength{\@tocsubsectionvskip}{#1}}
\newlength{\@tocsubsubsectionvskip}
\newcommand{\settocsubsubsectionvskip}[1]{\setlength{\@tocsubsubsectionvskip}{#1}}
\patchcmd{\tocsection}{\indentlabel}{\makebox[\@tocsectionnumwidth][l]}{}{}
\patchcmd{\tocsubsection}{\indentlabel}{\makebox[\@tocsubsectionnumwidth][l]}{}{}
\patchcmd{\tocsubsubsection}{\indentlabel}{\makebox[\@tocsubsubsectionnumwidth][l]}{}{}
\newcommand{\@sectypepnumformat}{}
\renewcommand{\contentsline}[1]{%
  \expandafter\let\expandafter\@sectypepnumformat\csname @toc#1pnumformat\endcsname%
  \csname l@#1\endcsname}
\newcommand{\@tocsectionpnumformat}{}
\newcommand{\@tocsubsectionpnumformat}{}
\newcommand{\@tocsubsubsectionpnumformat}{}
\newcommand{\setsectionpnumformat}[1]{\renewcommand{\@tocsectionpnumformat}{#1}}
\newcommand{\setsubsectionpnumformat}[1]{\renewcommand{\@tocsubsectionpnumformat}{#1}}
\newcommand{\setsubsubsectionpnumformat}[1]{\renewcommand{\@tocsubsubsectionpnumformat}{#1}}
\renewcommand{\@tocpagenum}[1]{%
  \hfill {\mdseries\@sectypepnumformat #1}}
\let\oldappendix\appendix
\renewcommand{\appendix}{%
  \leavevmode\oldappendix%
  \addtocontents{toc}{%
    \protect\settowidth{\protect\@tocsectionnumwidth}{\protect\@tocsectionformat\sectionname\space}%
    \protect\addtolength{\protect\@tocsectionnumwidth}{2em}}%
}
\let\oldtableofcontents\tableofcontents
\renewcommand{\tableofcontents}{%
  \vspace*{-\linespacing}
  \oldtableofcontents}
\newcommand{\Ell}{\mathrm{Ell}}
\newcommand{\CP}{\C P^1}
\newcommand{\im}{\mathrm{Im}\,}
\newcommand{\bJ}{\bar{J}}
\newcommand{\bc}{\bar{c}}
\newcommand{\btau}{\bar{\tau}}
\newcommand{\bQ}{\bar{Q}}
\newcommand{\bG}{\bar{G}}
\newcommand{\ad}{\mathrm{ad}}
\newcommand{\cH}{\mathcal{H}}
\newcommand{\Hom}{{\mathrm{Hom}}}
\newcommand{\Z}{\mathbb{Z}}
\newcommand{\R}{\mathbb{R}}
\newcommand{\C}{\mathbb{C}}
\newcommand{\Om}{\Omega}
\newcommand{\std}{{\text{std}}}
\newcommand{\sft}{{\text{sf}}}
\newcommand{\ind}{{\mathrm{Ind}}}
\newcommand{\va}{\bm{1}}
\newcommand{\vac}{\bm{1}}
\newcommand{\id}{{\mathrm{id}}}
\newcommand{\z}{{\bar{z}}}
\newcommand{\h}{{\bar{h}}}
\newcommand{\p}{{\bar{p}}}
\newcommand{\uz}{\underline{z}}
\newcommand{\pa}{{\partial}}
\newcommand{\al}{\alpha}
\newcommand{\bal}{\bar{\alpha}}
\newcommand{\ep}{\epsilon}
\newcommand{\be}{\beta}
\newcommand{\ga}{\gamma}
\newcommand{\bga}{\bar{\gamma}}
\newcommand{\D}{\bar{D}}
\newcommand{\om}{\omega}
\newcommand{\la}{\lambda}
\newcommand{\omb}{{\bar{\omega}}}
\newcommand{\si}{\sigma}
\newcommand{\lat}{{\mathrm{lat}}}
\newcommand{\hY}{\hat{Y}}
\newcommand{\tY}{\tilde{Y}}
\newcommand{\g}{{\mathfrak{g}}}
\newcommand{\ft}{\frac{1}{2}}
\newcommand{\Ld}{{\overline{L}}}
\newcommand{\Aut}{\mathrm{Aut}\,}
\newcommand{\tw}{{{I\hspace{-.1em}I}}}
\newcommand{\End}{\mathrm{End}}
\newcommand{\td}{{\bar{t}}}
\newcommand{\sVect}{{\underline{\mathrm{sVect}}}}
\newcommand{\sHilb}{{\underline{\mathrm{sHilb}_f}}}
\newcommand{\Bord}{{\underline{\mathrm{Bord}}_2^{\text{ori}}}}
\newcommand{\sll}{{\mathrm{sl}}}
\newcommand{\bare}{{\bar{0}}}
\newcommand{\baro}{{\bar{1}}}
\newcommand{\op}{{\mathrm{op}}}
\newcommand{\Ind}{{\mathrm{Ind}}}
\def\top{{\text{top}}}
\NewDocumentCommand{\extp}{e{^}}{%
  \mathop{\mathpalette\extp@{#1}}\nolimits
}
\NewDocumentCommand{\extp@}{mm}{%
  \bigwedge\nolimits\IfValueT{#2}{^{\extp@@{#1}#2}}%
  \IfValueT{#1}{\kern-2\scriptspace\nonscript\kern2\scriptspace}%
}
\newcommand{\extp@@}[1]{%
  \mkern
    \ifx#1\displaystyle-1.8\else
    \ifx#1\textstyle-1\else
    \ifx#1\scriptstyle-1\else
    -0.5\fi\fi\fi
  \thinmuskip
}
\newtheorem{thm}{Theorem}[section]
\newtheorem{dfn}[thm]{Definition}
\newtheorem{lem}[thm]{Lemma}
\newtheorem{prop}[thm]{Proposition}
\newtheorem{cor}[thm]{Corollary}
\newtheorem{rem}[thm]{Remark}
\newtheorem{exa}[thm]{Example}
\newtheorem{conj}{Conjecture}
\begin{document}

\begin{center}
{{\LARGE \bf 
Cohomology ring of unitary $N=(2,2)$ full vertex algebra and mirror symmetry}
} \par \bigskip

\renewcommand*{\thefootnote}{\fnsymbol{footnote}}
{\normalsize
Yuto Moriwaki \footnote{email: \texttt{moriwaki.yuto (at) gmail.com}}
}
\par \bigskip
{\footnotesize 
RIKEN Center for Interdisciplinary Theoretical and
Mathematical Sciences (iTHEMS), RIKEN,\\ Wako 351-0198, Japan}

\par \bigskip
\end{center}

\noindent

\vspace{5mm}

\begin{center}
\textbf{\large Abstract}
\end{center}
We formulate two-dimensional $N=(2,2)$ supersymmetric conformal field theories in terms of unitary full vertex operator superalgebras and develop their cohomology theory. Cohomology rings, Hodge numbers, and the Witten index of a unitary $N=(2,2)$ full VOA are introduced.
Using generalized full vertex operator superalgebras, spectral flow is constructed algebraically. Its periodicities are proved to be equivalent to the existence of top-degree cohomology classes, namely volume forms and holomorphic volume forms, and these characterizations yield Poincar\'e duality, T-duality, and Frobenius algebra structures on the cohomology rings, and thus two-dimensional topological field theories.
A mirror construction for full VOAs and its relation to Hodge-theoretic mirror symmetry are also discussed. Finally, examples arising from abelian varieties, a special K3 surface, and a Landau-Ginzburg model are examined.

\vspace{10mm}

\tableofcontents

\begin{center}
\textbf{\large Introduction}
\end{center}
\vspace{5mm}

A compact Calabi-Yau manifold $X$ is expected to determine a two-dimensional
$N=(2,2)$ supersymmetric conformal field theory (SCFT), namely the supersymmetric
sigma model with target $X$ \cite{Polc,Polc2}.
Various invariants of $X$, such as Hodge numbers, quantum cohomology, and the elliptic genus, should appear as classical limits of the sigma model $F_X$
\cite{Witten87,Witten,Hori}. It is therefore natural to study SCFTs themselves as mathematical objects, rather than only the geometric invariants
extracted from them.

A basic difficulty is that sigma models are usually constructed by path-integral
methods and are not yet available in a mathematically rigorous form in general.
By contrast, the algebraic structure of a two-dimensional conformal field theory
can be formulated rigorously by a full vertex operator algebra \cite{HK,M1,M11}; see also \cite{FRS}. The aim of this
paper is to develop, in the setting of full VOAs, a cohomology theory for
$N=(2,2)$ SCFTs and to establish several structures predicted in physics,
including Poincar\'e-type duality, T-duality, holomorphic twists, the Witten
index, and the construction of two-dimensional topological field theories.

More precisely, we formulate an $N=(2,2)$ SCFT by a unitary $N=(2,2)$
full vertex operator superalgebra $F$. Using the notion of \emph{topological twist} in
the sense of Elliott--Safronov \cite{ES}, one obtains two differentials
\begin{align*}
d_A=\tau^+_{(0)}+\bar\tau^-_{(0)}, \qquad
d_B=\tau^+_{(0)}+\bar\tau^+_{(0)}\quad \in \End\, F
\end{align*}
corresponding to the $A$- and $B$-twists in physics. Their cohomologies
$H(F,d_A)$ and $H(F,d_B)$ are supercommutative algebras. 
A basic result of this paper is that, under the unitarity assumption, these
cohomology rings carry a natural $\mathbb{R}^2$-grading. This is the full-VOA
analogue of the Hodge decomposition in K\"ahler geometry, and it is intrinsically
a feature of the full theory rather than of its chiral part, since the two gradings
arise from the holomorphic and anti-holomorphic structures. Thus the full VOA
framework is essential already at the level of cohomology.

The cohomological grading is not necessarily integral. For Landau-Ginzburg
models, fractional gradings appear naturally \cite{VW}. In the simplest $A_2$ example, the
$B$-twisted cohomology has nonzero components in bidegrees $(0,0)$ and
$(1/3,1/3)$, whereas the $A$-twisted cohomology is trivial except in degree
$(0,0)$. Thus, in general, the $A$- and $B$-twists may behave quite differently.
For SCFTs arising from sigma models, by contrast, the two twists are expected to
be closely related, and one of the main results of this paper is the T-duality
between them.

A central technical ingredient of the paper is the algebraic construction of
spectral flow. In physics, spectral flow is a fundamental operation in
$N=(2,2)$ theories, usually described in terms of changing fermionic boundary
conditions \cite{LVW}. However, the usual Lie-algebraic formulation does not by
itself explain how spectral flow is incorporated into the operator product
expansion of conformal field theory. In this paper, using the theory of
\emph{generalized full vertex operator algebras} \cite{Li20,M1}, we construct spectral flow directly
from a unitary $N=(2,2)$ full VOA $F$ as an $\mathbb{R}^2$-graded vector space
\[
\widetilde F=\bigoplus_{\lambda,\mu\in\mathbb R} F(\lambda,\mu),
\]
where $F(0,0)\cong F$, each $F(\lambda,\mu)$ is a twisted $F$-module, and
$\widetilde F$ itself carries the structure of a generalized full vertex operator
algebra. Thus spectral flow is realized not merely as an automorphism of the
$N=2$ superconformal Lie algebra, but as a canonical family of twisted sectors
within the full operator product expansion.

A key point is that the periodicity of spectral flow is governed by the existence
of top-degree cohomology classes. Let $(c,\bar c)$ denote the central charge of
$F$. For a unitary $N=(2,2)$ full VOA, the $d_T$-cohomology carries a natural
$\mathbb{R}^2$-grading
\begin{align*}
H(F,d_T)=\bigoplus_{\substack{c/3\ge p\ge 0\\\;\bar c/3\ge q\ge 0}} H^{p,q}(F,d_T),
\qquad T=A,B,
\end{align*}
coming from the eigenvalues of the left and right $U(1)$-charges. A non-zero
element in $H^{c/3,\bar c/3}(F,d_B)$ (resp.\ $H^{c/3,0}(F,d_B)$) is called a
\emph{volume form} (resp.\ a \emph{holomorphic volume form}). We prove that the diagonal
periodicity
\[
F(\lambda,\lambda)\cong F(\lambda+n,\lambda+n), \qquad n\in\mathbb Z,\ \lambda\in\mathbb R
\]
as twisted $F$-modules is equivalent to the existence of a volume form,
$H^{c/3,\bar c/3}(F,d_B)\neq 0$. Moreover, the  two-parameter periodicity
\[
F(\lambda,\mu)\cong F(\lambda+n,\mu+m), \qquad m,n\in\mathbb Z,\la,\mu\in\R
\]
is equivalent to the simultaneous existence of a volume form and a holomorphic
volume form, that is,
$H^{c/3,\bar c/3}(F,d_B)\neq 0$ and $
H^{c/3,0}(F,d_B)\neq 0$.
These periodicity results yield Poincar\'e duality and T-duality for the cohomology
rings (see \cite{LVW} for the physical discussion).
More precisely, if $H^{c/3,\bar c/3}(F,d_T)\neq 0$, then
\begin{align}
H^{p,q}(F,d_T)\cong H^{c/3-p,\bar c/3-q}(F,d_T),
\qquad p,q\ge 0,\ T=A,B.
\end{align}
If in addition $H^{c/3,0}(F,d_T)\neq 0$, then the gradings become integral and
\begin{align}
H^{p,q}(F,d_B)\cong H^{c/3-p,q}(F,d_A),
\qquad p,q\in\mathbb{Z}_{\geq 0}.
\label{eq_intro_Tdual}
\end{align}
Thus spectral flow is a basic ingredient in the theory.
Another consequence is the construction of \emph{two-dimensional topological field
theories} from topological twists. When $H^{c/3,\bar c/3}(F,d_T)\neq 0$,
the corresponding cohomology ring becomes a Frobenius
superalgebra, and hence determines a $2$d TFT in the category of super vector spaces.

The \emph{Witten index} is also defined using spectral flow \cite{Witten87}. The
half-spectral-flow sector $F(1/2,1/2)$ is naturally identified with the \emph{Ramond
sector}, and we define
\[
\Ind_F(y,q)=\mathrm{tr}_{F(1/2,1/2)}(-1)^F y^{J(0)} q^{L(0)-c/24}.
\]
For the $B$-twist, the torus partition function of the resulting $2$d TFT
is identified with
\[
Z_F(\mathrm{torus})=\lim_{q\to 0,\; y\to 1}\Ind_F(y,q).
\]

We also study the holomorphic twist \cite{ES}. It is given by the functor
$F\mapsto H(F,\bar\tau^+_{(0)})$,
from unitary $N=(2,2)$ full VOAs to $N=2$ VOAs, and the topological twists are
recovered by a further twist:
\[
H(F,d_B)\cong H\bigl(H(F,\bar\tau^+_{(0)}),\tau^+_{(0)}\bigr), \qquad
H(F,d_A)\cong H\bigl(H(F,\bar\tau^-_{(0)}),\tau^+_{(0)}\bigr).
\]
Thus the holomorphic twist gives an intermediate chiral object
between the full theory and the topological twists. This is closely related to a
conjecture of Kapustin, which predicts that the large volume limit of the
holomorphic twist of the sigma model recovers the chiral de Rham complex of
Malikov-Schechtman-Vaintrob \cite{Kap05,MSV,BHS}.

As an application of the preceding duality results, we discuss \emph{mirror symmetry} at the
level of full VOAs. For a unitary $N=(2,2)$ full VOA $F$, one defines another unitary
$N=(2,2)$ full VOA $\widehat F$, called the \emph{mirror algebra}, by reversing the
anti-holomorphic $N=2$ structure (see Definition \ref{def_mirror}). This operation is
involutive and exchanges the two twists in the form
\begin{align}
H(F,d_A)\cong H(\widehat F,d_B),\qquad
H(F,d_B)\cong H(\widehat F,d_A).
\label{eq_intro_mirror}
\end{align}

If $F_X$ is the sigma model associated with a Calabi-Yau $d$-fold $X$, physics
predicts that the $A$-twist should recover the Hodge decomposition of $X$
in the form \cite{Witten}
\[
H^{p,q}(F_X,d_A)\cong H^q(X,\Omega_X^p).
\]
Suppose that $X$ and $Y$ are Calabi-Yau $d$-folds whose sigma models $F_X$ and
$F_Y$ are mirror as unitary $N=(2,2)$ full VOAs. Then, combined with the
T-duality \eqref{eq_intro_Tdual}, \eqref{eq_intro_mirror} implies
\begin{align*}
h^{p,q}(X)=h^{d-p,q}(Y).
\end{align*}
Thus the mirror relation for Hodge numbers appears as a consequence of mirror
symmetry at the level of full conformal field theories \cite{CDGP,Witten}.


This point of view is closely related to the study of the moduli space of
$N=(2,2)$ SCFTs. One expects that geometric deformations of a Calabi-Yau
manifold $X$ correspond to deformations of the algebraic structure of the
associated full VOA $F_X$. This suggests that it is important to investigate
mirror symmetry already at the level of conformal field theory itself; see also
\cite{M1} for mathematical constructions of deformation families of CFTs.

There is also important earlier work by Borisov on mirror symmetry via vertex algebras
with $N=2$ structure, closely related to the chiral de Rham complex and its variants
\cite{Borisov2,Borisov,Borisov3}. In the toric and Berglund-H\"ubsch settings, Borisov constructed
vertex algebras whose chiral rings recover the expected $A$- and $B$-rings, and showed
that mirror symmetry is reflected in a natural mirror involution on the $N=2$ structure, together with the interchange of the underlying dual combinatorial data. These works
provide a concrete algebraic approach to mirror symmetry. See also
\cite{CHKLX,He,HeKa,NY} for related approaches to
supersymmetric chiral conformal field theory.

By contrast, the present paper works directly with unitary full vertex operator
algebras. This makes it possible to formulate spectral flow, topological twists,
T-duality, the Witten index, and the mirror involution at the level of the full
conformal field theory itself.

We apply these general constructions to several examples.
For abelian varieties and for a special K3 surface, we
construct unitary $N=(2,2)$ full VOAs whose topological twists reproduce the
expected Hodge numbers. Their cohomology rings agree with the corresponding
quantum cohomology rings, and their Witten indices recover the expected elliptic
genera. For related work on abelian varieties, see \cite{KO}; for the physics of K3
sigma models, see \cite{EOTY,GHV}.
We also discuss the simplest Landau--Ginzburg model \cite{VW}, which illustrates
the appearance of fractional gradings and lies beyond the Calabi-Yau sigma-model
setting.

\vspace{3mm}

The paper is organized as follows. Section~1 introduces the basic setting of the
paper. We review $N=2$ superconformal algebras, $N=2$ vertex operator
superalgebras, and $N=(2,2)$ full VOAs, together with their twisted modules and
unitarity. We also recall topological twists and two-dimensional topological
field theories, and define the mirror algebra. Section~2 develops the cohomology
theory of unitary $N=(2,2)$ full VOAs. We define the $A$- and $B$-twisted
cohomology rings, study the consequences of unitarity, and formulate the
resulting Poincar\'e duality, T-duality, and Frobenius algebra structure.
Section~3 is devoted to spectral flow. Using generalized full VOAs, we construct
spectral flow and analyze its periodicity. From this we derive the duality
results stated earlier, and we also study the holomorphic twist and the Witten
index. Section~4 concerns geometric aspects and examples. After reviewing the
relevant geometric invariants and the Hodge-theoretic mirror relation, we study
examples coming from abelian varieties, a special K3 surface, and a
Landau--Ginzburg model.

\vspace{3mm}
\begin{center}
\textbf{\large Notations}
\end{center}

\vspace{3mm}
We will use the following notations:
\begin{itemize}
\item[$\g_{\la}^{N=2}$:] the superconformal algebra with twist $\la \in \R$, \S \ref{sec_Lie}
\item[$G_r^{\pm}$:] the chiral superconformal current $r \in \ft+\Z$, \S \ref{sec_Lie} and \ref{sec_def_VOA}
\item[$J_n$:] the chiral R-symmetry $n \in \Z$, \S \ref{sec_Lie} and \ref{sec_def_VOA}
\item[$\bG_r^{\pm}$:] the anti-chiral superconformal current $r \in \ft+\Z$, \S \ref{sec_def_fullVOA}
\item[$\bJ_n$:] the anti-chiral R-symmetry $n \in \Z$, \S \ref{sec_def_fullVOA}
\item[$F$:] $N=(2,2)$ full vertex operator superalgebra, \S \ref{sec_def_fullVOA}
\item[$(c,\bc)$:] the central charges of the full vertex operator superalgebra, \S \ref{sec_def_fullVOA}
\item[$\phi$:] the unitary structure on $F$, \S \ref{sec_unitary}
\item[$\langle -,-\rangle$:] the positive-definite  invariant sesquilinear form on $F$, \S \ref{sec_unitary}
\item[$d_A,d_B$:] the differential operators, A-twist and B-twist, \S \ref{sec_class_twist}
\item[$H(F,d)$:] the cohomology ring of $F$, \S \ref{sec_twist_def}
\item[$C(F)_{cc,ca,ac,aa}$:] the space of c-c (c-a,a-c,a-a) primary vectors of $F$, \S \ref{sec_inequality}
\item[$\ep \in H^{J,\bJ}(F,d)$:] a volume form of $F$, \S \ref{sec_twist_star}
\item[$\eta \in H^{J,0}(F,d)$:] a holomorphic volume form of $F$, \S \ref{sec_twist_serre}
\item[$F(\la,\mu)$:] the spectral flow of $N=(2,2)$ full vertex operator superalgebra $F$, \S \ref{sec_flow}
\item[$\Om_F$:] the generalized full vertex algebra associated with $F$, \S \ref{sec_str_h}
\item[$\sVect$:] the category of super vector spaces, \S \ref{sec_tft}
\item[$L_\Om(-1),\Ld_\Om(-1)$:] translations of generalized full vertex algebra $\Om$, \S \ref{sec_str_h}
\item[$\ind_F(y,q)$:] the elliptic genus of $F$, \S \ref{sec_genus}
\item[$h_{p,q}^\bullet(F)$:] the Hodge numbers of $F$, \S \ref{sec_twist_star}
\end{itemize}

\vspace{5mm}

\section{Preliminary}\label{sec_Preliminary}

Section \ref{sec_Lie} reviews the definition of an infinite-dimensional Lie algebra called an $N=2$ superconformal algebra and its automorphisms. Section \ref{sec_def_VOA} recalls the definition of an $N=2$ vertex operator algebra and its automorphism group.
Section \ref{sec_def_fullVOA} defines an $N=(2,2)$ full vertex operator superalgebra and its twisted modules. In Section \ref{sec_unitary}, we define  unitarity and the mirror algebra of an $N=(2,2)$ full vertex operator superalgebra. In Section \ref{sec_class_twist}, we recall the definition of holomorphic/topological twists from \cite{ES} and classify them up to the inner automorphisms of a unitary $N=(2,2)$ full vertex operator algebra and in Section \ref{sec_tft} we review the definition of 2d topological field theory.

\subsection{$N=2$ superconformal algebra}\label{sec_Lie}
An $N=2$ superconformal algebra is an infinite-dimensional Lie algebra which is an extension of the chiral part of the super-Poincar\'e Lie algebra. In this section, we review the definition of $N=2$ superconformal algebra and its automorphisms. All the content of this section is standard (see for example \cite{LVW}).


\begin{dfn}\label{def_susy_Lie}
Let $\la \in \R$.
The ($\la$-twisted) $N = 2$ superconformal algebra $\g_\la^{N=2}$ is the Lie superalgebra with basis of even elements $c, L_n, J_n$, for $n$ an integer, and odd elements $G_r^+,G_s^-$ for $r \in \ft+ \la+\Z$ and $s\in \ft - \la +\Z$,
defined by the following relations:
\begin{align}
\begin{split}
[J_n,J_m]&= \frac{c}{3} n\delta_{n+m,0},\quad [J_n,G_r^+]=\pm G_{r+n}^+,\quad [L_n,G_r^+]= (\frac{n}{2}-r)G_{r+n}^+\\
[L_n,J_m]&= -mJ_{n+m},\quad [J_n,G_s^-]=\pm G_{s+n},\quad [L_n,G_s^-]= (\frac{n}{2}-s)G_{s+n}^-\\
[G_r^+,G_{r'}^+]_+&=[G_s^-,G_{s'}^-]_+=0,\quad [L_n,L_m] = (n-m)L_{n+m} + c\frac{n^3-n}{12}\delta_{n+m,0}\\
[G_r^+,G_s^-]_+&=L(r+s)+\frac{1}{2}(r-s)J(r+s)+\frac{c}{6}(r^2-\frac{1}{4})\delta_{r+s,0}
\end{split}
\label{eq_susy}
\end{align}
for any $n,m\in \Z$, $r,r' \in \ft+\la+\Z$ and $s,s' \in \ft-\la+\Z$.
\end{dfn}
Note that the algebras $\g_\la^{N=2}$ and $\g_\mu^{N=2}$ are the same if $\la - \mu \in \Z$.
$\g_\la^{N=2}$ with $\la \in \Z$ is called {\it the $N = 2$ Neveu-Schwarz algebra}, and $\g_\la^{N=2}$
with $\la \in \ft +\Z$ is called {\it the $N=2$ Ramond algebra}.

\begin{dfn}\label{def_spectral_Lie}
For $\la,\eta\in\R$,
{\it spectral flow} is a Lie superalgebra isomorphism \cite{LVW}
\begin{align*}
U_\eta: \g_\la^{N=2} \rightarrow \g_{\la+\eta}^{N=2}
\end{align*}
given by
\begin{align*}
U_{\eta} (c) =c,\quad U_\eta (L_n) &= L_n + \eta J_n + \frac{c\eta^2}{6} \delta_{n,0},\quad U_\eta (J_n) = J_n+ \frac{c\eta}{3} \delta_{n,0}\\
U_\eta (G_r^+) &= G_{r+\eta}^+, \quad U_\eta (G_s^-) = G_{s-\eta}^-.
\end{align*}
\end{dfn}

\begin{rem}
The shift of constants in spectral flow can be seen, for example, from:
\begin{align*}
&[U_\eta(G_r^+),U_\eta(G_{-r}^-)]_+ = [G_{r+\eta}^+,G_{-r-\eta}^-] = L(0)+(r+\eta) J(0)+\frac{c}{6} ((r+\eta)^2-\frac{1}{4})\\
&=(L(0)+\eta J(0)+\frac{c \eta^2}{6})+ r(J(0)+ \frac{c \eta}{3}) +\frac{c}{6} (r^2-\frac{1}{4})= U_\eta([G_r^+,G_{-r}^-]_+).
\end{align*}
\end{rem}

\begin{dfn}\label{def_anti-involution_Lie}
The linear map $\al: \g_0^{N=2} \rightarrow \g_0^{N=2}$ given below is an automorphism of the Lie superalgebra $\g_\la^{N=2}$:
\begin{align*}
\al (c) =c,\quad \al(L_n) &= L_{n},\quad \al (J_n) = -J_{n},\quad
\al (G_r^+) = G_{r}^-, \quad \al (G_s^-) = G_{s}^+
\end{align*}
for any $n\in \Z$ and $r,s \in \ft+\Z$.
\end{dfn}

\subsection{$N=2$ vertex operator superalgebra}\label{sec_def_VOA}
Vertex operators can be defined as generating functions of $N=2$ superconformal algebra.
In this section we review the definition of a vertex operator superalgebra \cite{Kac}
and an $N=2$ vertex operator superalgebra \cite{HeKa}.
A vertex operator superalgebra is an algebra describing the holomorphic part of a CFT.
A full vertex operator superalgebra, which is the algebra of the whole CFT, which is real analytic, will be reviewed in the next section.


Let $V$ be a $\ft\Z$-graded vector space $V=\bigoplus_{n\in \ft \Z} V_n$ and regard it as a $\Z_2$-graded vector space by
\begin{align*}
V^{\bare}=\bigoplus_{n\in \Z}V_n,
 \quad\quad V^\baro =\bigoplus_{n\in \ft+\Z}V_n
\end{align*}
and $V^\vee = \bigoplus_{n\in\ft\Z}V_n^*$, where $V_n^*$ is the dual vector space.
Denote the $\Z_2$-grading of a vector $a \in V= V^{\bar{i}}$ by $|a| = \bar{i} \in \Z_2$,
which is called a {\it fermion number}.

A $\Z_2$-graded vertex superalgebra is defined as follows (see \cite{Li2,Kac} for more detail):
\begin{dfn}\label{def_vertex_superalgebra}
A {\it $\ft \Z$-graded vertex superalgebra} is a $\ft \Z$-graded $\C$-vector space $V=\bigoplus_{n\in \ft \Z} V_n$ equipped with a linear map
$$Y(-,z):V \rightarrow \End (V)[[z^\pm]],\; a\mapsto Y(a,z)=\sum_{n \in \Z}a(n)z^{-n-1}$$
and an element $\va \in V_0$ satisfying the following conditions:
\begin{enumerate}
\item[V1)]
For any $i,j \in \Z_2$ and $a \in V^{\bar{i}}$ and $b\in V^{\bar{j}}$, $Y(a,z)b \in V^{\overline{i+j}}((z))$,
that is, $Y(-,z)$ is an even linear map;
\item[V2)]
For any $a \in V$, $Y(a,z)\va \in V[[z,\z]]$ and $\lim_{z \to 0}Y(a,z)\va = a(-1)\va=a$;
\item[V3)]
$Y(\va,z)=\mathrm{id}_V \in \End V$;
\item[V4)]
For any $a,b,c \in V$ and $u \in V^\vee$, there exists $\mu(z_1,z_2) \in \C[z_1^\pm,z_2^\pm,(z_1-z_2)^\pm]$ such that
\begin{align*}
u(Y(a,z_1)Y(b,z_2)c) &= \mu(z_1,z_2)|_{|z_1|>|z_2|}, \\
u(Y(Y(a,z_0)b,z_2)c) &= \mu(z_1,z_2)|_{|z_2|>|z_1-z_2|},\\
(-1)^{|a||b|}u(Y(b,z_2)Y(a,z_1)c)&=\mu(z_1,z_2)|_{|z_2|>|z_1|},
\end{align*}
where $z_0=z_1-z_2$;
\item[V5)]
For any $a \in V$, $z\frac{d}{dz}Y(a,z) = [L(0),Y(a,z)]-Y(L(0)a,z)$.
\end{enumerate}
\end{dfn}

\begin{dfn}\label{def_conformal}
A {\it vertex operator superalgebra} is a $\ft \Z$-graded vertex superalgebra $V$ equipped with
a vector $\om \in V_2$ such that
\begin{enumerate}
\item
There exist a scalar $c \in \C$ such that
$\om(3)\om=\frac{c}{2} \va$ and $\om(k)\om=0$ for any $k=2$ or $k\in \Z_{\geq 4}$.
\item
$\om(0)a=a(-2)\va$ for any $a\in V$;
\item
$\om(1)|_{V_n}=n \id_{V_n}$ for any $n\in \ft\Z$;
\item
There exists $N \in \Z$ such that $V_n=0$ for any $n <N$;
\item
$\dim V_n < \infty$ for any $n \in \ft\Z$.
\end{enumerate}
\end{dfn}
Let $L(n)$ be the coefficients of $\om$, that is,
\begin{align*}
Y(\om,z) = \sum_{n\in \Z}L(n) z^{-n-2}.
\end{align*}
By (V4), for any $a_1,a_2 \in V$ and $p,q\in \Z$,
\begin{align}
[a_1(p),a_2(q)]_{\pm} = \sum_{k \geq 0}\binom{p}{k} (a_1(k)a_2)(p+q-k)
\label{eq_commutator}
\end{align}
holds, which implies that  $\{L(n)\}_{n\in \Z}$ satisfies the Virasoro commutator relation:
\begin{align*}
[L(n),L(m)]= (n-m)L(n+m)+\frac{n^3-n}{12}c \delta_{n+m,0}.
\end{align*}

\begin{dfn}\label{def_N2}
An {\it $N=2$ vertex operator superalgebra} is a vertex operator superalgebra $(V,\om)$ equipped with
vectors $\tau^+,\tau^- \in V_{\frac{3}{2}}$ and $J \in V_1$ satisfying the following conditions:
\begin{align}
\begin{split}
L(n)J&= L(n) \tau^\pm=0\\
J(n)\tau^\pm &=0,\quad\quad J(0)\tau^\pm=\pm \tau^\pm\\
J(1)J&= \frac{c}{3}\va,\quad\quad J(k)J=0\\
\tau^+(m)\tau^+ &= \tau^-(m)\tau^-=0\\
\tau^+(0)\tau^- &=\om+\ft L(-1)J,\\
\tau^+(1)\tau^- &=J,\quad\quad \tau^+(2)\tau^-=\frac{c}{3}.
\end{split}
\label{eq_susy_ope}
\end{align}
for any $n \geq 1$, $m \geq 0$ and $k \geq 2$ or $k=0$.
\end{dfn}

Let $(V,\om,\tau^\pm,J)$ be an $N=2$ vertex operator superalgebra. Set
\begin{align*}
Y(\tau^\pm,z)=\sum_{r \in \ft+\Z} G_r^\pm z^{-\frac{3}{2}-r},\quad\quad Y(J,z) = \sum_{n\in \Z}J_n z^{-n-1},
\end{align*}
that is, $G_r^\pm = \tau^\pm(r+\ft)$.
By \eqref{eq_commutator}, \eqref{eq_susy_ope} is equivalent to the following relations:
\begin{align}
\begin{split}
[J_n,J_m]&= \frac{c}{3} n\delta_{n+m,0},\quad [J_n,G_r^\pm]=\pm G_{r+n},\quad [L_n,G_r^\pm]= (\frac{n}{2}-r)G_{r+n}^\pm\\
[L_n,J_m]&= -mJ_{n+m},\quad [G_r^+,G_s^+]_+=[G_r^-,G_s^-]_+=0,\\
[G_r^+,G_s^-]_+&=L(r+s)+\frac{1}{2}(r-s)J(r+s)+\frac{c}{6}(r^2-\frac{1}{4})\delta_{r+s,0}
\end{split}
\label{eq_susy}
\end{align}
for any $n,m\in\Z$ and $r,s\in \ft +\Z$. Hence, $\g_0^{N=2}$ naturally acts on $V$.

Put another way, let $V_c^{N=2}$ be the universal VOA generated from $J,\om,\tau^\pm$ with the relation \eqref{eq_susy_ope}. This algebra is determined only by the central charge $c \in \C$. An $N=2$ vertex operator superalgebra is a VOA that has a quotient of $V_c^{N=2}$ as a subalgebra sharing the same conformal vector.
Below, we review the automorphism groups of $V_c^{N=2}$.


\begin{dfn}\label{def_VOA_alpha}
Let $\al$ be the vertex operator algebra automorphisms of $V_c^{N=2}$ defined by
\begin{align*}
\al(\om)=\om,\quad &\al(\tau^\pm)=\tau^\mp,\quad \al(J)= -J
\end{align*}
and $\exp (i \la J(0))$ be the inner automorphism of $V_c^{N=2}$ defined by the exponential of the zero mode $J(0)$, i.e.,
\begin{align*}
\exp (i \la J(0))(\om)=\om,\quad &\exp (i \la J(0))(\tau^\pm) = \exp( i \la) \tau^\pm, \quad \exp (i \la J(0))(J)=J
\end{align*}
for any $\la \in \C^\times$.
\end{dfn}
Let $\Aut (V_c^{N=2})$ be the group of vertex operator algebra automorphisms of $V_c^{N=2}$.
The following proposition is obvious:
\begin{prop}\label{prop_auto_N2}
The automorphism group $\Aut(V_c^{N=2})$ is generated by $\al$ and $\{\exp (i \la J(0))\}_{\la \in \C^\times}$ and isomorphic to $\C^\times \rtimes \Z_2$.
\end{prop}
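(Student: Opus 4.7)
The plan is to exploit the fact that any vertex operator algebra automorphism $\phi \in \Aut(V_c^{N=2})$ preserves the conformal vector $\om$, and hence the $L(0)$-weight grading. Since by construction $V_c^{N=2}$ is generated as a vertex algebra by $\om$, $J$, and $\tau^\pm$, the automorphism $\phi$ is determined by its restriction to the weight $\leq \fth$ subspaces. The first preparatory step is to verify, using a PBW basis for $V_c^{N=2}$ regarded as the vacuum module over $\g_0^{N=2}$ at central charge $c$, that $(V_c^{N=2})_1 = \C J$ and $(V_c^{N=2})_\fth = \C \tau^+ \oplus \C\tau^-$, with $J(0)$-eigenvalues $\pm 1$ on the two summands respectively.

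Granted this, $\phi(J) = \mu J$ for some $\mu \in \C^\times$, and applying $\phi$ to the defining relation $J(1)J = \frac{c}{3}\1$ yields $\mu^2 = 1$, hence $\mu = \pm 1$. In the case $\mu = 1$, the $J(0)$-eigenspace decomposition of $(V_c^{N=2})_\fth$ is preserved by $\phi$, so $\phi(\tau^\pm) = \xi_\pm \tau^\pm$ for scalars $\xi_\pm \in \C^\times$. Applying $\phi$ to $\tau^+(2)\tau^- = \frac{c}{3}\1$ forces $\xi_+ \xi_- = 1$; writing $\xi_\pm = e^{\pm i\la}$ then identifies $\phi$ with $\exp(i\la J(0))$ in the sense of Definition \ref{def_VOA_alpha}. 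The remaining relations in \eqref{eq_susy_ope} are automatically preserved (e.g., $\tau^+(0)\tau^- = \om + \ft L(-1)J$ is invariant since $\xi_+ \xi_- = 1$), so every such $\phi$ is genuinely an automorphism.

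In the case $\mu = -1$, the same argument shows that $\phi$ exchanges the $\pm 1$ eigenspaces of $J(0)$ on $(V_c^{N=2})_\fth$, giving $\phi(\tau^\pm) = \eta_\mp \tau^\mp$ with $\eta_+ \eta_- = 1$ from the same OPE. Composing with $\al$ brings this case into the previous one, so every $\phi \in \Aut(V_c^{N=2})$ lies in $\al^k \cdot \{\exp(i\la J(0)) : \la \in \C\}$ for some $k \in \{0,1\}$. The group structure then follows from the straightforward conjugation computation $\al \exp(i\la J(0))\al^{-1} = \exp(-i\la J(0))$, which exhibits $\Aut(V_c^{N=2})$ as the semidirect product $\C^\times \rtimes \Z_2$ under the identification $\exp(i\la J(0)) \leftrightarrow e^{i\la} \in \C^\times$, with $\Z_2$ acting by inversion.

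The only nontrivial obstacle is the preliminary dimension count for $(V_c^{N=2})_1$ and $(V_c^{N=2})_\fth$: one must confirm that $V_c^{N=2}$, as the \emph{universal} $N=2$ VOA at central charge $c$, admits no additional relations collapsing these weight spaces below the naive PBW estimate, and that the $J(0)$-charges on $\tau^\pm$ are indeed $\pm 1$ in the universal setting. Once this structural lemma is in place, the remainder of the argument is a direct book-keeping exercise with the relations \eqref{eq_susy_ope}.
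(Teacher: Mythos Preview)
The paper gives no proof of this proposition; it is simply labelled ``obvious''. Your argument is correct and is exactly the standard one: restrict an automorphism to the low weight spaces, use the generator relations to pin down the possible actions on $J$ and $\tau^\pm$, and recognize the resulting group as $\C^\times \rtimes \Z_2$ with $\Z_2$ acting by inversion.

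One minor remark: your derivations of $\mu^2 = 1$ from $J(1)J = \frac{c}{3}\1$ and of $\xi_+\xi_- = 1$ from $\tau^+(2)\tau^- = \frac{c}{3}\1$ tacitly use $c \neq 0$, which is indeed a running assumption later in the paper. If you want a $c$-independent version, you can instead read off $\mu \in \{\pm 1\}$ directly from the fact that $\phi$ permutes the $J(0)$-eigenspaces of $(V_c^{N=2})_\fth$ (forcing $\phi(J)(0)$ to have spectrum $\{\pm 1\}$ there), and then obtain $\xi_+\xi_- = 1$ from $\tau^+(1)\tau^- = J$. Your PBW dimension count is also correct as stated: in the universal vacuum module one has $L_{-1}\1 = 0$ and $G^\pm_r\1 = 0$ for $r \geq -\ft$, so the only weight-$1$ vector is $J_{-1}\1$ and the only weight-$\fth$ vectors are $G^\pm_{-3/2}\1$.
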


\subsection{$N=(2,2)$ full vertex operator superalgebra and twisted modules}\label{sec_def_fullVOA}

Non-super conformal field theories can be described using the notion of full vertex operator algebras introduced in \cite{M1}.
In this section, we define the super analogue of a full vertex operator algebra and
review its basic properties. Since the proofs are the same as in the non-super case,
we refer to \cite{M1}. We then formulate two-dimensional $N=(2,2)$ SCFTs in
terms of full vertex operator superalgebras (full VOAs).


In this section we also introduce the notion of $\exp(i \la J(0)+i\mu \bJ(0))$-twisted modules for $N=(2,2)$ full VOAs, where the twisted superconformal algebra 
\begin{align*}
\g_\la^{N=2} \oplus \g_{\mu}^{N=2}\quad\quad\quad (\la,\mu \in \C)
\end{align*}
acts naturally. 
Barron showed that on the twisted module of $V_c^{N=2}$, the twisted superconformal algebra $\g_\la^{N=2}$ acts \cite{Barron}. 
Hence, this is a generalization of Barron's result.

%

We will first define the notion of a full vertex operator superalgebra and review its properties (see for more detail \cite{M1}).
Let $F=\bigoplus_{h,\h\in \R}F_{h,\h}$ be an $\R^2$-graded vector space
and $L(0),\Ld(0):F\rightarrow F$ linear maps defined by
$L(0)|_{F_{h,\h}}=h \id_{F_{h,\h}}$ and 
$\Ld(0)|_{F_{h,\h}}=\h \id_{F_{h,\h}}$
for any $h,\h\in \R$.
We assume that:
\begin{enumerate}
\item[FO0)]
$F_{0,0}=\C\vac$;
\item[FO1)]
$F_{h,\h}=0$ unless $h-\h \in \ft\Z$;
\item[FO2)]
$\sum_{h+\h<H} \dim F_{h,\h} < \infty$ for any $H \in \R$;
\item[FO3)]
There exists $N\in \R$ such that $F_{h,\h}=0$ unless $h \geq N$ and $\h \geq N$.
\end{enumerate}
Set 
\begin{align*}
F^\vee =\bigoplus_{h,\h\in\R} F_{h,\h}^*,
\end{align*}
where $F_{h,\h}^*$ is the dual vector space, and regard $F$ as a $\Z_2$-graded vector space by
\begin{align*}
F^{\bare} = \bigoplus_{\substack{h,\h\in\R\\ h-\h \in \Z}}F_{h,\h}\quad\quad\text{and}\quad\quad
F^{\baro} = \bigoplus_{\substack{h,\h\in\R\\ h-\h \in \ft+ \Z}}F_{h,\h}
\end{align*}
and let $(-1)^F: F \rightarrow F$ be the linear map given by
\begin{align*}
(-1)^F |_{F^{\bar{i}}}= (-1)^{\bar{i}}
\end{align*}
for $\bar{i} \in \Z_2$. We will denote by $\uz$ the pair of formal variables $z$ and $\z$.

A  \textbf{full vertex operator} on $F$ is an even linear map
\begin{align*}
Y(\bullet, z,\z):F \rightarrow \mathrm{End}(F)[[z,\z,|z|^\R]],\; a\mapsto Y(a,\uz)=\sum_{r,s \in \R}a(r,s)z^{-r-1}\z^{-s-1}
\end{align*}
such that:
\begin{align}
\begin{split}
[L(0),Y(a,\uz)]&= \frac{d}{dz}Y(a,\uz) + Y(L(0)a,\uz),\\
[\Ld(0),Y(a,\uz)]&= \frac{d}{d\z}Y(a,\uz) + Y(\Ld(0)a,\uz).
\label{eq_L0_cov}
\end{split}
\end{align}
Then, by (FO1), (FO2) and (FO3), $Y(a,\uz)b \in F((z,\z,|z|^\R))$.

By \eqref{eq_L0_cov}, for $u \in F_{h_0,\h_0}^\vee$ and $a_i \in F_{h_i,\h_i}$ we have
\begin{align}
u(Y(a_1,\uz_1)Y(a_2,\uz_2)a_3) \in z_2^{h_0-h_1-h_2-h_3}\z_2^{\h_0-\h_1-\h_2-\h_3}\C\Bigl(\Bigl(\frac{z_2}{z_1},\frac{\z_2}{\z_1},\Bigl|\frac{z_2}{z_1}\Bigr|^\R\Bigr)\Bigr),\label{eq_conv_rad2}\\
u(Y(Y(a_1,\uz_0)a_2,\uz_2)a_3) \in z_2^{h_0-h_1-h_2-h_3}\z_2^{\h_0-\h_1-\h_2-\h_3}\C\Bigl(\Bigl(\frac{z_0}{z_2},\frac{\z_0}{\z_2},\Bigl|\frac{z_0}{z_2}\Bigr|^\R\Bigr)\Bigr),
\label{eq_conv_rad}
\end{align}
where the left-hand side of \eqref{eq_conv_rad} is a formal series in $z_0,\z_0,|z_0|^\R, z_2,\z_2,|z_2|^\R$
but contains only terms of the form $z_0^n z_2^{-n}, \z_0^m \z_2^{-m}, z^s\z_0^s \z_2^{-s}\z_2^{-s}$ up to the factor
in front  (for more detail, see \cite[Proposition 1.5 and Lemma 1.6]{M1}).

\begin{dfn}\label{def_full_va}
A \textbf{full vertex algebra} is an $\R^2$-graded $\C$-vector space
$F=\bigoplus_{h,\h \in \R^2} F_{h,\h}$ satisfying (FO1), (FO2) and (FO3)  equipped with a
full vertex operator $Y(\bullet,\uz):F \rightarrow \mathrm{End}(F)[[z^\pm,\z^\pm,|z|^\R]]$
and an element $\vac \in F_{0,0}$ satisfying the following conditions:
\begin{enumerate}
\item[FV1)]
For any $a \in F$, $Y(a,\uz)\vac \in F[[z,\z]]$ and $\displaystyle{\lim_{\uz \to 0}Y(a,\uz)\vac = a(-1,-1)\vac=a}$.
\item[FV2)]
$Y(\vac,\uz)=\mathrm{id}_F \in \End F$;
\item[FV3)]
For any $a_i \in F_{h_i,\h_i}$ and $u \in F_{h_0,\h_0}^*$, \eqref{eq_conv_rad2} and \eqref{eq_conv_rad} are absolutely convergent in $\{|z_1|>|z_2|\} \subset Y_2(\C)$ and $\{|z_0|<|z_2|\}\subset Y_2(\C)$, respectively, and there exists a real analytic function $\mu: Y_2(\C)\rightarrow \C$ such that:
\begin{align*}
u(Y(a,\uz_1)Y(b,\uz_2)c) &= \mu(z_1,z_2)|_{|z_1|>|z_2|}, \\
u(Y(Y(a,\uz_0)b,\uz_2)c) &= \mu(z_0+z_2,z_2)|_{|z_2|>|z_0|},\\
(-1)^{|a||b|}u(Y(b,\uz_2)Y(a,\uz_1)c)&=\mu(z_1,z_2)|_{|z_2|>|z_1|}
\end{align*}
where $Y_2(\C)=\{(z_1,z_2)\in \C^2\mid z_1\neq z_2,z_1\neq 0,z_2\neq 0\}$.
\end{enumerate}
\end{dfn}

\begin{dfn}
\label{def_Koszul_vertex}
Let $(F,Y_F,\va_F),(G,Y_G,\va_G)$ be full vertex superalgebras. Define a full vertex operator $Y_{F\otimes G}(-,\uz)$ on $F \otimes G$ by
\begin{align}
Y_{F\otimes G}(a\otimes b,\uz)c \otimes d
= (-1)^{|b||c|} Y_F(a,z)c \otimes Y_G(b,\z)d \label{eq_Koszul_vertex}
\end{align}
for any $a,c \in F$ and $b,d\in G$. Then, $(F\otimes G,Y_{F \otimes G}(-,\uz),\va_F \otimes \va_G)$ is a full vertex superalgebra.
It is important to note that without the Koszul sign rule in \eqref{eq_Koszul_vertex}, there is no superalgebra structure.
(The need for such a sign is well-known in the chiral case (e.g. \cite{Kac}).
\end{dfn}

Let $F$ be a full vertex superalgebra
and $D$ and $\D$ denote the endomorphism of $F$
defined by $Da=a(-2,-1)\vac$ and $\D a=a(-1,-2)\vac$ for $a\in F$,
i.e., $$Y(a,z)\vac=a+Daz+\D a\z+\dots.$$

Then, we have (see \cite[Proposition 3.7, Lemma 3.11, Lemma 3.13]{M1}):
\begin{prop}
\label{prop_translation}
For $a \in F$, the following properties hold:
\begin{enumerate}
\item
$Y(Da,\uz)=\frac{d}{dz} Y(a,\uz)$ and $Y(\D a,\uz)=\frac{d}{d\z} Y(a,\uz)$;
\item
$D\vac=\D\vac=0$;
\item
$[D,\D]=0$;
\item
$Y(a,\uz)b=(-1)^{|a||b|}\exp(zD+\z\D)Y(b,-\uz)a$;
\item
$Y(\D a,\uz)=[\D,Y(a,\uz)]$ and $Y(Da,\uz)=[D,Y(a,\uz)]$.
\item
If $\D a=0$, then 
\begin{align*}
Y(a,\uz) = \sum_{n\in \Z} a(n,-1)z^{-n-1}
\end{align*}
and for any $n\in \Z$ and $b\in F$,
\begin{align*}
[a(n,-1),Y(b,\uz)]_\pm&= \sum_{j \geq 0} \binom{n}{j} Y(a(j,-1)b,\uz)z^{n-j},\\
Y(a(n,-1)b,\uz)&= 
\sum_{j \geq 0} \binom{n}{j}(-1)^j a(n-j,-1)z^{j}Y(b,\uz) \\
&\qquad -(-1)^{|a||b|}
Y(b,\uz)\sum_{j \geq 0} \binom{n}{j}(-1)^{j+n} a(j,-1)z^{n-j}.
\end{align*}
\item
If $\D a =0$ and $D b=0$, then $[Y(a,\uz),Y(b,\uz)]=0$.
\end{enumerate}
\end{prop}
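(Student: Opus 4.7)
The plan is to deduce all seven items from the locality/associativity axiom (FV3) together with the vacuum and translation properties (FV1, FV2), following the standard (super) vertex algebra pattern with careful bookkeeping of Koszul signs. For (1), I apply (FV3) to the triple $(a, \vac, c)$: the iterate identity gives, after using $Y(\vac,\uz)=\id$, the formal expansion $Y(a,\uz_0+\uz_2)c = Y(Y(a,\uz_0)\vac, \uz_2)c$. Since (FV1) gives $Y(a,\uz_0)\vac = a + z_0 Da + \bar z_0 \D a + O(|z_0|^2)$, comparing coefficients of $z_0$ and $\bar z_0$ yields $Y(Da,\uz) = \partial_z Y(a,\uz)$ and $Y(\D a,\uz) = \partial_{\bar z} Y(a,\uz)$. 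Item (2) is immediate from $Y(\vac,\uz)=\id$, since only the $(-1,-1)$-mode of $\vac$ is nonzero. For (3), iterating (1) gives $Y([D,\D]a,\uz) = 0$, and injectivity of $a \mapsto Y(a,\uz)\vac$ (which follows from (FV1)) yields $[D,\D]=0$.

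Next I would prove (5) by using (FV3) with $c = \vac$ together with the Taylor formula $Y(b,\uz_2)\vac = e^{z_2 D + \bar z_2 \D}b$ (a consequence of (1) and (FV1)) to obtain the conjugation identity $Y(a,\uz_1)e^{z_2 D+\bar z_2 \D}b = e^{z_2 D+\bar z_2 \D} Y(a, \uz_1-\uz_2)b$ on the appropriate domain; differentiating in $z_2$ and specializing $z_2=0$ gives $[D,Y(a,\uz)] = \partial_z Y(a,\uz) = Y(Da,\uz)$, and the $\D$ case is parallel. Skew-symmetry (4) then follows by combining this conjugation identity with the locality line of (FV3): equating the analytic continuations of $Y(a,\uz_1)Y(b,\uz_2)\vac$ and $(-1)^{|a||b|}Y(b,\uz_2)Y(a,\uz_1)\vac$, expressing the second as $(-1)^{|a||b|}e^{z_1 D+\bar z_1 \D}Y(b,\uz_2-\uz_1)a$ via the conjugation identity applied in the other order, and specializing $z_2 = 0$, yields $Y(a,\uz)b = (-1)^{|a||b|}e^{zD+\bar z\D}Y(b,-\uz)a$.

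For (6), when $\D a = 0$ part (5) gives $[\D, Y(a,\uz)] = 0$, so the vacuum condition $Y(a,\uz)\vac = e^{zD + \bar z \D}a = e^{zD}a$ together with (FV1) forces $Y(a,\uz) = \sum_{n\in\Z} a(n,-1)z^{-n-1}$. The commutator and normal-ordering formulas are then the standard Borcherds identity, extracted from (FV3) by taking residues in $z_1$ at $z_1 = z_2$: since $a$ has only integer modes in $z_1$, the real-analytic function $\mu(z_1,z_2)$ is rational in $z_1$ with poles only at $z_1 = 0$ and $z_1 = z_2$, allowing the classical residue calculus; the Koszul sign $(-1)^{|a||b|}$ enters from the third equation of (FV3). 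Item (7) follows immediately: if $\D a = 0$ and $Db = 0$, then by (6) and its mirror $Y(a,\uz)$ and $Y(b,\uz)$ depend only on $z$ and $\bar z$ respectively, so $\mu(z_1,z_2)$ has no singularity at $z_1 = z_2$, whence the commutator vanishes on $Y_2(\C)$ and in particular $[Y(a,\uz),Y(b,\uz)]=0$.

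The main obstacle is purely analytic: the function $\mu$ is only real-analytic on $Y_2(\C)$ and its formal expansions involve real powers $|z|^{\R}$, so one must verify that the residue and Cauchy integral arguments used in the chiral meromorphic case remain valid. Once that is settled, as carried out in the non-super case of \cite{M1}, everything else is routine bookkeeping, the only new ingredient being the insertion of Koszul signs at every swap of an ordering of $a$ and $b$.
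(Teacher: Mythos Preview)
Your proposal is correct and aligns with the paper's approach: the paper does not give a proof at all but simply cites \cite[Proposition~3.7, Lemma~3.11, Lemma~3.13]{M1} for the non-super case and remarks that the super generalization is straightforward, which is exactly what you outline. One small streamlining: for the first claim in (6), it is cleaner to invoke (1) directly (from $\D a=0$ you get $\partial_{\bar z}Y(a,\uz)=Y(\D a,\uz)=0$, hence only $z$-modes survive) rather than routing through (5) and the vacuum; your detour works but is unnecessarily indirect.
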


Proposition \ref{prop_translation} implies $\ker \D$ is a vertex superalgebra and $\ker D$ is also a vertex superalgebra with the formal variable $\z$.

\begin{dfn}\label{def_fullVOA}
A \textbf{full vertex operator superalgebra} (full VOA) is a pair of a full vertex superalgebra and distinguished vectors
$\om \in F_{2,0}$ and $\omb \in F_{0,2}$ such that
\begin{enumerate}[{FVOA}1{)}]
\item
$\D \om=0$ and $D \omb=0$;
\item
There exist scalars $c, \bar{c} \in \C$ such that
$\om(3,-1)\om=\frac{c}{2} \vac$,
$\omb(-1,3)\omb=\frac{\bar{c}}{2} \vac$ and
$\om(k,-1)\om=\omb(-1,k)\omb=0$
for any $k=2$ or $k\in \Z_{\geq 4}$.
\item
$\om(0,-1)=D$ and $\omb(-1,0)=\D$;
\item
$\om(1,-1)|_{F_{h,\h}}=h \id_{F_{h,\h}}$ and
$\omb(-1,1)|_{F_{h,\h}}=\h \id_{F_{h,\h}}$ for any $h,\h \in \R$.
\end{enumerate}
\end{dfn}

We remark that 
\begin{align*}
L(n)= \om(n+1)\quad\text{ and }\quad \Ld(n)=\omb(n+1)
\end{align*}
satisfy the commutation relation of the Virasoro algebras by Proposition \ref{prop_translation}.
Then, we have \cite[Proposition 3.18 and Proposition 3.19]{M1}:
\begin{prop}\label{ker_hom}
Let $(F,\om,\omb)$ be a full vertex operator algebra.
Then, $(\ker \D,\om)$ and $(\ker D,\omb)$ are vertex operator superalgebras and the linear map
\begin{align*}
\ker \D \otimes \ker D \rightarrow F,\quad a\otimes b \mapsto a(-1,-1)b
\end{align*}
is a full vertex operator superalgebra homomorphism.
Here, the tensor product of full vertex superalgebras $\ker \D \otimes \ker D$ is given in Definition \ref{def_Koszul_vertex}.
\end{prop}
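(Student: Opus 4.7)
The plan addresses three claims: the vertex operator superalgebra (VOSA) structure on $(\ker\D,\om)$, the symmetric one on $(\ker D,\omb)$, and the homomorphism property of $\Phi\colon a\otimes b \mapsto a(-1,-1)b$.

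For $(\ker\D,\om)$ I would first verify stability: Proposition \ref{prop_translation}(5) gives $[\D,Y(a,\uz)] = Y(\D a,\uz) = 0$ for $a\in\ker\D$, so modes $a(n,-1)$ preserve $\ker\D$. By Proposition \ref{prop_translation}(2) and (FVOA1), $\vac,\om\in\ker\D$, and by Proposition \ref{prop_translation}(6) the operators $Y(a,\uz)=\sum_n a(n,-1)z^{-n-1}$ on elements of $\ker\D$ are honest formal $z$-series. The remaining VOSA axioms of Definitions \ref{def_vertex_superalgebra} and \ref{def_conformal} are then inherited from $F$: the locality/iterate axiom (V4) follows by specializing (FV3) to the $\z$-independent part of the analytic function $\mu$, while (FVOA2)--(FVOA4) for $\om$ reduce directly to the conformal axioms of Definition \ref{def_conformal}. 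The symmetric argument with $z$ and $\z$ interchanged handles $(\ker D,\omb)$.

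The homomorphism claim hinges on the key identity
\begin{align*}
Y_F\bigl(a(-1,-1)b,\uz_2\bigr) \;=\; Y_F(a,z_2)\,Y_F(b,\z_2)\qquad\text{for }a\in\ker\D,\;b\in\ker D,
\end{align*}
as operators on $F$. To prove it, I apply the iterate--product comparison of (FV3) to vectors $a\in\ker\D$, $b\in\ker D$, $c\in F$ tested against $u\in F^\vee$. Since $\D a=0$, Proposition \ref{prop_translation}(6) gives $Y(a,\uz_0)b=\sum_n a(n,-1)b\,z_0^{-n-1}$, a pure $z_0$-series, so the iterate side equals $\sum_n z_0^{-n-1}u(Y(a(n,-1)b,\uz_2)c)$, while the product side equals $u(Y(a,z_1)Y(b,\z_2)c)$ because the two operators depend on disjoint formal variables. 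Setting $z_1=z_0+z_2$ and extracting the $z_0^0$-coefficient in the expansion region $|z_2|>|z_0|$ via $[z_0^0](z_0+z_2)^{-n-1}=z_2^{-n-1}$ yields the identity.

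With the key identity established, the homomorphism property reduces to direct checks. Preservation of the vacuum and conformal vectors follows from $\vac(-1,-1)\vac=\vac$, $\om(-1,-1)\vac=\om$ and $\vac(-1,-1)\omb=\omb$. For vertex operator compatibility one computes
\begin{align*}
Y_F(\Phi(a\otimes b),\uz)\,\Phi(c\otimes d) \;=\; Y_F(a,z)\,Y_F(b,\z)\,(c(-1,-1)d)
\end{align*}
by the key identity, while Definition \ref{def_Koszul_vertex} gives
\begin{align*}
\Phi\bigl(Y_{\ker\D\otimes\ker D}(a\otimes b,\uz)(c\otimes d)\bigr) \;=\; (-1)^{|b||c|}\bigl(Y(a,z)c\bigr)(-1,-1)\bigl(Y(b,\z)d\bigr),
\end{align*}
interpreted coefficient-wise in the formal variables. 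To match them, I would represent $c(-1,-1)d$ as the $z_3^0\z_3^0$-coefficient of $Y(c,z_3)Y(d,\z_3)\vac$ (by a second application of the key identity), so that the first display becomes the $z_3^0\z_3^0$-coefficient of $Y(a,z)Y(b,\z)Y(c,z_3)Y(d,\z_3)\vac$. The middle pair $Y(b,\z)Y(c,z_3)$ is then swapped via Proposition \ref{prop_translation}(7) together with the observation that $Y(b,\z)$ and $Y(c,z_3)$ depend on disjoint formal variables, producing exactly the sign $(-1)^{|b||c|}$. A third application of the key identity to the pairs $(a(n,-1)c, b(-1,m)d)$, coefficient-wise, converts the result into $(Y(a,z)c)(-1,-1)(Y(b,\z)d)$, matching the second display.

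The principal technical obstacle is the careful bookkeeping of Koszul signs: the factor $(-1)^{|b||c|}$ in Definition \ref{def_Koszul_vertex} must be produced exactly once in the rearrangement, and the verification reduces to confirming that the supercommutation of $Y(b,\z)$ past the field corresponding to $c$ implicit in $c(-1,-1)d$ contributes precisely this sign and nothing further.
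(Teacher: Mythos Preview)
The paper does not prove this proposition; it simply cites \cite[Propositions 3.18 and 3.19]{M1}. Your sketch is therefore a self-contained attempt, and the overall plan --- establish the key identity $Y(a(-1,-1)b,\uz)=Y(a,z)Y(b,\z)$ for $a\in\ker\D$, $b\in\ker D$, then verify the homomorphism property from it --- is the correct one, and your derivations of the VOSA structure on $\ker\D$ and of the key identity itself are sound.

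There is, however, a genuine gap in the final step. After the swap of $Y(b,\z)$ past $Y(c,z_3)$ you have, coefficient-wise in $z,\z$,
\[
[z_3^0\z_3^0]\,a(n,-1)\,Y(c,z_3)\,b(-1,m)\,Y(d,\z_3)\,\vac \;=\; a(n,-1)\,c(-1,-1)\,b(-1,m)\,d,
\]
and you want this to equal $(a(n,-1)c)(-1,-1)(b(-1,m)d)$. Your ``third application of the key identity'' only rewrites the target tautologically as $[z_3^0\z_3^0]Y(a(n,-1)c,z_3)Y(b(-1,m)d,\z_3)\vac$; it does not connect the two. What is actually missing is the associativity-type lemma
\[
a(n,-1)\bigl(c(-1,-1)w\bigr)\;=\;\bigl(a(n,-1)c\bigr)(-1,-1)\,w\qquad(a,c\in\ker\D,\ w\in\ker D),
\]
which is not a consequence of the key identity alone. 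It does follow from the iterate formula in Proposition~\ref{prop_translation}(6): expand both $Y\bigl((a(n,-1)c)(-1,-1)w,\uz'\bigr)$ and $Y\bigl(a(n,-1)(c(-1,-1)w),\uz'\bigr)$ as normal-ordered products in $Y(a,z')$, $Y(c,z')$, $Y(w,\z')$, and use that the $a$-modes supercommute with $Y(w,\z')$ by Proposition~\ref{prop_translation}(7); applying to $\vac$ and reading off the constant term then gives the identity. With this lemma in hand, the Koszul sign $(-1)^{|b||c|}$ you flag is produced exactly once by commuting $Y(b,\z)$ past $c(-1,-1)$, and the homomorphism follows.
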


\begin{dfn}\label{def_automorphism}
Let $(F,\om,\omb)$ be a full vertex operator superalgebra. An automorphism of $F$ is a linear isomorphism $f:F \rightarrow F$ such that  $f(\om)=\om$, $f(\omb)=\omb$, $f(\va)=\va$ and $f(a(r,s)b)=f(a)(r,s)f(b)$ for any $a,b \in F$ and $r,s\in \R$.
\end{dfn}

%
%

\begin{dfn}\label{def_N22}
An \textbf{$N=(2,2)$ full vertex operator superalgebra ($N=(2,2)$ full VOA)} is a full vertex operator superalgebra $(F,\om,\omb)$ equipped with vectors $\tau^+,\tau^- \in F_{\frac{3}{2},0}$, $\btau^+,\btau^- \in F_{0,\frac{3}{2}}$, $J \in F_{1,0}$ and $\bJ \in F_{0,1}$ satisfying the following conditions:
\begin{enumerate}
\item
$\tau^\pm, J \in \ker \Ld(-1)$ and $\btau^\pm, \bJ \in \ker L(-1)$.
Set
\begin{align*}
Y(\tau^\pm,\uz)=\sum_{r \in \ft+\Z} G_r^\pm z^{-\frac{3}{2}-r},\quad\quad Y(J,\uz) = \sum_{n\in \Z}J_n z^{-n-1},\\
Y(\btau^\pm,\uz)=\sum_{r \in \ft+\Z} \bG_r^\pm \z^{-\frac{3}{2}-r},\quad\quad Y(\bJ,\uz) = \sum_{n\in \Z}\bJ_n \z^{-n-1}.
\end{align*}
\item
$\{L(n),J(n),G_r^\pm \}$ and $\{\Ld(n),\bJ(n),\bG_r^\pm \}$ satisfy the commutator relation of $\g_0^{N=2} \oplus \g_0^{N=2}$.
\item
$J(0), \bJ(0)$ is semisimple on $F$ with real eigenvalues and all the eigenvalues of $J(0)-\bJ(0)$ are integers which satisfies
\begin{align}
\exp(\pi i (J(0)-\bJ(0))) = (-1)^F
\label{eq_fermi_integer}
\end{align}
as a linear map on $F$.
\end{enumerate}
\end{dfn}

Note that (2) is equivalent to impose the OPE relation in \eqref{eq_susy_ope}.
The assumption (3) in Definition \ref{def_N22} can be found in physics \cite[(1.15)]{LVW},
which plays an important role in this paper.

Let $F$ be an $N=(2,2)$ full VOA. 
Throughout this paper we use the following notation:
Let $H_l=\R J$ and $H_r =\R \bJ$ 
be a vector space over $\R$ equipped with the bilinear forms $(J,J)_l = \frac{c}{3}$ and $(\bJ,\bJ)_r = \frac{\bc}{3}$.
Throughout this paper we assume $c,\bc \neq 0$. Therefore, these are non-degenerate bilinear forms.
For $\al,\be \in H=H_l\oplus H_r$, $(\al,\be)_l$ and $(\al,\be)_r$ are determined by projection of $H$ onto $H_l$ and $H_r$, respectively.
For $\al \in H = H_l\oplus H_r$, set
\begin{align*}
F_{h,\h}^\al &= \{v \in F_{h,\h} \mid J(0)v =(\al,J)_l v \text{ and } \bJ(0)v =(\al,\bJ)_r v\}
\end{align*}
and $F^\al = \bigoplus_{h,\h\in \R} F_{h,\h}^\al$. Then,
\begin{align*}
F = \bigoplus_{\substack{h,\h \in \R \\ \al \in H}}F_{h,\h}^\al.
\end{align*}

By Proposition \ref{prop_translation}, for any $\la,\mu\in \R$, 
\begin{align}
\exp(i \la J(0)+ i\mu \bJ(0)):F \rightarrow F
\label{eq_exp_JJ}
\end{align}
is a full vertex operator algebra automorphism (see Definition \ref{def_automorphism}), which sends
\begin{align*}
\tau^\pm \mapsto \exp(\pm i \la) \tau^\pm \quad\text{and}\quad
\btau^\pm \mapsto \exp(\pm i \mu) \btau^\pm
\end{align*}
by
\begin{align}
\tau^\pm \in F_{\frac{3}{2},0}^{\pm \frac{3}{c}J},\quad
\btau^\pm \in F_{0,\frac{3}{2}}^{\pm \frac{3}{\bc}\bJ}.
\label{eq_wt_susy_current}
\end{align}

Next, we will introduce a notion of twisted modules of an $N=(2,2)$ full VOA $F$,
which is a direct generalization of the chiral cases \cite{FLM,DLM,Barron}.
We can define twisted modules for general automorphisms, but this is not necessary in this paper, so we consider only the twists by the automorphisms \eqref{eq_exp_JJ}
(for the general definition of twisted modules in the chiral case, see for example \cite{FLM,DLM}):

\begin{dfn}\label{def_twisted}
Let $\la,\mu\in \R$.
An $\exp(2\pi i (\la J(0) -\mu \bJ(0)))$-twisted module of $F$ is an $H \times \R^2$-graded vector space
$M =  \bigoplus_{\substack{h,\h \in \R \\ \be \in H}}M_{h,\h}^\be$ and a linear map
\begin{align*}
Y_M(\bullet,\uz):F \rightarrow \End M [[z^\R,\z^\R]],\quad a \mapsto Y_M(a,\uz) = \sum_{r,s\in \R} a(r,s)z^{-r-1}\z^{-s-1}
\end{align*}
such that:
\begin{enumerate}
\item
For any $a \in F_{h,\h}^\al$ and $m \in M_{h',\h'}^\be$,
\begin{align*}
a(r,s)m \in M_{h+h'-r-1,\h+\h'-s-1}^{\al+\be};
\end{align*}
\item
$Y_M(L(-1)a,\uz)=\frac{d}{dz}Y_M(a,\uz)$ and $Y_M(\Ld(-1)a,\uz)=\frac{d}{d\z}Y_M(a,\uz)$ for any $a\in F$;
\item
For any $K \in \R$ and $\be \in H$, $\sum_{h+\h <K} \dim M_{h,\h}^\be < \infty$;
\item
For any $\be\in H$, there exists $N_\be \in \R$ such that $M_{h,\h}^\be=0$ unless $h \geq N_\be$ and $\h \geq N_\be$;
\item
For any $a\in F^\al$ and $m \in M^\be$,
\begin{align*}
z^{\la(\al,J)_l -\mu(\al,\bJ)_r}Y_M(a,\uz)m \in M((z,\z,|z|^\R)).
\end{align*}
\item
For any $a_i \in F^{\al_i}$ ($i=1,2$ and $\al_i \in H$) and $m \in M$ and $u \in M^\vee = \bigoplus_{\substack{h,\h \in \R \\ \al \in H}}(M_{h,\h}^\al)^*$,
there exists $f(z_1,z_2) \in C^\om(Y_2(\C))$ such that:
\begin{align}
\begin{split}
z_1^{\la(\al_1,J)_l -\mu(\al_1,\bJ)_r}
z_2^{\la(\al_2,J)_l -\mu(\al_2,\bJ)_r}
\langle u, Y_M(a_1,\uz_1)Y_M(a_2,\uz_2) m\rangle &= f(z_1,z_2)|_{|z_1|>|z_2|}\\
(-1)^{|a_1||a_2|}
z_1^{\la(\al_1,J)_l -\mu(\al_1,\bJ)_r}
z_2^{\la(\al_2,J)_l -\mu(\al_2,\bJ)_r}
\langle u, Y_M(a_2,\uz_2)Y_M(a_1,\uz_1) m\rangle &= f(z_1,z_2)|_{|z_2|>|z_1|}\\
z_2^{\la(\al_2,J)_l -\mu(\al_2,\bJ)_r}(z_2+z_0)^{\la(\al_1,J)_l -\mu(\al_1,\bJ)_r}|_{|z_2|>|z_0|}
\langle u, Y_M( Y(a_1,\uz_0)a_2,\uz_2) m\rangle &= f(z_2+z_0,z_2)|_{|z_2|>|z_0|},
\end{split}
\label{eq_twist_borcherds}
\end{align}
where $C^\om(Y_2(\C))$ is a vector space real analytic functions on $Y_2(\C)$.
\end{enumerate}
\end{dfn}

\begin{rem}\label{rem_just_module}
Let $\la,\mu\in\R$.
If one of the following equivalent conditions are satisfied:
\begin{enumerate}
\item
$\la(\al,J)_l-\mu(\al,\bJ)_r \in \Z$ for any $\al \in H$ with $F^\al\neq 0$;
\item
$\exp(2\pi i (\la J(0)-\mu \bJ(0)))$ is the identity map on $F$,
\end{enumerate}
then an $\exp(2\pi i (\la J(0)-\mu \bJ(0)))$-twisted module is just a module in the usual sense (see \cite{M1} for the definition of modules of a full vertex algebra).
\end{rem}

From the definition of twisted module we see that:
\begin{itemize}
\item
By (2), for $a \in \ker \Ld(-1) \cap F^\al$,
\begin{align*}
Y_M(a,\uz) = \sum_{s \in \la(\al,J)_l+\Z} a(s)z^{-s-1}.
\end{align*}
\item
By \eqref{eq_wt_susy_current}, let $\{L(n),\Ld(n),G_r^+,G_s^-,\bG_p^+,\bG_q^-, J(n),\bJ(n)\}_{n\in \Z, r \in \ft+\la +\Z, s \in \ft-\la+\Z, p \in \ft+\mu +\Z,q \in \ft-\mu+\Z}$ be operators on $M$ given by
\begin{align}
\begin{split}
Y_{M}(\om,\uz) =\sum_{n\in\Z}L(n)z^{-n-2},\quad &Y_{M}(\omb,\uz) =\sum_{n\in\Z}\Ld(n)\z^{-n-2},\\
Y_{M}(J,\uz) = \sum_{n\in \Z}J_n z^{-n-1},\quad &Y_{M}(\bJ,\uz) = \sum_{n\in \Z}\bJ_n \z^{-n-1}\\
Y_{M}(\tau^+,\uz)=\sum_{r \in \ft + \la+\Z} G_r^+ z^{-\frac{3}{2}-r},\quad
&Y_{M}(\tau^-,\uz)=\sum_{s \in \ft - \la+\Z} G_s^- z^{-\frac{3}{2}-s}\\
Y_{M}(\btau^+,\uz)=\sum_{p\in \ft + \mu+\Z} \bG_p^+ \z^{-\frac{3}{2}-p},\quad
&Y_{M}(\btau^-,\uz)=\sum_{q \in \ft - \mu+\Z} \bG_q^- \z^{-\frac{3}{2}-q}.
\end{split}
\label{eq_twist_def_current}
\end{align}
\end{itemize}
In the chiral case, the following result is obtained in \cite{Barron}:
\begin{prop}\label{prop_twist_susy}
Let $\la,\mu \in\R$ and $F$ be an $N=(2,2)$ full vertex operator superalgebra and $M(\la,\mu)$ an $\exp( i \la J(0)- i\mu\bJ(0))$-twisted module of $F$.
\\
Then, $\{L(n),\Ld(n),G_r^+,G_s^-,\bG_p^+,\bG_q^-, J(n),\bJ(n)\}_{n\in \Z, r \in \ft+\la +\Z, s \in \ft-\la+\Z, p \in \ft+\mu +\Z,q \in \ft-\mu+\Z}$ in \eqref{eq_twist_def_current} gives a representation of the twisted superconformal algebras $\g_{\la}^{N=2}\oplus \g_{\mu}^{N=2}$.
\end{prop}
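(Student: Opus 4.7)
The plan is to reduce the statement to two independent chiral calculations. Since $\om, J, \tau^\pm \in \ker \Ld(-1)$ and $\omb, \bJ, \btau^\pm \in \ker L(-1)$, Proposition \ref{prop_translation}(7) implies that their vertex operators on $F$ mutually commute. Applying this to the Borcherds-type identity \eqref{eq_twist_borcherds} on $M(\la,\mu)$ (the real-analytic terms $z_i^{\la(\al_i,J)_l - \mu(\al_i,\bJ)_r}$ become pure powers of $z_i$ on the chiral side and pure powers of $\z_i$ on the antichiral side, so they do not interfere), one obtains that every chiral mode $L(n), J(n), G_r^\pm$ commutes with every antichiral mode $\Ld(n), \bJ(n), \bG_p^\pm$ on $M(\la,\mu)$. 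It therefore suffices to check that the chiral modes realize $\g_\la^{N=2}$ and the antichiral modes realize $\g_\mu^{N=2}$; by left/right symmetry we focus on the former.

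For the chiral side, let $a \in F^\al \cap \ker \Ld(-1)$ and $b \in F^\be \cap \ker \Ld(-1)$. Because $\Ld(-1)a = \Ld(-1)b = 0$, the vertex operators involve only $z$, and on $M(\la,\mu)$ the modes $a(r)$ are indexed by $r \in -h_a + \la(\al,J)_l + \Z$. I would then extract a twisted commutator formula from \eqref{eq_twist_borcherds} by the standard $\de$-function/residue argument of Dong--Li--Mason \cite{DLM}: multiplying the three equations in \eqref{eq_twist_borcherds} by $z_1^{-r-1}$ with $r \in -h_a+\la(\al,J)_l+\Z$, integrating over a contour around $z_1 = z_2$ minus contours at $z_1 = 0, \infty$, and using the standard binomial expansions of $(z_2+z_0)^{\la(\al,J)_l}$, yields the twisted commutator
\[
[a(r), Y_M(b,\uz_2)]_\pm = \sum_{j \geq 0} \binom{r+h_a-\la(\al,J)_l-1}{j} Y_M(a(j-h_a)b,\uz_2)\, z_2^{r-j+h_a-\la(\al,J)_l-1}.
\]
Specialized to the OPE relations \eqref{eq_susy_ope}, this reproduces exactly the bracket \eqref{eq_susy} of $\g_\la^{N=2}$: the central and Virasoro brackets come from $a=b=\om$, the $J$--$J$ and $J$--$\tau^\pm$ brackets come from $a=J$, and the key $[G_r^+, G_s^-]_+$ bracket comes from $a = \tau^+, b = \tau^-$. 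The shift of indices $r \in \ft+\la+\Z$ for $G_r^+$ and $s \in \ft - \la + \Z$ for $G_s^-$ is forced by \eqref{eq_wt_susy_current}, which gives $\tau^\pm \in F^{\pm \frac{3}{c}J}$, so that $\la(\pm \frac{3}{c}J, J)_l = \pm \la$.

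The only nontrivial check is that the $c/6 \cdot (r^2 - 1/4)$ central term in $[G_r^+, G_s^-]_+$ acquires no anomalous correction under the twist. This is the main technical point: it requires carefully tracking the binomial coefficients $\binom{r + 1 - \la}{j}$ appearing in the twisted commutator when $a = \tau^+$ (so $h_a = 3/2$, $\la(\al,J)_l = \la$). A direct computation shows that after using $\tau^+(0)\tau^- = \om + \tfrac12 L(-1) J$, $\tau^+(1)\tau^- = J$, $\tau^+(2)\tau^- = c/3 \cdot \va$, the resulting bracket is
\[
[G_r^+, G_s^-]_+ = L(r+s) + \tfrac12 (r-s) J(r+s) + \tfrac{c}{6}\bigl(r^2 - \tfrac14\bigr)\delta_{r+s,0},
\]
which is the defining relation of $\g_\la^{N=2}$ in Definition \ref{def_susy_Lie}. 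The antichiral side is identical with $\la$ replaced by $\mu$ and $(z,\om,J,\tau^\pm)$ replaced by $(\z,\omb,\bJ,\btau^\pm)$. This completes the verification; all other brackets follow by the same residue calculation applied to \eqref{eq_susy_ope}. The hardest step is the clean derivation of the twisted commutator formula from \eqref{eq_twist_borcherds} in the full-VOA setting, but because of the $\ker \Ld(-1)$ hypothesis the $\z$-dependence is trivial on the chiral side and the argument reduces directly to Barron's chiral twisted case \cite{Barron}.
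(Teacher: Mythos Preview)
Your approach coincides with the paper's: for $a_1,a_2\in\ker\Ld(-1)$ the $\z$-dependence drops out, the twisted Borcherds identity \eqref{eq_twist_borcherds} reduces to a Laurent-polynomial identity in $z_1,z_2,z_1{-}z_2$, and the commutator follows by residues. The paper makes the reduction explicit by multiplying \eqref{eq_twist_borcherds} by the single-valued factor $(z_i\z_i)^{-r_i}$ (with $r_i=\la(\al_i,J)_l-\mu(\al_i,\bJ)_r$), which converts the prefactors $z_i^{r_i}$ into $\z_i^{-r_i}$ and then observes that the resulting real-analytic function is holomorphic, hence in $\C[z_1^\pm,z_2^\pm,(z_1-z_2)^\pm]$; your phrase ``pure powers of $z_i$\ldots do not interfere'' gestures at this but does not explain how the multi-valuedness of $z_i^{r_i}$ is handled.

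There is, however, an error in your displayed twisted commutator formula: the binomial coefficient must \emph{not} carry a $-\la(\al,J)_l$ shift. The paper's formula, in the $z^{-t-1}$ convention, is
\[
[a_1(t),a_2(u)]_\pm=\sum_{k\ge0}\binom{t}{k}\,(a_1(k)a_2)(t+u-k),\qquad t\in\la(\al_1,J)_l+\Z,
\]
with the inner product $a_1(k)a_2$ computed by the \emph{untwisted} integer modes on $F$. The twist enters only through the index set for $t$, not through the binomial, because in the iterate $Y_M(Y(a_1,z_0)a_2,z_2)$ the factor $(z_2+z_0)^{r_1}\cdot(z_2+z_0)^{n}$ (with $n=t-r_1\in\Z$) expands simply as $\sum_k\binom{t}{k}z_2^{t-k}z_0^k$. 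If one uses your binomial $\binom{r+h_a-\la(\al,J)_l-1}{j}$ for $a=\tau^+$, the $j=2$ contribution to $[G_r^+,G_s^-]_+$ is $\tfrac{c}{6}\bigl((r-\la)^2-\tfrac14\bigr)$ rather than $\tfrac{c}{6}(r^2-\tfrac14)$, contradicting the final bracket you assert. With the correct binomial $\binom{r+\tfrac12}{k}$ the verification of the $\g_\la^{N=2}$ relations is exactly the routine computation you describe.
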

\begin{proof}
Applying $a_1,a_2 \in \ker \Ld(-1)$ to \eqref{eq_twist_borcherds} and multiplying both side by 
\begin{align}
(z_1\z_1)^{-\la (\al_1,J)_l+\mu (\al_1,\bJ)_r}|z_2|^{-\la (\al_2,J)_l+\mu (\al_2,\bJ)_r},
\label{eq_single_factor}
\end{align}
since \eqref{eq_single_factor} is single-valued real analytic on $Y_2(\C)$, by Definition \ref{def_twisted} (6), there is a real analytic function $g(z_1,z_2) \in C^\om(Y_2(\C))$ such that:
\begin{align*}
\z_1^{-\la(\al_1,J)_l +\mu(\al_1,\bJ)_r}
\z_2^{-\la(\al_2,J)_l +\mu(\al_2,\bJ)_r}
\langle u, Y_M(a_1,\uz_1)Y_M(a_2,\uz_2) m\rangle &= g(z_1,z_2)|_{|z_1|>|z_2|}\\
(-1)^{|a_1||a_2|}
\z_1^{-\la(\al_1,J)_l +\mu(\al_1,\bJ)_r}
\z_2^{-\la(\al_2,J)_l +\mu(\al_2,\bJ)_r}
\langle u, Y_M(a_2,\uz_2)Y_M(a_1,\uz_1) m\rangle &= g(z_1,z_2)|_{|z_2|>|z_1|}\\
\z_2^{-\la(\al_2,J)_l +\mu(\al_2,\bJ)_r}(\z_2+\z_0)^{-\la(\al_1,J)_l +\mu(\al_1,\bJ)_r}|_{|z_2|>|z_0|}
\langle u, Y_M( Y(a_1,\uz_0)a_2,\uz_2) m\rangle &= g(z_2+z_0,z_2)|_{|z_2|>|z_0|}.
\end{align*}
By $a_1,a_2 \in \ker \Ld(-1)$ and Definition \ref{def_twisted} (2), $g(z_1,z_2)$ is a holomorphic function with possible singularities along $z_1 = 0, z_2=0,z_1-z_2=0$,
and thus, $g(z_1,z_2) \in \C[z_1^\pm,z_2^\pm,(z_1-z_2)^\pm]$.
Hence, By the Residue formula, we get the commutator formula:
\begin{align*}
&[a_1(t),a_2(u)]_+\\
 &= \sum_{k \geq 0}\binom{t}{k} (a_1(k)a_2)(t+u-k)
\end{align*}
for any $t \in \la(\al_1,J)_l+\Z$ and $u \in \la(\al_2,J)_l+\Z$.
Hence, the assertion holds (see \cite[Proposition 3.11]{M1} for more detail).
\end{proof}
In this paper we will show that for an $N=(2,2)$ full VOA $F$, a family of $\exp( 2\pi i \la J(0)- 2\pi i\mu\bJ(0))$-twisted modules $F(\la,\mu)$ for all $\la,\mu \in \R$ can be canonically constructed, which is a mathematical formulation of the spectral flow in physics.

%

\subsection{Unitarity of $N=(2,2)$ full VOA}
\label{sec_unitary}
The unitarity of vertex operator algebras was studied in \cite{Y, AL17}. In this section, we briefly review \cite{AMT, M9}, which generalizes the definition of unitary vertex operator superalgebra \cite{Y,AL17} to the full case, and define unitary $N=(2,2)$ full VOAs and their mirror algebras.
%
%
%
%
The following definition can be found in \cite{AMT, M9}:
\begin{dfn}\label{def_full_bilinear}
Let $(F, Y,\om,\omb)$ be a full vertex operator superalgebra.
\begin{itemize}
\item
A bilinear form $(\bullet,\bullet):F\otimes F \rightarrow \C$ is called {\it invariant} if
\begin{align}
(u,Y(a,\uz)v) = (Y(\exp(L(1)z+\Ld(1)\z) (-1)^{(L(0)-\Ld(0))+2(L(0)-\Ld(0))^2}z^{-2L(0)}\z^{-2\Ld(0)}a, \uz^{-1})u,v)
\label{eq_bilinear_inv}
\end{align}
for any $a,u,v \in F$.
\item
An anti-linear automorphism $\phi$ of $F$ is an
anti-linear isomorphism $\phi:F \rightarrow F$ such that $\phi(\va) = \va, \phi(\om)=\om, \phi(\omb)=\omb$ and 
$\phi(a(r,s)b) = \phi(a)(r,s)\phi(b)$ for any $a,b\in F$ and $r,s \in \R$.
\end{itemize}
\end{dfn}
Throughout this paper, we normalize the invariant bilinear form by $(\vac,\vac)=1$.
Then, we have:
\begin{prop}\cite[Proposition 2.13]{M9}
Let $(F, Y,\va,\om,\omb)$ be a full vertex operator superalgebra with a non-degenerate invariant bilinear form $(\bullet,\bullet)$
and $\phi:F \rightarrow F$ be an anti-linear involution, i.e.\! an anti-linear automorphism of order 2.
Assume that $F_{0,0}=\C\va$ and $(\vac,\vac)=1$.
Then, the following properties hold:
\begin{enumerate}
\item
For any $a,b \in F$,
\begin{align*}
\overline{(\phi(a),\phi(b))} = (a,b).
\end{align*}
\item
The sesquilinear form $\langle \bullet,\bullet \rangle = (\phi(\bullet),\bullet)$ is Hermite, that is, for any $a,b\in F$
\begin{align*}
\langle a,b \rangle = \overline{\langle b, a \rangle}.
\end{align*}
\end{enumerate}
\end{prop}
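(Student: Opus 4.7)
The plan is to reduce both parts to the uniqueness of the invariant bilinear form on $F$: under the hypothesis $F_{0,0} = \C\vac$, the space of invariant bilinear forms on $F$ (in the sense of \eqref{eq_bilinear_inv}) is at most one-dimensional, so any two invariant forms agreeing on $(\vac,\vac)$ coincide. I would cite this uniqueness from \cite{M1} or establish it by adapting the standard Frenkel--Huang--Lepowsky / Li argument to the $\R^2$-graded real-analytic setting.

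For part (1), define a new pairing
\begin{align*}
(a,b)' := \overline{(\phi(a),\phi(b))}.
\end{align*}
Anti-linearity of $\phi$ combined with one overall complex conjugation makes $(\bullet,\bullet)'$ $\C$-bilinear. The key step is to verify that $(\bullet,\bullet)'$ is again invariant. Because $\phi$ is a full VOA automorphism with $\phi(\om)=\om$, $\phi(\omb)=\omb$, it intertwines the vertex operators as $\phi\, Y(a,\uz)\phi^{-1} = Y(\phi(a),\uz)$ (coefficient-wise in the formal expansion), and it commutes with $L(0),L(1),\Ld(0),\Ld(1)$; in particular,
\begin{align*}
\phi\bigl(\exp(L(1)z+\Ld(1)\z)(-1)^{(L(0)-\Ld(0))+2(L(0)-\Ld(0))^2}z^{-2L(0)}\z^{-2\Ld(0)}a\bigr)
\end{align*}
equals the same operator applied to $\phi(a)$. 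Applying the invariance identity \eqref{eq_bilinear_inv} for $(\bullet,\bullet)$ to the triple $(\phi(u),\phi(a),\phi(v))$ and taking complex conjugates of the scalar matrix-element coefficients then yields exactly the invariance identity for $(\bullet,\bullet)'$. Since $(\vac,\vac)' = \overline{(\phi(\vac),\phi(\vac))} = \overline{(\vac,\vac)} = 1$, uniqueness forces $(\bullet,\bullet)' = (\bullet,\bullet)$, proving (1).

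For part (2), apply (1) with the pair $(b,\phi(a))$ in place of $(a,b)$, using $\phi^2 = \id$:
\begin{align*}
\overline{\langle b,a\rangle} \;=\; \overline{(\phi(b),a)} \;=\; \overline{(\phi(b),\phi(\phi(a)))} \;=\; (b,\phi(a)).
\end{align*}
Hence the Hermiticity $\langle a,b\rangle = \overline{\langle b,a\rangle}$ reduces to the identity $(\phi(a),b) = (b,\phi(a))$, i.e.\ to the (super-)symmetry of the invariant bilinear form. This symmetry is itself a general property of invariant bilinear forms on full VOAs, provable in the same uniqueness spirit: the swapped pairing $(x,y)^\sigma := (y,x)$ (or its super-signed variant, depending on the fermion grading) is again invariant in the sense of \eqref{eq_bilinear_inv}, and by uniqueness and normalization at $\vac$ it must coincide with $(\bullet,\bullet)$.

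The principal obstacle is the uniqueness statement for invariant bilinear forms in the full-VOA context. In the chiral super case this is well established, but here one must control the real-analytic factors $|z|^\R$ appearing in matrix elements of $Y(\bullet,\uz)$ and the $\R^2$-grading, confirming that the minimal-weight component still forces the form to be determined by $(\vac,\vac)$. Once this uniqueness is in hand, both parts of the proposition follow essentially formally from the automorphism properties of $\phi$ and $\phi^2 = \id$, with the only remaining care being the bookkeeping of coefficient-wise complex conjugation on the formal/real-analytic expansions.
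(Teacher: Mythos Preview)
The paper does not supply its own proof of this proposition: it is quoted verbatim as \cite[Proposition 2.13]{M9} (a reference listed as ``in preparation'') and is immediately followed by the next definition with no intervening argument. So there is nothing in the paper to compare against directly.

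That said, your strategy is the standard one and is essentially correct. Reducing (1) to the uniqueness of invariant bilinear forms under the hypothesis $F_{0,0}=\C\vac$ is exactly how the analogous chiral statement is proved (Li, Frenkel--Huang--Lepowsky), and the full/real-analytic adaptation you outline is what \cite{M9} is presumably doing. Your derivation of (2) from (1) plus symmetry of $(\bullet,\bullet)$ is clean; the only point to be careful about is that in the \emph{super} setting you must actually get plain symmetry $(x,y)=(y,x)$ rather than a super-signed version, since the stated conclusion is ordinary Hermiticity $\langle a,b\rangle=\overline{\langle b,a\rangle}$ with no Koszul sign. This does hold here: the sign factor $(-1)^{(L(0)-\Ld(0))+2(L(0)-\Ld(0))^2}$ built into the invariance formula \eqref{eq_bilinear_inv} is precisely what makes the ``adjoint'' operation square to the identity and forces the form to be genuinely symmetric, but you should make that computation explicit rather than leaving ``or its super-signed variant'' as a hedge. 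The other obstacle you flag---checking that coefficient-wise complex conjugation interacts correctly with the formal variables $z,\z,|z|^\R$ in the invariance identity---is real but routine once one treats $z$ and $\z$ as independent formal symbols and conjugates only the scalar coefficients.
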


\begin{dfn}\label{def_}
Let $(F, Y,\va,\om,\omb)$ be a full vertex operator superalgebra with an invariant bilinear form $(\bullet,\bullet)$
and $\phi:F \rightarrow F$ be an anti-linear involution. The pair $(F,\phi)$ is called \textbf{unitary} if the Hermite form $\langle \bullet,\bullet \rangle = (\phi(\bullet),\bullet)$ is positive-definite.
\end{dfn}
%

\begin{dfn}\label{def_full_susy}
A unitary $N=(2,2)$ full vertex operator superalgebra is an $N=(2,2)$ full vertex operator superalgebra $(F,\om,\omb,\tau^\pm,\bar{\tau}^\pm,J,\bJ)$ which is unitary as a full vertex operator superalgebra by an invariant bilinear form $(\bullet,\bullet)$ and an anti-linear involution $\phi:F \rightarrow F$ such that:
\begin{align}
\begin{split}
\phi(J)=-J,\quad \phi(\tau^+) = \tau^-,\quad \phi(\tau^-)= \tau^+\\
\phi(\bJ)=-\bJ, \quad \phi(\bar{\tau}^+) = -\bar{\tau}^-,\quad \phi(\bar{\tau}^-)= -\bar{\tau}^+.
\end{split}
\label{eq_unitary_susy_full}
\end{align}
\end{dfn}

Let $F$ be a unitary full vertex operator superalgebra.
Note that for $a \in F_{h,\h}$ with $L(1)a = \Ld(1)a =0$,
\begin{align}
\begin{split}
\langle u,Y(a,\uz)v \rangle
&= (\phi(u),Y(a,\uz)v )\\
&=(-1)^{(h-\h)+2(h-\h)^2}(z^{-2h}\z^{-2\h}Y(a,\uz^{-1})\phi(u),v )\\
&=(-1)^{(h-\h)+2(h-\h)^2}\langle z^{-2h}\z^{-2\h}Y(\phi(a),\uz^{-1})u,v \rangle.
\end{split}
\label{eq_unitary_inv}
\end{align}

\begin{prop}\label{prop_unitary_adjoint}
Let $F$ be a unitary $N=(2,2)$ full vertex operator superalgebra with the positive-definite Hermite form $\langle -,- \rangle$. Then, 
\begin{align}
\begin{split}
\langle u,L(n)v \rangle = \langle L(-n)u,v \rangle,\quad&\langle u,\Ld(n)v \rangle = \langle \Ld(-n)u,v \rangle,\\
\langle u,J(n)v \rangle = \langle J(-n)u,v \rangle,
\quad & \langle u,\bJ(n)v \rangle = \langle \bJ(-n)u,v \rangle\\
\langle u,G^+(r)v \rangle =  \langle G^-(-r)u,v \rangle,\quad
&\langle u,\bG^+(r)v \rangle = \langle \bG^-(-r)u,v \rangle,\\
 \langle u,G^-(r)v \rangle = \langle G^+(-r)u,v \rangle,
 \quad &  \langle u,\bG^-(r)v \rangle = \langle \bG^+(-r)u,v \rangle,
\end{split}
\label{eq_adjoint_formula}
\end{align}
for any $n\in \Z$, $r\in \ft+\Z$ and $u,v \in F$. 
\end{prop}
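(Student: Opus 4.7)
The plan is to apply the invariance identity \eqref{eq_unitary_inv} to each of the generating vectors $\omega,\omb,J,\bJ,\tau^\pm,\btau^\pm$ and to read off the mode-wise adjoint identities by comparing coefficients of $z$ and $\z$. For a primary vector $a\in F_{h,\h}$ (i.e.\ $L(1)a=\Ld(1)a=0$), \eqref{eq_unitary_inv} reads
\begin{equation*}
\langle u,Y(a,\uz)v\rangle = \epsilon_a\,\langle z^{-2h}\z^{-2\h}\,Y(\phi(a),\uz^{-1})u,v\rangle,\qquad \epsilon_a=(-1)^{(h-\h)+2(h-\h)^2}.
\end{equation*}

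The first step is to check primarity of the six generators. For $\omega\in F_{2,0}$ one has $L(1)\omega=\omega(2,-1)\omega=0$ by the Virasoro normalization (FVOA2); for $J$ and $\tau^\pm$ the vanishing of $L(1)$ on them follows directly from the $N=2$ OPE relations \eqref{eq_susy_ope}, while the vanishing of $\Ld(1)$ on them is automatic since $\omega,J,\tau^\pm\in\ker\Ld(-1)$ lie in the holomorphic subalgebra $\ker\Ld$ by Proposition \ref{ker_hom}. The same argument with the roles of $L$ and $\Ld$ swapped handles the antiholomorphic generators $\omb,\bJ,\btau^\pm$.

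Next, for each generator I would record the bidegree $(h,\h)$, compute $\epsilon_a$ from Definition \ref{def_full_bilinear}, and invoke \eqref{eq_unitary_susy_full} to substitute $\phi(a)$. Explicitly: $(\omega,\omb)$ give $\epsilon=+1$ and $\phi$ fixes them; $(J,\bJ)$ give $\epsilon=-1$ and $\phi$ negates them, so the two signs cancel; $(\tau^+,\tau^-)$ give $\epsilon=+1$ and $\phi$ swaps $\tau^+\leftrightarrow\tau^-$; $(\btau^+,\btau^-)$ give $\epsilon=-1$ and $\phi$ sends $\btau^\pm\mapsto -\btau^\mp$, again with the two signs cancelling. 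In every case the identity becomes $\langle u,Y(a,\uz)v\rangle=\langle z^{-2h}\z^{-2\h}Y(a',\uz^{-1})u,v\rangle$ with $a'$ equal to $\omega,\omb,J,\bJ$ in the bosonic cases and to $\tau^\mp,\btau^\mp$ in the fermionic cases.

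The final step is coefficient extraction. For instance, writing $Y(\tau^+,\uz)=\sum_r G^+(r)z^{-r-3/2}$ and $z^{-3}Y(\tau^-,\uz^{-1})=\sum_r G^-(r)z^{r-3/2}$, matching the coefficient of $z^{-r-3/2}$ on both sides yields $\langle u,G^+(r)v\rangle=\langle G^-(-r)u,v\rangle$. The analogous bookkeeping with $Y(\omega,\uz)=\sum_nL(n)z^{-n-2}$, $Y(J,\uz)=\sum_nJ(n)z^{-n-1}$, and their antiholomorphic and barred counterparts produces the remaining six identities of \eqref{eq_adjoint_formula}. No genuine obstacle arises once primarity is in hand; the proof is essentially bookkeeping of the two sign conventions $\epsilon_a$ and $\phi(a)$ that appear in \eqref{eq_unitary_inv} and \eqref{eq_unitary_susy_full}, which were carefully designed precisely so that these conventions cancel to give the clean adjoint formulas in \eqref{eq_adjoint_formula}.
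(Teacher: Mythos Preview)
Your approach is correct and matches the paper's: apply the invariance identity \eqref{eq_unitary_inv} to each generator, compute the sign $\epsilon_a=(-1)^{(h-\h)+2(h-\h)^2}$, substitute $\phi(a)$ via \eqref{eq_unitary_susy_full}, and extract modes. The paper's own proof only spells out the $\tau^+$ and $\btau^+$ cases explicitly (computing $(-1)^{\frac{3}{2}+2\cdot\frac{9}{4}}=1$ and $(-1)^{-\frac{3}{2}+2\cdot\frac{9}{4}}=-1$, the latter cancelling against $\phi(\btau^+)=-\btau^-$) and leaves the remaining six identities implicit, so your outline is in fact more detailed than what appears there.
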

\begin{proof}
We will only check for $G_r^+$ and $\bG^+$.
By \eqref{eq_unitary_inv}, we have
\begin{align*}
\langle u, Y(\tau^+,\uz)v \rangle &= (-1)^{\frac{3}{2}+2 \frac{9}{4}}
\langle Y(\tau^-,\uz^{-1})u,v\rangle = \langle Y( \tau^-,\uz^{-1})u,v\rangle
\end{align*}
and
\begin{align*}
\langle u, Y(\btau^+,\uz)v \rangle &= (-1)^{-\frac{3}{2}+2 \frac{9}{4}}
\langle Y(-\btau^-,z^{-1})u,v\rangle = \langle Y(\btau^-,z^{-1})u,v\rangle.
\end{align*}
\end{proof}
\begin{rem}\label{rem_sign_strange}
The reader may find the difference in sign between the holomorphic and anti-holomorphic parts of the definition \ref{def_full_susy} strange, that is, $\phi(\tau^+)=\tau^-$ and $\phi(\btau^+) =- \btau^-$.
If we assume $\phi(\btau^+)=\btau^-$, then by the proof of Proposition \ref{prop_unitary_adjoint}, \eqref{eq_adjoint_formula} changes into 
\begin{align*}
\langle G_r^+ u, v\rangle =  \langle u, G_{-r}^-  v\rangle \quad \text{ and }\quad
\langle \bG_r^+ u, v\rangle = - \langle u, \bG_{-r}^-  v\rangle.
\end{align*}
Hence, the minus sign appears in the adjoint operator.
Such a difference in sign between the holomorphic and anti-holomorphic parts already appears in Definition \eqref{eq_bilinear_inv}.
Geometrically $L(0)-\Ld(0)$ generates $\mathrm{Spin}(2)$, 
which seems to be the origin of this sign (see also \cite{M9}).
\end{rem}
By Proposition \ref{prop_unitary_adjoint},  we have:
\begin{align*}
\langle \om, \om \rangle = \langle L(-2)\vac, L(-2)\vac \rangle = \langle \vac, L(2)L(-2)\vac \rangle  = c/2.
\end{align*}
Hence, we have:
\begin{cor}
\label{cor_c_zero}
Let $F$ be a unitary full vertex operator algebra. Then, 
if $\om \neq 0$ (resp. $\omb \neq 0$), then $c >0$ (resp. $\bc > 0$) holds.
\end{cor}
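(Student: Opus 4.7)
The plan is to read off the corollary directly from the computation already displayed just before the statement, together with positive-definiteness of the Hermite form. The essential input is the adjoint relation $\langle u, L(n) v\rangle = \langle L(-n)u, v\rangle$ from Proposition \ref{prop_unitary_adjoint}, applied with $n = -2$ and $u = v = \vac$. Since $\vac \in F_{0,0}$ is annihilated by $L(n)$ for $n \geq 0$ by the conformal vector axioms (FVOA4) and $L(-1)\vac = D\vac = 0$ by Proposition \ref{prop_translation}(2), the pairing $\langle L(-2)\vac, L(-2)\vac\rangle$ reduces via the Virasoro commutator $[L(2),L(-2)] = 4L(0) + \tfrac{c}{2}\mathrm{id}$ to the purely scalar expression $\tfrac{c}{2}\langle \vac,\vac\rangle$.

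Concretely, first I would write
\[
\langle \om,\om\rangle = \langle L(-2)\vac, L(-2)\vac\rangle = \langle \vac, L(2)L(-2)\vac\rangle
\]
using Proposition \ref{prop_unitary_adjoint}. Then I would expand $L(2)L(-2)\vac = [L(2),L(-2)]\vac + L(-2)L(2)\vac = \bigl(4L(0)+\tfrac{c}{2}\bigr)\vac + 0 = \tfrac{c}{2}\vac$, using $L(0)\vac = 0$ and $L(2)\vac = 0$. Combined with the normalization $\langle \vac,\vac\rangle = (\phi(\vac),\vac) = (\vac,\vac) = 1$, this yields $\langle \om,\om\rangle = c/2$. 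Since $(F,\phi)$ is unitary, the Hermite form is positive-definite; thus if $\om \neq 0$ one has $\langle \om,\om\rangle > 0$, i.e.\ $c > 0$. The argument for $\bc$ is obtained verbatim by replacing $L(n)$ with $\Ld(n)$ and $\om$ with $\omb$, using the second adjoint identity in \eqref{eq_adjoint_formula} and the Virasoro relations on the antiholomorphic side.

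There is essentially no obstacle: the entire proof is a one-line computation already carried out in the lines preceding the corollary, plus an invocation of positive-definiteness. The only subtlety worth flagging explicitly is that $L(2)\vac = 0$ and $L(0)\vac = 0$ must be cited (these follow immediately from (FVOA4) and the fact that $\om(k,-1)$ for $k \geq 2$ lowers conformal weight below $0$, combined with (FO3) and $F_{0,0} = \C\vac$). Once this is noted, the corollary is immediate and the antiholomorphic case is entirely symmetric.
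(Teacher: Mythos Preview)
Your proposal is correct and follows exactly the paper's approach: the computation $\langle \om,\om\rangle = c/2$ via Proposition~\ref{prop_unitary_adjoint} is precisely the line displayed immediately before the corollary, and the conclusion then follows from positive-definiteness. There is nothing to add.
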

If $\om =0$, then $L(-1) =0$, which implies $Y(\bullet,\uz)$ does not depend on $z$, which is trivial. Hence, we will always assume that
\begin{itemize}
\item
$c >0$ and $\bc >0$.
\end{itemize}

\begin{dfn}\label{def_mirror}
Let $(F,\om,\omb,\tau^\pm,\btau^\pm,J,\bJ,(-,-),\phi)$ be a unitary $N=(2,2)$ full vertex operator superalgebra.
Define a new $N=(2,2)$ structure on $(F,\om,\omb)$ by
\begin{align*}
\tau_{\mathrm{new}}^\pm = \tau^\pm,
\quad &\btau_{\mathrm{new}}^\pm = \btau^\mp,\\
J_{\mathrm{new}} = J,\quad &\bJ_{\mathrm{new}} = -\bJ.
\end{align*}
Then, $(F,\om,\omb,\tau_{\mathrm{new}}^\pm,\btau_{\mathrm{new}}^\pm, J_{\mathrm{new}},\bJ_{\mathrm{new}})$ satisfy \eqref{eq_unitary_susy_full} for $\phi$, and thus, a unitary $N=(2,2)$ full vertex operator algebra.
We denote it by $\tilde{F}$ and call {\it the mirror algebra}.
\end{dfn}
\begin{rem}
\label{rem_four_mirror}
We can consider four $N=(2,2)$ structures $\{F,\om,\omb, J_{\ep,\bar{\ep}},\bJ_{\ep,\bar{\ep}}, \tau_{\ep,\bar{\ep}}^\pm,\btau_{\ep,\bar{\ep}}^\pm\}$ for each $\ep, \bar{\ep} \in \Z_2$ by
\begin{align*}
\tau_{\ep,\bar{\ep}}^\pm = \tau^{\ep \pm},
\quad &\btau_{\ep,\bar{\ep}}^\pm = \btau^{\bar{\ep}\pm},\\
J_{\ep,\bar{\ep}} = \ep J,\quad &\bJ_{\ep,\bar{\ep}} = \bar{\ep}\bJ.
\end{align*}
However, the anti-linear involution $\phi:F \rightarrow F$ gives an (anti-linear) isomorphism between $(\ep,\bar{\ep})$ and $(-\ep,-\bar{\ep})$, and thus, they are essentially the same. Hence, we will consider $(1,1) $ and its mirror $(1,-1)$.
\end{rem}

\subsection{Classification of twists}\label{sec_class_twist}

For quantum field theories with supersymmetry, it is possible to construct another theory by dropping information, called a twist.
Although it is generally difficult to compute physical quantities in quantum field theories, the physical quantities after twisting are often mathematically formulable and much easier.
For this reason, twists have been very well investigated in physics.
Using the concept of factorization algebras,
Elliott and Safronov formulated twists mathematically and introduced the concept of holomorphic / topological twists \cite{ES}. In this section, we classify the topological twists of unitary $N=(2,2)$ full VOAs based on the definition of \cite{ES}.

Let $S$ be a subspace of the Lie superalgebra $\g_{\la=0}^{N=2} \oplus \g_{\la=0}^{N=2}$ spanned by the super translations, i.e., 
\begin{align*}
S = \C G_{-\ft}^+ \oplus \C G_{-\ft}^- \oplus \C \bG_{-\ft}^+ \oplus \C \bG_{-\ft}^-.
\end{align*}
\begin{dfn}[\cite{ES}]\label{def_hol_top_twist}
An element $d \in S$ with $[d,d]_+=0$ is called 
\begin{itemize}
\item
a {\bf holomorphic twist} if the image of the linear map $[d,\bullet]:S \rightarrow \C L(-1) \oplus \C \Ld(-1), a\mapsto [d,a]$ is one-dimensional;
\item
 a {\bf topological twist} if $[d,\bullet]:S \rightarrow \C L(-1) \oplus \C \Ld(-1)$ is surjective.
\end{itemize}
\end{dfn}
Let $p,q,r,s \in \C$ and set 
\begin{align}
d = p G_{-\ft}^+ +q G_{-\ft}^- +r\bG_{-\ft}^+ + s\bG_{-\ft}^-. \label{eq_twist_all}
\end{align}
Then, $[d,d]_+ =0$ if and only if $pq=0$ and $rs=0$.

\begin{prop}\label{prop_classify_twist}
Any holomorphic twist is a scalar multiple of $G_{-\ft}^+,G_{-\ft}^-,\bG_{-\ft}^+$ or $\bG_{-\ft}^-$.
Any topological twist is one of the followings:
\begin{align*}
\la G_{-\ft}^+ + \mu \bG_{-\ft}^+,\quad \la G_{-\ft}^+ + \mu \bG_{-\ft}^-,\quad \la G_{-\ft}^- + \mu \bG_{-\ft}^+,
\quad\la G_{-\ft}^- + \mu \bG_{-\ft}^-,
\end{align*}
where $\la,\mu \in \C^\times$.
\end{prop}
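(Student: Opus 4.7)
The plan is a direct computation using the $N=2$ commutation relations \eqref{eq_susy} restricted to the modes that appear in $S$. Writing
\[
d = p G^+_{-\ft} + q G^-_{-\ft} + r\bG^+_{-\ft} + s\bG^-_{-\ft},
\]
the only nonzero anticommutators among these generators are
\[
[G^+_{-\ft},G^-_{-\ft}]_+ = L(-1),\qquad [\bG^+_{-\ft},\bG^-_{-\ft}]_+ = \Ld(-1),
\]
since the chiral and anti-chiral parts commute and $[G^\pm_{-\ft},G^\pm_{-\ft}]_+=[\bG^\pm_{-\ft},\bG^\pm_{-\ft}]_+=0$. Hence $\ft[d,d]_+ = pq\,L(-1) + rs\,\Ld(-1)$, and since $L(-1),\Ld(-1)$ are linearly independent in $\End F$ (this follows from non-triviality of the conformal vectors together with $c,\bc>0$ and the weight decomposition in Definition \ref{def_fullVOA}), the condition $[d,d]_+=0$ is equivalent to $pq = 0$ and $rs = 0$, as already noted.

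Next I would compute the linear map $[d,\bullet]: S \to \C L(-1)\oplus \C \Ld(-1)$ on the basis $\{G^+_{-\ft},G^-_{-\ft},\bG^+_{-\ft},\bG^-_{-\ft}\}$ of $S$, using the same relations. The result is
\[
[d,G^+_{-\ft}]_+ = q L(-1),\quad [d,G^-_{-\ft}]_+ = p L(-1),\quad [d,\bG^+_{-\ft}]_+ = s\Ld(-1),\quad [d,\bG^-_{-\ft}]_+ = r \Ld(-1).
\]
Thus the image of $[d,\bullet]$ is spanned by $pL(-1),qL(-1),r\Ld(-1),s\Ld(-1)$; its dimension is the number of sectors $\{p,q\}$, $\{r,s\}$ that are not both zero.

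To conclude, I would run through the cases allowed by $pq=rs=0$. For a topological twist, both pairs must contain a nonzero coefficient, so each of $\{p,q\}$ and $\{r,s\}$ has exactly one nonzero entry, yielding the four cases
\[
\la G^+_{-\ft}+\mu\bG^+_{-\ft},\quad \la G^+_{-\ft}+\mu\bG^-_{-\ft},\quad \la G^-_{-\ft}+\mu\bG^+_{-\ft},\quad \la G^-_{-\ft}+\mu\bG^-_{-\ft}
\]
with $\la,\mu\in\C^\times$. For a holomorphic twist, exactly one of the two pairs is entirely zero and the other contains a single nonzero coefficient (again by $pq=rs=0$), so $d$ is a nonzero scalar multiple of one of $G^\pm_{-\ft},\bG^\pm_{-\ft}$. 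There is no real obstacle here beyond making sure $L(-1)$ and $\Ld(-1)$ are linearly independent in $\End F$; this is ensured because $L(-1)$ preserves the right conformal weight $\bar h$ while $\Ld(-1)$ shifts it by $1$, and (FO2)–(FO3) together with $\om,\omb\neq 0$ guarantee both are nonzero operators acting on distinct bigraded pieces.
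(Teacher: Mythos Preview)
Your proof is correct and follows exactly the approach the paper intends: the paper states the key observation $[d,d]_+=0\iff pq=0$ and $rs=0$ immediately before the proposition and leaves the remaining case analysis implicit, which is precisely what you have spelled out. Your extra care about the linear independence of $L(-1)$ and $\Ld(-1)$ is not strictly needed since the codomain $\C L(-1)\oplus\C\Ld(-1)$ in Definition~\ref{def_hol_top_twist} is a formal direct sum, but your grading argument is valid in any case.
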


Let $F$ be an $N=(2,2)$ unitary full vertex operator algebra.
We should then classify topological twists under the automorphism group of the full VOA. 
Among the automorphisms of Definition \ref{def_VOA_alpha}, $\al$ is not necessarily lifted to an automorphism of $F$ in general (basically, it is never lifted).
But $\exp(i\la J(0)+i\mu \bJ(0))$ is an inner automorphism, so it always defines an automorphism of $F$. Under inner automorphisms and the anti-linear involution $\phi$, there are essentially only two topological twists:
\begin{align}
\begin{split}
d_B&= G_{-\ft}^+ + \bG_{-\ft}^+\\
d_A&= G_{-\ft}^+ + \bG_{-\ft}^-,
\end{split}
\label{eq_AB_twist}
\end{align}
which are called the {\bf B-twist} and the {\bf A-twist}, respectively in physics.
The purpose of this paper is to mathematically examine these twists.

\subsection{2d topological field theory and Frobenius superalgebra}\label{sec_tft}
In this section, we review the definition of 2d topological field theory (2d TFT) and briefly discuss the relation between 2d TFTs and commutative Frobenius algebras,
and the unitary structure on 2d TFT (for more details, see \cite{Atiyah,Abrams,St1,St2}).
This section does not contain anything new, but is written to prepare some notation on Frobenius superalgebras and for the reader's convenience.

A (non-extended) $d$-dimensional topological field theory is a symmetric monoidal functor from the symmetric monoidal category of $d$-dimensional bordisms to the given symmetric monoidal category, called the target category. In this paper, we consider the case that the target category is a category of the finite dimensional supervector space $\sVect$, where the braiding is given by the Koszul sign rule.

%
%

Let $\Bord$ be the bordism category 
whose objects are oriented closed 1-manifold, i.e., disjoint union of oriented $S^1$ and morphisms from $Y_0$ to $Y_1$ are compact oriented 2-manifolds with boundaries $\Sigma$ together with a partition of the boundary into two parts
\begin{align*}
\pa\Sigma = (\pa\Sigma)_{\text{in}}\sqcup (\pa\Sigma)_{\text{out}}
\end{align*}
and $\mathrm{GL}_2^+(\R)$-diffeomorphism 
\begin{align}
Y_0^* \cong (\pa\Sigma)_{\text{in}},\quad 
Y_1 \cong (\pa\Sigma)_{\text{out}},
\label{eq_orientation}
\end{align}
where on $Y_i$, we consider the $\mathrm{GL}_2^+(\R)$-structure, induced from the orientation, on a once-stabilized tangent space $TY_i\oplus \R$, and $Y_0^*$ is the 1-manifold with the reverse $\mathrm{GL}_2^+(\R)$-structure.
Here, we consider the homotopy equivalence classes of  bordisms. Then, $\Bord$ inherits a symmetric monoidal category structure by the disjoint union of manifolds (for more detail, see for example \cite{St1}).

\begin{dfn}\label{def_TFT}
A {\bf 2d topological field theory} (2d TFT) with the target category $\sVect$ is a symmetric monoidal functor 
\begin{align*}
Z:\Bord \rightarrow \sVect.
\end{align*}
\end{dfn}

It is well-known that 2d TFT can be classified by Frobenius superalgebras (see \cite{Abrams}). 
We will briefly review this result.
\begin{dfn}\label{def_Frob}
A \textbf{Frobenius superalgebra} is a finite-dimensional $\Z_2$-graded vector space $A=A_0\oplus A_1$ equipped with a  linear map 
$m:A \otimes A \rightarrow A$ and a unit $\eta:\C \rightarrow A$ and a counit $\ep:A \rightarrow \C$ such that:
\begin{enumerate}
\item
All maps $m, \eta,\ep$ are even;
\item
$m$ and $\eta$ define a unital associative algebra structure on the vector space $A$;
\item
The product is supercommutative, that is, $m(a,b)= (-1)^{|a||b|} m(b,a)$ for any $a,b\in A$;
\item
A bilinear form $(-,-)_\ep:A\otimes A \rightarrow \C$ given by
\begin{align*}
(a,b)_\ep = \ep(m(a,b)),\quad\quad(\text{ for }a,b \in A)
\end{align*}
is non-degenerate.
\end{enumerate}
\end{dfn}

Let $A$ be a Frobenius superalgebra.
Let $\{e_i \in A\}_{i \in I}$ be a basis of $A$ such that 
$I=I_0 \sqcup I_1$ and $e_i$ is in $A_0$ (resp. $A_1$) if $i\in I_0$ (resp. $i\in I_1$) and $\{e^i\}_{i\in I}$ the right dual basis with respect to the bilinear form $(-,-)_\ep$, that is, they satisfy:
\begin{align}
(e_i,e^j)_\ep = \delta_{i,j}. \label{eq_right_dual}
\end{align}
Note that since $(-,-)_\ep$ is not symmetric on $A_1$ the order of \eqref{eq_right_dual} is important.
Then, define a linear map $\delta: A \rightarrow A\otimes A$ by
\begin{align}
\delta(x) = \sum_{i \in I} (x\cdot e^i) \otimes e_i.
\label{eq_delta}
\end{align}

\begin{prop}\label{prop_delta_super}
The following properties hold:
\begin{enumerate}
\item
$ \sum_{i \in I} (x\cdot e^i) \otimes e_i=  \sum_{i \in I} e^i \otimes (e_i \cdot x)$ for any $x\in A$;
\item
$\tau \circ \delta =\delta$, where $\tau:A\otimes A \rightarrow A\otimes A$ is the braiding of $\sVect$;
\item
$(\ep \otimes \id_A) \circ \delta = \id_A = (\id_A \otimes \ep) \circ \delta$;
\item
$(m\otimes \id_A) \circ (\id_A\otimes \delta) = \delta \circ m = (\id_A\otimes m) \circ (\delta \otimes \id_A)$.
\end{enumerate}
\end{prop}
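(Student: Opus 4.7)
The plan is to verify the four identities by the strategy familiar for ordinary commutative Frobenius algebras, paying careful attention to Koszul signs throughout. The key computational input, used repeatedly, is the dual-basis expansion
\begin{align*}
y = \sum_i (e_i,y)_\ep\, e^i = \sum_i (y,e^i)_\ep\, e_i \qquad (y\in A),
\end{align*}
which is immediate from $(e_i,e^j)_\ep=\delta_{ij}$ together with the supersymmetry $(a,b)_\ep=(-1)^{|a||b|}(b,a)_\ep$ forced by supercommutativity of $m$.

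For (1), I would pair both sides of the claimed identity with an arbitrary $y\in A$ in the second tensor slot via $\id\otimes(-,y)_\ep$. The left side collapses to $\sum_i(xe^i)(e_i,y)_\ep=xy$ by the expansion, while the right side, after Frobenius invariance $(e_ix,y)_\ep=(e_i,xy)_\ep$ (just associativity of $m$ combined with $(-,-)_\ep=\ep\circ m$), becomes $\sum_ie^i(e_i,xy)_\ep=xy$, and non-degeneracy of $(-,-)_\ep$ concludes. Statement (3) is then immediate: $(\ep\otimes\id)\delta(x)=\sum_i(x,e^i)_\ep e_i=x$, and symmetrically $(\id\otimes\ep)\delta(x)=x$ after using (1) to rewrite $\delta(x)=\sum_ie^i\otimes(e_ix)$. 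For (4), $(m\otimes\id)(\id\otimes\delta)(x\otimes y)=\sum_ix(ye^i)\otimes e_i=\sum_i(xy)e^i\otimes e_i=\delta(xy)$ by associativity, and the other equality $\delta\circ m=(\id\otimes m)(\delta\otimes\id)$ follows by pairing the second slot of $\sum_i(xe^i)\otimes(e_iy)$ against an arbitrary $b\in A$ and reducing both to $(xy)b$ via the same invariance and expansion.

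The only item requiring genuine sign tracking is (2). By the Koszul rule and $|e^i|=|e_i|$, one has $\tau\delta(x)=\sum_i(-1)^{(|x|+|e_i|)|e_i|}e_i\otimes(xe^i)$, which simplifies via supercommutativity $xe^i=(-1)^{|x||e_i|}e^ix$ and $|e_i|^2\equiv|e_i|\pmod 2$ to $\sum_i(-1)^{|e_i|}e_i\otimes(e^ix)$. Combined with $\delta(x)=\sum_ie^i\otimes(e_ix)$ from (1), the claim reduces to the copairing symmetry
\begin{align*}
\sum_i(-1)^{|e_i|}e_i\otimes e^i=\sum_ie^i\otimes e_i\quad\text{in }A\otimes A.
\end{align*}
I would prove this by showing both sides represent $\id_A$ under the isomorphism $\Phi:A\otimes A\xrightarrow{\sim}\End A$, $\Phi(a\otimes b)(x)=(b,x)_\ep a$ induced by non-degeneracy: the right-hand side gives $\id_A$ at once by the dual-basis expansion, while for the left a direct computation yields $\Phi\bigl(\sum_i(-1)^{|e_i|}e_i\otimes e^i\bigr)(x)=\sum_i(-1)^{|e_i|(1+|x|)}(x,e^i)_\ep e_i$; on a homogeneous $x$ only terms with $|e_i|=|x|$ contribute, and the accumulated sign $(-1)^{|x|(1+|x|)}$ is $+1$ in either parity. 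The main obstacle is precisely this Koszul bookkeeping; once the copairing symmetry is in hand, the remainder is a transparent transcription of the classical (non-super) argument.
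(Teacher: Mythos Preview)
Your proof is correct and follows essentially the same approach as the paper: both arguments verify (1) by pairing against the non-degenerate form $(-,-)_\ep$ in one tensor slot and invoking the dual-basis expansion, then deduce (3) and (4) from (1) and associativity. The paper pairs in the first slot via $(a,-)_\ep$ while you pair in the second via $(-,y)_\ep$, a cosmetic difference.

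The only point of divergence is (2): the paper dismisses it with ``follows from the definition,'' whereas you actually carry out the Koszul sign computation and reduce to the copairing symmetry $\sum_i(-1)^{|e_i|}e_i\otimes e^i=\sum_ie^i\otimes e_i$, which you then verify by identifying both sides with $\id_A$ under the non-degeneracy isomorphism $A\otimes A\cong\End A$. Your treatment is more careful here and makes explicit what the paper leaves implicit; the parity cancellation $(-1)^{|x|(1+|x|)}=1$ is precisely the point that deserves to be written out in the super setting.
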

\begin{proof}
To show (1), evaluate the left-hand side tensor product by $(a,-)_\ep \in A^*$ for $a\in A$. Then,
\begin{align*}
\sum_{i \in I} (a, x e^i)_\ep e_i = \sum_{i \in I} (ax, e^i)_\ep e_i = ax
\end{align*}
and
\begin{align*}
\sum_{i \in I} (a,e^i)_\ep e_i \cdot x= ax.
\end{align*}
Hence, (1) holds. (3) and (4) follows from (1). (2) follows from the definition.
\end{proof}

(2) and (3) mean that $(A,\delta,\ep)$ is a commutative comonad in $\sVect$.
(4) is called the {\it Frobenius relation}.

\begin{figure}[h]
  \begin{minipage}[l]{0.4\linewidth}
    \includegraphics[width=5cm]{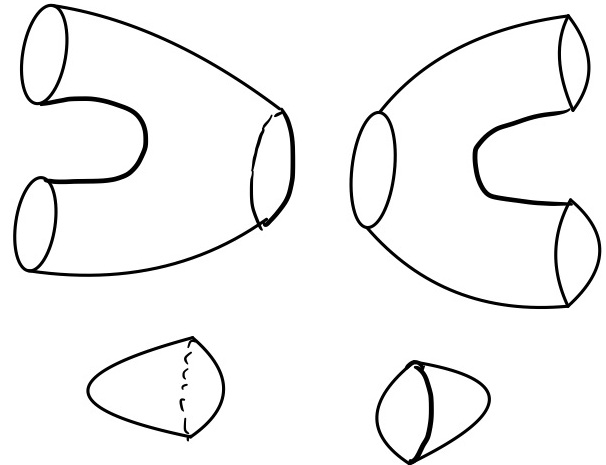}
\caption{}
\label{fig_TFT1}
  \end{minipage}
    \begin{minipage}[h]{0.4\linewidth}
    \vspace{-2mm}
        \flushright
    \includegraphics[width=5.0cm]{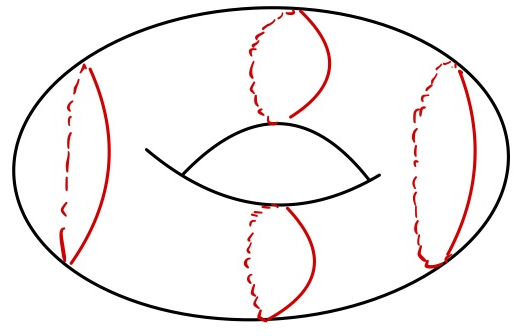}
    \caption{}\label{fig_TFT2}
  \end{minipage}
\end{figure}

By the Morse theory, any two-dimensional manifold with boundaries decomposes into the four fundamental 2-manifolds in Figure \ref{fig_TFT1}.
These four manifolds correspond to the four operations on a Frobenius algebra $A$:
\begin{align*}
m:A \otimes A \rightarrow A,\quad\quad \delta:A \rightarrow A\otimes A,\quad \eta:\C \rightarrow A,\quad\quad \ep:A \rightarrow \C.
\end{align*}
For example, the partition function of $\mathrm{Torus}$ is from Figure \ref{fig_TFT2}
\begin{align*}
Z(\mathrm{Torus}) =\ep\circ m\circ \delta \circ \eta(1) \in \C.
\end{align*}
It can be seen from the definition of a Frobenius algebra that such partition functions do not depend on how a manifold is decomposed into fundamental pieces in Figure \ref{fig_TFT1} \cite{Atiyah, DJ}.
%
\begin{prop}
\label{prop_TFT_frob}
There is a one-to-one correspondence between 2d topological field theory with the target category $\sVect$ and Frobenius superalgebras.
\end{prop}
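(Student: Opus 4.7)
The plan is to prove this correspondence in the standard way going back to Abrams and Dijkgraaf, with careful attention to the Koszul signs that arise in the super setting. Given a 2d TFT $Z:\Bord \to \sVect$, I would first extract a Frobenius superalgebra by setting $A := Z(S^1)$, then let $m$, $\eta$, $\delta$, $\ep$ be the images under $Z$ of the pair of pants, the disk (viewed as a bordism $\emptyset \to S^1$), the copair of pants, and the disk (viewed as $S^1 \to \emptyset$), respectively, as in Figure \ref{fig_TFT1}. Since $Z$ is a symmetric monoidal functor into $\sVect$ and all four fundamental bordisms are parity-even under the Koszul grading, each of these structure maps is even, confirming condition (1) of Definition \ref{def_Frob}.

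Next I would check the algebra axioms by translating diffeomorphism equivalences of surfaces into equations. Associativity of $m$ follows from the fact that the two ways of gluing three pairs of pants into a 4-holed sphere are diffeomorphic; unitality from the standard disk-attachment equivalence; supercommutativity $m \circ \tau = m$ from the diffeomorphism of the pair of pants that swaps the two incoming circles — here the swap lives in $\Bord$ and is sent by $Z$ to the Koszul braiding $\tau$ in $\sVect$, which is exactly where the sign $(-1)^{|a||b|}$ enters. Non-degeneracy of $(-,-)_\ep$ is the snake identity: the cylinder $S^1 \times [0,1]$, identified with itself via two distinct decompositions, forces $(m \otimes \id) \circ (\id \otimes \delta') \circ \eta = \id_A$ for the coevaluation $\delta' = \delta \circ \eta$, giving an explicit right-dual. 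The Frobenius relation of Proposition \ref{prop_delta_super}(4), which was already verified algebraically, corresponds on the topology side to the two ways of viewing a 4-holed sphere as a composition involving both $m$ and $\delta$.

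For the converse, given a Frobenius superalgebra $(A,m,\eta,\delta,\ep)$, I would build $Z$ by choosing, for every bordism $\Sigma:Y_0 \to Y_1$, a handle decomposition into the fundamental pieces of Figure \ref{fig_TFT1} and assigning the corresponding composite of $m,\delta,\eta,\ep$, $\id_A$, and the Koszul braiding $\tau$. Well-definedness amounts to checking invariance under the moves relating different handle decompositions: these are generated by associativity, unitality (removing a cap), cocommutativity and counitality (their duals), the Frobenius relation, and the swap of tensor factors — all four of which are guaranteed by Proposition \ref{prop_delta_super} together with the Frobenius axioms. Symmetric monoidality of the resulting $Z$ is automatic: disjoint union of surfaces corresponds to tensor product, and the braiding of disjoint $S^1$'s is sent to the Koszul $\tau$ by construction.

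The main obstacle is the sign bookkeeping in the super case: the non-symmetric bilinear form on $A_1$ (which is why the right dual basis in \eqref{eq_right_dual} must be written in a specific order) means that whenever a bordism is decomposed with an odd-parity loop being threaded through a crossing or a cap, one must verify that the resulting composite in $\sVect$ picks up precisely the Koszul sign and no extra. Concretely, the cleanest way to organize this is to present $\Bord$ by generators (the four pieces plus the swap) and relations (associativity, unitality, Frobenius, cocommutativity of the swap, snake identities) as in \cite{Abrams, St1, St2}, and then check relation-by-relation that the assignment above respects each one — the cocommutativity of $\delta$, Proposition \ref{prop_delta_super}(2), is the crucial one where the Koszul braiding appears explicitly and must match the topological swap.
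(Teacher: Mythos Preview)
Your proposal is correct and follows the standard Abrams--Dijkgraaf argument, but you should know that the paper does not actually give a proof of this proposition: it is stated without proof as a well-known classification result, with a citation to \cite{Abrams} (and the surrounding discussion alludes to \cite{Atiyah,DJ}). So there is no ``paper's own proof'' to compare against; your sketch simply fills in what the paper chose to omit, and it does so along exactly the lines the cited literature takes.
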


By definition, we have:
\begin{prop}\label{prop_partition}
Let $A$ be a Frobenius superalgebra and $Z_A:\Bord \rightarrow \sVect$ the associated 2d TFT. Then, we have:
\begin{enumerate}
\item
$Z_A(\text{2-sphere}) = \ep\circ \eta(1)$;
\item
$Z_A(\text{torus}) = \dim A_0 - \dim A_1$;
\end{enumerate}
\end{prop}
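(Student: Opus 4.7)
The plan is to use the equivalence between 2d TFTs with target $\sVect$ and Frobenius superalgebras (Proposition \ref{prop_TFT_frob}): under this correspondence, the four fundamental bordisms in Figure \ref{fig_TFT1} go to $m, \delta, \eta, \ep$, so any closed surface partition function is computed by composing these maps according to a Morse-theoretic decomposition.

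For part (1), I would decompose the $2$-sphere, regarded as a bordism $\emptyset \to \emptyset$, as a cap glued to a cocap, i.e.\ a disk viewed as a bordism $\emptyset \to S^1$ followed by a disk viewed as $S^1 \to \emptyset$. Under the TFT these pieces become $\eta$ and $\ep$ respectively, and functoriality of $Z_A$ gives $Z_A(S^2) = \ep \circ \eta: \C \to \C$, evaluated at $1$.

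For part (2), the decomposition in Figure \ref{fig_TFT2} realizes the torus as $\ep \circ m \circ \delta \circ \eta$, so
\begin{align*}
Z_A(\text{torus}) = \ep \bigl( m (\delta ( \eta(1)))\bigr) = \ep\Bigl(m\bigl(\delta(1_A)\bigr)\Bigr).
\end{align*}
Plugging $x = 1_A$ into the formula \eqref{eq_delta} gives $\delta(1_A) = \sum_{i \in I} e^i \otimes e_i$, hence $m(\delta(1_A)) = \sum_i e^i e_i$ and
\begin{align*}
Z_A(\text{torus}) = \sum_{i \in I} (e^i, e_i)_\ep.
\end{align*}
The key computational step is to reduce $(e^i, e_i)_\ep$ to $(e_i, e^i)_\ep = 1$ using the supersymmetry of the Frobenius pairing. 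Since $m$ is supercommutative and $\ep$ is even, we have $(a,b)_\ep = (-1)^{|a||b|}(b,a)_\ep$; and the evenness of $(-,-)_\ep$ forces $|e^i| = |e_i|$ on the nonzero pairings, so $(e^i, e_i)_\ep = (-1)^{|e_i|}$. Summing over $I = I_0 \sqcup I_1$ yields $\dim A_0 - \dim A_1$.

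Mild care is needed to check that the decomposition in Figure \ref{fig_TFT2} really does correspond to $\ep \circ m \circ \delta \circ \eta$ rather than to a variant involving the braiding $\tau$ of $\sVect$ (which would produce an unwanted sign). The symmetry relation $\tau \circ \delta = \delta$ from Proposition \ref{prop_delta_super}(2), together with supercommutativity of $m$, ensures that any admissible decomposition of the torus yields the same map, so the only genuine sign contribution is the one from the supersymmetry of the pairing computed above. This is the one place where Koszul signs actively enter and where I expect any bookkeeping slip to occur; the rest is a direct assembly of the fundamental bordism pieces.
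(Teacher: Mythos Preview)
Your proof is correct and follows essentially the same route as the paper's: the sphere is $\ep\circ\eta$, and the torus is computed via the decomposition $\ep\circ m\circ\delta\circ\eta$ of Figure~\ref{fig_TFT2}, reducing to $\sum_i (e^i,e_i)_\ep$ and then using supercommutativity to turn each term into $(-1)^{|e_i|}$. Your argument is in fact more explicit than the paper's about the sign step $(e^i,e_i)_\ep = (-1)^{|e_i|}(e_i,e^i)_\ep$, which the paper simply writes down; your caveat about the braiding is a nice sanity check but, as you note, $\tau\circ\delta=\delta$ ensures no extra sign enters.
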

\begin{proof}
(1) is clear. By taking the basis \eqref{eq_right_dual}, by \ref{fig_TFT2}, we have:
\begin{align*}
Z_A(\text{torus}) = \sum_{i \in I_0 \sqcup I_1} \ep(e^i, e_i) = \sum_{i\in I_1}1 - \sum_{i \in I_1} = \dim A_0-\dim A_1.
\end{align*}
\end{proof}

This section ends with a brief description of unitary 2d TFT. For more details, see \cite{Atiyah, St1}. Note that this paper does not examine unitary 2d TFTs themselves, but only to show that topological twists of $N=(2,2)$ SCFTs do not inherit any unitary structure. Therefore, readers who are not interested may skip this part.

A {\bf dagger structure} on a symmetric monoidal category $C$ is a functor from the opposite category $C^\op$ of $C$ to $C$ itself
\begin{align*}
\dagger:C^\op \rightarrow C
\end{align*}
which is the identity on objects and involution $\dagger^\op\circ \dagger =\mathrm{id}_C$
and some compatibility with the symmetric monoidal structure.
$\Bord$ inherits a dagger structure by the orientation reverse. Also $\sHilb$ (the category of finite dimensional super Hilbert spaces) has a dagger structure by the Hermite conjugate.
Then, a {\bf unitary 2d topological field theory} with the target category $\sHilb$ is a symmetric monoidal functor $Z:\Bord \rightarrow \sHilb$ which preserves the dagger structure (see \cite{St1,St2} for more detail).
From the definition it is easy to see that (see \cite{DJ}):
\begin{lem}\label{lem_non_zero_star}
Let $Z:\Bord \rightarrow \sHilb$ be a unitary 2d TFT.
Then, $\ep\circ \eta(1)$ is a non-zero positive real number.
\end{lem}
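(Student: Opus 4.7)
The plan is to decompose the 2-sphere as a composition of two disks inside $\Bord$ and then use the fact that $Z$ preserves the dagger structures on both sides. Specifically, write $S^2$, viewed as an endomorphism of the monoidal unit $\emptyset$, as the composition $\ep \circ \eta$, where $\eta$ is the disk regarded as a bordism $\emptyset \to S^1$ and $\ep$ is the disk regarded as a bordism $S^1 \to \emptyset$. Applying $Z$ yields $\ep \circ \eta(1) \in \C$, which is exactly the scalar we want to analyze.

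Next, I would check that $\eta$ and $\ep$ are dagger-related in $\Bord$. Both are represented by the same underlying 2-disk $D^2$; the only difference is the partition of $\partial D^2 = S^1$ into incoming and outgoing boundary, and the orientation data on this $S^1$ induced by the orientation of $D^2$ and by the choice of in/out. The dagger functor $\dagger:\Bord^{\op}\to\Bord$, which reverses orientation and swaps the in/out labels, therefore sends $\eta$ to $\ep$ (up to a canonical diffeomorphism of $D^2$ reversing orientation). Hence $\eta^\dagger = \ep$ as morphisms in $\Bord$.

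Since $Z$ is a symmetric monoidal dagger functor, this identity is preserved: $Z(\ep) = Z(\eta)^\dagger$ in $\sHilb$, where $\dagger$ on $\sHilb$ is the Hermitian adjoint. Unpacking, the map $\ep: A \to \C$ is the adjoint of $\eta:\C \to A$ with respect to the inner product on $A = Z(S^1)$ and the standard inner product on $\C$. Consequently,
\begin{align*}
\ep \circ \eta(1) \;=\; \langle 1_\C,\, \ep(\eta(1))\rangle_\C \;=\; \langle \eta(1),\, \eta(1)\rangle_A,
\end{align*}
which is manifestly a non-negative real number by positive-definiteness of the Hilbert space structure on $A$. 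To promote this to strict positivity, I would argue that $\eta(1) = 1_A$ is the unit of the associated Frobenius superalgebra structure on $A$ (from Proposition~\ref{prop_TFT_frob}), and non-degeneracy of the Frobenius pairing $(-,-)_\ep$ together with the relation $\ep(a) = (1_A, a)_\ep$ forces $1_A \neq 0$ whenever $A \neq 0$; the trivial case $A = 0$ corresponds to the zero TFT which is excluded (a symmetric monoidal functor sends $\emptyset$ to the monoidal unit $\C$, and in the non-trivial sector $A \neq 0$).

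The main obstacle is bookkeeping the dagger-structure conventions on $\Bord$: one must verify carefully that orientation reversal combined with the in/out swap really identifies $\eta$ with $\ep$, rather than introducing an unwanted sign or twist, and that the resulting Hermitian form on $A$ arising from the dagger functor coincides with the one built into the definition of a unitary 2d TFT with target $\sHilb$. Once this identification is set up cleanly, the positivity of $\ep \circ \eta(1) = \|\eta(1)\|^2 > 0$ is immediate.
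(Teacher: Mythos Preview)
Your argument is correct and is the standard one; the paper itself does not give a proof but simply says the lemma follows easily from the definition and cites \cite{DJ}, and what you wrote is exactly that standard computation: $\ep=\eta^\dagger$ in $\Bord$, a dagger functor turns this into the Hermitian adjoint, and hence $\ep\circ\eta(1)=\langle\eta(1),\eta(1)\rangle_A=\|1_A\|^2$.

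One small remark: your handling of the $A=0$ case is slightly circuitous. You do not need to invoke non-degeneracy of the Frobenius pairing separately; in any unital algebra $1_A=0$ forces $A=0$, so $\eta(1)\neq 0$ is equivalent to $A\neq 0$. The lemma as stated in the paper tacitly assumes the TFT is nonzero (this is automatic in the intended application, since $\vac\in C(F)$ gives $1_A\neq 0$), so you may simply note that $A\neq 0$ implies $1_A\neq 0$ and conclude strict positivity.
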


Lemma \ref{lem_non_zero_star} says that if 2d TFT is unitary, then the partition function on $S^2$ is non-zero by Proposition \ref{prop_partition}.
We will see that the topological twist of any non-trivial unitary $N=(2,2)$ SCFT does not satisfy this property.

\section{Cohomology ring of unitary $N=(2,2)$ full vertex operator superalgebra}\label{sec_ring}

In this section, we introduce the cohomology ring of an $N=(2,2)$ full vertex operator algebra with topological twists $d_A,d_B$ in Section \ref{sec_class_twist}.
Section \ref{sec_twist_def} shows that for an $N=(2,2)$ full VOA $F$ which is not necessarily unitary, $H(F,d_A)$ and $H(F,d_B)$ are $\Z_2$-graded algebras.
In Section \ref{sec_inequality}, we see that by assuming unitarity, these cohomology rings become $\R^2$-graded algebras. This grading corresponds to the Hodge decomposition and is very important.
In Section \ref{sec_twist_star} and Section \ref{sec_twist_serre}, we examine the dualities between $H(F,d_A)$ and $H(F,d_B)$. We also see that the cohomology rings give 2d topological field theories.
We emphasize that Sections \ref{sec_twist_def} and \ref{sec_inequality} are independent of the spectral flow developed later, and depend only on the properties of the differentials and on unitarity. The proofs of the results in Sections \ref{sec_twist_star} and \ref{sec_twist_serre}, however, require the spectral flow construction established in Section \ref{sec_sec_flow}.
%

%

\subsection{Cohomology ring of $N=(2,2)$ full VOA}
\label{sec_twist_def}

In this section, we define cohomology rings of $N=(2,2)$ full VOA.
This operation is also called a {\it topological twist} and the cohomology ring is called a {\it chiral ring} in physics.
We first define the cohomology ring in the case of an $N=2$ VOA, and then generalize it to the full VOA cases (see Remark \ref{rem_Borisov} for the relation with the chiral ring in \cite{Borisov}).


Let $(V,\om,\tau^\pm,J)$ be an $N=2$ vertex operator superalgebra. Let $H=\R J$ be the subspace of $V_1$ equipped with bilinear form $(J,J)= \frac{c}{3}$. For $\al \in H$, set
\begin{align*}
V_n^\al &= \{v \in V_n \mid J(0)v =(\al,J) v \}
\end{align*}
and 
\begin{align*}
Q &= \tau^+(0) = G_{-\ft}^+.
\end{align*}
By \eqref{eq_commutator}, for any $a,b \in V$ and $n\in \Z$,
\begin{align*}
Q(a(n)b) = (Qa)(n) b + (-1)^{|a|} a(n)Qb.
\end{align*}
Hence, $Q$ is a super-derivation, and thus, $\ker Q$ is a subalgebra.
Moreover, by $Q^2=0$, $\im Q \subset \ker Q$ is an ideal of the vertex superalgebra $\ker Q$.

By an easy computation, we have:
\begin{lem}\label{lem_conf}
$\tilde{\om} = \om + \ft L(-1)J = Q \tau^-$ is a conformal vector of $\ker Q$ with central charge $0$, that is, it satisfies 
\begin{enumerate}
\item
$\tilde{\om}(k)\tilde{\om}=0$ for any integer $k\geq 2$.
\item
$\tilde{\om}(0)a=a(-2)\va$ for any $a\in V$;
\item
$\tilde{\om}(1) = L(0) - \ft J(0)$ is semisimple on $V$.
\end{enumerate}
\end{lem}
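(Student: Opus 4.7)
The plan is to exploit the identity $\tilde{\om} = Q\tau^-$, which reads off directly from $\tau^+(0)\tau^- = \om + \ft L(-1)J$ in \eqref{eq_susy_ope} together with $Q = \tau^+(0)$. Since $Q^2 = 0$ (by $[G^+_{-\ft},G^+_{-\ft}]_+ = 0$ in \eqref{eq_susy}), this gives $Q\tilde{\om} = Q^2\tau^- = 0$ for free, so $\tilde{\om}$ lies in $\ker Q$, and the task reduces to verifying the three conformal-vector axioms.

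First I would compute the modes. Using $Y(L(-1)J,z) = \frac{d}{dz}Y(J,z)$, one finds
\[
\tilde{\om}(k) = L(k-1) - \frac{k}{2}J(k-1)
\]
for all $k \in \Z$. Condition (2) is then immediate, since $\tilde{\om}(0) = L(-1)$ satisfies the translation axiom $L(-1)a = a(-2)\va$. Condition (3) is also immediate, since $L(0)$ and $J(0)$ commute and are each semisimple on $V$, so their linear combination $\tilde{\om}(1) = L(0) - \ft J(0)$ is as well.

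The substantive step is condition (1). I would expand
\[
\tilde{\om}(k)\tilde{\om} = L(k-1)\om + \ft L(k-1)L(-1)J - \frac{k}{2}J(k-1)\om - \frac{k}{4}J(k-1)L(-1)J
\]
and evaluate each of the four pieces using the Virasoro, Heisenberg, and cross relations of $\g_0^{N=2}$ together with $L(n)\va = 0$ for $n \geq -1$ and $J(n)\va = 0$ for $n \geq 0$. For $k \geq 4$ every piece is killed because the relevant commutators either annihilate $\va$ or produce vanishing coefficients. The cases $k = 2$ and $k = 3$ each produce two nonzero contributions that cancel exactly. The only real obstacle is bookkeeping---specifically, one must compute $J(1)\om = J$, $L(1)L(-1)J = 2J$, $L(2)\om = \frac{c}{2}\va$, and $J(2)L(-1)J = \frac{2c}{3}\va$ correctly. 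It is precisely the numerical identities $\ft \cdot 2J = J$ (for $k=2$) and $\frac{c}{2} = \frac{3}{4} \cdot \frac{2c}{3}$ (for $k=3$) that force $\tilde{\om}$ to have vanishing central charge, which is the conceptual content of the lemma: the shift by $\ft L(-1)J$ is engineered precisely so that the central term of $\om(3)\om = \frac{c}{2}\va$ is killed by the Heisenberg self-coupling of $J$, producing the trivially graded conformal structure underlying the chiral ring.
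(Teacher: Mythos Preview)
Your proof is correct and is precisely the direct computation the paper has in mind; the paper itself omits the argument entirely, introducing the lemma only with the phrase ``By an easy computation.'' Your mode formula $\tilde{\om}(k) = L(k-1) - \frac{k}{2}J(k-1)$ and the case-by-case cancellation at $k=2,3$ are exactly what that computation amounts to.
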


Set
\begin{align*}
H(V,Q) = \ker Q /\im Q.
\end{align*}
By Lemma \ref{lem_conf}, $H(V,Q)$ is a conformal vertex superalgebra with the trivial conformal vector.
Hence, for any $a\in H(V,Q)$, the vertex operator consists of only the constant term, $Y(a,z) = a(-1)$,
and $H(V,Q)$ inherits a structure of a unital supercommutative algebra by $(-1)$-th product:
\begin{align*}
H(V,Q) \otimes H(V,Q) \rightarrow H(V,Q),\quad a\otimes b \mapsto a(-1)b
\end{align*}
and the unit $\va \in H(V,Q)$. In addition, $H(V,Q)$ has a $\Z_2$-grading by the $\Z_2$-grading on $V$.
\begin{prop}
\label{prop_chiral_ring}
$H(V,Q)$ is a $\Z_2$-graded unital supercommutative algebra by (-1)-th product.
\end{prop}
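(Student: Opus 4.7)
The plan is to exploit the fact that $\tilde{\om} = Q\tau^-$ is exact, so the entire conformal structure descends trivially to $H(V,Q)$, collapsing each vertex operator to its constant term. First I would observe that $Q = G^+_{-\ft}$ has odd parity (since $\tau^+ \in V_{3/2} \subset V^{\baro}$), and the super-derivation property already noted in the text together with $Q^2 = 0$ (which follows from $[G^+_{-\ft}, G^+_{-\ft}]_+ = 0$ in $\g_0^{N=2}$) shows that $\ker Q$ is a $\Z_2$-graded sub-vertex-superalgebra of $V$ and $\im Q \subseteq \ker Q$ is a $\Z_2$-graded ideal in the sense that $Y(\ker Q,z)(\im Q) \subseteq (\im Q)((z))$ and $Y(\im Q,z)(\ker Q) \subseteq (\im Q)((z))$. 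Consequently $H(V,Q)$ inherits a $\Z_2$-graded vertex superalgebra structure with vacuum $\bar{\va}$, which I denote by $\bar Y(-,z)$.

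Next, by Lemma \ref{lem_conf}, $\tilde{\om}$ is a conformal vector for $\ker Q$, so the translation operator on $\ker Q$ is $\tilde L(-1) := \tilde{\om}(0)$ and satisfies $\frac{d}{dz}Y(a,z) = Y(\tilde L(-1)a,z)$ for every $a \in \ker Q$. Since $\tilde{\om} = Q\tau^-$ lies in $\im Q$, it represents zero in $H(V,Q)$; hence $\bar Y(\overline{\tilde{\om}}, z) = 0$, and in particular $\tilde L(-1)$ acts as the zero operator on the quotient. Translation covariance then gives
\begin{equation*}
\frac{d}{dz}\bar Y(\bar a, z) \;=\; \bar Y(\tilde L(-1)\bar a, z) \;=\; 0
\end{equation*}
for every $\bar a \in H(V,Q)$, so each $\bar Y(\bar a,z)$ is independent of $z$ and, by the creation axiom (V2) applied in the quotient, equals the constant operator $\bar a(-1)$. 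Thus the $(-1)$-th product yields a well-defined bilinear multiplication on $H(V,Q)$, and $\bar{\va}$ acts as a two-sided identity by (V3).

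Associativity of this product is a direct consequence of axiom (V4) applied to constant-in-$z$ vertex operators: all three formal series on the right-hand side degenerate to scalars and the identity $\bar a(-1)(\bar b(-1)\bar c) = (\bar a(-1)\bar b)(-1)\bar c$ drops out. Supercommutativity is the last point, and I would obtain it from the skew-symmetry formula $Y(a,z)b = (-1)^{|a||b|}\exp(z\tilde L(-1))Y(b,-z)a$ inside $\ker Q$ (the standard consequence of (V2)--(V4)). Reducing mod $\im Q$ and using that $\tilde L(-1)$ vanishes on the quotient collapses the exponential to the identity; extracting the coefficient of $z^0$ gives
\begin{equation*}
\bar a(-1)\bar b \;=\; (-1)^{|a||b|}\,\bar b(-1)\bar a,
\end{equation*}
which is the desired supercommutativity.

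The only step that requires genuine care is the descent of $\tilde L(-1)$ to zero on $H(V,Q)$, i.e.\ the observation that an exact conformal vector forces the cohomological translation operator to vanish. Once this is in place, the vertex algebra axioms collapse to a unital associative supercommutative product and the proposition follows.
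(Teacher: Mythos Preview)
Your proposal is correct and follows essentially the same approach as the paper: the key observation that $\tilde{\om} = Q\tau^-$ is $Q$-exact, so the conformal structure trivializes on $H(V,Q)$ and each vertex operator collapses to its constant $(-1)$-th term. The paper states this more tersely in the paragraph preceding the proposition, whereas you have spelled out the descent of $\tilde L(-1)$ and the derivation of associativity and supercommutativity in more detail; this is exactly the argument the paper intends.
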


Note that there is another direct way to look at the triviality of the vertex operator of the chiral ring as follows:
\begin{prop}\label{prop_constant_chiral}
Let $a_1,\dots,a_{r+1} \in V$ satisfy $Q a_i =0$ and $u \in V^\vee$ satisfy $Q^*u=u(Q -)=0$.
Then,
\begin{align*}
\langle u, Y(a_1,z_1) \cdots Y(a_r,z_{r})a_{r+1}\rangle \in \C,
\end{align*}
that is, all non-constant coefficients of $z_i$ vanish.
\end{prop}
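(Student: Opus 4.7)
My plan is to prove that $\partial/\partial z_i$ of the correlator vanishes for every $i$, after which I conclude by (V4), which guarantees the correlator equals the expansion of a rational function in $\C[z_i^\pm, (z_i-z_j)^\pm]$: a Laurent polynomial annihilated by all partial derivatives is a constant. The key identity is that $L(-1)$ is $Q$-exact as an operator on $V$. Indeed, specializing the $N=2$ relation $[G^+_r, G^-_s]_+ = L(r+s) + \tfrac{1}{2}(r-s)J(r+s) + \tfrac{c}{6}(r^2-\tfrac{1}{4})\delta_{r+s,0}$ to $r=s=-\tfrac{1}{2}$ gives $\{Q, G^-_{-1/2}\} = L(-1)$. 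Consequently, for each $Q$-closed $a_i$, setting $b_i := G^-_{-1/2} a_i$ yields $L(-1) a_i = Q b_i$, so the $L(-1)$-descendant appearing after differentiation is $Q$-exact.

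Using the translation covariance $(\partial/\partial z_i) Y(a_i, z_i) = Y(L(-1)a_i, z_i)$, the $i$-th partial derivative of the correlator amounts to replacing $Y(a_i, z_i)$ by $Y(Qb_i, z_i)$. Since $Q = \tau^+(0)$ is a (super-)derivation of every $n$-th product (apply \eqref{eq_commutator} with $a_1 = \tau^+$ and $p=0$), we obtain the operator identity $Y(Qb_i, z_i) = [Q, Y(b_i, z_i)]_\pm$. Expanding this super-commutator inside the correlator produces two terms in which $Q$ sits immediately to the left or to the right of $Y(b_i, z_i)$. In the first term, $Q$ can be propagated further left past each $Y(a_j, z_j)$ with $j<i$, since $[Q, Y(a_j, z_j)]_\pm = Y(Q a_j, z_j) = 0$; each crossing contributes a Koszul sign $(-1)^{|a_j|}$. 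Eventually $Q$ acts on $u$ and vanishes by $Q^* u = 0$. In the second term, $Q$ is propagated rightward past $Y(a_j, z_j)$ for $j>i$ and finally annihilates $a_{r+1}$ by $Q a_{r+1} = 0$. Hence $\partial/\partial z_i$ of the correlator is zero for every $i$, proving the claim.

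The argument is the familiar principle that a BRST-exact insertion in a physical correlator vanishes, so no genuine obstacle is anticipated. The only subtlety is bookkeeping of Koszul signs when commuting the odd operator $Q$ through the chain of vertex operators and through $u$; this reduces to a routine verification once each $a_j$ is taken homogeneous for the $\Z_2$-grading of $V$, and the two sign contributions from the left- and right-moving $Q$ terms combine consistently.
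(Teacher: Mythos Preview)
Your proof is correct and follows essentially the same route as the paper: differentiate in $z_i$, use $L(-1)=[G^+_{-1/2},G^-_{-1/2}]_+$ together with $Qa_i=0$ to write $L(-1)a_i=Q(G^-_{-1/2}a_i)$, turn $Y(Qb_i,z_i)$ into $[Q,Y(b_i,z_i)]_\pm$, and propagate $Q$ to the ends where it annihilates $u$ and $a_{r+1}$. The only difference is cosmetic: your appeal to (V4) is unnecessary, since a formal Laurent series with all partial derivatives zero is already constant, and (V4) as stated only literally covers the two-point case anyway.
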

\begin{proof}
For any $i \in \{1,\dots,r \}$,
\begin{align*}
&\frac{d}{dz_i} \langle u, Y(a_1,z_1) \cdots Y(a_r,z_r)a_{r+1}\rangle\\
&=\langle u, Y(a_1,z_1) \cdots Y(L(-1)a_i,z_i) \cdots Y(a_r,z_r)a_{r+1} \rangle\\
&=\langle u, Y(a_1,z_1) \cdots Y([G_{-\ft}^+,G_{-\ft}^-]_+ a_i,z_i) \cdots Y(a_r,z_r)a_{r+1} \rangle\\
&=\langle u, Y(a_1,z_1) \cdots Y(G_{-\ft}^+G_{-\ft}^- a_i,z_i) \cdots Y(a_r,z_r)a_{r+1} \rangle\\
&=\langle u, Y(a_1,z_1) \cdots Y(\tau^+(0)\tau^-(0)a_i,z_i) \cdots Y(a_r,z_r)a_{r+1}\rangle\\
&=\langle u, Y(a_1,z_1) \cdots [\tau^+(0),Y(\tau^-(0)a_i,z_i)]_\pm \cdots Y(a_r,z_r) a_{r+1} \rangle\\
&=\pm\langle u,\tau^+(0) Y(a_1,z_1) \cdots Y(\tau^-(0)a_i,z_i) \cdots Y(a_r,z_r)a_{r+1} \rangle\\
&+\sum_{k \neq i}^{r}\pm  \langle u, Y(a_1,z_1) \cdots [\tau^+(0),Y(a_k,z_k)]_\pm \cdots Y(\tau^-(0)a_i,z_i) \cdots Y(a_r,z_r)a_{r+1}\rangle\\
&\pm \langle u,Y(a_1,z_1) \cdots Y(\tau^-(0)a_i,z_i) \cdots Y(a_r,z_r)\tau^+(0) a_{r+1}\rangle\\
&=0.
\end{align*}
\end{proof}

\begin{lem}\label{lem_chiral_concentrate}
Let $v \in \ker Q \cap V_n^\al$ be a non-zero vector. If $n \neq \frac{(J,\al)}{2}$, then $v \in \im Q$.
\end{lem}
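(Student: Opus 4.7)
The plan is to exhibit $v$ explicitly as $Q$ applied to $(n - \tfrac{1}{2}(J,\al))^{-1}\,\tau^-(1)v$, using the fact that the twisted conformal vector $\tilde\om$ is $Q$-exact by Lemma \ref{lem_conf}. The crucial observation is that $\tilde\om(1) = L(0) - \tfrac{1}{2}J(0)$ acts on $V_n^\al$ as the scalar $n - \tfrac{1}{2}(J,\al)$, so the hypothesis $n \neq \tfrac{1}{2}(J,\al)$ says exactly that $\tilde\om(1)$ is invertible on the one-dimensional space $\C v$.

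First I would apply the commutator formula \eqref{eq_commutator} with $p=0$ to the decomposition $\tilde\om = Q\tau^- = \tau^+(0)\tau^-$, giving
\begin{align*}
\tilde\om(q) = (\tau^+(0)\tau^-)(q) = [\tau^+(0),\tau^-(q)]_+ = [Q,\tau^-(q)]_+
\end{align*}
for every integer $q$. Specializing to $q=1$, we obtain $\tilde\om(1) = Q\,\tau^-(1) + \tau^-(1)\,Q$ as operators on $V$.

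Next, since $v \in \ker Q$, the second summand annihilates $v$, so
\begin{align*}
\tilde\om(1)\, v = Q\bigl(\tau^-(1)v\bigr).
\end{align*}
On the other hand, Lemma \ref{lem_conf}(3) combined with $v \in V_n^\al$ yields $\tilde\om(1)v = (n - \tfrac{1}{2}(J,\al))\,v$. The assumption $n \neq \tfrac{1}{2}(J,\al)$ lets me divide, concluding
\begin{align*}
v = \tfrac{1}{n - \frac{1}{2}(J,\al)}\, Q\bigl(\tau^-(1)v\bigr) \in \im Q.
\end{align*}

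There is no real obstacle here; the lemma is a standard Hodge-style argument (analogous to the proof that Laplacian-harmonic representatives exhaust cohomology in finite-dimensional chain complexes equipped with a contracting homotopy on non-zero spectral components). The only thing to verify carefully is the sign in the supercommutator $[Q,\tau^-(1)]_+$, which is a $+$ since both $Q$ and $\tau^-(1)$ are odd; this is precisely why the contribution of $\tau^-(1)Q$ vanishes on $\ker Q$ rather than cancelling the desired term.
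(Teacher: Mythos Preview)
Your proof is correct and essentially identical to the paper's. The paper computes directly $G_{-\ft}^+ G_{\ft}^- v = [G_{-\ft}^+, G_{\ft}^-]_+ v = (n - \tfrac{1}{2}(J,\al))v$ using $Qv=0$ and the superconformal anticommutator, while you package the same identity through $\tilde\om(1) = [Q,\tau^-(1)]_+$ via Lemma~\ref{lem_conf}; since $\tau^-(1) = G_{\ft}^-$, the two arguments are the same computation in slightly different notation.
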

\begin{proof}
Since
$G_{-\ft}^+ G_{\ft}^- v = [G_{-\ft}^+, G_{\ft}^-]_+ v = (n- \frac{(J,\al)}{2})v$,
if $n \neq \frac{(J,\al)}{2}$,
then
$v = \frac{2}{2n-(J,\al)} Q G_{\ft}^- v \in \im Q$.
\end{proof}
By Lemma \ref{lem_chiral_concentrate}, $H(V,Q)$ can be identified as a quotient of 
\begin{align}
\bigoplus_{\al \in H} V_{\frac{(J,\al)}{2}}^\al \subset V.
\label{eq_Borisov}
\end{align}

\begin{rem}\label{rem_Borisov}
Borisov directly introduced an algebra structure on \eqref{eq_Borisov} under some assumption \cite[Definition 2.4]{Borisov}, which is called a chiral ring \cite{Borisov}. Although Borisov's definition differs from our definition, for a unitary $N=2$ vertex operator algebra, Borisov's assumption, which is used to define the algebra structure, automatically holds, so they coincide (see also Proposition \ref{prop_isom_ring}).
\end{rem}


Next, we generalize the above construction to the full case and define a cohomology ring of $N=(2,2)$ full VOA.
Let $(F,\om,\omb,\tau^\pm,\btau^\pm,J,\bJ)$ be an $N=(2,2)$ vertex operator superalgebra.
Let $H_l=\R J$ and $H_r =\R J_r$ equipped with bilinear forms $(J,J)_l = \frac{c}{3}$ and $(\bJ,\bJ)_r = \frac{\bc}{3}$.
For $\al \in H = H_l\oplus H_r$, set
\begin{align*}
F_{h,\h}^\al &= \{v \in F_{h,\h} \mid J(0)v =(\al,J)_l v \text{ and } \bJ(0)v =(\al,\bJ)_r v\}
\end{align*}
and 
\begin{align*}
d_B=Q+\bQ,\quad Q =\tau^+(0),\quad\bar{Q}=\bar{\tau}^+(0),
\end{align*}
which is a topological twist in Section \ref{sec_class_twist}, called the B-twist.
There is another topological twist, namely the $A$-twist \eqref{eq_AB_twist}.
However, since the $B$-twist of the mirror full vertex operator superalgebra $\widetilde F$
coincides with the $A$-twist of $F$, every statement and proof for the $B$-twist applies
formally to the $A$-twist as well, although the two are not isomorphic in general; see
Section \ref{sec_LG}. Accordingly, we work mainly with the $B$-twist and write $d_B$ simply as
$d$ unless otherwise stated.
%


Since $Q$ and $\bQ$ are superderivations, 
\begin{align}
d(a(r,s)b) = (da)(r,s) b + (-1)^{|a|} a(r,s)db
\label{eq_super_derivation_d}
\end{align}
holds for any $a,b\in F$ and $r,s\in \R$. Hence, $\ker d$ is a subalgebra of $F$.
By $[Q,Q]_+ =[\bQ,\bQ]_+=[Q,\bQ]_+=0$, we have $d^2=0$.
Hence, $\im d$ is an ideal of $F$. Since for any $a,b \in \ker d$,
\begin{align}
\begin{split}
\frac{d}{dz}Y(a,\uz)b &= [L(-1),Y(a,\uz)]b = [[G_{-\ft}^+,G_{-\ft}^-]_+,Y(a,\uz)]b\\
&=[[d,G_{-\ft}^-]_+,Y(a,\uz)]b = [[d,Y(a,\uz)] ,G_{-\ft}^-]_+b +[d, [G_{-\ft}^-,Y(a,\uz)]_+]b\\
&= [Y(da,\uz) ,G_{-\ft}^-]_+b +d [G_{-\ft}^-,Y(a,\uz)]_+b\\
&=d [G_{-\ft}^-,Y(a,\uz)]_+b \in (\im d) ((z,\z,|z|^\R)),
\end{split}
\label{eq_ker_dz}
\end{align}
and similarly $\frac{d}{d\z}Y(a,\uz)b \in (\im d)((z,\z,|z|^\R))$,
the induced linear map $\bar{Y}(\bullet,\uz):H(F,d) \rightarrow \End H(F,d)[[z,\z,|z|^\R]]$ on
\begin{align*}
H(F,d) = \ker d  /\im d
\end{align*}
consists of only constant term, which is the $(-1)$-th product:
\begin{align*}
H(F)\otimes H(F) \rightarrow H(F),\quad a\otimes b \mapsto a(-1,-1)b.
\end{align*}
Since $Y(L(-1)a,\uz) = Y([d,G_{-\ft}^-]_+ a,\uz)= Y(dG_{-\ft}^- a,\uz) = [d,Y(G_{-\ft}^- a,\uz)]_+$,
similarly to the proof of Proposition \ref{prop_constant_chiral},
we have
\begin{align*}
\langle u, Y(a,\uz_1)Y(b,\uz_2)c,\quad\quad  \langle u, Y(Y(a,\uz_0)b,\uz_2)c \in \C
\end{align*}
for any $a,b,c\in \ker d$ and $u \in F^\vee$ with $u(d\bullet)=0$. Hence, the $(-1)$-th product on $H(F,d)$ is supercommutative and associative by (FV3).
Defining a $\Z_2$-grading on $H(F,d)$ by the fermion number $(-1)^F$. Then, we have:
\begin{prop}\label{prop_full_ring}
$H(F,d)$ is a $\Z_2$-graded unital supercommutative algebra by
\begin{align*}
H(F,d)\otimes H(F,d) \rightarrow H(F,d),\quad a\otimes b \mapsto a(-1,-1)b
\end{align*}
and the unit $\va \in H(F,d)$.
\end{prop}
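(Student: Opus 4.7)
The plan is to establish the proposition in three logical stages: first, show that $\ker d$ is a full vertex subalgebra of $F$ and $\im d$ is a two-sided ideal of $\ker d$, so that $H(F,d)=\ker d/\im d$ inherits a full vertex superalgebra structure; second, verify that the induced vertex operator on this quotient is independent of $\uz$, reducing it to the $(-1,-1)$-th product; third, read off supercommutativity, associativity, and the unit axiom from this constancy.

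For the first stage, the key input is that $d=\tau^+(0)+\btau^+(0)$ is an odd superderivation of all products $a(r,s)b$, as stated in \eqref{eq_super_derivation_d}. Closure of $\ker d$ under products is then immediate, and $d^2=[Q,Q]_+ + 2[Q,\bar Q]_+ + [\bar Q,\bar Q]_+=0$ (each bracket vanishing since $\tau^+,\btau^+$ live in the decoupled left and right $N=2$ copies of Definition \ref{def_N22}) shows $\im d\subseteq\ker d$, and the superderivation identity turns the ideal axiom $(\im d)(r,s)(\ker d)\subseteq\im d$ into a one-line check.

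For the second stage, I would invoke the computation \eqref{eq_ker_dz}: writing $L(-1)=[G_{-\ft}^+,G_{-\ft}^-]_+$ and using $[\bG_{-\ft}^+,G_{-\ft}^-]_+=0$ (left/right decoupling again) gives $[d,G_{-\ft}^-]_+=L(-1)$, and a short manipulation shows $\frac{d}{dz}Y(a,\uz)b=d\,[G_{-\ft}^-,Y(a,\uz)]_+ b$ for $a,b\in\ker d$. The analogous identity with $\Ld(-1)=[d,\bG_{-\ft}^-]_+$ takes care of $\frac{d}{d\z}$. Hence the induced operator $\bar Y(a,\uz)b$ on the quotient is annihilated by $\partial_z$ and $\partial_{\z}$, and since $Y(a,\uz)b\in F((z,\z,|z|^\R))$ has only terms with nonnegative real parts and is continuous, the only surviving coefficient is $a(-1,-1)b$. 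This gives a well-defined binary operation $H(F,d)\otimes H(F,d)\to H(F,d)$, and $\vac$ serves as unit by (FV1).

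For the third stage, supercommutativity $a(-1,-1)b=(-1)^{|a||b|}b(-1,-1)a$ on $H(F,d)$ follows from Proposition \ref{prop_translation}(4) once constancy in $\uz$ is established. Associativity is the constant specialization of the locality/associativity axiom (FV3): for $a,b,c\in\ker d$ and $u\in(\ker d)^\vee/(\im d)^\perp$, a calculation mirroring Proposition \ref{prop_constant_chiral}, now using both $L(-1)=[d,G_{-\ft}^-]_+$ and $\Ld(-1)=[d,\bG_{-\ft}^-]_+$, shows $\langle u,Y(a,\uz_1)Y(b,\uz_2)c\rangle$ and $\langle u,Y(Y(a,\uz_0)b,\uz_2)c\rangle$ are constants, so the two iterated $(-1,-1)$-products agree. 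Finally, the $\Z_2$-grading on $H(F,d)$ is well-defined because $d$ is odd and hence preserves the fermion number decomposition of $\ker d$ and $\im d$. The main technical point, and the only one requiring genuine care, is the computation $[d,G_{-\ft}^-]_+=L(-1)$ and its bar analogue; once these are in hand, everything else is a formal consequence of the full vertex algebra axioms and the trivialization of the vertex operator on the cohomology.
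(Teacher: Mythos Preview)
Your proposal is correct and follows essentially the same route as the paper: establish that $\ker d$ is a subalgebra and $\im d$ an ideal via the superderivation property \eqref{eq_super_derivation_d}, use the computation \eqref{eq_ker_dz} (and its $\z$-analogue) to show the induced vertex operator on $H(F,d)$ is constant in $\uz$, and then read off associativity and supercommutativity from (FV3) via the Proposition~\ref{prop_constant_chiral}-style argument that the two-point correlators are constants. One small wording issue: your phrase ``has only terms with nonnegative real parts and is continuous'' is not quite the right justification---the point is purely formal, that a series in $F((z,\z,|z|^\R))$ annihilated by both $\partial_z$ and $\partial_{\z}$ reduces to its $z^0\z^0$ coefficient---but this does not affect the validity of the argument.
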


\begin{lem}\label{lem_full_concentrate}
Let $r,s \in \R$ and $v=\sum_{\al \in H} v_\al \in \bigoplus_{\al\in H}F_{r+\frac{(\al,J)_l}{2},s+\frac{(\al,\bJ)_r}{2}}^\al$ be a non-zero vector.
If $dv=0$ and $(r,s)\neq (0,0)$, then $v \in \im d$.
\end{lem}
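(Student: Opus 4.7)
The plan is to extend the argument of Lemma \ref{lem_chiral_concentrate} to the full setting by exhibiting two partial homotopy operators, one sensing the $L(0)$-direction and the other $\Ld(0)$. Writing
$$V_{r,s} := \bigoplus_{\al\in H} F^{\al}_{r+\frac{(\al,J)_l}{2},\,s+\frac{(\al,\bJ)_r}{2}},$$
I would introduce the candidate homotopies $d_l^* := G_{\ft}^-$ and $d_r^* := \bG_{\ft}^-$. My first step would be to check that $d$, $d_l^*$, and $d_r^*$ all preserve $V_{r,s}$. This is pure grading bookkeeping: each mode $G_{\pm\ft}^{\pm}$ shifts charge by $\pm\frac{3}{c}J$ and $L(0)$-weight by $\pm\ft$ while leaving $\Ld(0)$-weight fixed, and each $\bG_{\pm\ft}^{\pm}$ shifts charge by $\pm\frac{3}{\bc}\bJ$ and $\Ld(0)$-weight by $\pm\ft$; using $(\frac{3}{c}J,J)_l = 1 = (\frac{3}{\bc}\bJ,\bJ)_r$ and the orthogonality of $H_l$ and $H_r$, these shifts cancel precisely against the induced changes in $\frac{(\al,J)_l}{2}$ and $\frac{(\al,\bJ)_r}{2}$.

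Next, I would compute the relevant graded anticommutators on $V_{r,s}$. Applying \eqref{eq_susy} in each chiral sector gives
$$[G_{-\ft}^+, G_{\ft}^-]_+ = L(0) - \ft J(0), \qquad [\bG_{-\ft}^+, \bG_{\ft}^-]_+ = \Ld(0) - \ft \bJ(0),$$
while the cross brackets $[G_{-\ft}^+, \bG_{\ft}^-]_+$ and $[\bG_{-\ft}^+, G_{\ft}^-]_+$ vanish: by Proposition \ref{ker_hom} the chiral generators live in $\ker\D$ and the anti-chiral ones in $\ker D$, and these two subalgebras super-commute inside $F$ via the Koszul-signed tensor product of Definition \ref{def_Koszul_vertex}. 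Hence $[d, d_l^*]_+ = L(0) - \ft J(0)$ and $[d, d_r^*]_+ = \Ld(0) - \ft \bJ(0)$. On each summand $F^{\al}_{r+(\al,J)_l/2,\,s+(\al,\bJ)_r/2}$ of $V_{r,s}$, these operators act by the scalars $\bigl(r+\frac{(\al,J)_l}{2}\bigr) - \ft (\al,J)_l = r$ and, similarly, $s$, so on the whole of $V_{r,s}$,
$$[d, d_l^*]_+ = r\,\id_{V_{r,s}}, \qquad [d, d_r^*]_+ = s\,\id_{V_{r,s}}.$$

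The conclusion is then immediate. For $v \in V_{r,s}$ with $dv = 0$: if $r \neq 0$, then $v = r^{-1}(dd_l^* + d_l^* d)v = r^{-1} d(d_l^* v) \in \im d$; if instead $s \neq 0$, the same argument with $d_r^*$ applies. Since the hypothesis $(r,s)\neq(0,0)$ forces at least one of $r$, $s$ to be nonzero, we obtain $v \in \im d$. I expect the only non-routine step to be the vanishing of the cross anticommutators, which hinges essentially on the tensor decomposition $\ker\D \otimes \ker D \to F$ from Proposition \ref{ker_hom}; apart from this, the argument is a direct two-variable analogue of Lemma \ref{lem_chiral_concentrate}.
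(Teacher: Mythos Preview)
Your proof is correct and is cleaner than the paper's own argument. Both approaches use $G_{\ft}^-$ (respectively $\bG_{\ft}^-$) as a homotopy for $d$, and both rely on the vanishing cross-anticommutator $[\bG_{-\ft}^+,G_{\ft}^-]_+=0$ coming from Proposition~\ref{prop_translation}(7)/Proposition~\ref{ker_hom}. The difference is organizational: the paper first picks the component $v_{\al_0}$ of maximal $J$-charge to force $Qv_{\al_0}=0$, subtracts $\frac{1}{r}\,dG_{\ft}^-v_{\al_0}$, and repeats this recursively until $v$ is reduced to a single $\al$-component with $Qv=\bQ v=0$, and only then applies the homotopy identity. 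You bypass this recursion by observing directly that $[d,G_{\ft}^-]_+=L(0)-\tfrac12 J(0)$ acts as the \emph{uniform} scalar $r$ on every summand of $V_{r,s}$, so the homotopy identity $rv=d\,d_l^*v+d_l^*\,dv$ holds on the whole space at once. Your version is the standard ``contracting homotopy'' formulation and is arguably the more natural packaging of the same content.
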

\begin{proof}
Assume $dv =0$ and $r\neq 0$.
Then, $Q v_\al \in F_{r+\frac{(\al,J)}{2}+\ft,s+\frac{(\al,\bJ)}{2}}^{\al+\frac{3}{c}J}$
and $\bQ v_\al \in F_{r+\frac{(\al,J)}{2},s+\frac{(\al,\bJ)}{2}+\ft}^{\al+\frac{3}{\bc}\bJ}$.
Hence, $dv=0$ implies
$Q v_\al = \bQ v_{\al-\frac{3}{\bc}\bJ+\frac{3}{c}J}$ for any $\al\in H$.

Take $\al_0 \in H$ such that $(\al,J)$ is maximal in $\al \in H$ satisfying $v_\al \neq 0$.
Then, $Q v_{\al_0} =0$.
Since $G_{-\ft}^+ G_{\ft}^- v_{\al_0} = [G_{-\ft}^+, G_{\ft}^-]_+ v_{\al_0} = rv$.
Then,
$v' = v - \frac{1}{r} d G_\ft^- v_{\al_0}$ satisfies $dv'=0$.
By repeating this, we may assume that $Qv = \bQ v = 0$ with $v \in F_{r+\frac{(\al,J)_l}{2},s+\frac{(\al,\bJ)_r}{2}}^\al$ for some $\al\in H$.
Then,
\begin{align*}
d G_\ft^- v = (G_{-\ft}^+ + \bG_{-\ft}^+) G_\ft^- v =rv - G_\ft^- \bG_{-\ft}^+ v=rv.
\end{align*}
Thus, $v \in \im d$.
\end{proof}

\subsection{Inequalities from unitarity and supersymmetry}\label{sec_inequality}
In this section we will show some inequalities as consequences from the unitarity and supersymmetry. 
All the results (maybe except for Lemma \ref{lem_chiral_volume} and the second half of Lemma \ref{lem_primary_inequality}) in this section are well-known in physics \cite{LVW}, but for the reader's convenience, we will give all the proofs.

Let $F$ be a unitary $N=(2,2)$ full vertex operator superalgebra.
\begin{dfn}\label{def_full_primary}
Let $\al \in H=H_l\oplus H_r$ and $h,\h \in \R$.
A vector $v \in F_{h,\h}^\al$ is called a lowest weight vector of weight $\al$ if it satisfies
\begin{align*}
\begin{cases}
L_n v= \Ld_n v=0 & (n\geq 1),\\
J_n v= \bJ_n v=0 & (n\geq 1),\\
G_r^\pm v=\bG_r^\pm v= 0 & (r \geq \ft \text{ and }r \in \ft+\Z).
\end{cases}
\end{align*}
For a lowest weight vector $v$, we consider the following four conditions:
\begin{align}
\begin{cases}
G_{-\ft}^+ v=\bG_{-\ft}^+ v=0,\quad\quad &(\text{c-c primary})\\
G_{-\ft}^- v=\bG_{-\ft}^+ v=0,\quad\quad &(\text{a-c primary})\\
G_{-\ft}^+ v=\bG_{-\ft}^^ v=0,\quad\quad &(\text{c-a primary})\\
G_{-\ft}^- v=\bG_{-\ft}^- v=0,\quad\quad &(\text{a-a primary}).
\end{cases}
\end{align}
A vector which satisfies the first condition is called a \textbf{chiral-chiral primary} (c-c primary) vector
and the second condition is called a \textbf{antichiral-chiral primary} (a-c primary) vector, and so on, in physics \cite{LVW}. Denote by $C_{h,\h}^\al(F)_{cc}$ (resp. $C_{h,\h}^\al(F)_{ac,ca,aa}$) be the subspace of $F$ consisting of c-c primary (resp. a-c, c-a, a-a primary) vectors of weight $\al$.
\end{dfn}
Since we mainly consider c-c primary vectors, we will hereafter write $C_{h,\h}^\al(F)_{cc}$ simply as $C_{h,\h}^\al(F)$ when there is no confusion.
%


\begin{lem}
\label{lem_NS_inequality}
Let $\al \in H$
and $v \in F_{h,\h}^\al$ be a non-zero vector.
Then, the following properties hold:
\begin{enumerate}
\item
$h \geq \frac{|(\al,J)_l|}{2}$ and $\h \geq \frac{|(\al,\bJ)_r|}{2}$.
\item
$h = \frac{(\al,J)}{2}$ if and only if $G_{-\ft}^+ v =0$ and $G_\ft^- v =0$.
\item
$h = -\frac{(\al,J)}{2}$ if and only if $G_{-\ft}^- v =0$ and $G_\ft^+ v =0$.
\end{enumerate}
\end{lem}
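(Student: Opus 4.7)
The plan is to derive both the inequalities and their equality characterizations from a single identity obtained by combining the $N=2$ superconformal anticommutation relations with the adjoint formulas of Proposition \ref{prop_unitary_adjoint}. The key observation is that, up to the positive factor $\langle v,v\rangle$, each of the quantities $h \pm \ft(\al,J)_l$ can be written as a sum of two non-negative squared norms.

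Concretely, from \eqref{eq_susy} one has $[G_\ft^+, G_{-\ft}^-]_+ = L(0) + \ft J(0)$ and $[G_{-\ft}^+, G_\ft^-]_+ = L(0) - \ft J(0)$, while Proposition \ref{prop_unitary_adjoint} gives the adjoint relations $(G_r^+)^* = G_{-r}^-$ and $(G_r^-)^* = G_{-r}^+$. Using $L(0)v = hv$ and $J(0)v = (\al,J)_l v$, expanding each anticommutator into two products and moving one factor across the Hermitian form yields
\begin{align*}
\langle G_{-\ft}^- v, G_{-\ft}^- v\rangle + \langle G_\ft^+ v, G_\ft^+ v\rangle &= \langle v, [G_\ft^+, G_{-\ft}^-]_+ v\rangle = \bigl(h + \ft(\al,J)_l\bigr)\langle v,v\rangle,\\
\langle G_{-\ft}^+ v, G_{-\ft}^+ v\rangle + \langle G_\ft^- v, G_\ft^- v\rangle &= \langle v, [G_{-\ft}^+, G_\ft^-]_+ v\rangle = \bigl(h - \ft(\al,J)_l\bigr)\langle v,v\rangle.
\end{align*}
Since $\langle v,v\rangle > 0$ by positive-definiteness and each left-hand side is non-negative, the two inequalities $h \geq \pm \ft(\al,J)_l$ follow immediately, proving $h \geq \ft|(\al,J)_l|$. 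Moreover, the second identity vanishes precisely when $G_{-\ft}^+ v = 0$ and $G_\ft^- v = 0$, which gives the characterization in (2); analogously, vanishing of the first identity yields (3). The bound $\h \geq \ft|(\al,\bJ)_r|$ is obtained by the identical computation after replacing $G_r^\pm, L(0), J(0)$ with their anti-holomorphic counterparts $\bG_r^\pm, \Ld(0), \bJ(0)$ and invoking the corresponding adjoint relations from Proposition \ref{prop_unitary_adjoint}.

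The argument is a direct computation, so no substantive obstacle arises. The only delicate point is keeping track of the sign conventions highlighted in Remark \ref{rem_sign_strange} when passing to the anti-holomorphic adjoint formulas, but since only squared norms of vectors appear these signs cancel, and the anti-chiral side runs exactly parallel to the chiral one.
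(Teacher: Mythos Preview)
Your argument is correct and in fact cleaner than the paper's. The paper writes only one of the two terms in the anticommutator as a squared norm, which forces a case distinction: it first assumes $G_\ft^- v = 0$ to obtain $0 \leq \langle G_{-\ft}^+ v, G_{-\ft}^+ v\rangle = (h - \ft(\al,J))\langle v,v\rangle$, and then treats the case $G_\ft^- v \neq 0$ by a descent step, applying the bound just established to the vector $G_\ft^- v$ (which satisfies $(G_\ft^-)^2 v = 0$). The equality characterization in (2) is likewise handled in the paper by a short auxiliary argument showing $G_\ft^- v = 0$ first. By contrast, you expand both terms of the anticommutator as squared norms simultaneously, so no case analysis or descent is needed and the equality characterizations fall out immediately from positive-definiteness. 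Your route is the more direct one; the paper's version reaches the same conclusions with a little more bookkeeping.
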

\begin{proof}
By Proposition \ref{prop_unitary_adjoint}, if $G_\ft^- v=0$, then
\begin{align}
\begin{split}
0 &\leq  \langle G_{-\ft}^+ v, G_{-\ft}^+ v \rangle = \langle v,G_{\ft}^-G_{-\ft}^+ v \rangle = \langle v,[G_{\ft}^-,G_{-\ft}^+]_+ v \rangle \\
&= \langle v, (L(0)-\ft J(0)) v \rangle = (h-\ft(\al,J)) \langle v,v\rangle
\end{split}
\label{eq_ine_1}
\end{align}
and similarly, if $G_\ft^+ v=0$, then
\begin{align*}
0 \leq  \langle G_{-\ft}^- v, G_{-\ft}^- v \rangle = (h+\ft(\al,J)) \langle v,v\rangle.
\end{align*}
Suppose $G_\ft^- v \neq 0$.
Since $G_\ft^- G_\ft^- v = 0$ and $G_\ft^- v \in F_{h-\ft,\h}^{\al- \frac{3}{c}J}$, we have
$h-\ft \geq \frac{(\al-\frac{3}{c}J,J)}{2} = \frac{(\al,J)}{2}-\ft$.
Hence, $h \geq \frac{(\al,J)}{2}$ always holds.
Suppose that $G_{-\ft}^+ v =0$ and $G_\ft^- v =0$. Then, by \eqref{eq_ine_1}, $h= \frac{(\al,J)}{2}$.
Conversely, assume $h= \frac{(\al,J)}{2}$.
Then,
\begin{align*}
\langle G_{\ft}^- v, G_{\ft}^- v \rangle = \langle v, G_{-\ft}^+ G_\ft^- v\rangle.
\end{align*}
Since $G_\ft^- v \in F_{h-\ft,\h}^{\al- \frac{3}{c}J}$ satisfies the equality and $(G_\ft^-)^2 v=0$,
$G_{-\ft}^+G_\ft^- v =0$ by \eqref{eq_ine_1}.
Hence, $G_{\ft}^- v=0$ and, by \eqref{eq_ine_1} again, $G_{-\ft}^+ v=0$.
\end{proof}

Hence, we have:
\begin{prop}\label{prop_NS_inequality}
Let $v \in F_{h,\h}^\al$ be a non-zero vector. Then, $v$ is c-c primary if and only if $h = \frac{(\al,J)_l}{2}$ and $\h=\frac{(\al,\bJ)_r}{2}$.
\end{prop}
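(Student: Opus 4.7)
The plan is to deduce the proposition directly from Lemma \ref{lem_NS_inequality}, applied separately on the holomorphic side and on its antiholomorphic analog (which holds by the same proof, using the $\bG$-adjoint relations in Proposition \ref{prop_unitary_adjoint}). The forward implication will be immediate: a c-c primary vector is in particular a lowest weight vector, so $G_{\ft}^- v = \bG_{\ft}^- v = 0$, and the c-c conditions give $G_{-\ft}^+ v = \bG_{-\ft}^+ v = 0$; part (2) of the lemma and its antiholomorphic analog then yield the claimed weight equalities $h=\frac{(\al,J)_l}{2}$ and $\h=\frac{(\al,\bJ)_r}{2}$.

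For the converse, assume $h=\frac{(\al,J)_l}{2}$ and $\h=\frac{(\al,\bJ)_r}{2}$. The inequalities in Lemma \ref{lem_NS_inequality}(1) force $(\al,J)_l\geq 0$ and $(\al,\bJ)_r\geq 0$, so the weights saturate the unitarity bounds $h=|(\al,J)_l|/2$ and $\h=|(\al,\bJ)_r|/2$. The ``if'' direction of Lemma \ref{lem_NS_inequality}(2), and its antiholomorphic analog, then give the c-c conditions $G_{-\ft}^+ v = \bG_{-\ft}^+ v = 0$ together with $G_{\ft}^- v = \bG_{\ft}^- v = 0$.

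It remains to verify the higher lowest-weight annihilation conditions, which I would do uniformly by a saturation argument. For each candidate annihilator $X\in\{L_n,\Ld_n,J_n,\bJ_n,G_r^\pm,\bG_r^\pm\}$ with $n\geq 1$ or $r\geq \ft$, the image $Xv$ lies in a graded component $F_{h',\h'}^{\al'}$ whose weights are read off from the commutation relations of the $N=(2,2)$ superconformal algebra. Applying Lemma \ref{lem_NS_inequality}(1) to $F_{h',\h'}^{\al'}$, I will check that either $h'<|(\al',J)_l|/2$ or $\h'<|(\al',\bJ)_r|/2$, which forces $F_{h',\h'}^{\al'}=0$ and hence $Xv=0$. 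For instance, $L_n v \in F_{h-n,\h}^\al$ with $h-n<h=|(\al,J)_l|/2$; and $G_r^+ v\in F_{h-r,\h}^{\al+\tfrac{3}{c}J}$, where $(\tfrac{3}{c}J,J)_l=1$ gives $|(\al,J)_l+1|/2=h+\ft>h-r$ for $r\geq\ft$.

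The only, and rather minor, obstacle is the bookkeeping for $G_r^-$ and $\bG_r^-$ with $r\geq \tfrac{3}{2}$: here $|(\al,J)_l-1|/2$ requires a short case split on whether $(\al,J)_l\geq 1$, but in both cases the desired inequality $h-r<|(\al,J)_l-1|/2$ holds and gives the annihilation. The antiholomorphic generators are handled identically. Nothing in the argument uses any structure beyond Lemma \ref{lem_NS_inequality} and the superconformal weights of the currents.
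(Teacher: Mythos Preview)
Your proposal is correct and follows essentially the same approach as the paper's proof: both directions are deduced from Lemma \ref{lem_NS_inequality}, with the converse obtained by first extracting $G_{-\ft}^+ v = G_{\ft}^- v = 0$ (and their antiholomorphic analogs) from part (2), and then killing the remaining positive-mode annihilators by the weight inequality in part (1). Your explicit case split for $G_r^-$ with $r\geq\tfrac{3}{2}$ is just a more careful version of what the paper leaves implicit when it writes ``by the inequality, $G_r^+ v = G_s^- v = 0$ for any $r\geq\ft$ and $s\geq\tfrac{3}{2}$.''
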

\begin{proof}
If $v$ is c-c primary, then $h = \frac{(\al,J)_l}{2}$ and $\h=\frac{(\al,\bJ)_r}{2}$ by Lemma \ref{lem_NS_inequality}.
Assume that $h = \frac{(\al,J)_l}{2}$ and $\h=\frac{(\al,\bJ)_r}{2}$.
By Lemma \ref{lem_NS_inequality} again, $G_{-\ft}^+ v = 0$.
Since
\begin{align*}
L(n) v \in F_{h-n,\h}^\al \quad\quad J(n)v \in F_{h-n,\h}^\al,
\end{align*}
by the inequality (1) in Lemma \ref{lem_NS_inequality}, $L(n)v = J(n)v =0$ for any $n \geq 1$.
Similarly,
\begin{align*}
G_{r}^+ v \in F_{h-r,\h}^{\al + \frac{3}{c}J},\quad,\quad G_{s}^- v \in F_{h-s,\h}^{\al - \frac{3}{c}J},
\end{align*}
by the inequality, $G_r^+ v =G_s^-v =0$ for any $r \geq \ft$ and $s \geq \frac{3}{2}$.
Hence, the assertion holds.
\end{proof}

Hence, the inclusion map $C(F)_{\frac{(\al,J)_l}{2},\frac{(\al,\bJ)_r}{2}}^{\al} \rightarrow F_{\frac{(\al,J)_l}{2},\frac{(\al,\bJ)_r}{2}}^{\al}$ gives an isomorphism of vector spaces.
Moreover, for any $v \in F_{\frac{(\al,J)_l}{2},\frac{(\al,\bJ)_r}{2}}^{\al}$, by $d v=(G_{-\ft}^+ + \bG_{-\ft}^+) v =0$,
we can regard $v$ as an element of $H(F,d)$.
If $v =dw$ for some $w \in \bigoplus_{k\in \Z} F_{\frac{(\al,J)_l}{2}-\frac{k+1}{2},\frac{(\al,\bJ)_r}{2}+\frac{k}{2}}^{\al-(k+1)\frac{c}{3}J +k\frac{\bc}{3}\bJ}$, then by Lemma \ref{lem_NS_inequality}, $d w =0$.
Hence, by Lemma \ref{lem_full_concentrate}, we have:
\begin{prop}
\label{prop_cohomology_primary}
Let $F$ be an $N=(2,2)$ unitary full vertex operator superalgebra.
Then, the cohomology ring $H(F,d)$
is isomorphic to $\bigoplus_{\al \in H} C(F)_{\frac{(\al,J)_l}{2},\frac{(\al,\bJ)_r}{2}}^{\al}$ as a vector space by the inclusion.
\end{prop}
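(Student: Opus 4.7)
The plan is to use the $H \times \R^2$-grading of $F$ together with the supersymmetric inequality of Lemma~\ref{lem_NS_inequality} and the vanishing result of Lemma~\ref{lem_full_concentrate}. Concretely, I would reparametrize the conformal weights by the ``shifted'' variables
\begin{align*}
r = h - \tfrac{(\al,J)_l}{2}, \quad s = \bar h - \tfrac{(\al,\bar J)_r}{2},
\end{align*}
and write $F = \bigoplus_{r,s \in \R} F[r,s]$, where
$F[r,s] = \bigoplus_{\al \in H} F^\al_{r+\frac{(\al,J)_l}{2},\, s+\frac{(\al,\bar J)_r}{2}}$.
Using $(J,J)_l = \tfrac{c}{3}$ (so that $\tfrac{3}{c}J$ shifts $(\al,J)_l$ by $1$), a direct computation shows $Q = \tau^+(0)$ and $\bar Q = \btau^+(0)$ both preserve the labels $(r,s)$: for $v \in F^\al_{h,\bar h}$ with $h - \tfrac{(\al,J)_l}{2} = r$, one has $Qv \in F^{\al+\frac{3}{c}J}_{h+\frac{1}{2},\bar h}$ and $h+\tfrac{1}{2}-\tfrac{(\al+\frac{3}{c}J,J)_l}{2}=r$, and similarly for $\bar Q$. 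Hence $d$ restricts to a differential on each $F[r,s]$, and $H(F,d) = \bigoplus_{r,s} H(F[r,s],d)$.

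Next I would restrict the range of $(r,s)$: Lemma~\ref{lem_NS_inequality}(1) gives $h \geq \tfrac{|(\al,J)_l|}{2} \geq \tfrac{(\al,J)_l}{2}$, hence $r \geq 0$, and likewise $s \geq 0$. Now invoke Lemma~\ref{lem_full_concentrate}: any $d$-closed element of $F[r,s]$ with $(r,s) \neq (0,0)$ is $d$-exact, so $H(F[r,s],d) = 0$ whenever $(r,s) \neq (0,0)$. Therefore
\begin{align*}
H(F,d) = H(F[0,0],d).
\end{align*}

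For the remaining piece, Proposition~\ref{prop_NS_inequality} identifies the summand at $(r,s)=(0,0)$ with the space of c-c primaries:
\begin{align*}
F[0,0] = \bigoplus_{\al \in H} F^\al_{\frac{(\al,J)_l}{2},\frac{(\al,\bar J)_r}{2}} = \bigoplus_{\al \in H} C(F)^\al_{\frac{(\al,J)_l}{2},\frac{(\al,\bar J)_r}{2}}.
\end{align*}
By the c-c primary condition, $G_{-\ft}^+ v = \bG_{-\ft}^+ v = 0$ for every $v \in F[0,0]$, so $d$ acts as zero on $F[0,0]$; in particular $\im d \cap F[0,0] = d(F[0,0]) = 0$ since $d$ preserves the $(r,s)$-decomposition. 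Thus $H(F[0,0],d) = F[0,0]$, and the inclusion induces the claimed isomorphism of vector spaces.

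The main subtlety, and the step I would double-check most carefully, is the bookkeeping that $d$ preserves the shifted grading $[r,s]$ (this is where the normalizations $(J,J)_l = \tfrac{c}{3}$ and $\tau^\pm \in F_{3/2,0}^{\pm \frac{3}{c}J}$ from \eqref{eq_wt_susy_current} are essential); once that is established, the rest is a clean combination of Proposition~\ref{prop_NS_inequality} (identifying the $(0,0)$-piece with c-c primaries) and Lemma~\ref{lem_full_concentrate} (killing every other piece).
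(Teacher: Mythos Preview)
Your proof is correct and follows essentially the same approach as the paper. The paper's argument (contained in the two sentences preceding the proposition) uses the same three ingredients---Proposition~\ref{prop_NS_inequality} to identify $F[0,0]$ with c-c primaries and see that $d$ vanishes there, a grading check to see that a hypothetical preimage $w$ of a c-c primary under $d$ must also lie in $F[0,0]$, and Lemma~\ref{lem_full_concentrate} for surjectivity---only without introducing the $(r,s)$-decomposition explicitly; your packaging via $F[r,s]$ makes the argument a bit cleaner but is the same proof.
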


Proposition \ref{prop_cohomology_primary} allows us to put the algebraic structure of the cohomology ring $H(F,d)$ onto $C(F) \subset F$, which is often more convenient.
Denote the isomorphism in Proposition \ref{prop_cohomology_primary} by
\begin{align}
R: C(F) = \bigoplus_{\al \in H}C(F)_{\frac{(\al,J)}{2},\frac{(\al,\bJ)}{2}}^{\al} \rightarrow H(F,d).
\label{eq_R_isom}
\end{align}

\begin{lem}\label{lem_primary_product}
Let $v \in C(F)_{\frac{(\al,J)}{2}, \frac{(\al,\bJ)}{2}}^{\al}$ and $w\in C(V)_{\frac{(\be,J)}{2},\frac{(\be,\bJ)}{2}}^{\be}$.
Then, $v(r,s)w =0$ for any $r>-1$ or $s >-1$ and
$v(-1,-1)w \in C(F)_{\frac{(\al+\be,J)}{2},\frac{(\al+\be,\bJ)}{2}}^{\al+\be}$.
\end{lem}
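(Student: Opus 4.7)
The plan is to prove both statements purely from the bigraded structure of $F$ together with the saturation characterization of c-c primary vectors in Proposition \ref{prop_NS_inequality}.

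First I would record the basic degree bookkeeping. Since $v \in F_{h_v,\h_v}^{\al}$ with $h_v = (\al,J)_l/2$, $\h_v = (\al,\bJ)_r/2$, and similarly for $w$ with weight $\be$, the action of a Fourier mode sends $v(r,s)w$ into $F_{h,\h}^{\al+\be}$ with
\begin{align*}
h = \tfrac{(\al,J)_l}{2} + \tfrac{(\be,J)_l}{2} - r - 1 = \tfrac{(\al+\be,J)_l}{2} - (r+1),\\
\h = \tfrac{(\al,\bJ)_r}{2} + \tfrac{(\be,\bJ)_r}{2} - s - 1 = \tfrac{(\al+\be,\bJ)_r}{2} - (s+1).
\end{align*}

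Next I would invoke Lemma \ref{lem_NS_inequality}, which says that any non-zero vector in $F_{h,\h}^{\al+\be}$ satisfies $h \geq |(\al+\be,J)_l|/2 \geq (\al+\be,J)_l/2$, and likewise $\h \geq (\al+\be,\bJ)_r/2$. Thus if $r>-1$, the above formula forces $h < (\al+\be,J)_l/2$, contradicting the lower bound, so $v(r,s)w=0$; the case $s>-1$ is identical in the antiholomorphic direction. This settles the first claim.

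For the second claim I would just observe that at $r=s=-1$ we hit the equality case of the inequality: $v(-1,-1)w$ lies in $F_{(\al+\be,J)_l/2,\,(\al+\be,\bJ)_r/2}^{\al+\be}$. By Proposition \ref{prop_NS_inequality}, any vector sitting at the minimal conformal weight allowed by its $J(0),\bJ(0)$-eigenvalues is automatically c-c primary, so $v(-1,-1)w \in C(F)^{\al+\be}_{(\al+\be,J)_l/2,\,(\al+\be,\bJ)_r/2}$, as desired.

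There is essentially no obstacle here; the content of the lemma is just that the c-c primary subspaces saturate both the $J$-weight bound and the $\bJ$-weight bound, and the $(-1,-1)$-product preserves that saturation while all higher modes would undershoot it. The only mild care needed is that one uses the absolute-value inequality from Lemma \ref{lem_NS_inequality} (not merely $h\geq (\al+\be,J)_l/2$), but since $|x|/2 \geq x/2$ it suffices to derive the contradiction from the weaker one-sided bound, as done above.
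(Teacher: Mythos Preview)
Your proof is correct and follows exactly the same approach as the paper: compute the bigrading of $v(r,s)w$, then invoke Lemma~\ref{lem_NS_inequality} for the vanishing and Proposition~\ref{prop_NS_inequality} for the c-c primary property of $v(-1,-1)w$. The paper's proof is just the one-line version of what you wrote out in detail.
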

\begin{proof}
Since $v(r,s)w \in F_{\frac{(\al+\be,J)}{2}-r-1, \frac{(\al+\be,\bJ)}{2}-s-1}^{\al+\be}$ for any $r,s \in \R$, the assertion follows from Lemma \ref{lem_NS_inequality} and Proposition \ref{prop_NS_inequality}.
\end{proof}
Hence, we can define a product $C(F) \otimes C(F) \rightarrow C(F)$ by the $(-1,-1)$-th product,
which is denoted by $\cdot$.
An important consequence of this construction is that $C(F)$ is not only a $\Z_2$-graded algebra, but also an $\R^2$-graded algebra.
Then, by construction, we have:
\begin{prop}\label{prop_isom_ring}
The linear isomorphism $R: C(F) \rightarrow H(F,d)$ \eqref{eq_R_isom} is an isomorphism of $\Z_2$-graded supercommutative algebras. Moreover, they have the $\R^2$-graded algebra structures by the action of $J(0)$ and $\bJ(0)$.
\end{prop}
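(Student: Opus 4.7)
The plan is to verify three compatibilities of the linear isomorphism $R$: the product, the $\Z_2$-grading, and the $\R^2$-grading. The heavy lifting has already been done in Proposition~\ref{prop_cohomology_primary} (that $R$ is a vector-space isomorphism) and Lemma~\ref{lem_primary_product} (that the $(-1,-1)$-th product stays inside $C(F)$ and shifts the $H$-weight additively), so the proof reduces to tracking these structures through the inclusion $C(F) \hookrightarrow \ker d \subset F$.

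First, I would check multiplicativity. By Proposition~\ref{prop_NS_inequality}, any $v \in C(F)$ satisfies $G_{-\ft}^+ v = \bG_{-\ft}^+ v = 0$, so $dv = 0$ and $C(F) \subset \ker d$. Under this inclusion, the product on $C(F)$ defined by $v \cdot w = v(-1,-1)w$ (which lands back in $C(F)$ by Lemma~\ref{lem_primary_product}) maps to the class $[v(-1,-1)w] \in H(F,d)$. But this is precisely the product on $H(F,d)$ constructed in Proposition~\ref{prop_full_ring}, hence $R(v \cdot w) = R(v) \cdot R(w)$. Unitality is immediate from $R(\vac) = [\vac]$.

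Next, I would handle the $\Z_2$-grading. Both $C(F)$ and $H(F,d)$ carry the fermion-number grading inherited from $F$; since $R$ comes from the inclusion $C(F) \hookrightarrow F$ followed by the quotient $\ker d \twoheadrightarrow H(F,d)$, and both of these maps are fermion-even, $R$ is $\Z_2$-graded. Supercommutativity of both sides was already established in Proposition~\ref{prop_full_ring} (for $H(F,d)$) and follows on $C(F)$ by transport of structure. For the $\R^2$-grading on $C(F)$, the decomposition $C(F) = \bigoplus_\al C(F)^\al$ is the joint eigenspace decomposition for $J(0)$ and $\bJ(0)$, and Lemma~\ref{lem_primary_product} already records $C(F)^\al \cdot C(F)^\be \subset C(F)^{\al+\be}$, giving the $\R^2$-graded algebra structure.

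The only point requiring care is the $\R^2$-grading on $H(F,d)$. A naive attempt to descend $J(0)$ and $\bJ(0)$ directly from $\ker d$ runs into the fact that $[J(0),d] = Q \neq 0$, so $J(0)$ does not preserve $\ker d$ in general. The remedy is to define the $\R^2$-grading on $H(F,d)$ via the isomorphism $R$, i.e.\ $H^\al(F,d) := R(C(F)^\al)$; this is forced since on c-c primaries both operators act diagonally (as $Q v = \bQ v = 0$ eliminates the obstruction $[J(0),d]v = -Qv$). With this definition, compatibility with the product is automatic from the already-verified multiplicativity of $R$ together with the additivity $(\al,\be)\mapsto \al+\be$ of Lemma~\ref{lem_primary_product}. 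This last verification of well-definedness of the $\R^2$-grading is the main (and essentially only) conceptual point, and it is resolved precisely by working with the c-c primary model $C(F)$ rather than directly on the quotient.
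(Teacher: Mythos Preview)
Your proof is correct and follows the same route as the paper, which simply records the proposition as holding ``by construction'' after Lemma~\ref{lem_primary_product}; you have filled in the details the paper leaves implicit. Your remark that $[J(0),d]=Q\neq 0$ obstructs a naive descent of the $\R^2$-grading to $H(F,d)$, forcing one to define it by transport through $R$, is a genuine clarification of a point the paper passes over in silence.
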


The following lemma is important:
\begin{lem}\label{lem_primary_inequality}
Let $v \in C(F)_{\frac{(\al,J)}{2},\frac{(\al,\bJ)}{2}}^{\al}$ be a non-zero vector.
Then, the following properties hold:
\begin{enumerate}
\item
$\frac{c}{3} \geq (\al,J)$ and $\frac{\bc}{3} \geq (\al,\bJ)$;
\item
If $\frac{c}{3} = (\al,J)$, then $L(-1)v = J(-1)v$;
\item
If $\frac{\bc}{3} = (\al,\bJ)$, then $\Ld(-1)v = \bJ(-1)v$.
\end{enumerate}
\end{lem}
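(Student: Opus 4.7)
The plan is to derive (1) from computing a single norm on the c-c primary vector, and then extract (2) and (3) from the equality case via a supercommutator identity.

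First I would compute $\langle G_{-3/2}^+ v, G_{-3/2}^+ v\rangle$ using Proposition \ref{prop_unitary_adjoint} to move $G_{3/2}^-$ past $v$. Since $v$ is a lowest weight vector, $G_{3/2}^- v = 0$, so the norm reduces to
\begin{align*}
\langle v, [G_{3/2}^-, G_{-3/2}^+]_+ v\rangle = \langle v, \bigl(L(0)-\tfrac{3}{2}J(0)+\tfrac{c}{3}\bigr)v\rangle,
\end{align*}
using the $N=2$ anticommutator in \eqref{eq_susy} with $r=-3/2, s=3/2$. The c-c primary condition forces $L(0)v = \frac{(\al,J)_l}{2} v$ and $J(0)v = (\al,J)_l v$, so this becomes $\bigl(\frac{c}{3}-(\al,J)_l\bigr)\langle v, v\rangle$. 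Positivity of the inner product then gives $(\al,J)_l \leq \frac{c}{3}$, and the same argument with bars yields $(\al,\bJ)_r \leq \frac{\bc}{3}$; this proves (1). Crucially, the argument also shows that equality $(\al,J)_l = \frac{c}{3}$ holds if and only if $G_{-3/2}^+ v = 0$.

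For (2), the main step is the supercommutator identity $L(-1)-J(-1) = [G_{-3/2}^+, G_{1/2}^-]_+$, which follows by specializing \eqref{eq_susy} to $r=-3/2, s=1/2$ (so $r+s=-1, r-s=-2$). Applying this to $v$, the term $G_{-3/2}^+ G_{1/2}^- v$ vanishes because $v$ is lowest weight ($G_{1/2}^- v = 0$), and the term $G_{1/2}^- G_{-3/2}^+ v$ vanishes by the equality case of (1). Hence $L(-1)v = J(-1)v$. Statement (3) is proved identically using the anti-chiral generators $\bG^{\pm}_r$ and the bar-version $\Ld(-1)-\bJ(-1)=[\bG_{-3/2}^+,\bG_{1/2}^-]_+$.

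There is no real obstacle; the proof is a direct unpacking of the $N=2$ superconformal anticommutators in \eqref{eq_susy} combined with the unitarity adjoints of Proposition \ref{prop_unitary_adjoint} and the c-c primary conditions (which set $h=(\al,J)_l/2, \h=(\al,\bJ)_r/2$ and $G^+_{-1/2}v = \bG^+_{-1/2}v = 0 = G^\pm_r v = \bG^\pm_r v$ for $r\geq 1/2$). The only minor care point is to pick the correct mode pair, namely $(r,s)=(-3/2,3/2)$ for the norm bound and $(-3/2,1/2)$ for the equality identity, so that the lowest-weight annihilation conditions kill exactly the terms one needs.
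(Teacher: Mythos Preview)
Your proposal is correct and matches the paper's proof essentially line for line: the paper also computes $\langle G_{-3/2}^+ v, G_{-3/2}^+ v\rangle$ via $[G_{-3/2}^+, G_{3/2}^-]_+ = L(0)-\tfrac{3}{2}J(0)+\tfrac{c}{3}$ to get (1), and then uses $[G_{-3/2}^+, G_{1/2}^-]_+ = L(-1)-J(-1)$ together with $G_{-3/2}^+ v=0$ and $G_{1/2}^- v=0$ to obtain (2), with (3) by the barred analogue.
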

\begin{proof}
By Proposition \ref{prop_unitary_adjoint}, we have
\begin{align}
\begin{split}
0 &\leq  \langle  G_{-\frac{3}{2}}^+ v, G_{-\frac{3}{2}}^+ v \rangle =  \langle v, G_{\frac{3}{2}}^- G_{-\frac{3}{2}}^+ v \rangle
=\langle v, [G_{-\frac{3}{2}}^+,G_{\frac{3}{2}}^-]_+ v \rangle\\
&=\langle v, (L(0)-\frac{3}{2} J(0) + \frac{c}{3}) v \rangle = (\frac{c}{3}-(\al,J))\langle v, v \rangle.
\end{split}
\label{eq_top_primary}
\end{align}
Assume $\frac{c}{3}=(J,\al)$. Then, by \eqref{eq_top_primary}, $G_{-\frac{3}{2}}^+ v=0$. Hence, we have
\begin{align*}
0&= G_{\ft}^- G_{-\frac{3}{2}}^+ v = [G_{\ft}^-, G_{-\frac{3}{2}}^+]_+ v =(L(-1) - J(-1))v.
\end{align*}
\end{proof}
The property $L(-1)v = J(-1)v$ in the above lemma is the condition used to define a notion of a {\bf lattice-like vector} in \cite{M7} and plays an important role in this paper. By Lemma \ref{lem_primary_inequality},
the possible degrees of the cohomology $H(F,d)= \bigoplus_{\al \in H}H^\al(F,d)$ are bounded by $\al = J+\bJ$. 
\begin{dfn}\label{def_full_vol}
We call a non-zero vector $\ep \in C^{J+\bJ}(F)_{cc} =F_{\frac{c}{6},\frac{\bc}{6}}^{J+\bJ}$
(resp. $\ep \in C^{J-\bJ}(F)_{ca} =F_{\frac{c}{6},\frac{\bc}{6}}^{J-\bJ}$)
 a \textbf{B-volume form} (resp. a \textbf{A-volume form}) of $F$.
\end{dfn}
Note that a $B$-volume form of $\tilde{F}$ is an $A$-volume form of $F$. 
We later see that the $B$-volume form corresponds to
\begin{itemize}
\item
periodicity of spectral flow;
\item
a Hodge star operation on $H^\bullet(F,d_B)$;
\item
a Frobenius algebra structure on $H(F,d_B)$.
\end{itemize}

\begin{lem}
\label{lem_chiral_volume}
Let $v\in F_{h,0}$. Then, $\Ld(-1)v=0$.
\end{lem}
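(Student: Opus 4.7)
The plan is to deduce $\Ld(-1)v=0$ from the positive-definiteness of the Hermite form $\langle-,-\rangle$ together with the adjoint relation $\langle u,\Ld(n)w\rangle=\langle\Ld(-n)u,w\rangle$ from Proposition \ref{prop_unitary_adjoint}. The guiding computation is
\[
\langle \Ld(-1)v,\Ld(-1)v\rangle=\langle v,\Ld(1)\Ld(-1)v\rangle=\langle v,2\Ld(0)v\rangle+\langle \Ld(1)v,\Ld(1)v\rangle,
\]
where we used $[\Ld(1),\Ld(-1)]=2\Ld(0)$ and adjointness. Since $v\in F_{h,0}$, the first term vanishes because $\Ld(0)v=0$. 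It therefore suffices to show $\Ld(1)v=0$, which amounts to showing $F_{h,-1}=0$.

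First I would prove the auxiliary claim that, for a unitary full vertex operator superalgebra, $F_{h,\h}=0$ whenever $\h<0$ (the analogue of the Corollary \ref{cor_c_zero} argument, replacing $\om$ by an arbitrary lowest $\Ld(0)$-eigenvector). By (FO3) there exists $\h_{\min}\in\R$ which is the minimum of the $\Ld(0)$-spectrum, and for any non-zero $v_0\in F_{h,\h_{\min}}$ one has $\Ld(1)v_0\in F_{h,\h_{\min}-1}=0$. Repeating the computation above gives
\[
\langle \Ld(-1)v_0,\Ld(-1)v_0\rangle=\langle v_0,2\Ld(0)v_0\rangle=2\h_{\min}\langle v_0,v_0\rangle.
\]
Positive-definiteness (Definition of unitary) forces $\h_{\min}\langle v_0,v_0\rangle\ge 0$ in the appropriate sense (real and non-negative for even $v_0$, in $i\R_{\ge 0}$ for odd $v_0$, since $\Ld(-1)$ is even and preserves parity). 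In either case one concludes $\h_{\min}\ge 0$.

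Granted the claim, $\Ld(1)v\in F_{h,-1}=0$, hence $\Ld(1)v=0$, and the opening display reduces to $\langle \Ld(-1)v,\Ld(-1)v\rangle=0$. Invoking positive-definiteness once more (separating the even and odd cases, since $\Ld(-1)$ preserves the $\Z_2$-grading) yields $\Ld(-1)v=0$, as required.

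The main technical nuisance is handling the super sign conventions when applying positive-definiteness: for odd vectors the form takes values in $i\R_{\ge 0}$, so the inequality $\langle w,w\rangle\ge 0$ has to be interpreted in the sense of Definition 2.17 of a positive-definite Hermite form in $\sVect$. Apart from this careful bookkeeping, the argument is just the standard Sugawara-type positivity estimate applied to $\Ld(-1)$.
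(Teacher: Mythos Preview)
Your proof is correct and shares the same skeleton as the paper's: show $\Ld(1)v=0$, then use the Virasoro commutator $[\Ld(1),\Ld(-1)]=2\Ld(0)$ and positivity of $\langle-,-\rangle$ to conclude $\langle\Ld(-1)v,\Ld(-1)v\rangle=0$.

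The one substantive difference is how you obtain $\Ld(1)v=0$. The paper simply invokes Lemma~\ref{lem_NS_inequality}, whose inequality $\h\ge\tfrac{|(\al,\bJ)_r|}{2}\ge 0$ (a consequence of the $N=(2,2)$ supersymmetry) immediately gives $F_{h,-1}=0$. You instead prove the non-negativity of the $\Ld(0)$-spectrum from scratch via a Virasoro-only argument. Your route is more elementary and in fact establishes the lemma for any unitary full VOA, not just $N=(2,2)$ ones; the paper's route is shorter because the needed inequality is already on the shelf. One small wrinkle: your phrase ``by (FO3) there exists $\h_{\min}$ which is the minimum of the $\Ld(0)$-spectrum'' is not literally justified by (FO3), which only gives a lower bound. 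The clean fix is to start from any nonzero $w\in F_{h,\h}$ with $\h<0$ and iterate $\Ld(1)$; by (FO3) this terminates, producing a nonzero $w_0$ with $\Ld(1)w_0=0$ and $\Ld(0)$-eigenvalue $\le\h<0$, which then yields the contradiction.

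Finally, your worry about super-Hermite sign conventions is unnecessary here: in the paper's setup the Hermite form $\langle-,-\rangle$ is positive-definite in the ordinary sense (cf.\ the inequalities written in the proof of Lemma~\ref{lem_NS_inequality}), so the even/odd case split can be dropped.
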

\begin{proof}
By Lemma \ref{lem_NS_inequality}, $\Ld(1)v \in F_{h,-1}$ implies $\Ld(1)v=0$.
Hence,
\begin{align*}
\langle \Ld(-1)v, \Ld(-1)v \rangle =\langle v,  \Ld(1) \Ld(-1) v \rangle=\langle v, 2\Ld(0)v \rangle =0,
\end{align*}
which implies $\Ld(-1)v=0$.
\end{proof}

Similarly to Proposition \ref{prop_cohomology_primary}, we have:
\begin{prop}
\label{prop_all_top_twist}
Let $F$ be an $N=(2,2)$ unitary full vertex operator superalgebra. Then, the inclusion map induces isomorphisms of $\R^2$-graded supercommutative algebras:
\begin{align*}
C(F)_{cc} &\cong H(F,G_{-\ft}^+ + \bG_{-\ft}^+)\\
C(F)_{ca} &\cong H(F,G_{-\ft}^+ + \bG_{-\ft}^-)\\
C(F)_{ac} &\cong H(F,G_{-\ft}^- + \bG_{-\ft}^+)\\
C(F)_{aa} &\cong H(F,G_{-\ft}^- + \bG_{-\ft}^-).
\end{align*}
Moreover, $\phi$ induces anti-linear $\C$-algebra isomorphisms $C(F)_{cc} \cong C(F)_{aa}$ and $C(F)_{ca} \cong C(F)_{ac}$.
\end{prop}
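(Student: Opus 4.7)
The plan is to reduce each of the four cases, as well as the anti-linear isomorphisms, to the already-established case of the B-twist $d_B = G_{-\ft}^+ + \bG_{-\ft}^+$, which is handled by combining Proposition \ref{prop_cohomology_primary} with Proposition \ref{prop_isom_ring}. The reductions use two tools: (i) the mirror algebra construction of Definition \ref{def_mirror}, which swaps the A- and B-twists and exchanges the types of primary vectors accordingly; (ii) the anti-linear involution $\phi$ together with inner automorphisms of the form $\exp(i\mu \bJ(0))$, which conjugate one twist to another up to signs.

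First I would dispose of the $cc$-case by simply assembling Proposition \ref{prop_cohomology_primary} and Proposition \ref{prop_isom_ring}: the inclusion $C(F)_{cc}\hookrightarrow \ker d_B$ followed by projection to $H(F,d_B)$ is an isomorphism of $\Z_2$-graded supercommutative algebras, and refines to an $\R^2$-graded one since $J(0), \bJ(0)$ act on $F$ and commute with $d_B$ on representatives with equal bidegree-bicharge relation (as ensured by the c-c primary condition). For the $ca$-case, I would apply the $cc$-case to the mirror algebra $\tilde{F}$ of Definition \ref{def_mirror}. Indeed, $\tilde{F}$ has underlying full vertex operator superalgebra equal to $F$, but with $\btau^\pm_{\mathrm{new}}=\btau^\mp$ and $\bJ_{\mathrm{new}}=-\bJ$, so a c-c primary vector of $\tilde{F}$ is exactly a c-a primary vector of $F$, and the B-twist of $\tilde{F}$ is precisely $d_A = G_{-\ft}^+ + \bG_{-\ft}^-$ of $F$. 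Hence the $cc$-case applied to $\tilde{F}$ yields the $ca$-case for $F$ as an isomorphism of $\R^2$-graded supercommutative algebras (the $\R^2$-grading on $\tilde{F}$ differs from that on $F$ by reflection in the second factor, but this is just a relabeling).

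For the remaining $aa$ and $ac$ cases, I would use the anti-linear involution $\phi$. Since $\phi$ is an anti-linear full VOA automorphism with $\phi(\tau^\pm) = \tau^\mp$ and $\phi(\btau^\pm) = -\btau^\mp$, conjugation yields
\begin{align*}
\phi \circ (G_{-\ft}^+ + \bG_{-\ft}^+) \circ \phi^{-1} &= G_{-\ft}^- - \bG_{-\ft}^-,\\
\phi \circ (G_{-\ft}^+ + \bG_{-\ft}^-) \circ \phi^{-1} &= G_{-\ft}^- - \bG_{-\ft}^+.
\end{align*}
The inner automorphism $\exp(i\pi \bJ(0))$ of $F$ then conjugates $G_{-\ft}^- - \bG_{-\ft}^-$ into $G_{-\ft}^- + \bG_{-\ft}^-$ and $G_{-\ft}^- - \bG_{-\ft}^+$ into $G_{-\ft}^- + \bG_{-\ft}^+$, since it acts as multiplication by $-1$ on the $\bG^\pm$ modes while fixing $G^\pm$. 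Composing, one obtains anti-linear algebra isomorphisms $H(F, d_B) \cong H(F, G_{-\ft}^- + \bG_{-\ft}^-)$ and $H(F, d_A)\cong H(F, G_{-\ft}^- + \bG_{-\ft}^+)$. Since $\phi$ and $\exp(i\pi \bJ(0))$ both send the subspaces of primary vectors to each other (explicitly $\phi$ sends c-c primaries to a-a primaries and c-a primaries to a-c primaries; the inner automorphism just rescales), one obtains both the $aa$- and $ac$-isomorphisms by combining with the previously established $cc$- and $ca$-cases, and simultaneously proves the final statement about $\phi$-induced anti-linear algebra isomorphisms $C(F)_{cc}\cong C(F)_{aa}$ and $C(F)_{ca}\cong C(F)_{ac}$.

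The main subtlety, rather than any substantial obstacle, will be to verify carefully that passing through the mirror and through conjugation by $\phi$ preserves the $\R^2$-grading (up to the obvious reflection of the second factor, which is consistent with the fact that a-a and c-a primaries have their $\bJ$-charges negated relative to c-c and a-c respectively), and that the resulting algebra isomorphisms are indeed algebra isomorphisms and not merely vector space isomorphisms—this follows because all the intertwining maps ($\phi$, inner automorphisms, and the identity on underlying vector space of $\tilde{F}=F$) are full VOA (anti-)automorphisms and so commute with the $(-1,-1)$-th product, which by Lemma \ref{lem_primary_product} is the algebra structure on each of the $C(F)_\bullet$.
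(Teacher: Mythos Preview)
Your reduction strategy is correct and complete. The paper's own argument is much terser: it simply says ``Similarly to Proposition \ref{prop_cohomology_primary}'', meaning one is to rerun the chain Lemma \ref{lem_NS_inequality} $\to$ Proposition \ref{prop_NS_inequality} $\to$ Lemma \ref{lem_full_concentrate} with the obvious sign changes for each of the four differentials, and then observe directly from Definition \ref{def_full_susy} that $\phi$ swaps the primary types. Your route is different in that you prove only the $cc$-case from scratch and then transport it: to $ca$ via the mirror $\tilde F$, and to $aa$, $ac$ via $\exp(i\pi\bJ(0))\circ\phi$. This is a genuinely distinct organization---symmetry reduction rather than parallel repetition---and it has the advantage of making the final ``$\phi$ induces \ldots'' clause fall out of the same machinery rather than requiring a separate check.

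One small remark: you obtain the cohomology isomorphism $H(F,d_B)\cong H(F,G_{-\ft}^-+\bG_{-\ft}^-)$ via $\exp(i\pi\bJ(0))\circ\phi$, whereas the statement asks for $\phi$ alone on the primary spaces $C(F)_{cc}\cong C(F)_{aa}$. This is harmless since $\exp(i\pi\bJ(0))$ acts by nonzero scalars on each $\bJ(0)$-eigenspace and hence preserves $C(F)_{aa}$ as an algebra, so $\phi$ by itself already gives the required anti-linear algebra isomorphism---but it is worth saying explicitly in the write-up.
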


\subsection{Volume form, Poincar\'e duality, and Frobenius algebra}
\label{sec_twist_star}

\begin{dfn}\label{def_hodge_number}
For any $p,q \geq 0$, set
\begin{align*}
H^{p,q}(F,d_B) =H^{\frac{3p}{c}J+\frac{3q}{\bc}\bJ}(F,d_B)\quad&\text{ and }\quad H^{p,q}(F,d_A) =H^{\frac{3p}{c}J-\frac{3q}{\bc}\bJ}(F,d_A)
\\
h_{p,q}^B(F) = \dim H^{p,q}(F,d_B)\quad&\text{ and }\quad h_{p,q}^A(F) = \dim H^{p,q}(F,d_A)
\end{align*}
which we call \textbf{Hodge numbers} of $N=(2,2)$ SCFT, and for $T=A,B$
\begin{align*}
\chi_y^T(F) = \sum_{p,q \in \R} (-1)^{p-q} h_{p,q}^T(F) y^p
\end{align*}
which we call {\bf Hirzebruch genera} of $F$.
Note that $h_{p,q}^T(F)=0$
unless
\begin{align*}
p-q \in \Z,\quad 0 \leq p \leq \frac{c}{3},\quad 0 \leq q \leq \frac{\bc}{3},
\end{align*}
and $h_{p,q}^T(F)$ can be nonzero even when $p,q$ are not integers (see Section \ref{sec_example_K3} and \ref{sec_LG}).
\end{dfn}
\begin{rem}\label{rem_h00}
Note that $h_{0,0}^B=1=h_{0,0}^A$ since $C^{0,0}(F) \subset \ker L(-1) \cap \ker \Ld(-1) =F_{0,0}=\C\va$ by Lemma \ref{lem_chiral_volume}.
\end{rem}

We assume that 
\begin{itemize}
\item
$h_{\frac{c}{3},\frac{\bc}{3}}^B \neq 0$, that is, there is a B-volume form $\ep \in F_{\frac{c}{6},\frac{c}{6}}^{J+\bJ}$, and normalize it by $\langle \ep,\ep \rangle =1$.
\end{itemize}


Define a sesquilinear form $\langle-,-\rangle :C(F)\otimes C(F) \rightarrow \C$ by the restriction of the sesquilinear form on $F$, which is positive definite, and thus, non-degenerate.
Define a linear map $\ep: C(F) \rightarrow \C$ by
\begin{align*}
\ep(a) = \langle \ep, a\rangle
\end{align*}
and a blinear form $(-,-)_\ep: C(F)\otimes C(F) \rightarrow \C$ by
\begin{align*}
(a,b)_\ep = \ep(a \cdot b)
\end{align*}
for $a,b \in C(F)$, and 
an anti-linear map $*:F \rightarrow F$ by
\begin{align}
*a = (-1)^{s(\al)+2s(\al)^2}  \phi(a)((\al,J)_l-1,(\al,\bJ)_r-1)\ep,
\label{eq_full_star_def}
\end{align}
where $s(\al)=\frac{(\al,J)_l}{2}-\frac{(\al,\bJ)_r}{2} \in \ft\Z$.
We will prove the following theorem in Section \ref{sec_Hodge_Serre}:
\begin{thm}\label{thm_full_frob}
Let $F$ be an $N=(2,2)$ unitary full vertex operator superalgebra equipped with a volume form $\ep \in C^{J+\bJ}(F)$.
Then, the following properties hold:
\begin{enumerate}
\item
$(a,b)_\ep=0$ for any $a\in C^\al(F)$ and $b\in C^\be(F)$ unless $\al +\be =J+\bJ$;
\item
$(a,b)_\ep =(-1)^{|a||b|}(b,a)_\ep$ for any $a,b \in C(F)$;
\item
$(a,b\cdot c)_\ep =(a\cdot b,c)_\ep$ for any $a,b,c \in C(F)$;
\item
$*$ send $C^\al(F)$ onto $C^{J+\bJ-\al}(F)$;
\item
For any $a\in C^\al(F)$,
\begin{align*}
**a = (-1)^{|a|+(\frac{c}{6}-\frac{\bc}{6})|a|}a.
\end{align*}
In particular, if $c-\bc \in 12\Z$, then $**a=(-1)^{|a|} a$;
\item
$(a,b)_\ep=\langle *a , b \rangle$ for any $a,b \in C(F)$;
\item
$C^0(F)=\C\va$ and $C^{J+\bJ}(F)=\C \ep$.
\end{enumerate}
In particular, $(C(F),\cdot,\va,\ep)$ is an $\R^2$-graded Frobenius superalgebra in $\sVect$
and defines a 2d topological field theory.
Moreover, $\dim C^\al(F) = \dim C^{J+\bJ-\al}(F)$ for any $\al \in H$.
\end{thm}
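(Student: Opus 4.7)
The scheme is to split the theorem into formal consequences of the $H$-grading and of the invariance identity \eqref{eq_bilinear_inv}, which will yield (1)--(4) and (6), and a single deep input provided by spectral flow, which will give (5), (7), and the non-degeneracy of $(-,-)_\ep$ needed for the Frobenius structure.

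First, for (1), note that $a\cdot b \in C^{\al+\be}(F)$ and that the anti-linear involution $\phi$ flips the $H$-grading (since $\phi(J)=-J$ and $\phi(\bJ)=-\bJ$); hence $\langle \ep,a\cdot b\rangle=(\phi(\ep),a\cdot b)$ vanishes unless $\al+\be=J+\bJ$. Properties (2) and (3) are immediate from the supercommutativity and associativity of the $(-1,-1)$-product on $C(F)$ established in Propositions \ref{prop_full_ring} and \ref{prop_isom_ring}. For (4), the mode count from Proposition \ref{prop_translation} places $\phi(a)((\al,J)_l-1,(\al,\bJ)_r-1)\ep$ in the space $F^{J+\bJ-\al}_{\frac{(J+\bJ-\al,J)_l}{2},\frac{(J+\bJ-\al,\bJ)_r}{2}}$, which by Proposition \ref{prop_NS_inequality} consists entirely of c-c primary vectors; so $*a \in C^{J+\bJ-\al}(F)$. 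For (6), I plug $\ep$ into \eqref{eq_bilinear_inv}: since $\ep$ saturates the bound of Lemma \ref{lem_primary_inequality} one has $L(1)\ep=\Ld(1)\ep=0$, so $\exp(L(1)z+\Ld(1)\z)$ collapses to the identity, and the remaining sign $(-1)^{(L_0-\Ld_0)+2(L_0-\Ld_0)^2}$ on $a$ produces exactly the prefactor $(-1)^{s(\al)+2s(\al)^2}$ in \eqref{eq_full_star_def}.

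Property (5) is then a bookkeeping calculation: one applies $*$ twice, uses $\phi^2=\id$ and the identity $s(J+\bJ-\al)=\frac{c-\bc}{12}-s(\al)$, and combines the two sign prefactors with the parity $|a|\equiv (\al,J)_l-(\al,\bJ)_r\pmod 2$ inherited from \eqref{eq_fermi_integer}; the cross terms collapse and the result simplifies to $(-1)^{|a|+(c/6-\bc/6)|a|}$.

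The main obstacle, and the reason the theorem is nontrivial, is (7) together with the non-degeneracy of $(-,-)_\ep$. Neither follows from unitarity alone; both reduce to injectivity of the anti-linear map $*$, equivalently to the statement that for any nonzero $a\in C^\al(F)$ there exists $b\in C^{J+\bJ-\al}(F)$ with $a\cdot b\neq 0$. This bijectivity will be obtained from the spectral flow periodicity $F(\la,\la)\cong F(\la+1,\la+1)$ of Theorem \ref{thm_D}: the existence of a $B$-volume form $\ep$ is precisely what realises ``multiplication by $\ep$'' as an isomorphism of twisted $F$-modules, hence as a bijection of the lowest-weight c-c primary subspaces of the twisted sectors. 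Granting this, positive-definiteness of $\langle-,-\rangle$ combined with (6) gives $(a,*a)_\ep=\langle *a,*a\rangle\neq 0$ for $a\neq 0$, hence non-degeneracy of the Frobenius pairing; surjectivity of $*$ gives $\dim C^\al(F)=\dim C^{J+\bJ-\al}(F)$; and specialising to $\al=0$, where $C^0(F)=\C\va$ by Lemma \ref{lem_chiral_volume} together with the inequalities of Lemma \ref{lem_primary_inequality}, yields $C^{J+\bJ}(F)=\C\ep$. Finite-dimensionality of $C(F)$ follows from the bounds in (4) and Lemma \ref{lem_primary_inequality} via (FO2), after which the 2d TFT structure is produced through Proposition \ref{prop_TFT_frob}. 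The actual construction of the spectral flow isomorphism is itself the real technical content and is deferred to Section \ref{sec_flow}.
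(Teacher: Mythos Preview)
Your overall architecture is correct and matches the paper's: (1)--(3) are formal, (6) comes from the invariance formula \eqref{eq_unitary_inv}, (4) ``into'' is a weight count via Proposition \ref{prop_NS_inequality}, and (7), surjectivity of $*$, and non-degeneracy of $(-,-)_\ep$ all rest on the spectral-flow periodicity of Section \ref{sec_flow}. One small slip: $s(J+\bJ-\al)=\tfrac{c-\bc}{6}-s(\al)$, not $\tfrac{c-\bc}{12}$.

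The genuine gap is your treatment of (5). You describe $**a=(-1)^{|a|+(c/6-\bc/6)|a|}a$ as ``a bookkeeping calculation'' obtained by combining sign prefactors and using $\phi^2=\id$. But unwinding the definition gives
\[
**a \;=\; (\text{sign})\cdot \bigl(a((\al,J)_l-1,(\al,\bJ)_r-1)\phi(\ep)\bigr)\bigl((J-\al,J)_l-1,(\bJ-\al,\bJ)_r-1\bigr)\ep,
\]
and what must be shown is that this iterated mode product equals $a$ itself, not $0$ or some other vector in $C^\al(F)$. That is not a sign identity; it is exactly where the generalized full vertex algebra machinery enters. In the paper this is Proposition \ref{prop_star_primary}: one uses Lemma \ref{lem_star_calc} to rewrite each factor as a $(-1,-1)_\Om$-product in $\Om_F$, then Proposition \ref{prop_vac_right} (associativity for lattice-like vectors) to regroup, and finally the normalization $\phi(\ep)(-1,-1)_\Om\ep=(-1)^{s_F+2s_F^2}\va$ coming from $\langle\ep,\ep\rangle=1$. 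This is the same circle of ideas as the spectral-flow periodicity (indeed the paper also factors $*$ as the composite $\phi\circ(f_+\circ D_{1,1})$ of isomorphisms from Proposition \ref{prop_shift_primary} and Lemma \ref{lem_anti_flip}), so (5) belongs with your ``deep input'', not with the formal consequences. Neither (6), nor (2), nor positive-definiteness of $\langle-,-\rangle$ lets you bypass this: from those alone one obtains only $\langle **a,b\rangle=(-1)^{|*a||b|}\langle *b,*a\rangle$, which does not force $**a$ to be a scalar multiple of $a$ when $\dim C^\al(F)>1$.
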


Assume $c$ or $\bc$ is non-zero, then $\ep(1) =0$. Hence, by Lemma \ref{lem_non_zero_star} and Corollary \ref{cor_c_zero}, we have:
\begin{cor}\label{cor_non_unitary}
If $F$ is not isomorphic to the trivial full VOA $\C$, then the $2d$ topological field theory $C(F)$ could not be extended to a unitary  $2d$ topological field theory.
\end{cor}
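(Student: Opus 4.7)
The plan is to argue by contradiction and exploit the mismatch between the $S^{2}$ partition function determined by the Frobenius data of Theorem \ref{thm_full_frob} and the strict positivity required by a unitary 2d TFT.

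Suppose $Z:\Bord \rightarrow \sHilb$ is a unitary extension of $C(F)$. Under the Frobenius correspondence of Proposition \ref{prop_TFT_frob} applied to $(C(F),\cdot,\va,\ep)$, the unit is $\eta(1)=\va$ and the counit is $\ep(a)=\langle \ep, a\rangle$, so Proposition \ref{prop_partition}(1) identifies the sphere partition function as
\[
Z(S^{2}) \;=\; \ep\bigl(\eta(1)\bigr) \;=\; \langle \ep, \va \rangle,
\]
where $\ep \in F_{c/6,\bc/6}^{J+\bJ}$ is the normalized B-volume form and $\va \in F_{0,0}^{0}$ is the vacuum.

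Next I would show that this inner product vanishes purely on grading grounds. By Proposition \ref{prop_unitary_adjoint}, $L(0)$ and $\Ld(0)$ are self-adjoint with respect to $\langle -,-\rangle$, so the weight spaces $F_{h,\h}$ at distinct $(h,\h)$ are pairwise orthogonal. Since $F \not\cong \C$, the standing convention $c,\bc>0$ fixed just after Corollary \ref{cor_c_zero} gives $(c/6,\bc/6)\neq (0,0)$; hence $\ep$ and $\va$ live in different weight spaces, forcing $\langle \ep, \va \rangle = 0$, and therefore $Z(S^{2})=0$.

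Finally, Lemma \ref{lem_non_zero_star} requires $\ep\circ\eta(1)$ to be a strictly positive real number in any unitary 2d TFT, contradicting $Z(S^{2})=0$. The argument is a short concatenation of results already at hand; no genuine obstacle arises, the only point deserving care being the orthogonality of distinct $(L(0),\Ld(0))$-weight spaces, which is immediate from self-adjointness of the Virasoro zero modes.
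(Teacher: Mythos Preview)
Your proof is correct and matches the paper's approach exactly: the paper argues in one line that if $c$ or $\bc$ is nonzero then $\ep(\va)=\langle\ep,\va\rangle=0$, and then invokes Lemma~\ref{lem_non_zero_star} and Corollary~\ref{cor_c_zero}. You have simply unpacked the orthogonality step (via self-adjointness of $L(0),\Ld(0)$) that the paper leaves implicit.
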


\subsection{A- and B-twists, mirror symmetry and T-duality}\label{sec_twist_serre}

In physics, there is a well-known duality between the type IIA and the type IIB superstring theories, called a \textbf{T-duality} \cite{Polc2}, which is related with a geometrically very nontrivial phenomenon (see Introduction and Remark \ref{rem_mirror}).
In this section, we will mathematically study this duality as a duality between the A-twist and the B-twist.


Note that in general $H^\bullet(F,d_A)$ and $H^\bullet(F,d_B)$ are completely different (see Introduction and Section \ref{sec_LG}).
However, if $F$ is the supersymmetric sigma model of a Calabi-Yau $d$-fold, physics predicts that there is an isomorphism between $H^{p,q}(F,d_A)$ and $H^{d-p,q}(F,d_B)$ as $\R^2$-graded vector spaces (see \cite{Hori} and Section \ref{sec_conj}).
In this section, we formulate this claim mathematically, and give its proof in Section \ref{sec_Hodge_Serre}.

%


\begin{thm}\label{thm_chiral_star}
Assume that $H^{J,0}(F,d_B) \neq 0$.
Let $\eta \in F_{\frac{c}{6},0}^{J}$ such that $\langle \eta,\eta \rangle=1$. Set $d=\frac{c}{3}$.
Then, the following conditions hold:
\begin{enumerate}
\item
The eigenvalues of $J(0)$ are integers and $c \in {3}\Z$. In particular, $H(F,d_B)$ is a $\Z^2$-graded algebra;
\item
The linear map $*_c: C^{\al}(F)_{cc} \rightarrow C^{J-\al}(F)_{ca}$ defined by
\begin{align*}
*_c a = \phi(a)((\al,J)_l-1,-1)\eta
\end{align*}
is a linear isomorphism. In particular, $h_{p,q}^B(F) = h_{d-p,q}^A(F)$ for any $p,q\in\Z$.
\item
$h_{p,0}^B(F) = h_{d-p,0}^B(F)$ for any $p \in\Z$.
\end{enumerate}
\end{thm}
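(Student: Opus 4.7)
The plan is to leverage $\eta$ as a \emph{chiral} lattice-like vector. By Proposition \ref{prop_NS_inequality}, any non-zero element of $F_{c/6,0}^{J}$ is automatically c-c primary, and by Lemma \ref{lem_chiral_volume} together with Lemma \ref{lem_primary_inequality}, $\eta$ satisfies $\Ld(-1)\eta = 0$ and $L(-1)\eta = J(-1)\eta$. Hence $\eta \in \ker\Ld(-1)$ and $Y(\eta,\uz) = \sum_{n\in\Z}\eta(n,-1)z^{-n-1}$ is an integer-moded chiral field; by unitarity, $\phi(\eta) \in F_{c/6,0}^{-J}$ is its a-a primary chiral partner. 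All three statements will be derived from the interplay of $\eta$, $\phi(\eta)$, and the spectral-flow machinery of Section \ref{sec_flow}.

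For (1), I would invoke the chiral specialization of Theorem \ref{thm_D}: the hypothesis $H^{J,0}(F,d_B)\neq 0$ identifies $\eta$ as a unit-norm chiral volume form, which together with $\phi(\eta)$ generates an invertible spectral-flow action on $F$ in the $J$-direction. The concrete mechanism is the chiral OPE: $Y(\eta,z)\phi(\eta)$ lies in $\ker\Ld(-1)((z))$, and its pairing with $\va$ produces a non-zero term proportional to $z^{-c/3}\va$, pinned down by $\langle\eta,\eta\rangle = 1$ via \eqref{eq_bilinear_inv}. Since the modes of a $\ft\Z$-graded chiral vertex superalgebra are indexed by half-integers, iterating $\eta$- and $\phi(\eta)$-mode actions, with the invertibility coming from spectral flow, forces $(\al,J)_l\in\Z$ for every $\al$ with $F^\al\neq 0$, and in particular $c/3 = (J,J)_l \in \Z$. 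Integrality of $(\al,\bJ)_r$ on c-c primaries then follows from the fermion-parity compatibility $\exp(\pi i(J(0)-\bJ(0)))=(-1)^F$ in Definition \ref{def_N22}, giving the $\Z^2$-grading of $H(F,d_B)$.

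For (2), I would directly check that $*_c a$ lies in $F_{c/6-(\al,J)_l/2,\,(\al,\bJ)_r/2}^{J-\al}$ by weight and charge additivity and that it satisfies the c-a primary annihilations $G_{-\ft}^+(*_c a) = 0$ and $\bG_{-\ft}^-(*_c a) = 0$, together with the higher-mode vanishings. These reduce, through the super-derivation and commutator formulas of Proposition \ref{prop_translation}, to $\eta$ being c-c primary and $\phi(a)$ being a-a primary (Proposition \ref{prop_all_top_twist}). Bijectivity follows by constructing an explicit inverse $b \mapsto \phi(\eta)((J-\al,J)_l-1,-1)\phi(b)$, whose composition with $*_c$ reduces, via the OPE identity $\eta(c/3-1,-1)\phi(\eta) = c_0\va$ (with $c_0 \neq 0$) from step (1) together with $\phi^2 = \id$, to a nonzero scalar multiple of the identity.

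For (3), the key observation is the collapse $C^\al(F)_{cc} = C^\al(F)_{ca}$ whenever $(\al,\bJ)_r = 0$: for $v\in F_{h,0}^\al$, Lemma \ref{lem_chiral_volume} gives $\Ld(-1)v=0$, and the inner-product computation
\begin{align*}
\langle \bG_{-\ft}^\pm v, \bG_{-\ft}^\pm v\rangle = \pm\ft(\al,\bJ)_r\langle v,v\rangle = 0
\end{align*}
forces $\bG_{-\ft}^\pm v = 0$, so both chiralities on the anti-chiral side hold automatically. Combining with (2) at $q=0$ gives
\begin{align*}
h^B_{p,0}(F) = h^A_{d-p,0}(F) = \dim C^{(3(d-p)/c)J}(F)_{ca} = \dim C^{(3(d-p)/c)J}(F)_{cc} = h^B_{d-p,0}(F).
\end{align*}
The hard part is (1): weight and OPE considerations alone only place $(\al,J)_l$ in a coset of $\Z$, and upgrading the local shift action of $\eta$-modes to a genuine invertible $\Z$-action — and hence to genuine integrality — requires the full spectral-flow framework of Section \ref{sec_flow}.
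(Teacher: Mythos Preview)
Your outline matches the paper's strategy: (1) via spectral-flow periodicity (Proposition \ref{prop_period_cc}), (2) via the explicit $*_c$-map, (3) via the collapse $C^{rJ}(F)_{cc}=C^{rJ}(F)_{ca}$ at zero $\bJ$-charge. There are two issues in (2). First, your proposed inverse is wrong: $\phi(\eta)(k,-1)\phi(b)$ lands in charge $\al-2J$, not $\al$. The correct inverse is $*_c$ itself, $b\mapsto\phi(b)((J-\al,J)_l-1,-1)\eta$, and the task is to show $*_c{*_c}a$ is a nonzero scalar multiple of $a$.

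Second, and more substantively, reducing $*_c{*_c}a=\bigl(a((\al,J)_l-1,-1)\phi(\eta)\bigr)\bigl((J-\al,J)_l-1,-1\bigr)\eta$ to the single OPE $\eta(c/3-1,-1)\phi(\eta)\propto\va$ is not immediate from the commutator formulas of Proposition \ref{prop_translation}: the nested product does not obviously factor through $\phi(\eta)\cdot\eta$. The paper handles this by passing to the generalized full vertex algebra $\Om_F$ via Lemma \ref{lem_star_calc}, which rewrites each such full-VOA product as an $\Om_F$-product $a(-1,-1)_\Om v$, and then invokes the associativity of Proposition \ref{prop_vac_right} to collapse $(a\cdot_\Om\phi(\eta))\cdot_\Om\eta$ to $a\cdot_\Om(\phi(\eta)\cdot_\Om\eta)$. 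This is the same spectral-flow machinery you rightly flag as essential for (1); your sketch understates that it is equally essential for bijectivity in (2). By contrast, that $*_c a$ lands in $C^{J-\al}(F)_{ca}$ \emph{is} automatic from the weight/charge count plus Lemma \ref{lem_NS_inequality}, as you say --- the paper instead routes this through the module isomorphism of Proposition \ref{prop_shift_serre}, but your direct argument works there.
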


\begin{cor}
\label{cor_mirror}
Let $F$ be an $N=(2,2)$ unitary  full VOA and $\tilde{F}$ the mirror full VOA (see Definition \ref{def_mirror}).
Assume that $H^{J+\bJ}(F,d_B)\neq 0$ and $H^{J}(F,d_B)\neq 0$. Then, the Hodge numbers of $F$ are equal to the flipped Hodge numbers of $\tilde{F}$,
\begin{align*}
h_{p,q}^B(F) = h_{d-p,q}^B(\tilde{F}).
\end{align*}
\end{cor}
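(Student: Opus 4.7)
The plan is to reduce the statement directly to the T-duality in Theorem~\ref{thm_chiral_star} by observing that the $B$-twist of the mirror algebra $\tilde F$ coincides on the nose with the $A$-twist of $F$.

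First I would unpack the definitions. Since $(F,\om,\omb)$ and $(\tilde F,\om,\omb)$ share the same underlying full vertex operator superalgebra, and since $\tau^+_{\mathrm{new}}=\tau^+$ and $\btau^+_{\mathrm{new}}=\btau^-$ by Definition~\ref{def_mirror}, the $B$-twist differential for $\tilde F$ is literally
$$d_B^{\tilde F} = \tau^+_{\mathrm{new}}(0) + \btau^+_{\mathrm{new}}(0) = \tau^+(0) + \btau^-(0) = d_A^F.$$
Hence $H(\tilde F, d_B) = H(F, d_A)$ as $\Z_2$-graded supercommutative algebras: the fermion number depends only on the underlying $\Z_2$-graded vector space and is unchanged by passing to the mirror, and the product is the $(-1,-1)$-th product in either case.

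Next I would verify that the bigradings match under this identification. The R-currents on $\tilde F$ are $J_{\mathrm{new}} = J$ and $\bJ_{\mathrm{new}} = -\bJ$, while the central charges $(c,\bc)$ are unchanged, so the normalisations $(J_{\mathrm{new}},J_{\mathrm{new}})_l = c/3$ and $(\bJ_{\mathrm{new}},\bJ_{\mathrm{new}})_r = \bc/3$ are the same as for $F$. A vector $v$ then contributes to $H^{p,q}(\tilde F, d_B)$ exactly when $J(0)v = pv$ and $\bJ_{\mathrm{new}}(0)v = qv$, i.e.\ $\bJ(0)v = -qv$, which is precisely the condition defining $H^{p,q}(F,d_A)$ in Definition~\ref{def_hodge_number}. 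Thus $h_{p,q}^B(\tilde F) = h_{p,q}^A(F)$ for every $p,q\in\Z$.

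Finally I would invoke Theorem~\ref{thm_chiral_star}. The assumption $H^J(F,d_B)\neq 0$ (the existence of a holomorphic volume form) is exactly what that theorem requires in order to conclude the T-duality $h_{p,q}^B(F) = h_{d-p,q}^A(F)$ for all $p,q \in \Z$, with $d = c/3$. Chaining the two equalities produces
$$h_{p,q}^B(F) \;=\; h_{d-p,q}^A(F) \;=\; h_{d-p,q}^B(\tilde F),$$
as desired. There is no serious obstacle here once Theorem~\ref{thm_chiral_star} is in hand; the only bookkeeping point is that the sign flip $\bJ\mapsto -\bJ$ must be consistently matched by the swap $\btau^+\leftrightarrow\btau^-$, but this compatibility is built into Definition~\ref{def_mirror} and is precisely what makes the mirror operation involutive.
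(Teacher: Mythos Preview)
Your proof is correct and follows essentially the same approach as the paper: identify $H^{p,q}(\tilde F,d_B)$ with $H^{p,q}(F,d_A)$ directly from Definition~\ref{def_mirror}, then apply the T-duality isomorphism of Theorem~\ref{thm_chiral_star}. Your version simply spells out in more detail why the bigradings line up under the sign flip $\bJ\mapsto -\bJ$.
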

\begin{proof}
By definition of the mirror algebra and Theorem \ref{thm_chiral_star},
\begin{align*}
H^{p,q}(\tilde{F},d_B) = H^{p,q}(F,d_A) \cong H^{d-p,q}(F,d_B),
\end{align*}
the assertion holds.
\end{proof}

\begin{cor}\label{cor_3fold}
Let $F$ be a unitary $N=(2,2)$ full VOA of central charge $(9,9)$.
Assume that both $H^{3,3}(F,d_B)$ and $H^{3,0}(F,d_B)$ are non-zero
and $H^{1,0}(F,d_B) =H^{0,1}(F,d_B)=0$. Then,
its Hodge numbers $h_{p,q}=h_{p,q}^B(F)$ are determined only by $h_{1,1}$ and $h_{1,2}$, that is,
\begin{align}
\begin{array}{ccccccc}
&&& h_{0,0} \\
&& h_{1,0} && h_{0,1}\\
&h_{2,0} & & h_{1,1} & & h_{0,2}\\
h_{3,0}&& h_{2,1} & & h_{1,2} &&h_{0,3}\\
&h_{3,1}&& h_{2,2}&&h_{1,3}\\
&&h_{3,2}&&h_{2,3}\\
&&&h_{3,3}
\end{array}
\quad =\quad
\begin{array}{ccccccc}
&&& 1 \\
&& 0 && 0\\
&0 & & h_{1,1} & & 0\\
1&& h_{1,2} & & h_{1,2} &&1\\
&0&& h_{1,1}&&0\\
&&0&&0\\
&&&1.
\end{array}
\end{align}
In particular, $h_{1,1}(F) = h_{1,2}(\tilde{F})$ and $h_{2,1}(F) = h_{1,1}(\tilde{F})$ for the mirror full VOA $\tilde{F}$.
\end{cor}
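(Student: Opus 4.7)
The plan is to fill in the Hodge diamond from three already-available dualities --- Hodge duality $h_{p,q}=h_{3-p,3-q}$ (Theorem \ref{thm_full_frob}), holomorphic T-duality $h_{p,0}=h_{3-p,0}$ (Theorem \ref{thm_chiral_star}(3)), and the mirror identity $h_{p,q}(F)=h_{3-p,q}(\tilde F)$ (Corollary \ref{cor_mirror}) --- together with one extra input, which I will call \emph{anti-holomorphic T-duality} ($h_{0,q}(F)=h_{0,3-q}(F)$ for $q\in\Z_{\ge 0}$) and establish separately.

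Granted the anti-holomorphic T-duality, the filling-in is routine. Remark \ref{rem_h00} gives $h_{0,0}=1$, so Hodge yields $h_{3,3}=1$. The assumption $h_{3,0}\neq 0$ combined with holomorphic T-duality forces $h_{3,0}=h_{0,0}=1$, and then $h_{0,3}=1$ by Hodge. The vanishing $h_{1,0}=0$ propagates through holomorphic T-duality to $h_{2,0}=0$ and through Hodge to $h_{1,3}=h_{2,3}=0$; dually $h_{0,1}=0$ propagates through anti-holomorphic T-duality to $h_{0,2}=0$ and through Hodge to $h_{3,1}=h_{3,2}=0$. The remaining Hodge relations $h_{1,1}=h_{2,2}$ and $h_{1,2}=h_{2,1}$ leave exactly two independent entries, reproducing the displayed table. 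The mirror statements $h_{1,1}(F)=h_{1,2}(\tilde F)$ and $h_{2,1}(F)=h_{1,1}(\tilde F)$ then follow by applying Corollary \ref{cor_mirror} and Hodge duality inside $\tilde F$; the latter is applicable because $h^B_{3,3}(\tilde F)=h^A_{3,3}(F)=h^B_{0,3}(F)=1$ by Theorem \ref{thm_chiral_star}(2) and Hodge.

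The one substantive step is the anti-holomorphic T-duality, which I plan to extract from the following unitarity observation: every cc primary of pure anti-holomorphic weight is automatically ac primary. Indeed, for $v\in C^{q\bJ/d}(F)_{cc}\subset F_{0,q/2}^{q\bJ/d}$ the condition $(\al,J)_l=0$ forces $L(0)v=J(0)v=0$, while $G^+_{1/2}v=0$ by the cc condition; using the adjoint formula $(G^-_{-1/2})^\ast=G^+_{1/2}$ from Proposition \ref{prop_unitary_adjoint} together with the commutator $[G^+_{1/2},G^-_{-1/2}]_+=L(0)+\tfrac12 J(0)$ coming from \eqref{eq_susy},
\begin{align*}
\langle G^-_{-1/2}v,\,G^-_{-1/2}v\rangle
=\langle v,\,G^+_{1/2}G^-_{-1/2}v\rangle
=\langle v,\,(L(0)+\tfrac12 J(0))v\rangle
=0,
\end{align*}
so $G^-_{-1/2}v=0$ by positive-definiteness, whence $v$ is ac primary. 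Combined with the anti-linear $\phi$-isomorphism $C^\al(F)_{ac}\cong C^{-\al}(F)_{ca}$ of Proposition \ref{prop_all_top_twist}, this gives $h^B_{0,q}(F)=h^A_{0,q}(F)$, and the same argument applied to $\tilde F$ together with the mirror identification $H(\tilde F,d_A)=H(F,d_B)$ yields $h^B_{0,q}(\tilde F)=h^B_{0,q}(F)$. Chaining
\begin{align*}
h^B_{0,q}(F)=h^B_{3,q}(\tilde F)=h^B_{0,3-q}(\tilde F)=h^B_{0,3-q}(F),
\end{align*}
by Corollary \ref{cor_mirror}, Hodge duality in $\tilde F$, and the identity just established closes the argument. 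The hard part is thus this single unitarity step, where positive-definiteness of the Hermite form is essential.
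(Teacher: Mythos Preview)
Your proof is correct, and the routine filling-in of the Hodge diamond matches the paper's argument exactly. The only point of divergence is how you obtain the \emph{anti-holomorphic} T-duality $h_{0,q}=h_{0,3-q}$, and here your route is more circuitous than the paper's.

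The paper's proof simply writes ``By the T-duality, Theorem~\ref{thm_chiral_star}, and the Hodge duality, $0=h_{0,1}=h_{0,2}=h_{3,2}=h_{3,1}$.'' What is being invoked is the $\bJ$-analog of Theorem~\ref{thm_chiral_star}(3): once one knows $H^{0,\bJ}(F,d_B)\neq 0$, the entire proof of that theorem runs verbatim with holomorphic and anti-holomorphic roles swapped, giving $h_{0,q}^B=h_{0,d-q}^B$ directly (this is also recorded as item~(6) of Theorem~\ref{thm_CY_VOA}). The missing hypothesis $H^{0,3}(F,d_B)\neq 0$ follows immediately from Theorem~\ref{thm_CY}, since two of the three spaces $H^{J,0}$, $H^{0,\bJ}$, $H^{J,\bJ}$ being nonzero forces all three to be nonzero.

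Your approach instead derives the same identity by chaining through the mirror $\tilde F$: you show $h_{0,q}^B(F)=h_{0,q}^A(F)$ via the unitarity observation that $C^{0,\ast}(F)_{cc}=C^{0,\ast}(F)_{ac}$ (this is the exact analog of the equality $C^{\ast,0}(F)_{cc}=C^{\ast,0}(F)_{ca}$ used in the paper's proof of Theorem~\ref{thm_chiral_star}(3)), then pass to $\tilde F$ and apply Hodge duality there. This is valid, and the verification that Hodge duality is available in $\tilde F$ is correctly done. What it buys you is that you never need to state the $\bJ$-version of Theorem~\ref{thm_chiral_star} explicitly; what it costs is a longer chain and the need to check hypotheses in $\tilde F$. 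The paper's route is shorter because the anti-holomorphic analog is immediate by symmetry once Theorem~\ref{thm_CY} supplies $H^{0,\bJ}\neq 0$.
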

\begin{proof}
By the Hodge duality, Theorem \ref{thm_full_frob}, $h_{0,3} =h_{3,0}=1$ and $h_{1,1}=h_{2,2}$, $h_{1,2}=h_{2,1}$.
By the T-duality, Theorem \ref{thm_chiral_star}, and the Hodge duality, $0=h_{1,0}=h_{2,0}=h_{2,3}=h_{1,3}$ and $0=h_{0,1}=h_{0,2} = h_{3,2}=h_{3,1}$.
Hence, the assertion holds.
\end{proof}

\begin{rem}\label{rem_mirror}
In superstring theory, Calabi-Yau manifolds $X$ and $\widetilde X$ are mirror if the
associated supersymmetric sigma models $F_X$ and $F_{\widetilde X}$ satisfy
$\widehat F_X \cong F_{\widetilde X}$ as unitary $N=(2,2)$ full VOAs. Physics further
predicts that the Hodge numbers of $F_X$ coincide with the Hodge numbers of $X$ in the
sense of K\"ahler geometry; see Section \ref{sec_conj}. Hence Corollary \ref{cor_mirror} implies formally that,
when $X$ and $\widetilde X$ are mirror in the sigma-model sense, their Hodge numbers
satisfy the usual mirror relation. This is what is called Hodge-theoretic, or classical,
mirror symmetry in mathematics; see \cite{Kont,Yau}.
\end{rem}

\section{Spectral flow twist of full vertex operator superalgebra}\label{sec_sec_flow}

In this section, we construct a generalized full vertex operator superalgebra from an $N=(2,2)$ full VOA based on \cite{M1}.
It consists of a family of vector spaces $F(\la,\mu)$ with $(\la,\mu)\in \mathbb{R}^2$, on which the twisted $N=(2,2)$ superconformal algebra 
$\g_{\la}^{N=2} \oplus g_{\mu}^{N=2}$ acts.
This construction serves as a mathematical formulation of the spectral flow in physics.

In \cite{M1}, an equivalence of categories between
full $\mathcal{H}$-vertex algebras (full VOAs with Heisenberg full vertex subalgebras) and generalized full vertex algebras was shown. This result is very important in our construction of the spectral flow, and we will review it in Section \ref{sec_str_h}.
Section \ref{sec_const} examines the structure of a generalized full vertex algebra.
In Section \ref{sec_flow}, the spectral flow is formulated. Using the results in Section \ref{sec_const}, we prove the equivalence between the spectral flow being periodic and the existence of the volume form. Section \ref{sec_Lie_flow} compares the spectral flows in the sense of Lie algebra (Definition \ref{def_spectral_Lie}) and in the sense of full vertex algebra in Section \ref{sec_flow}, and shows that they coincide on the subspace of primary vectors.
This implies the dualities in Section \ref{sec_Hodge_Serre}.
In Section \ref{sec_genus}, the Witten genus of an $N=(2,2)$ full VOA is defined and its properties are investigated.


%

\subsection{Structure of full $\cH$-vertex operator algebra}\label{sec_str_h}

A full vertex operator algebra with a subalgebra which is isomorphic to a Heisenberg full vertex algebra is called a full $\cH$-vertex operator algebra.
The $N=(2,2)$ full VOA has a natural full $\cH$-vertex algebra structure by the affine Heisenberg full vertex algebra generated by $J$ and $\bJ$.

In \cite{M1}, we showed that the category of full $\cH$-vertex operator algebras is equivalent to the category of generalized full vertex operator algebras with charge structure. 
This is a generalization of Li's result in the chiral case \cite{Li20}. In \cite{M1} we used this equivalence to construct deformation families of full VOAs. In this paper, we use this for the construction of the spectral flow. In this section, we review \cite{M1}.

Let $H_l,H_r$ be real finite-dimensional vector spaces equipped with
non-degenerate symmetric bilinear forms $(-,-)_l:H_l \times H_l \rightarrow \R$ and $(-,-)_l:H_r \times H_r \rightarrow \R$.
Let $M_{H_l}(0)$ and $M_{H_r}(0)$ be affine Heisenberg vertex algebras
associated with $(H_l,(-,-)_l)$ and $(H_r,(-,-)_r)$.
Set $H=H_l \oplus H_r$ and let $p, \overline{p}:H \rightarrow H$
 be the projections onto $H_l$ and $H_r$
and 
$$
M_{H,p}(0)=M_{H_l}(0)\otimes \overline{M_{H_r}(0)},
$$
the tensor product of the vertex algebra $M_{H_l}(0)$
and the conjugate vertex algebra $\overline{M_{H_r}(0)}$, whose formal variable of the vertex operator is $\z$.
Hence, $M_{H,p}(0)$ is a full vertex operator algebra (see \cite[Proposition 3.10 and Corollary 3.17]{M1}).

\begin{dfn}\label{def_H_VA}
A {\bf full $\cH$-vertex operator superalgebra} is a full vertex operator superalgebra $F$ together with a full vertex superalgebra homomorphism $i:M_{H,p}(0) \rightarrow F$ such that:
\begin{enumerate}
\item[FH1)]
For any $h_l \in H_l$ and $h_r \in H_r$, $L(n)i(h_l) = 0=\Ld(n)i(h_r)$ and $L(0)i(h_l) =i(h_l)$,
$\Ld(0)i(h_r) =i(h_r)$ for any $n \geq 1$;
\item[FH2)]
For any $h_l \in H_l$ and $h_r\in H_r$, $i(h_l)(0,-1)$ and $i(h_r)(-1,0)$ semisimply act on $F$ with real eigenvalues.
\end{enumerate}
\end{dfn}
Since $i$ sends holomorphic vectors (resp. anti-holomorphic vectors) onto holomorphic vectors (resp. anti-holomorphic vectors), $L(n)i(h_r) =0$ and $\Ld(n)i(h_l)=0$ automatically holds for all $n \geq -1$.
Hereafter, we regard $M_{H,p}(0)$ as a subalgebra of $F$ and omit writing $i$.

Let $F$ be a full $\cH$-vertex operator superalgebra.
For $\al \in H$, let $\Omega_{F}^\al$ be the set of all vectors $a \in F$ satisfying the following conditions:
For any $h_l\in H_l$ and $h_r\in H_r$,
\begin{itemize}
\item
$h_l(n,-1)a=0$, $h_r(-1,n)a=0$ for any $n \geq 1$;
\item
$h_l(0,-1)a=(h_l,p\al)_l a$, $h_r(-1,0)a=(h_r,\p\al)_r a$.
\end{itemize}
Set $$\Omega_{F}=\bigoplus_{\al \in H} \Omega_{F}^\al$$
and
$$(\Om_{F})_{h-\frac{(p\al,p\al)_l}{2},\h-\frac{(\p\al,\p\al)_r}{2}}^{\al}
= F_{h,\h} \cap \Om_{F}^\al
$$ for $h,\h\in \R$ and $\al \in H$.

For $\al \in H$, set
\begin{align}
\begin{split}
E^-(\al,\uz)&= \exp\Bigl(\sum_{n \geq 1} \frac{p\al(-n,-1)}{n}z^n+\frac{\p\al(-1,-n)}{n}\z^n\Bigr)\\
E^+(\al,\uz)&= \exp\Bigl(\sum_{n \geq 1} \frac{p\al(n,-1)}{-n} z^{-n}+\frac{\p\al(-1,n)}{-n}\z^{-n} \Bigr).
\end{split}
\label{eq_lattice_vertex1}
\end{align}
For $\al \in H$,
define $z^{(p\al)(0)}\z^{(\p\al)(0)} \in \End\, \Om_{F,H}[z^\R,\z^\R]$ by
\begin{align}
z^{(p\al)(0)}\z^{(\p\al)(0)}v=z^{(p\al,p\be)_l}\z^{(\p\al,\p\be)_r}v
\label{eq_lattice_vertex2}
\end{align}
for $v \in \Omega_{F}^\be$.
Define a vertex operator 
\begin{align*}
\hY_\Om(\bullet,\uz): \Om_F \rightarrow \Om_F[[z^\R,\z^\R]]
\end{align*}
by 
\begin{align}
\hY_\Om(v,\uz)= 
E^-(-\al,\uz)Y(v,\uz)E^+(-\al,\uz)z^{(-p\al)(0)}\z^{(-\p\al)(0)}\label{eq_modified_vertex}
\end{align}
for $v \in \Omega_{F}^\al$.
For $u,v \in \Om_F$, it is nontrivial that the coefficients of $\hY(v,\uz)u$ are in $\Om_F$, but it can be proved from the commutator relation of the affine Heisenberg Lie algebra (see \cite{M1}).

Thus $\Om_F$ is an algebra with vertex operator $\hY_\Om(\bullet,\uz)$.
But $\hY_\Om(\bullet,\uz)$ contains terms like $z^r\z^s$ and is not single-valued around $z=0$.
Note that for $v \in \Om_F^\al$ and $u \in \Om_F^\be$,
\begin{align}
z^{-(\al,\be)_l}\z^{-(\al,\be)_r} &= 
z^{-(\al,\be)_l+(\al,\be)_r}(z\z)^{-(\al,\be)_r}
\label{eq_monodromy_source}
\end{align}
and $(z\z)^{-(\al,\be)_r}= \exp(-(\al,\be)_r\log(z\z))$ is single-valued around $z=0$.
Hence, the monodromy of $\hY_\Om(\bullet,\uz)$ around $z=0$ is controlled by the bilinear form $(-,-)_c$,
where 
%
\begin{align}
(\al,\be)_c =(p\al,p\be)_l - (\p\al,\p\be)_r.
\label{eq_charge_lattice}
\end{align}

A generalized full vertex superalgebra is an algebra with a monodromy controlled by a real vector space $H$ equipped with a symmetric bilinear form $(-,-)_c$.
%

\begin{dfn}\cite[Section 4.2]{M1}
\label{def_fGVA}
A {\bf generalized full vertex superalgebra} is a real finite-dimensional vector space
$H$ equipped with a non-degenerate symmetric bilinear form 
$$(-,-)_c:H\times H \rightarrow \R$$
and an $\R^2\times H$-graded $\C$-vector space
$\Om=\bigoplus_{t,\td \in \R,\al \in H} \Om_{t,\td}^\al$ equipped with a linear map 
$$\hY(-,\uz):\Om \rightarrow \End \Om[[z^\R,\z^\R]],\; a\mapsto \hY(a,\uz)=\sum_{r,s \in \R}a(r,s)z^{-r-1}\z^{-s-1}$$
and an element $\va \in \Om_{0,0}^0$ 
satisfying the following conditions:
%
\begin{enumerate}
\item[GFV1)]
For any $\al,\be \in H$ and $a \in \Om^\al$, $b \in \Om^\be$,
$z^{(\al,\be)}\hY(a,z)b \in \Om((z,\z,|z|^\R))$;
\item[GFV2)]
$\Om_{t,\td}^\al=0$ unless $(\al,\al)/2+t-\td \in \ft \Z$,
which defines a $\Z_2$-grading on $\Om$;
\item[GFV3)]
For any $a \in \Om$, $\hY(a,\uz)\va \in \Om[[z,\z]]$ and $\lim_{z \to 0}\hY(a,\uz)\va = a(-1,-1)\va=a$;
\item[GFV4)]
$\hY(\va,\uz)=\mathrm{id}_\Om \in \End\, \Om$;
\item[GFV5)]
For any $\al_i \in H$ and  $a_i \in \Omega^{\al_i}$ ($i=1,2,3$) and $u \in \Omega^\vee=\bigoplus_{t,\td\in \R,\al\in H}(\Om_{t,\td}^\al)^*$,  there exists $\mu(z_1,z_2) \in C^\om(Y_2(\C))$ such that
\begin{align*}
(z_1-z_2)^{(\al_1,\al_2)}z_1^{(\al_1,\al_3)}z_2^{(\al_2,\al_3)}|_{|z_1|>|z_2|}
u(\hY(a_1,\uz_1)\hY(a_2,\uz_2)a_3) &= \mu(z_1,z_2)|_{|z_1|>|z_2|},\\
z_0^{(\al_1,\al_2)}(z_2+z_0)^{(\al_1,\al_3)}z_2^{(\al_2,\al_3)}|_{|z_2|>|z_0|}
u(\hY(\hY(a_1,\uz_0)a_2,\uz_2)a_3) &= \mu(z_0+z_2,z_2)|_{|z_2|>|z_0|},\\
(-1)^{|a_1||a_2|}(z_2-z_1)^{(\al_1,\al_2)}z_1^{(\al_1,\al_3)}z_2^{(\al_2,\al_3)}|_{|z_2|>|z_1|}
u(\hY(a_2,\uz_2)\hY(a_1,\uz_1)a_3) &= \mu(z_1,z_2)|_{|z_2|>|z_1|},
\end{align*}
where ${|a_i|}$ is the $\Z_2$-grading in (GFV2).
\item[GFV6)]
$\Om_{t,\td}^\al(r,s)\Om_{t',\td'}^\be \subset \Om_{t+t'-r-1,\td+\td'-s-1}^{\al+\be}$ for any $r,s,t,\td,t',\td' \in \R$
and $\al,\be \in H$;
\item[GFV7)]
For any $\al \in H$, there exists $N_\al \in \R$ such that $\Om_{t,\td}^\al=0$ for any $t \leq N_\al$ or
$\td \leq N_\al$.
\end{enumerate}
\end{dfn}

Let $\Om$ be a generalized full vertex superalgebra and $L_\Om(-1),\Ld_\Om(-1)$ denote the endomorphism of $\Om$ defined by
$L_\Om(-1) a =a(-2,-1)\va$ and $\Ld_\Om(-1) a = a(-1,-2)\va$.
Then, similar propositions as in the proposition\ref{prop_translation} hold (see \cite[Proposition 4.4]{M1}).
\begin{dfn}\label{def_fGVA_conformal}
A generalized full vertex operator superalgebra is a generalized full vertex superalgebra equipped with 
vectors $\om \in \Om_{2,0}^0$ and $\omb \in \Om_{0,2}^0$ such that:
\begin{enumerate}
\item
$\Ld_\Om(-1) \om=0$ and $L_\Om(-1) \omb=0$;
\item
There exist scalars $c, \bar{c} \in \C$ such that
$\om(3,-1)\om=\frac{c}{2} \va$,
$\omb(-1,3)\omb=\frac{\bar{c}}{2} \va$ and
$\om(k,-1)\om=\omb(-1,k)\omb=0$
for any $k=2$ or $k\geq 4$.
\item
$\om(0,-1)=L_\Om(-1)$ and $\omb(-1,0)=\Ld_\Om(-1)$;
\item
$\om(1,-1)|_{\Om_{t,\td}}=t$ and
$\omb(-1,1)|_{\Om_{t,\td}}=\td$ for any $t,\td \in \R$.
\end{enumerate}
\end{dfn}

Then, we have \cite[Theorem 5.3]{M1}:
\begin{thm}\label{thm_vacuum}
Let $(F,H_l,H_r,\om,\omb)$ be a full $\cH$-vertex operator superalgebra. Then,\\ $(\Om_F,\hY_\Om,\va,H,(-,-)_c,\om_\Om, \omb_\Om)$ is a generalized full vertex operator superalgebra, where
\begin{align}
\begin{split}
\om_{H_l} &= \ft \sum_{i\in I_l} h_i^l(-1)h_i^l,\\
\omb_{H_r} &= \ft \sum_{i\in I_r} h_i^r(-1)h_i^r,
\end{split}
\label{eq_om_H}
\end{align}
and $\{h_i^l\}_{i\in I_l}$ and $\{h_i^r\}_{i\in I_r}$ are orthonormal basis of $(H_l,(-,-)_l)$ and $(H_r,(-,-)_r)$, respectively.
\end{thm}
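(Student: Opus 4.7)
The plan is to deduce the generalized full vertex operator superalgebra axioms for $\Om_F$ directly from the full VOA axioms of $F$ (FV1)--(FV3) together with the explicit form of the dressing exponentials $E^{\pm}(\al,\uz)$. First I would verify that the modified operator $\hY_\Om(v,\uz)$ in \eqref{eq_modified_vertex} actually lands in $\End(\Om_F)[[z^{\R},\z^{\R}]]$ when $v\in\Om_F^\al$. Since $p\al(0,-1)$ and $\p\al(-1,0)$ preserve $\Om_F^\bullet$, the only issue is that $Y(v,\uz)u$ for $u\in\Om_F^\be$ produces Heisenberg descendants; the exponentials $E^{-}(-\al,\uz)E^{+}(-\al,\uz)$ are designed to conjugate these away. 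Concretely, using the commutator $[h(n,-1), Y(v,\uz)]=Y(h(n,-1)v,\uz)+\dots$ together with $h(n,-1)v=(h,p\al)_l\delta_{n,0}v$ for $v\in\Om_F^\al$ and $n\ge 0$, a standard calculation (see also \cite{Li20,M1} in the chiral case) shows that the coefficients of $\hY_\Om(v,\uz)u$ are Heisenberg-highest-weight vectors of weight $\al+\be$, hence lie in $\Om_F^{\al+\be}$.

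With closedness established, the grading axioms (GFV1), (GFV2), (GFV6), (GFV7) are essentially bookkeeping. (GFV1) follows by computing the leading $z,\z$ monomial in $\hY_\Om(v,\uz)u$: the factor $z^{(-p\al)(0)}\z^{(-\p\al)(0)}$ produces $z^{-(p\al,p\be)_l}\z^{-(\p\al,\p\be)_r}$, and pairing with $z^{(\al,\be)_c}=z^{(p\al,p\be)_l-(\p\al,\p\be)_r}$ yields a single-valued object of the form $\sum v(r,s) z^{-r-1}\z^{-s-1}$ with only positive-power integer offsets, exactly as required. The $\Z_2$-grading (GFV2) is inherited from the fermion number on $F$ via the shift $(t,\td)\mapsto (t-(p\al,p\al)_l/2,\td-(\p\al,\p\al)_r/2)$. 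The vacuum axioms (GFV3), (GFV4) follow immediately from $\va\in\Om_F^0$ and $E^{\pm}(0,\uz)=\id$, reducing to (FV1), (FV2) for $F$.

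The main obstacle is the generalized locality/associativity (GFV5). The strategy is to start from the corresponding property of $Y$ on $F$, namely that there exists $\mu_F(z_1,z_2)\in C^\om(Y_2(\C))$ realizing
\[
u\bigl(Y(a_1,\uz_1)Y(a_2,\uz_2)a_3\bigr),\ \ u\bigl(Y(Y(a_1,\uz_0)a_2,\uz_2)a_3\bigr),\ \ (-1)^{|a_1||a_2|}u\bigl(Y(a_2,\uz_2)Y(a_1,\uz_1)a_3\bigr)
\]
in the three regimes, and then conjugate by the $E^{\pm}$ factors. The key identity is
\[
E^{+}(-\al_1,\uz_1)E^{-}(-\al_2,\uz_2) = (1-z_2/z_1)^{(p\al_1,p\al_2)_l}(1-\z_2/\z_1)^{(\p\al_1,\p\al_2)_r} E^{-}(-\al_2,\uz_2)E^{+}(-\al_1,\uz_1),
\]
derived from the Baker--Campbell--Hausdorff formula for the Heisenberg algebra. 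Multiplying the $F$-side matrix element by the appropriate product of $E^{\pm}$'s and using this identity produces a prefactor that combines with $z^{(-p\al)(0)}\z^{(-\p\al)(0)}$ to give the precise monodromy factors $(z_1-z_2)^{(\al_1,\al_2)_c}z_1^{(\al_1,\al_3)_c}z_2^{(\al_2,\al_3)_c}$ demanded by (GFV5). The analytic function $\mu_\Om$ is then obtained from $\mu_F$ by multiplying by the single-valued real-analytic function $(z_1\z_1)^{\bullet}(z_2\z_2)^{\bullet}|z_1-z_2|^{\bullet}$ made from the charge data; the three regime-compatibility relations for $\mu_\Om$ follow from those for $\mu_F$ in exactly the same three regions $|z_1|>|z_2|,\ |z_2|>|z_1|,\ |z_2|>|z_0|$.

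Finally, for the conformal structure I would take $\om_\Om=\om-\om_{H_l}$ and $\omb_\Om=\omb-\omb_{H_r}$ with $\om_{H_l},\omb_{H_r}$ as in \eqref{eq_om_H}. The key facts are that $\om_{H_l},\omb_{H_r}\in\Om_F^0$ (since Heisenberg conformal vectors are annihilated by positive Heisenberg modes), that $\om-\om_{H_l}$ commutes with every $h_l(n,-1)$ for $n\ge 0$ and with $h_r(-1,n)$, and that the shifted $L(0)$-eigenvalue on $\Om_F^\al$ is exactly $t-(p\al,p\al)_l/2$ (and similarly for $\Ld(0)$). The central charge and Virasoro relations transfer unchanged because the Heisenberg subalgebra contributes additively to the stress tensor. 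Checking (1)--(4) of Definition \ref{def_fGVA_conformal} then reduces to direct computation from (FVOA1)--(FVOA4) for $F$ and the corresponding identities for the Heisenberg conformal vectors. The hardest conceptual point throughout is ensuring that all identifications of analytic continuations and monodromies are consistent, but this is purely bookkeeping once the exponential conjugation identity above is in hand.
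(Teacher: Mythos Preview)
The paper does not give its own proof of this theorem: it is stated as \cite[Theorem 5.3]{M1}, and Remark \ref{rem_history_generalized} adds only that the extension from the bosonic case treated in \cite{Li20,M1} to the super case ``is straightforward''. So there is nothing in the paper to compare against beyond that citation.

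Your outline is correct and is precisely the argument carried out in \cite{M1} (itself the full-case analogue of Li's chiral argument \cite{Li20}). The essential ingredients you identify---closure of $\Om_F$ under $\hY_\Om$ via the Heisenberg commutators, the Baker--Campbell--Hausdorff identity for $E^+(-\al_1,\uz_1)E^-(-\al_2,\uz_2)$ producing the factor $(1-z_2/z_1)^{(p\al_1,p\al_2)_l}(1-\z_2/\z_1)^{(\p\al_1,\p\al_2)_r}$, and the identification $\om_\Om=\om-\om_{H_l}$, $\omb_\Om=\omb-\omb_{H_r}$---are exactly the mechanisms used there. One small point worth tightening: in deriving (GFV5) you should be explicit that the product of dressing prefactors you obtain is $z_1^{-(p\al_1,p\al_2+p\al_3)_l}\z_1^{-(\p\al_1,\p\al_2+\p\al_3)_r}\cdots$, and that combining holomorphic and antiholomorphic powers via \eqref{eq_monodromy_source} yields the single-valued real-analytic correction times the $(\bullet,\bullet)_c$-monodromy; your sentence ``multiplying by the single-valued real-analytic function $(z_1\z_1)^\bullet(z_2\z_2)^\bullet|z_1-z_2|^\bullet$'' is the right idea but deserves one more line of bookkeeping to match the three expansions in (GFV5) exactly. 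Beyond that, your proposal is a faithful reconstruction of the proof the paper defers to \cite{M1}.
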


\begin{rem}\label{rem_history_generalized}
The notion of a generalized vertex algebra is introduced by Dong and Lepowsky \cite{DL}.
The method of obtaining a generalized vertex algebra from an $\mathcal{H}$-vertex algebra was discovered by \cite{Li20}.
We reinterpreted Li's result as giving the equivalence of categories in both the chiral and full settings.
Note that the definition of a generalized vertex algebra can be varied depending on the treatment of degrees (charges). Here we conform to \cite{M1}. Note also that \cite{DL, Li20, M1} only considered the bosonic case, not vertex superalgebra, but the generalization to vertex superalgebra is straightforward as above.
\end{rem}

Theorem \ref{thm_vacuum} yields a functor from the category of full $\cH$-vertex operator superalgebras to the category of generalized full vertex operator algebras. Hereafter in this section, we will review the opposite functor. First, let us summarize \cite{M1}:
\begin{enumerate}
\item
As above, any full $\mathcal{H}$-VOA is a direct sum of the affine Heisenberg full VOA $M_{H,p}(0)$-modules:
\begin{align}
F= \bigoplus_{\al \in H}M_{H,p}(\al)\otimes \Om_F^\al,
\label{eq_F_bigsum}
\end{align}
where $M_{H,p}(\al)$ is the highest weight module of $M_{H,p}(0)$ with the highest weight $\al \in H$ (see below).
\item
Then, $\Om_F$ on the right-hand side of \eqref{eq_F_bigsum} is no longer commutative and associative, but is an algebra with monodoromy, which is axiomatized by generalized full vertex operator algebra.
\item
In fact, the left-hand-side of \eqref{eq_F_bigsum}
\begin{align*}
G_{H,p}=\bigoplus_{\al\in H} M_{H,p}(\al)
\end{align*}
also inherits a generalized full vertex operator algebra structure, and $F$ is actually a subalgebra of the large generalized full vertex operator algebra $G_{H,p}\otimes \Om_F$:
\begin{align}
F= \bigoplus_{\al \in H}M_{H,p}(\al)\otimes \Om_F^\al \subset  \bigoplus_{\al,\be \in H}M_{H,p}(\al)\otimes \Om_F^\be=G_{H,p}\otimes \Om_F.
\label{eq_F_tilde}
\end{align}
\item
Here $F$ is the diagonal sum in equation \eqref{eq_F_tilde} for $(\al,\be)\in H^2$, which causes a cancellation of the right and left monodromies. Therefore, the axiom of a full VOA is satisfied.
\end{enumerate}

We formulate the spectral flow in Section \ref{sec_flow} as a 2-parameter family in this large generalized full vertex algebra \eqref{eq_F_tilde}. Hereafter, we will describe the generalized full vertex algebra structure on  $G_{H,p}$ based on \cite{M1},
which is a generalization of the generalized lattice vertex algebra introduced by Dong-Lepowsky in the chiral case \cite{DL}.
%
%

Let $H_l$ and $H_r$ be finite dimensional real vector spaces equipped with symmetric bilinear forms $(-,-)_l, (-,-)_r$. For $\al,\be \in H=H_l\oplus H_r$, set
\begin{align*}
(\al,\be)_p&= (p\al,p\be)+(\p\al,\p\be)\\
(\al,\be)_c&= (p\al,p\be)-(\p\al,\p\be),
\end{align*}
which are bilinear forms on $H$.
Let $\hat{H}^p=\bigoplus_{n \in \Z} H\otimes t^n \oplus \C c$ be the affine Heisenberg Lie algebra associated with $({H},(-,-)_p)$
and $\hat{H}_{\geq 0}^p=\bigoplus_{n \geq 0} H\otimes t^n \oplus \C c$ a subalgebra of $\hat{H}^p$.
Define the action of $\hat{H}_{\geq 0}^p$ on the group algebra of $H$, 
$\C[H]=\bigoplus_{\al \in H} \C e_\al$, by
\begin{align*}
c e_\alpha&=e_\al \\
h\otimes t^n e_\alpha &=
\begin{cases}
0, &n \geq 1,\cr
(h,\al)_p e_\alpha, &n = 0
\end{cases}
\end{align*}
for $\al \in H$.
Let $G_{H,p}$ be the $\hat{H}^p$-module induced from $\C[H]$.
Denote by $h(n)$ the action of $h \otimes t^n$ on $G_{H,p}$ for $n \in \Z$. Then,
\begin{align*}
G_{H,p}&= \bigoplus_{\al \in H}M_{H,p}(\al)\\
M_{H,p}(\al)&=M_{H_l}(p\al) \otimes M_{H_r}(\p \al).
\end{align*}
Let $\al \in H$.
Define $E^\pm(\al,\uz)$ and $z^{p\al(0)}\z^{\p\al(0)}$ similar to \eqref{eq_lattice_vertex1} and \eqref{eq_lattice_vertex2}.
Denote by $l_{e_\al} \in \End \C[H]$ the left multiplication by ${e_\al}$.
Then, we can define a unique generalized full vertex algebra structure on $G_{H,p}$ such that
the vertex operator $\hY_\std(\bullet,\uz):G_{H,p} \rightarrow \End G_{H,p}[[z^\R,\z^\R]]$
satisfies
\begin{align}
\hY_\std(e_\al,\uz) &=E^-(\al,\uz)E^+(\al,\uz)l_{e_\al} z^{p \alpha} \z^{\p \alpha}
\label{eq_standard_vertex}
\end{align}
(for more detail see \cite{M1}).

Set
\begin{align*} 
\va=1\otimes e_0,\quad \omega_{H_l}  = \frac{1}{2}\sum_{i=1}^{\dim H_l} h_l^i (-1)h_l^i,\quad \bar{\om}_{H_r} = \frac{1}{2} \sum_{j=1}^{\dim H_r} h_r^j (-1)h_r^j,
\end{align*}
where $h_l^i$ and $h_r^j$ is an orthonormal basis of $H_l\otimes_{\R} \C$ and $H_r\otimes_{\R} \C$ with respect to the bilinear form $(-,-)_p$.

\begin{prop}\cite[Proposition 4.10]{M1}\label{standard}
$(G_{H,p},\hY_\std,\va,H, -(-,-)_c, \omega_{H_l},\omb_{H_r})$ is a generalized full vertex operator algebra.
\end{prop}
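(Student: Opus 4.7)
The plan is to adapt the Dong--Lepowsky construction of generalized lattice vertex algebras \cite{DL} to our mixed holomorphic/antiholomorphic setting. The vertex operator is prescribed on the generators $e_\al \in \C[H] \subset G_{H,p}$ by \eqref{eq_standard_vertex}, and extended to all of $G_{H,p}$ by reconstruction: once locality and associativity are established on the generators, the translation and (anti)commutator formulas (the non-chiral analogues of Proposition \ref{prop_translation} for a generalized full vertex algebra) force the vertex operator on every descendant obtained by applying Heisenberg modes and $L_\Om(-1), \Ld_\Om(-1)$. Axioms (GFV1)--(GFV4), (GFV6), (GFV7) are then immediate from the definition of $G_{H,p}$ as an induced module, so the substantive points are the generalized locality/associativity (GFV5) and the conformal conditions of Definition \ref{def_fGVA_conformal}.

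The key computation is the product of two generator fields. From the commutation relations for $\hat{H}^p$, together with the fact that left-moving and right-moving Heisenberg modes commute, the Baker--Campbell--Hausdorff formula yields
\begin{equation*}
E^+(\al,\uz_1)E^-(\be,\uz_2)=E^-(\be,\uz_2)E^+(\al,\uz_1)\,\bigl(1-\tfrac{z_2}{z_1}\bigr)^{(p\al,p\be)_l}\bigl(1-\tfrac{\z_2}{\z_1}\bigr)^{(\p\al,\p\be)_r}.
\end{equation*}
Commuting $l_{e_\al}z_1^{p\al(0)}\z_1^{\p\al(0)}$ through $E^\pm(\be,\uz_2)$ and $l_{e_\be}z_2^{p\be(0)}\z_2^{\p\be(0)}$, and using $E^+(\al,\uz_1)E^+(\be,\uz_2)e_\ga = e_\ga$, one arrives at a normal-ordered expression in which all branched factors have been collected into
\begin{equation*}
(z_1-z_2)^{(p\al,p\be)_l}(\z_1-\z_2)^{(\p\al,\p\be)_r}\,z_1^{(p\al,p\ga)_l}\z_1^{(\p\al,\p\ga)_r}\,z_2^{(p\be,p\ga)_l}\z_2^{(\p\be,\p\ga)_r}.
\end{equation*}
Applying identity \eqref{eq_monodromy_source} to each factor of the form $z^a\z^b$ with $a-b\in\ft\Z$ splits it as $z^{a-b}|z|^{2b}$ with $|z|^{2b}$ single-valued; hence the entire monodromy of the three-point correlator collapses to the prefactor $(z_1-z_2)^{(\al,\be)_c}z_1^{(\al,\ga)_c}z_2^{(\be,\ga)_c}$, which is exactly what (GFV5) with bilinear form $-(-,-)_c$ prescribes.

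The same normal-ordered expression analytically continues to a real-analytic function $\mu(z_1,z_2)$ on $Y_2(\C)$; its expansions in $|z_1|>|z_2|$, $|z_2|>|z_1|$ and $|z_2|>|z_0|$ (with $z_1=z_2+z_0$) produce the three series required by (GFV5), the Koszul sign being inherited from (GFV2). Locality in the first two expansions reduces to the classical binomial identity for $(1-z_2/z_1)^{(p\al,p\be)_l}$ and its antiholomorphic counterpart, exactly as in \cite{DL, Li20}; associativity follows by re-expanding the branched prefactors in $|z_2|>|z_0|$. Locality for general descendants then propagates through the (anti)commutator formulas. The conformal conditions for $\om_{H_l}$ and $\omb_{H_r}$ in \eqref{eq_om_H} are verified by the standard Virasoro OPE for the free-field stress tensor, giving $c=\dim H_l$, $\bc=\dim H_r$; on each $M_{H,p}(\al)$, the zero modes $L(0), \Ld(0)$ are bounded below by $\tfrac{(p\al,p\al)_l}{2}$, $\tfrac{(\p\al,\p\al)_r}{2}$, yielding (GFV7).

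The main obstacle is the careful bookkeeping of branches in the mixed monodromy around $z_1=z_2$ and $z_i=0$: unlike in the chiral case one cannot work purely within $\C((z))$, and one must verify that after extracting the prefactors dictated by (GFV1) and (GFV5) the three expansions really agree as Laurent expansions of one and the same real-analytic function on $Y_2(\C)$. This is precisely the non-chiral point where the bilinear form $-(-,-)_c=-(-,-)_l+(-,-)_r$ emerges, as the difference between left- and right-moving monodromies via \eqref{eq_monodromy_source}, and it is the genuinely new input beyond \cite{DL, Li20}.
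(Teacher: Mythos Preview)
The paper does not prove this statement; it merely cites \cite[Proposition 4.10]{M1}, where the result was established. Your outline is a faithful reconstruction of what that proof must look like: it is the non-chiral adaptation of the Dong--Lepowsky/Li construction, and you have correctly isolated the new ingredient, namely that the combined left/right monodromy of $z^{(p\al,p\be)_l}\z^{(\p\al,\p\be)_r}$ reduces via \eqref{eq_monodromy_source} to $z^{(\al,\be)_c}$ times a single-valued factor, which is why the charge form on $G_{H,p}$ is $-(-,-)_c$ rather than $(-,-)_c$.

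One small remark: you mention the Koszul sign ``being inherited from (GFV2)'', but the paper observes immediately after the proposition that the $\Z_2$-grading on $G_{H,p}$ is identically zero, i.e.\ $G_{H,p}$ contains no fermions, so the sign in (GFV5) is trivially $+1$. It would strengthen your sketch to note this explicitly, since it is precisely the feature that later allows $G_{H,p}$ to tensor cleanly with $\Om_F$ (which may carry nontrivial fermion parity) without introducing extra cocycles.
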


Note that
\begin{itemize}
\item
In \eqref{eq_modified_vertex} and \eqref{eq_standard_vertex}, the sign of $z^{\pm p\al(0)}\z^{\pm \p\al(0)}$ is opposite. Therefore
\eqref{eq_monodromy_source} is the opposite sign for $\hY_{\mathrm{std}}(e_\al,\uz)$. This is why the bilinear form on $H$ is $-(-,-)_c$ in the above proposition, and thanks to that, \eqref{eq_F_bigsum} has no monodromy.
\item
The $\R^2\times H$-grading of $e_\al$ is 
$$e_\al \in (G_{H,p})_{\frac{1}{2}(p\al,p\al)_p,\frac{1}{2}(\p\al,\p\al)_p}^\al.
$$
Therefore, $\Z_2$-grading in (GFV2) is always
\begin{align*}
\frac{1}{2}(p\al,p\al)_p-\frac{1}{2}(\p\al,\p\al)_p 
- \ft (\al,\al)_c=0.
\end{align*}
Especially $G_{H,p}$ does not contain fermion.
%
%
\end{itemize}

\subsection{Constant subalgebra of generalized full vertex algebra}\label{sec_const}
An important observation in this paper is that the volume form of a unitary $N=(2,2)$ full VOA satisfies the following condition by Lemma \ref{lem_primary_inequality}:
\begin{dfn}\cite[Section 3.1]{M7}\label{def_lattice_like}
Let $(F,H)$ be a full $\mathcal{H}$-vertex operator superalgebra. A vector $v \in \Om_F^\al$ is called a \textbf{lattice-like vector} if it satisfies
\begin{align}
\begin{split}
L(-1)v &= (p\al)(-1,-1)v,\\
\Ld(-1)v &= (\p\al)(-1,-1)v.
\end{split}
\label{eq_lattice_like}
\end{align}
\end{dfn}

Note that $v \in \Om_F^\al$ is a lattice-like vector if and only if
\begin{align}
v \in \ker L_\Om(-1) \cap \Ld_\Om(-1)
\end{align}
in the generalized full vertex operator algebra $\Om_F$ (see \cite[Lemma 3.2]{M7}).
In the case of a vertex operator algebra $V$, 
$A=\ker L(-1)$ is a commutative associative algebra by $-1$-th product $(a,b) \mapsto a(-1)b$ and the vertex operator $Y(\bullet,z):V \rightarrow \End V$ is automatically $A$-module homomorphism. That is, $A$ gives the coefficient ring of $V$.
In this section, we show that for a generalized full vertex operator superalgebra, $\ker L_\Om(-1)\cap \ker \Ld_\Om(-1)$ satisfies the similar properties.
%
%

Let $(\Om,\hY(\bullet,\uz),\va,\om_\Om,\omb_\Om,H,(-,-)_c)$ be a generalized full vertex superalgebra.
Set
\begin{align}
\begin{split}
A_\Om &= \ker \Ld_\Om(-1) \cap \ker L_\Om(-1) \subset \Om\\
A_\Om^\al&= \Om^\al \cap A_\Om
\end{split}
\label{eq_A_om}
\end{align}
for $\al \in H$.

By definition, for $e_\al \in A_\Om$, $\hY(e_\al,\uz) = e_\al(-1,-1)$, that is, the vertex operator consisting of only the constant term. We denote $e_\al(-1,-1)$ by $e_\al \cdot$ for short.
In \cite[Proposition 5.12]{M1}, it was shown that $A_\Om$ is an associative algebra by the product $\cdot$.
\begin{lem}\label{lem_vacuum_like}
Let $e_\ga \in A_\Om^\ga$ and $a \in \Om^{\al}$. If $e_\ga \cdot a \neq 0$, then $(\al,\ga)_c \in \Z$ and
\begin{align*}
(-1)^{|a||e_\ga|+(\al,\ga)_c}\hY(a,\uz)e_\ga = \exp(L_\Om(-1)z+\Ld_\Om(-1)\z)(e_\ga\cdot a) = e_\ga \cdot \exp(L_\Om(-1)z+\Ld_\Om(-1)\z) a.
\end{align*}
\end{lem}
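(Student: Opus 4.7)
The plan is to combine the fact that $\hY(e_\ga,\uz)$ is a constant operator on $\Om$ with the locality axiom (GFV5) applied to the triple $(a,e_\ga,\va)$.

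First I would establish that $\hY(e_\ga,\uz) = e_\ga(-1,-1) = e_\ga\cdot$ as an element of $\End\,\Om$. Since $L_\Om(-1)e_\ga = 0 = \Ld_\Om(-1)e_\ga$, the analogue of Proposition~\ref{prop_translation}(1) for generalized full vertex superalgebras (cf.~\cite[Proposition 4.4]{M1}) gives $\frac{d}{dz}\hY(e_\ga,\uz) = 0 = \frac{d}{d\z}\hY(e_\ga,\uz)$, so only the mode $e_\ga(-1,-1)$ survives. In particular $\hY(e_\ga,-\uz)a = e_\ga\cdot a$, matching the RHS of the lemma. The second equality in the lemma then reduces to the observation that $L_\Om(-1)$ and $\Ld_\Om(-1)$ are derivations of the $(-1,-1)$-th product which annihilate $e_\ga$, so multiplication by $e_\ga$ commutes with $\exp(L_\Om(-1)z + \Ld_\Om(-1)\z)$.

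Next I would invoke (GFV5) with $a_1 = a$, $a_2 = e_\ga$, $a_3 = \va$. Because $\hY(e_\ga,\uz_2)\va = \exp(L_\Om(-1)z_2 + \Ld_\Om(-1)\z_2)e_\ga = e_\ga$, the first matrix element becomes $(z_1-z_2)^{(\al,\ga)_c}u(\hY(a,\uz_1)e_\ga)$ expanded in $|z_1|>|z_2|$, while the third, using the commutation remark above, becomes $(-1)^{|a||e_\ga|}(z_2-z_1)^{(\al,\ga)_c}u\bigl(\exp(L_\Om(-1)z_1 + \Ld_\Om(-1)\z_1)(e_\ga\cdot a)\bigr)$ expanded in $|z_2|>|z_1|$; both agree with the common real-analytic function $\mu(z_1,z_2)$ on their respective domains. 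Letting $z_2\to 0$ and using the branch identity $(-z_1)^{(\al,\ga)_c} = e^{\pi i(\al,\ga)_c}z_1^{(\al,\ga)_c}$, one obtains
\begin{align*}
z_1^{(\al,\ga)_c}\hY(a,\uz_1)e_\ga = (-1)^{|a||e_\ga|}e^{\pi i(\al,\ga)_c}z_1^{(\al,\ga)_c}\exp(L_\Om(-1)z_1 + \Ld_\Om(-1)\z_1)(e_\ga\cdot a).
\end{align*}

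Finally, when $e_\ga\cdot a\neq 0$ the RHS is a nontrivial formal power series in nonnegative integer powers of $z_1,\z_1$, whereas the LHS lives in the shifted space determined by (GFV1), whose $z_1$-exponents are of the form $-(\al,\ga)_c + n$ with $n\in \Z$. Matching the two forces $(\al,\ga)_c\in\Z$. With this integrality the monodromy factor collapses to $e^{\pi i(\al,\ga)_c} = (-1)^{(\al,\ga)_c}$, the factors $z_1^{(\al,\ga)_c}$ cancel on both sides, and the claimed formula of the lemma falls out. The only delicate point will be selecting the branches of $z^{(\al,\ga)_c}$ consistently as $z_2\to 0$ in the two expansions appearing in (GFV5); this is pure bookkeeping once the formal identities are set up, but it is the step where the sign $(-1)^{(\al,\ga)_c}$ is actually produced.
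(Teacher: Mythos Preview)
Your argument is correct and arrives at the same conclusion, but it takes a more first-principles route than the paper. The paper's proof is a one-line application of the skew-symmetry identity for generalized full vertex algebras \cite[Proposition 4.4 (4)]{M1}:
\[
z^{(\al,\be)_c}\hY(a,\uz)b = (-1)^{|a||b|}\exp(L_\Om(-1)z+\Ld_\Om(-1)\z)\,\lim_{\uz\to-\uz}z^{(\al,\be)_c}\hY(b,\uz)a,
\]
specialized to $b=e_\ga$. Since $\hY(e_\ga,\uz)a=e_\ga\cdot a$ is constant, the right-hand side becomes $(-1)^{|a||e_\ga|}\exp(\dots)\lim_{\uz\to-\uz}z^{(\al,\ga)_c}(e_\ga\cdot a)$; single-valuedness of the left-hand side in $\Om((z,\z,|z|^\R))$ then forces $(\al,\ga)_c\in\Z$, after which $\lim_{\uz\to-\uz}z^{(\al,\ga)_c}=(-1)^{(\al,\ga)_c}z^{(\al,\ga)_c}$ is unambiguous and the formula drops out. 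The second equality is obtained, as you do, from $[L_\Om(-1),\hY(e_\ga,\uz)]=0$.

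Your approach via (GFV5) with $a_3=\va$ is precisely how one proves that skew-symmetry identity in the first place, so you are in effect re-deriving the needed special case rather than citing it. One caution: the phrase ``letting $z_2\to 0$'' is misleading as written, since the third expansion in (GFV5) is only valid for $|z_2|>|z_1|$. What actually makes your argument work is that both expansions yield expressions whose $z_2$-dependence sits entirely in the prefactor $(z_1-z_2)^{(\al,\ga)_c}$ or $(z_2-z_1)^{(\al,\ga)_c}$; since $\mu$ is a single-valued real-analytic function on $Y_2(\C)$, nontriviality of $e_\ga\cdot a$ forces $(\al,\ga)_c\in\Z$, and then the two global polynomial expressions for $\mu$ can be compared directly (no limit needed). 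Framing it this way makes the branch-tracking you flag as ``delicate'' automatic. The paper's route buys exactly this: the formal substitution $\uz\mapsto-\uz$ in the skew-symmetry identity already packages the sign $(-1)^{(\al,\ga)_c}$ without any discussion of analytic continuation.
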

\begin{proof}
By \cite[Proposition 4.4 (4)]{M1}, for any $a \in \Om^\al$ and $b\in \Om^\be$,
\begin{align}
z^{(\al,\be)_c}\hY(a,\uz)b = (-1)^{|a||b|}\exp(L_\Om(-1)z+\Ld_\Om(-1)\z)
\lim_{\uz\to-\uz}z^{(\al,\be)_c}\hY(b,\uz)a
\label{eq_skew_general}
\end{align}
holds. Applying \eqref{eq_skew_general} to $b=e_\ga$, we have
\begin{align*}
z^{(\al,\ga)_c}\hY(a,\uz)e_\ga &=(-1)^{|a||e_\ga|} \exp(L_\Om(-1)z+\Ld_\Om(-1)\z)
\lim_{\uz\to-\uz}z^{(\al,\ga)_c}\hY(e_\ga,\uz)a\\
&=(-1)^{|a||e_\ga|}\exp(L_\Om(-1)z+\Ld_\Om(-1)\z)
\lim_{\uz\to-\uz}z^{(\al,\ga)_c}(e_\ga \cdot a).
\end{align*}
Since by definition $z^{(\al,\ga)_c}\hY(a,\uz)e_\ga \in \Om((z,\z,|z|^\R))$,
$(\al,\ga)_c \in \Z$ if $e_\ga \cdot a \neq 0$.
Assume that $e_\ga \cdot a \neq 0$.
Then, since $\lim_{\uz \to -\uz} z^{(\al,\ga)_c} = (-1)^{(\al,\ga)_c}z^{(\al,\ga)_c}$, we have
\begin{align*}
\hY(a,\uz)e_\ga &=(-1)^{|a||e_\ga|+(\al,\ga)_c}
\exp(L_\Om(-1)z+\Ld_\Om(-1)\z)(e_\ga \cdot a)\\
&=(-1)^{|a||e_\ga|+(\al,\ga)_c}
e_\ga \cdot \exp(L_\Om(-1)z+\Ld_\Om(-1)\z) a,
\end{align*}
where we used $[L_\Om(-1),\hY(e_\ga,\uz)] = \hY(L_\Om(-1)e_\ga,\uz) =0$ in the last line \cite[Proposition 4.4 (1) and (5)]{M1}.
\end{proof}
%

\begin{prop}\label{prop_vac_right}
For any $e_\ga \in A_\Om^\ga$, $a_i \in \Om^{\al_i}$, if $e_\ga \cdot a_1 \neq 0$ and $e_\ga \cdot a_2\neq 0$, then
\begin{align*}
\hY(a_1,\uz_1) \Bigl(a_2(-1,-1) e_\ga\Bigr) =  \Bigl(\hY(a_1,\uz_1)a_2\Bigr) (-1,-1)e_\ga,
\end{align*}
that is, $a_1(r,s) \Bigl(a_2(-1,-1) e_\ga\Bigr)= \Bigl(a_1(r,s)a_2\Bigr) (-1,-1)e_\ga$ for any $r,s\in \R$.
\end{prop}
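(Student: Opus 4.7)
The plan is to apply the associativity axiom (GFV5) to the triple $(a_1, a_2, e_\ga)$ and take the formal $\uz_2 = 0$ limit, using the vacuum-like property of $e_\ga$ from Lemma~\ref{lem_vacuum_like} to make this evaluation meaningful on both sides. First I would use the hypotheses $e_\ga \cdot a_i \neq 0$ for $i=1,2$ to invoke Lemma~\ref{lem_vacuum_like} and conclude $(\al_1,\ga)_c,(\al_2,\ga)_c \in \Z$, hence $(\al_1+\al_2,\ga)_c \in \Z$. The lemma then applies to every coefficient $b = a_1(r,s) a_2 \in \Om^{\al_1+\al_2}$, yielding
\[
\hY(b, \uz_2) e_\ga = (-1)^{|b||e_\ga|+(\al_1+\al_2,\ga)_c}\,e_\ga \cdot \exp(L_\Om(-1) z_2 + \Ld_\Om(-1) \bar{z}_2)\,b.
\]
In particular both $\hY(a_2,\uz_2)e_\ga$ and $\hY(\hY(a_1,\uz_0)a_2,\uz_2) e_\ga$ are regular formal power series in $\uz_2$ whose constant terms are $a_2(-1,-1)e_\ga$ and $(\hY(a_1,\uz_0)a_2)(-1,-1)e_\ga$ respectively. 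Setting $\uz_2 = 0$ (and later identifying $\uz_0 = \uz_1$) therefore extracts precisely the two sides of the identity to be proved.

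Next I would pair with an arbitrary $u \in \Om^\vee$ and apply (GFV5) to $(a_1,a_2,e_\ga)$: there exists $\mu(z_1,z_2) \in C^\om(Y_2(\C))$ such that both $u(\hY(a_1,\uz_1)\hY(a_2,\uz_2)e_\ga)$ and $u(\hY(\hY(a_1,\uz_0)a_2,\uz_2)e_\ga)$ recover $\mu$ after division by the monodromy prefactor $(z_1-z_2)^{(\al_1,\al_2)_c} z_1^{(\al_1,\ga)_c} z_2^{(\al_2,\ga)_c}$, expanded in the regions $|z_1|>|z_2|$ and $|z_2|>|z_0|$ respectively. The integrality of $(\al_i,\ga)_c$ makes $z_1^{(\al_1,\ga)_c}$ and $z_2^{(\al_2,\ga)_c}$ single-valued monomials, while the regularity of both formal series at $\uz_2 = 0$ established above forces $\nu(z_1,z_2) := \mu(z_1,z_2)/z_2^{(\al_2,\ga)_c}$ to extend regularly across $z_2 = 0$.

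Finally, evaluating at $\uz_2 = 0$ collapses the remaining prefactor $(z_1-z_2)^{(\al_1,\al_2)_c} z_1^{(\al_1,\ga)_c}$ to the common expression $z_1^{(\al_1,\al_2)_c+(\al_1,\ga)_c}$ (with $z_0 \to z_1$ on the second side), so both formal $\uz_2$-constant terms coincide with the Puiseux expansion of $\nu(z_1,0)/z_1^{(\al_1,\al_2)_c+(\al_1,\ga)_c}$ in $\uz_1$. Since $u \in \Om^\vee$ was arbitrary, the desired identity
\[
\hY(a_1,\uz_1)(a_2(-1,-1) e_\ga) = (\hY(a_1,\uz_1) a_2)(-1,-1) e_\ga
\]
follows, and extracting coefficients of $z_1^{-r-1}\bar{z}_1^{-s-1}$ yields the componentwise statement. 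The main obstacle is the careful bookkeeping of formal expansions across the two distinct regions of (GFV5) under $\uz_2 \to 0$: the crucial structural input is Lemma~\ref{lem_vacuum_like}, which simultaneously supplies the integrality of $(\al_i,\ga)_c$ making the cross-monodromy factors single-valued and guarantees regularity at $\uz_2 = 0$, so that constant-term extraction on both sides yields the same Puiseux series in $\uz_1$.
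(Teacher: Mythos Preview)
Your proposal is correct and follows essentially the same strategy as the paper's proof: use Lemma~\ref{lem_vacuum_like} to obtain integrality of $(\al_i,\ga)_c$ and regularity of both formal series at $\uz_2=0$, apply the associativity axiom (GFV5), and then take $z_2 \to 0$ to extract the constant terms. The paper's write-up is marginally more explicit in that it first pins down the precise form $f(z_1,z_2) \in \C[z_2,\z_2,(z_1-z_2)^\pm,(\z_1-\z_2)^\pm,|z_1-z_2|^\R]$ (absorbing the integer-power factors $z_i^{(\al_i,\ga)_c}$ into $f$) before passing to the limit, but the underlying argument is the same as yours.
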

\begin{proof}
For any $u\in \Om^\vee$, there exists $f(z_1,z_2) \in C^\om(Y_2(\C))$ such that:
\begin{align}
\begin{split}
(z_1-z_2)^{(\al_1,\al_2)_c}|_{|z_1|>|z_2|}
\langle u, \hY(a_1,\uz_1) \hY(a_2,\uz_2)e_\ga \rangle&= f(z_1,z_2)|_{|z_1|>|z_2|}\\
z_0^{(\al_1,\al_2)_c}
\langle u, \hY(\hY(a_1,\uz_0)a_2,\uz_2)e_\ga \rangle&= f(z_2+z_0,z_2)|_{|z_2|>|z_0|},
\end{split}
\label{eq_ap_limit}
%
\end{align}
where we drop $z_1^{(\al_1,\ga)_c}$ and $z_2^{(\al_2,\ga)_c}$ by $(\al_1,\ga)_c, (\al_2,\ga)_c \in \Z$.
By Lemma \ref{lem_vacuum_like}, $f(z_1,z_2) \in \C[z_2,\z_2,(z_1-z_2)^\pm,(\z_1-\z_2)^\pm,|z_1-z_2|^\R]$.
Combining this with $\langle u, \hY(\hY(a_1,\uz_0)a_2,\uz_2)e_\ga \rangle=(-1)^{|e_\ga|(|a_1|+|a_2|)+(\ga,\al_1+\al_2)_c}
\langle u, e_\ga\cdot  \exp(L_\Om(-1)z_2+\Ld_\Om(-1)\z_2) \hY(a_1,\uz_0)a_2\rangle$, which consisting of finite polynomials of $z_2,\z_2$,
we can take the limit $z_2 \to 0$ of \eqref{eq_ap_limit}.
Hence, 
\begin{align*}
z_0^{(\al_1,\al_2)_c}\langle u, \Bigl(\hY(a_1,\uz_0)a_2\Bigr)(-1,-1)e_\ga \rangle
&=
\lim_{z_2 \to 0}
z_0^{(\al_1,\al_2)_c}\langle u, \hY(\hY(a_1,\uz_0)a_2,\uz_2)e_\ga \rangle\\
&=\lim_{z_2 \to 0} f(z_2+z_0,z_2) = f(z_0,0)
\end{align*}
and
\begin{align*}
z_1^{(\al_1,\al_2)_c} \langle u, \hY(a_1,\uz_1) \Bigl(a_2(-1,-1)e_\ga\Bigr) \rangle &=
\lim_{z_2 \to 0} (z_1-z_2)^{(\al_1,\al_2)_c}|_{|z_1|>|z_2|} \langle u, \hY(a_1,\uz_1) \hY(a_2,\uz_2)e_\ga \rangle\\
&=\lim_{z_2 \to 0} f(z_1,z_2) = f(z_1,0),
\end{align*}
which implies the assertion.
\end{proof}

Proposition \ref{prop_vac_right} says that the linear map 
\begin{align}
\Psi_\ga: \Om \rightarrow \Om,\quad a \mapsto a(-1,-1)e_\ga
\label{eq_app_Psi}
\end{align}
is a left $\Om$-module homomorphism.

\begin{prop}\label{prop_app_invertible}
Let $e_\ga \in A_\Om^\ga$ and $e_{-\ga} \in A_\Om^{-\ga}$ satisfy $e_\ga \cdot e_{-\ga}=\va$.
Then, $(\ga,\ga)_c +|e_\ga| \in 2\Z$ and, for any $a\in \Om$, $e_\ga \cdot a = 0$ if and only if $a=0$.
Moreover, $\Psi_\ga$ and $\Psi_{-\ga}$ in \eqref{eq_app_Psi} are mutually inverse.
\end{prop}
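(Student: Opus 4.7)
The plan is to leverage the fact that $e_\ga, e_{-\ga} \in A_\Om$ makes their vertex operators $\hY(e_{\pm\ga},\uz)$ collapse to the constant multiplications $e_{\pm\ga}(-1,-1)$, so that all manipulations reduce to ordinary associative-algebra identities in $A_\Om$ twisted by signs coming from Lemma \ref{lem_vacuum_like}. First, from $\hY(e_\ga,\uz)e_{-\ga} = \va$ together with axiom (GFV1), the requirement that $z^{-(\ga,\ga)_c}\va$ lie in $\Om((z,\z,|z|^\R))$ immediately forces $(\ga,\ga)_c \in \Z$. Applying Lemma \ref{lem_vacuum_like} with $a = e_{-\ga}$ (and using that $\hY(e_{-\ga},\uz)$ is also constant), one obtains
\[
e_{-\ga}\cdot e_\ga = (-1)^{|e_\ga||e_{-\ga}| + (\ga,\ga)_c}\,\va.
\]
For the parity identity $(\ga,\ga)_c + |e_\ga| \in 2\Z$, I would match bi-grades: since the $(-1,-1)$-product is additive in $(t,\td)$ and $\va \in \Om_{0,0}^0$, the grade of $e_{-\ga}$ is opposite to that of $e_\ga$; combining this with the characterization of the $\Z_2$-grading in (GFV2) gives $|e_\ga| = |e_{-\ga}|$ and, together with $(\ga,\ga)_c \in \Z$, yields the parity.

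Next, I would establish the core algebraic identity. Since both $e_{-\ga}$ and $e_\ga$ lie in $A_\Om$, the associativity axiom (GFV5) degenerates (both outer vertex operators being constants) to
\[
e_{-\ga}\cdot(e_\ga\cdot a) = (e_{-\ga}\cdot e_\ga)\cdot a = (-1)^{|e_\ga| + (\ga,\ga)_c}\,a = a
\]
for any $a \in \Om$, where the final equality invokes the parity identity. This immediately gives injectivity: $e_\ga\cdot a = 0$ implies $a = 0$. Translating back to $\Psi_\ga$ via Lemma \ref{lem_vacuum_like},
\[
\Psi_\ga(a) = a(-1,-1)e_\ga = (-1)^{|a||e_\ga| + (\al,\ga)_c}\,(e_\ga\cdot a) \qquad (a\in\Om^\al),
\]
with both sides vanishing precisely when $e_\ga\cdot a = 0$, one then computes $\Psi_{-\ga}\circ\Psi_\ga(a)$ by two applications of the lemma: the two sign factors coming from $\Psi_\ga$ and $\Psi_{-\ga}$ are identical and multiply to $+1$, and the factor $(-1)^{|e_\ga|+(\ga,\ga)_c}$ from $e_{-\ga}\cdot e_\ga$ is $+1$ by the parity identity, yielding $\Psi_{-\ga}\Psi_\ga = \id_\Om$. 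The opposite composition $\Psi_\ga\Psi_{-\ga} = \id_\Om$ follows by the symmetric argument with $\ga$ and $-\ga$ interchanged.

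The main obstacle I expect is the sign bookkeeping in the first stage — specifically, extracting the parity $(\ga,\ga)_c + |e_\ga| \in 2\Z$ from the (GFV2) description of the $\Z_2$-grading — since this identity is precisely what ensures $e_{-\ga}\cdot e_\ga = +\va$ rather than $-\va$ and underlies every subsequent cancellation. Once this identity is secured, the remaining manipulations are systematic: repeated uses of Lemma \ref{lem_vacuum_like} to convert between left and right multiplication by $e_{\pm\ga}$, together with the degeneracy of the associativity axiom (GFV5) on $A_\Om$, reduce everything to the identity $(e_{-\ga}\cdot e_\ga)\cdot a = a$ noted above.
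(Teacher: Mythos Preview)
Your proposal is correct and essentially parallel to the paper, but differs in two places.

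For the parity identity $(\ga,\ga)_c + |e_\ga| \in 2\Z$, the paper reverses your order: it first proves injectivity of $e_\ga\cdot$ via (GFV5), deduces $e_\ga\cdot e_\ga \neq 0$, and then reads off the parity from Lemma~\ref{lem_vacuum_like} with $a=e_\ga$, which gives $e_\ga\cdot e_\ga = (-1)^{|e_\ga|+(\ga,\ga)_c}e_\ga\cdot e_\ga$. Your direct route through (GFV2) also works: from $e_\ga\cdot e_{-\ga}=\va$ and (GFV6) one has $t_{-\ga}=-t_\ga$, $\td_{-\ga}=-\td_\ga$, and (GFV2) then gives $|e_\ga|\equiv (\ga,\ga)_c + 2(t_\ga-\td_\ga)\pmod 2$ with $t_\ga-\td_\ga\in\tfrac12\Z$, whence $(\ga,\ga)_c+|e_\ga|\equiv 0$.

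For the inversion $\Psi_{-\ga}\Psi_\ga=\id$, the paper bypasses all sign tracking by invoking Proposition~\ref{prop_vac_right} directly: $(a(-1,-1)e_{-\ga})(-1,-1)e_\ga = a(-1,-1)(e_{-\ga}(-1,-1)e_\ga) = a(-1,-1)\va = a$. Your left-multiplication route works too, but your claim that ``the two sign factors coming from $\Psi_\ga$ and $\Psi_{-\ga}$ are identical'' is not accurate: the second sign involves the parity and charge of $\Psi_\ga(a)\in\Om^{\al+\ga}$, not of $a$. The combined exponent is $2|a||e_\ga|+|e_\ga|-(\ga,\ga)_c$, which vanishes mod $2$ only after invoking the parity identity --- so the cancellation holds, just not for the reason you state. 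The paper's use of Proposition~\ref{prop_vac_right} is cleaner precisely because it absorbs these signs.
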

\begin{proof}
Note that by Lemma \ref{lem_vacuum_like} $e_\ga \cdot e_{-\ga} = (-1)^{|e_\ga||e_{-\ga}|} (-1)^{(\ga,\ga)_c} e_{-\ga} \cdot e_\ga$.
Assume that $e_\ga \cdot a = 0$ for $a \in \Om$.
Then, $\hY(e_{-\ga},\uz_1) \hY(e_\ga,\uz_2) a =0$. Thus,
\begin{align*}
0= \hY(\hY(e_{-\ga},\uz_0)e_\ga,\uz_2) a = \hY(e_{-\ga}\cdot e_\ga,\uz_2) a =
 (-1)^{|e_\ga||e_{-\ga}|+(\ga,\ga)_c} a.
\end{align*}
Since $e_\ga \cdot e_\ga = (-1)^{(\ga,\ga)_c +|e_\ga|} e_\ga \cdot e_\ga$, $(\ga,\ga)_c +|e_\ga|$ must be an even integer. Thus, $\va=e_\ga \cdot e_{-\ga} = e_{-\ga} \cdot e_\ga$.
By Proposition \ref{prop_vac_right}, we have
\begin{align*}
\Psi_{\ga}\circ \Psi_{-\ga} (a) &=\Bigl( a(-1,-1)e_{-\ga}\Bigr)(-1,-1)e_\ga \\
&=a(-1,-1) \Bigl( e_{-\ga}(-1,-1)e_\ga \Bigr)= a(-1,-1)\va =a.
\end{align*}
\end{proof}

\subsection{Periodicity of spectral flow}\label{sec_flow}
Let $F$ be an $N=(2,2)$ full vertex operator superalgebra with $c,\bc \neq 0$.
Let $H_l=\R J$ and $H_r =\R J_r$ equipped with bilinear forms $(J,J)_l = \frac{c}{3}$ and $(\bJ,\bJ)_r = \frac{\bc}{3}$.

Then, $(F,H=H_l\oplus H_r)$ is a full $\cH$-vertex operator algebra and, by Theorem \ref{thm_vacuum}, $\Om_F = \bigoplus_{\al \in H}\Om_{F}^{\al}$ is a generalized full vertex operator superalgebra with the charge lattice $(H,(-,-)_c)$.
Let $G_{H,p} = \bigoplus_{\al \in H}M_{H,p}(\al)$ be the generalized full vertex operator algebra in Proposition \ref{standard}. Then, $\Om_F \otimes G_{H,p}$ is a generalized full vertex operator superalgebra with the vertex operator (for the tensor product of generalized full vertex operator algebras see \cite[Section 4.5]{M1}):
\begin{align*}
\tY_\sft(\bullet,\uz) = \tY_\Om(\bullet,\uz)\otimes \tY_\std(\bullet,\uz) :\Om_F \otimes G_{H,p}
\rightarrow \End (\Om_F \otimes G_{H,p})[[z^\R,\z^\R]].
\end{align*}
For $\la,\mu \in \R$, let $F(\la,\mu)$ be the subspace of $\Om_F \otimes G_{H,p}$ defined by
\begin{align}
\begin{split}
F^\al(\la,\mu)&= \Om_F^{\al} \otimes M_{H,p}(\al-\la J-\mu \bJ),\\
F(\la,\mu) &= \bigoplus_{\al \in H}F^\al (\la,\mu).
\end{split}
\label{eq_spectral_twist}
\end{align}
The restriction of $\tY_\sft(\bullet,\uz)$ gives linear maps
\begin{align*}
\tY_\sft(\bullet,\uz): F(\la_1,\mu_1) \rightarrow \mathrm{Hom}_\C(F(\la_2,\mu_2),F(\la_1+\la_2,\mu_1+\mu_2))[[z^\R,\z^\R]].
\end{align*}
For $v_i \otimes e_{\al_i - \la_i J -\mu_i \bJ} \in F^{\al_i}(\la_i,\mu_i)$ ($i=1,2$), we have
\begin{align*}
\tY_\sft &(v_1 \otimes e_{\al_1 - \la_1 J-\mu_1 \bJ)},\uz)(v_2 \otimes e_{\al_2 - \la_2 J-\mu_2 \bJ}) \\
&= \hat{Y}_\Om(v_1,\uz)v_2 \otimes \tY_\std(e_{\al_1 - \la_1 J- \mu_1 \bJ},\uz)e_{\al_2- \la_2 J-\mu_2 \bJ}\\
 &\in z^{-(\al_1,\al_2)_c+(\al_1-\la_1 J-\mu_1 \bJ, \al_2-\la_2 J -\mu_2 \bJ )_c}
 F^{\al_1+\al_2}(\la_1+\la_2,\mu_1,\mu_2)((z,\z,|z|^\R)).
\end{align*}
If $\la_1=\mu_1=0$, then $z^{-(\al_1,\al_2)_c+(\al_1-\la_1 J-\mu_1 \bJ, \al_2-\la_2 J -\mu_2 \bJ )_c} =z^{-(\al_1,\la_2 J+\mu_2 \bJ))_c}=
z^{-\la_2(\al_1,J)_l+\mu_2 (\al_1,\bJ)_r)}$. Hence,
\begin{align*}
\tY_\sft(\bullet,\uz): F^\al(0,0) \rightarrow z^{-\la_2(\al_1,J)_l+\mu_2 (\al_1,\bJ)_r)}\Hom(F^{\be}(\la,\mu),F^{\al+\be}(\la,\mu))[[z,\z,|z|^\R]].
\end{align*}
Note that $(F(0,0),\tY_\sft(\bullet,\uz))$ is a full vertex operator algebra which is isomorphic to $(F,Y(\bullet,\uz))$ by Theorem \ref{thm_vacuum}.

\begin{prop}\label{prop_chiral_twist}
For any $\la,\mu \in \R$, the restriction $\tY_\sft(\bullet,\uz): F(0,0) \rightarrow \End(F(\la,\mu))[[z^\R,\z^\R]]$ defines an $\exp(2\pi i (\la J(0)- \mu \bJ(0)))$-twisted $F(0,0)$-module structure on $F(\la,\mu)$.
\end{prop}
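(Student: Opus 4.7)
The strategy is to deduce the axioms of Definition \ref{def_twisted} for $F(\la,\mu)$ from the generalized full vertex superalgebra axioms of $\Om_F \otimes G_{H,p}$ recalled in Section \ref{sec_str_h}, exploiting the inclusion $F(\la,\mu) \subset \Om_F \otimes G_{H,p}$ and the isomorphism $F(0,0) \cong F$ given by Theorem \ref{thm_vacuum}. The conformal $\R^2$-grading on $F(\la,\mu)$ is defined by the zero modes of $\om_\Om + \om_{H_l}$ and $\omb_\Om + \omb_{H_r}$ acting through $\tY_\sft$, and the $H$-grading is built into \eqref{eq_spectral_twist}. With these gradings in place, axioms (1)--(4) of Definition \ref{def_twisted} are immediate: (1) is tautological; (2) reduces to the fact that $L(-1)$ and $\Ld(-1)$ on $F(0,0)$ correspond to the standard derivations on the tensor product and commute with the inclusion $F(\la,\mu) \hookrightarrow \Om_F \otimes G_{H,p}$; (3) and (4) follow from the lower-bound (FO3) and finiteness (FO2) properties of $\Om_F$ together with the usual Fock-module bounds for $M_{H,p}(\al-\la J-\mu\bJ)$.

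Axiom (5) is essentially the displayed computation immediately preceding the proposition statement. For $v_1 \in F^{\al_1}(0,0)$ acting on a vector in $F^{\al_2}(\la,\mu)$, the exponent of $z$ produced by $\tY_\sft$ equals $-(\al_1,\al_2)_c + (\al_1,\al_2-\la J-\mu\bJ)_c = -\la (\al_1,J)_l + \mu(\al_1,\bJ)_r$, where I use $(-,-)_c = (-,-)_l$ on $H_l$ and $(-,-)_c = -(-,-)_r$ on $H_r$. Multiplying by the twist factor $z^{\la(\al_1,J)_l - \mu(\al_1,\bJ)_r}$ demanded in Definition \ref{def_twisted}(5) therefore lands in $F^{\al_1+\al_2}(\la,\mu)((z,\z,|z|^\R))$, as required.

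For axiom (6), I would apply (GFV5) in $\Om_F \otimes G_{H,p}$ to $a_1, a_2 \in F(0,0)$ and $m \in F^\be(\la,\mu)$, producing a real-analytic function on $Y_2(\C)$ with the three matching expansions characteristic of the three-term identity. The factors $(z_1-z_2)^{(\al_1,\al_2)_c}$ coming from $\Om_F$ cancel the opposite-sign factors $(z_1-z_2)^{-(\al_1,\al_2)_c}$ coming from $G_{H,p}$ (whose bilinear form is $-(-,-)_c$, by Proposition \ref{standard}), and the only surplus monodromy in $z_i$ is $z_i^{-\la(\al_i,J)_l + \mu(\al_i,\bJ)_r}$, which arises from pairing $\al_i$ against the twist vector $-\la J -\mu \bJ$ present in the $G_{H,p}$-factor of $m$. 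Multiplying the three expansions by $z_1^{\la(\al_1,J)_l - \mu(\al_1,\bJ)_r}\,z_2^{\la(\al_2,J)_l - \mu(\al_2,\bJ)_r}$ converts them into precisely the three identities of \eqref{eq_twist_borcherds}.

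The principal technical obstacle is this monodromy bookkeeping, especially keeping the signs on $H_l$ and $H_r$ straight and verifying that the Koszul signs in the tensor product of full vertex superalgebras agree with the $(-1)^{|a_1||a_2|}$ in \eqref{eq_twist_borcherds}. Since $G_{H,p}$ is purely even, the $\Z_2$-parities of $a_1,a_2$ as elements of $F(0,0)$ coincide with those of their $\Om_F$-representatives, so the required sign matches the one already present in (GFV5). Once this matching is verified, the three displayed equalities in axiom (6) follow directly.
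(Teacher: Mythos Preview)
Your proposal is correct and follows essentially the same route as the paper: both reduce the twisted-module axioms to the (GFV5) identity on the tensor product $\Om_F\otimes G_{H,p}$ with charge lattice $(H\oplus H,(-,-)_c\oplus -(-,-)_c)$, and both identify the residual monodromy factor as $z_i^{(\al_i,\la J+\mu\bJ)_c}=z_i^{\la(\al_i,J)_l-\mu(\al_i,\bJ)_r}$. The paper's proof is terser---it declares axioms (1)--(5) evident and writes out only the three-term identity (6)---whereas you spell out each axiom and the Koszul-sign check; but the mathematical content is the same.
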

\begin{proof}
It suffices to show the condition (6) in Definition \ref{def_twisted}.
Let $\la,\mu \in \R$.
Since $\Om_F \otimes G_{H,p}$ is a generalized full vertex superalgebra with the charge lattice $(H \oplus H,(-,-)_c\oplus -(-,-)_c)$,
for any $a_i \in F^{\al_i}(0,0)$ ($i=1,2$) and $v \in F^{\al_3}(\la,\mu)$ and
$u \in (F^{\al_1+\al_2+\al_3}(\la,\mu))^*$,
there exists a real analytic function $f(z_1,z_2) \in C^\om(Y_2(\C),\C)$ such that:
\begin{align}
\begin{split}
z_1^{(\al_1,\la J+\mu \bJ)_c} z_2^{(\al_2,\la J+\mu \bJ)_c}
\langle u, \tY_\sft(a_1,z_1)\tY_\sft(a_2,z_2)v \rangle
&= f(z_1,z_2)|_{|z_1|>|z_2|},\\
(-1)^{|a_1||a_2|}z_1^{(\al_1,\la J+\mu \bJ)_c} z_2^{(\al_2,\la J+\mu \bJ)_c}
\langle u, \tY_\sft(a_2,z_2)\tY_\sft(a_1,z_1) v \rangle
&= f(z_1,z_2)|_{|z_2|>|z_1|},\\
(z_2+z_0)^{(\al_1,\la J+\mu\bJ)_c}|_{|z_2|>|z_0|} z_2^{(\al_2,\la J+\mu \bJ)_c}
\langle u, \tY_\sft(\tY_\sft(a_1,z_0)a_2,z_2)v \rangle
&= f(z_2+z_0,z_2)|_{|z_2|>|z_0|}.
\end{split}
\end{align}
Since $(\al,\la J+\mu \bJ)_c= \la(\al,J)_l - \mu(\al,\bJ)_r$ for any $\al\in H$,
$F(\la,\mu)$ is an $\exp(2\pi i \la J(0) -\mu\bJ(0)))$-twisted module.
\end{proof}

By Proposition \ref{prop_chiral_twist}, $L(0),\Ld(0)$ and $J(0),\bJ(0)$ act on $F(\la,\mu)$. Set
\begin{align}
F_{h,\h}^\al\left(\la,\mu \right) = \{v\in F(\la,\mu)\mid L(0)v = hv,\Ld(0)v =\h v, J_0v=(\al,J)_lv, \bJ_0v =(\al,\bJ)_r v\}.
\end{align}
Then, we have:

\begin{prop}\label{prop_flow_dim}
For any $h,\h\in \R$, $\al \in H$ and $\la,\mu \in \R$,
\begin{align*}
\dim F_{h,\h}^\al = \dim F_{h-\la(\al,J)+\frac{\la^2 c}{6},\h-\mu(\al,\bJ)+\frac{\mu^2 \bc}{6}}^{\al-\la J-\mu\bJ}\left(\la,\mu\right).
\end{align*}
\end{prop}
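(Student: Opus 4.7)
The plan is to construct an explicit vector-space bijection
\begin{align*}
F_{h,\h}^\al \xrightarrow{\sim} F_{h-\la(\al,J)+\la^2 c/6,\,\h-\mu(\al,\bJ)+\mu^2\bc/6}^{\al-\la J-\mu\bJ}(\la,\mu)
\end{align*}
by exploiting the Heisenberg Fock decompositions $F\cong \bigoplus_{\be}\Om_F^\be \otimes M_{H,p}(\be)$ and $F(\la,\mu) = \bigoplus_{\be}\Om_F^\be \otimes M_{H,p}(\be-\la J-\mu\bJ)$ from \eqref{eq_F_bigsum} and \eqref{eq_spectral_twist}. The two sides differ only in the highest weights of the Heisenberg factors, so they should be related by a Fock-module identification whose conformal and charge weight shifts can be computed directly.

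First I will pin down which summand contributes to each graded piece. On a component $\Om_F^\be \otimes M_{H,p}(\gamma)\subset \Om_F\otimes G_{H,p}$, the currents $J,\bJ$ sit in $M_{H,p}(0)\subset G_{H,p}$, their zero modes commute with all Heisenberg creation operators, and the vertex operator $\hY_\Om$ on $\Om_F$ carries no Heisenberg zero-mode action; consequently $J_0$ (resp.\ $\bJ_0$) acts as the scalar $(\gamma,J)$ (resp.\ $(\gamma,\bJ)$) on this component. Demanding $J_0=(\al-\la J,J)=(\al,J)-\la c/3$ and $\bJ_0=(\al,\bJ)-\mu\bc/3$ on $F(\la,\mu)$ forces $\gamma=\al-\la J-\mu\bJ$, i.e.\ $\be=\al$; correspondingly $F_{h,\h}^\al \subset \Om_F^\al\otimes M_{H,p}(\al)$ inside $F=F(0,0)$. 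Both sides thus reduce to comparing graded pieces of $\Om_F^\al$ tensored with a single Fock module.

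Next I will define the bijection via the linear isomorphism $\sigma: M_{H,p}(\al)\to M_{H,p}(\al-\la J-\mu\bJ)$ sending $h_1(-n_1)\cdots h_k(-n_k)\,e_\al$ to $h_1(-n_1)\cdots h_k(-n_k)\,e_{\al-\la J-\mu\bJ}$, which is well-defined because both modules are freely generated from their highest weight vectors by the negative modes of $\hat{H}^p$. Tensoring with $\id_{\Om_F^\al}$ and using $\om=\om_\Om+\om_{H_l}$, $\omb=\omb_\Om+\omb_{H_r}$, the $\Om_F$-contribution to $L(0),\Ld(0)$ is preserved by $\id\otimes\sigma$, while the Heisenberg ground-state $L(0)$-weight shifts by
\begin{align*}
\tfrac{(p\al-\la J,p\al-\la J)_l-(p\al,p\al)_l}{2} = -\la(\al,J)+\tfrac{\la^2 c}{6},
\end{align*}
and the $\Ld(0)$-weight by $-\mu(\al,\bJ)+\mu^2\bc/6$. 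The charges $J_0$ and $\bJ_0$ shift by $-\la c/3$ and $-\mu\bc/3$ respectively, matching the change of superscript $\al\mapsto\al-\la J-\mu\bJ$. Therefore $\id\otimes\sigma$ restricts to the desired bijection of weight subspaces, yielding the stated equality of dimensions.

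The main point that will require care is ensuring that the $H$-grading on $\Om_F$ is a formal charge of the generalized full vertex algebra structure rather than the eigenvalue of a genuine zero-mode acting on $F(\la,\mu)$, so that no double counting of $J_0,\bJ_0$ occurs. This should be automatic from the construction of Section \ref{sec_str_h}: the currents $J,\bJ$ lie in $G_{H,p}$ and not in $\Om_F$, and the coset conformal vectors $\om_\Om,\omb_\Om$ commute with them, so $J_0,\bJ_0$ act on $\Om_F\otimes G_{H,p}$ purely through the $G_{H,p}$-factor, validating the weight computations above.
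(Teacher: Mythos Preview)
Your proof is correct and follows essentially the same approach as the paper: both reduce to the Heisenberg Fock decomposition $F_{h,\h}^\al \subset \Om_F^\al \otimes M_{H,p}(\al)$ and $F^{\al-\la J-\mu\bJ}(\la,\mu) \subset \Om_F^\al \otimes M_{H,p}(\al-\la J-\mu\bJ)$, and then compare the graded pieces. The paper phrases the last step as ``the dimension of the Heisenberg weight space depends only on the level $k,l$ (partition numbers)'', while you package the same fact as the explicit PBW bijection $\sigma:M_{H,p}(\al)\to M_{H,p}(\al-\la J-\mu\bJ)$; these are the same argument.
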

\begin{proof}
By definition, we have
\begin{align*}
F_{h,\h}^\al = \bigoplus_{k, l \geq 0} (\Om_F)_{h-\frac{(\al,\al)_l}{2}-k,\h-\frac{(\al,\al)_r}{2}-l}^\al \otimes (M_{H,p}(\al))_{k+\frac{(\al,\al)_l}{2},l+\frac{(\al,\al)_r}{2}}
\end{align*}
and
\begin{align*}
&F_{h-\la(\al,J)+\frac{\la^2 c}{6},\h-\mu(\al,\bJ)+\frac{\mu^2 \bc}{6}}^{\al-\la J-\mu\bJ}\left(\la,\mu\right)\\
& = \bigoplus_{k, l \geq 0} (\Om_F)_{h-\la(\al,J)+\frac{\la^2 c}{6}-\frac{(\al-\la J,\al-\la J)_l}{2}-k,\h-\mu(\al,\bJ)+\frac{\mu^2 \bc}{6}-\frac{(\al-\mu \bJ,\al-\mu \bJ)_r}{2}-l}^{\al} \otimes (M_{H,p}(\al-\la J-\mu \bJ))_{k+\frac{(\al-\la J,\al-\la J)_l}{2},l+\frac{(\al-\mu \bJ,\al-\mu \bJ)_r}{2}}\\
& = \bigoplus_{k, l \geq 0} (\Om_F)_{h-\frac{(\al,\al)_l}{2}-k,\h-\frac{(\al,\al)_r}{2}-l}^{\al} \otimes (M_{H,p}(\al-\la J-\mu \bJ))_{k+\frac{(\al-\la J,\al-\la J)_l}{2},l+\frac{(\al-\mu \bJ,\al-\mu \bJ)_r}{2}}.
\end{align*}
Since the dimension of $(M_{H,p}(\al))_{k+\frac{(\al,\al)_l}{2},l+\frac{(\al,\al)_r}{2}}$ is the number of partitions and only depends on $k,l$, the assertion holds.
\end{proof}


Set
\begin{align*}
F(\la)=F(\la,\la),
\end{align*}
which we call the \textbf{spectral flow} of $F$.
By Proposition \ref{prop_chiral_twist}, $F(\la)$ is a $\exp(2\pi i \la (J(0)-\bJ(0)))$-twisted module .
Since by \eqref{eq_fermi_integer} for any $n\in \Z$
$\exp(2 \pi i n (J(0)-\bJ(0)) )=\mathrm{id}_F$,
$F(n)$ is not twisted but a usual $F(0)$-module by Remark \ref{rem_just_module}.

\begin{rem}
\label{rem_phys_spectral}
The parameter $\la\in\R$ of $\{F(\la)\}_{\la\in\R}$
corresponds to the boundary condition of fermions $\psi$ on $S^1=\R/\Z$ (in the path-integral):
\begin{align*}
\psi(\theta+1)= \exp(2 \pi i \la) \psi(\theta)
\end{align*}
in physics.
From such a geometric point of view, the spectral flow should be periodic.
\end{rem}


\begin{thm}\label{thm_chiral_period}
Let $F$ be an $N=(2,2)$ full vertex operator superalgebra.
Then, the following conditions are equivalent:
\begin{enumerate}
\item
For any $n \in \Z$, $F(n)$ is isomorphic to $F(0)$ as $F(0)$-modules;
\item
$F(1)$ and $F(-1)$ are isomorphic to $F(0)$ as $F(0)$-modules;
\item
There are vectors $v_+ \in \Om_F^{J+\bJ}$ and $v_- \in \Om_F^{-J-\bJ}$ such that
$L(-1)v_\pm= \pm J(-1)v_\pm$, $\Ld(-1)v_\pm= \pm \bJ(-1)v_\pm$ and $v_+(\frac{c}{3}-1,\frac{\bc}{3}-1)v_- = \va$.
\end{enumerate}
Furthermore, if one of the above conditions are satisfied, then
$(\Om_F)_{\frac{(J,J)_l}{2},\frac{(\bJ,\bJ)_r}{2}}^{J+\bJ} =1$ and $F(\la)$ is isomorphic to $F(\la+n)$ as $\exp(2\pi i \la ( J(0)-\bJ(0)))$-twisted modules and
\begin{align*}
\dim F_{h,\h}^\al = \dim F_{h-n(\al,J)+\frac{n^2 c}{6},\h-n(\al,\bJ)+\frac{n^2 \bc}{6}}^{\al-n(J+\bJ)}
\end{align*}
 for any $\la\in\R$ and $n\in \Z$.
\end{thm}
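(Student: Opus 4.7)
My plan is to prove $(1) \Rightarrow (2) \Rightarrow (3) \Rightarrow (1)$, and then deduce the furthermore statements from the construction used in $(3) \Rightarrow (1)$. The implication $(1) \Rightarrow (2)$ is immediate since (2) is a special case of (1).

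For $(3) \Rightarrow (1)$, I work inside the enveloping generalized full vertex operator superalgebra $\Om_F \otimes G_{H,p}$ of Section~\ref{sec_str_h}, in which each twisted module $F(\la)$ embeds diagonally as $\bigoplus_\al \Om_F^\al \otimes M_{H,p}(\al - \la(J+\bJ))$. Given $v_\pm$ as in (3), I form
\begin{align*}
\ep_\pm = v_\pm \otimes e_{\mp(J+\bJ)} \in \Om_F^{\pm(J+\bJ)} \otimes M_{H,p}(\mp(J+\bJ)).
\end{align*}
The lattice-like condition on $v_\pm$ from Definition~\ref{def_lattice_like} together with the standard lattice-like property of $e_{\mp(J+\bJ)}$ in $G_{H,p}$ places $\ep_\pm$ in the constant subalgebra $A$ of Section~\ref{sec_const}. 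Expanding $\hY_\Om(v_+,\uz) v_-$ via the exponentials $E^\pm(-(J+\bJ), \uz)$ from \eqref{eq_modified_vertex}, and using that $v_-$ is annihilated by positive Heisenberg modes, I will show that the normalization $v_+(c/3 - 1, \bc/3 - 1) v_- = \va$ in $F$ translates into the constant-subalgebra identity $\ep_+ \cdot \ep_- = \va$ in $A$ (the lattice factor $e_{-(J+\bJ)} \cdot e_{J+\bJ} = \va$ is automatic in $G_{H,p}$). Proposition~\ref{prop_app_invertible} then yields that $\Psi_{\ep_+}$ and $\Psi_{\ep_-}$ are mutually inverse linear isomorphisms on $\Om_F \otimes G_{H,p}$, and tracking the $H \oplus H$-charge shows that $\Psi_{\ep_+}$ restricts to a linear bijection $F(\la) \to F(\la+1)$. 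Proposition~\ref{prop_vac_right} promotes this to an $F(0)$-module isomorphism, and iterating yields $F(\la) \cong F(\la + n)$ as $\exp(2\pi i \la(J(0) - \bJ(0)))$-twisted modules for all $\la \in \R$ and $n \in \Z$, proving both (1) and the furthermore isomorphism statement.

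For $(2) \Rightarrow (3)$, Proposition~\ref{prop_flow_dim} applied to $(h, \h, \al) = (0, 0, 0)$ with $(\la, \mu) = (\pm 1, \pm 1)$, combined with the hypothesis $F(\pm 1) \cong F(0)$, yields
\begin{align*}
\dim F^{J+\bJ}_{c/6, \bc/6} = 1 = \dim F^{-J-\bJ}_{c/6, \bc/6},
\end{align*}
giving also the uniqueness clause $\dim (\Om_F)_{(J,J)_l/2, (\bJ,\bJ)_r/2}^{J+\bJ} = 1$. Let $v_\pm$ generate these one-dimensional spaces. To check $v_\pm \in \Om_F^{\pm(J+\bJ)}$, another application of Proposition~\ref{prop_flow_dim} identifies $F^{\pm(J+\bJ)}_{c/6 - n, \bc/6}$ and $F^{\pm(J+\bJ)}_{c/6, \bc/6 - n}$ with $F^0_{-n, 0}$ and $F^0_{0, -n}$, which vanish for $n \geq 1$ by the lower-bound axiom (FO3); hence all positive Heisenberg modes annihilate $v_\pm$. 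A direct Virasoro--Heisenberg commutator computation then shows that $a_\pm := L(-1) v_\pm \mp J(-1) v_\pm$ and $\bar a_\pm := \Ld(-1) v_\pm \mp \bJ(-1) v_\pm$ are primary vectors of $\Om_F^{\pm(J+\bJ)}$ at the next $L_\Om$-weight, and tracking them through the module isomorphism $\Phi_\pm : F \to F(\pm 1)$ forces them into a space that the dimension formula collapses to zero. Finally, identifying the $F$-module isomorphism $\Phi_+$ with multiplication by the inverse lattice element in $\Om_F \otimes G_{H,p}$, together with $\Phi_+ \circ \Phi_+^{-1} = \id$, extracts the normalization $v_+(c/3 - 1, \bc/3 - 1) v_- = \va$.

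The dimension formula in the furthermore clause is then a direct application of Proposition~\ref{prop_flow_dim} to the isomorphism $F \cong F(n)$. The main obstacle I anticipate is the bookkeeping in $(3) \Rightarrow (1)$: converting the concrete $(c/3 - 1, \bc/3 - 1)$-normalization in $F$ into the abstract constant-subalgebra relation $\ep_+ \cdot \ep_- = \va$ in $\Om_F \otimes G_{H,p}$ requires careful handling of the $E^\pm$ exponential factors in the modified vertex operator $\hY_\Om$ and of the associated Koszul signs.
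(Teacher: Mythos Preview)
Your $(3)\Rightarrow(1)$ has a genuine gap: the elements $\ep_\pm=v_\pm\otimes e_{\mp(J+\bJ)}$ are \emph{not} in the constant subalgebra $A_{\Om_F\otimes G_{H,p}}$. The translation of the tensor product generalized full VOA is $L_{\Om_F}(-1)\otimes 1+1\otimes L_{G_{H,p}}(-1)$, and while $L_{\Om_F}(-1)v_+=0$ by the lattice-like hypothesis, one has $L_{G_{H,p}}(-1)e_{-(J+\bJ)}=(-J)(-1)e_{-(J+\bJ)}\neq 0$ from the standard computation $\hY_\std(e_\be,\uz)\va=e_\be+(p\be)(-1)e_\be\,z+\cdots$. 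So Propositions~\ref{prop_vac_right} and~\ref{prop_app_invertible} do not apply to $\ep_\pm$, and $\Psi_{\ep_\pm}$ is not guaranteed to be a module map. The paper avoids this by working purely in $\Om_F$: there $v_\pm\in A_{\Om_F}$, one forms the iterated products $v_{\pm n}$ and the maps $\Psi_{\pm n}:\Om_F\to\Om_F$, and then defines $f_n=\Psi_{-n}\otimes\id_{G_{H,p}}:F(n)\to F(0)$. The identity factor on $G_{H,p}$ trivially commutes with $\hY_\std$, so the $F(0)$-module property follows immediately.

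Your $(2)\Rightarrow(3)$ is also shakier than necessary. Your dimension argument for $v_\pm\in\Om_F^{\pm(J+\bJ)}$ needs $F^0_{-k,0}=0$ for $k\ge 1$, but (FO3) only gives a lower bound $N$ which need not be $0$; the theorem does not assume unitarity, so Lemma~\ref{lem_NS_inequality} is unavailable. Likewise, the step ``tracking $a_\pm$ through $\Phi_\pm$ forces them into a space that collapses to zero'' does not work: the relevant target $(\Om_F)^0_{1,0}$ can be nonzero. The paper sidesteps all of this by \emph{defining} $v_\pm$ as the images under $f_\pm$ of the vacuum-like elements $\va\otimes e_{\mp(J+\bJ)}\in F(\pm1)$; since $f_\pm$ intertwines the $F(0)$-action, Heisenberg-primarity and the lattice-like identities $L(-1)v_\pm=\pm J(-1)v_\pm$, $\Ld(-1)v_\pm=\pm\bJ(-1)v_\pm$ are inherited directly from those of the vacuum-like elements, and the normalization $v_+(\tfrac c3-1,\tfrac{\bc}3-1)v_-\in\C^\times\va$ is extracted by pushing a single explicit product through $f_-$.
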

\begin{proof}
Assume that (2) holds and let $f_\pm:F(\pm 1) \rightarrow F$ be $F(0)$-module isomorphisms.
Set
\begin{align*}
v_{\pm} = f_\pm(\va \otimes e_{\pm (J+\bJ)}).
\end{align*}
Then, $v_{\pm} \in \Om_F^{\pm (J+\bJ)}$, $L(-1)v_{\pm} =\pm J(-1)v_\pm$ and $\Ld(-1)v_{\pm} =\pm \bJ(-1)v_\pm$.

By regarding $v_+ \otimes e_{J+\bJ}$ as a vector of $\Om_F^{J+\bJ} \otimes \C e_{J+\bJ} \subset F(0)$,
the action of $F(0)$ on the modules $F(1)$ is,
\begin{align*}
\tY_\sft(v_+ \otimes e_{J+\bJ},\uz) (\va \otimes e_{-{J+\bJ}}) 
&=\hY_\Om(v_+,\uz)\va \otimes \hY_{\std}(e_{{J+\bJ}},\uz)e_{-{J+\bJ}}\\
&=(v_+  + O(z,\z)) \otimes z^{-\frac{c}{3}}\z^{-\frac{\bc}{3}}(1+J(-1)z +\bJ(-1)\z+O(z^2))e_0,
\end{align*}
which implies $v_+\otimes e_0 = \left(v_+ \otimes e_{J+\bJ}\right) (\frac{c}{3}-1,\frac{\bc}{3}-1)\left( \va \otimes e_{-J-\bJ} \right)$.
Hence, we have
\begin{align*}
f_-(v_+ \otimes e_{0})&=f_-\left(\left(v_+ \otimes e_{J+\bJ}\right) (\frac{c}{3}-1,\frac{\bc}{3}-1)\left( \va \otimes e_{-J-\bJ} \right)\right)\\
&=v_+ (\frac{c}{3}-1,\frac{\bc}{3}-1) f_-\left( \va \otimes e_{-J-\bJ} \right)\\
&=v_+ (\frac{c}{3}-1,\frac{\bc}{3}-1) v_{-}.
\end{align*}
Since $f_-$ is isomorphism and $v_+ \otimes e_0 \neq 0$, $v_+ (\frac{c}{3}-1,\frac{\bc}{3}-1) v_{-} \neq 0$.
Since
\begin{align*}
v_\pm \in \Bigl(\ker(\om(0)-\om_{H_l}(0))\Bigr) \cap \Bigl(\ker(\omb(0)-\omb_{H_r}(0))\Bigr),
\end{align*}
$v_+ (\frac{c}{3}-1,\frac{\bc}{3}-1) v_{-} \in \Om_F^0 \cap \ker L(-1) \cap \ker \Ld(-1)=F_{0,0}=\C\va$.
Hence, $v_+ (\frac{c}{3}-1,\frac{\bc}{3}-1) v_{-} = A \va$ with some non-zero $A \in \C$.

Assume (3) and let $n >0$ be an integer. By regarding $v_\pm$ as elements of the generalized full vertex algebra $\Om_F$,
$v_\pm$ are elements of $A_{\Om_F}$ in \eqref{eq_A_om}.
Set
\begin{align*}
v_{n} = \Bigl(v_+(-1,-1)\Bigr)^{n-1} v_+,\quad \quad v_{-n} = \Bigl(v_-(-1,-1)\Bigr)^{n-1} v_-.
\end{align*}
Since $v_+ (-1,-1)v_- =\va$, where the product is taken as the generalized full vertex algebra,
by Proposition \ref{prop_app_invertible}, $v_{\pm n} \in \Om_F^{\pm n(J+\bJ)} \cap A_{\Om_F}$ are non-zero vectors,
which satisfies $v_{n} (-1,-1)v_{-n}=\pm \va$. 
If necessary, multiply $v_n$ by $-1$ and normalize it as $v_n(-1,-1)v_{-n}=\va$.
Let $\Psi_{\pm n}$ be the linear map defined by
\begin{align}
\Psi_{\pm n}:\Om_F \rightarrow \Om_F,\quad a \mapsto a(-1,-1)v_{\pm n},
\label{eq_psin}
\end{align}
where $a(-1,-1)$ is the product of the generalized full vertex algebra (not the full vertex algebra).
Then, by Proposition \ref{prop_vac_right} and Proposition \ref{prop_app_invertible}, the following properties hold:
\begin{itemize}
\item
For any $a,b\in \Om_F$, $\Psi_{\pm n}(a(r,s)b) = a(r,s)\Psi_{\pm n} (b)$, that is, $\Psi_{\pm n}$ are $\Om_F$-module homomorphisms.
\item
$\Psi_{+n} \circ \Psi_{-n} = \Psi_{-n} \circ \Psi_{+n} =\mathrm{id}_{\Om_F}$. In particular, $\Psi_{\pm n}$ are linear isomorphisms.
\end{itemize}
Define $f_{n}:F(n) \rightarrow F(0)$ by
\begin{align}
\begin{split}
f_{n}=\Psi_{-n}\otimes \mathrm{id}_{G_{H,p}}&:F(n)= \bigoplus_{\al \in H} \Om_F^\al \otimes M_{H,p}(\al-n(J+\bJ))\\ &\longrightarrow 
\bigoplus_{\al\in H} \Om_F^{\al -n(J+\bJ)} \otimes M_{H,p}(\al-n(J+\bJ)) = F(0)
\end{split}
\label{eq_period_isom}
\end{align}
and $f_{-n}:F(-n) \rightarrow F(0)$ in the same way.
Then, it is clear that $f_{\pm n}$ is a $F(0)$-module isomorphisms
since $\id_{G_{H,p}}$ is an isomorphism of generalized full vertex algebra. Hence, (1) holds.
(1) clearly implies (2), and $\Psi_{\pm n}$ more generally leads to isomorphisms of $F(\la)$ and $F(\la \pm n)$ for any $\la \in \R$.
Since $(\Om_F)_{\frac{(J,J)_l}{2},\frac{(\bJ,\bJ)_r}{2}}^{J+\bJ} \cong (\Om_F)_{0,0}^{0} =\C \va$, the assertion holds.
\end{proof}

\begin{thm}\label{thm_unitary_period}
Let $F$ be a unitary $N=(2,2)$ full vertex operator superalgebra.
Then, the following conditions are equivalent:
\begin{enumerate}
\item
For any $n \in \Z$, $F(n)$ is isomorphic to $F(0)$ as $F$-modules;
\item
There is a non-zero vector in $H^{\top}(F,d_B)$ (Definition \ref{def_full_vol}).
\end{enumerate}
\end{thm}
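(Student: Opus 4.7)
The plan is to deduce this unitary refinement from the general periodicity criterion in Theorem \ref{thm_chiral_period}, using the anti-linear involution $\phi$ and the invariant bilinear form to bridge between the lattice-like data $(v_+, v_-)$ appearing there and the B-volume form.

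For $(1)\Rightarrow(2)$, first note that by \eqref{eq_fermi_integer} the automorphism $\exp(2\pi i(J(0)-\bJ(0)))$ is trivial, so $F(n)$ is an ordinary (untwisted) $F(0)$-module for each $n\in\Z$ (Remark \ref{rem_just_module}). I would apply Proposition \ref{prop_flow_dim} with $\al=J+\bJ$, $\la=\mu=1$ to obtain
\begin{align*}
\dim F^{J+\bJ}_{c/6,\bc/6} \;=\; \dim F^{0}_{0,0}(1,1) \;=\; \dim F^{0}_{0,0} \;=\; 1,
\end{align*}
where the second equality uses the assumed isomorphism $F(1)\cong F(0)$ (which preserves the $L(0),\Ld(0),J(0),\bJ(0)$-grading) and the third is (FO0) under the identification $F\cong F(0)$. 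Any non-zero vector in $F^{J+\bJ}_{c/6,\bc/6}$ is c-c primary by Proposition \ref{prop_NS_inequality}, hence a B-volume form and a non-zero class in $H^{\top}(F,d_B)$.

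For $(2)\Rightarrow(1)$, I would produce the lattice-like data of Theorem \ref{thm_chiral_period}(3) from a B-volume form $\ep\in F^{J+\bJ}_{c/6,\bc/6}$ normalized by $\langle\ep,\ep\rangle=1$. Take $v_+:=\ep$: Lemma \ref{lem_primary_inequality}(2)(3) gives $L(-1)\ep=J(-1)\ep$ and $\Ld(-1)\ep=\bJ(-1)\ep$, and c-c primacy forces $J(n)\ep=\bJ(n)\ep=0$ for $n\geq 1$, so $\ep\in\Om_F^{J+\bJ}$. For $v_-$ take $\phi(\ep)$; applying $\phi$ to the defining relations and using $\phi(J)=-J$, $\phi(\bJ)=-\bJ$, $\phi(\tau^\pm)=\tau^\mp$, $\phi(\btau^\pm)=-\btau^\mp$ shows that $\phi(\ep)\in\Om_F^{-(J+\bJ)}$ is a-a primary with $L(-1)\phi(\ep)=-J(-1)\phi(\ep)$ and $\Ld(-1)\phi(\ep)=-\bJ(-1)\phi(\ep)$. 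For the non-degeneracy condition I apply the invariance identity \eqref{eq_bilinear_inv} to $(\phi(\ep),Y(\ep,\uz)\va)$. Since $L(1)\ep=\Ld(1)\ep=0$ and $L(0)\ep=\tfrac{c}{6}\ep$, $\Ld(0)\ep=\tfrac{\bc}{6}\ep$, this identity becomes
\begin{align*}
(\phi(\ep),Y(\ep,\uz)\va) \;=\; s\,z^{-c/3}\z^{-\bc/3}(Y(\ep,\uz^{-1})\phi(\ep),\va),
\end{align*}
with $s:=(-1)^{\frac{c-\bc}{6}+2\left(\frac{c-\bc}{6}\right)^2}\in\{\pm1\}$. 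The left-hand side equals the constant $(\phi(\ep),\ep)=1$, since higher modes of $Y(\ep,\uz)\va$ have the wrong conformal weight to pair with $\phi(\ep)$; comparing the $z^0\z^0$ coefficient on the right gives $(\ep(\tfrac{c}{3}-1,\tfrac{\bc}{3}-1)\phi(\ep),\va)=s$. Since $\ep(\tfrac{c}{3}-1,\tfrac{\bc}{3}-1)\phi(\ep)\in F^{0}_{0,0}=\C\va$, this forces $\ep(\tfrac{c}{3}-1,\tfrac{\bc}{3}-1)\phi(\ep)=s\va$, and rescaling $v_-:=s\,\phi(\ep)$ fulfills condition (3) of Theorem \ref{thm_chiral_period}, whence (1).

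The main obstacle is reconciling the product appearing in Theorem \ref{thm_chiral_period}(3) -- a priori the generalized full vertex algebra product in $\Om_F$ defined via \eqref{eq_modified_vertex} -- with the $F$-product computed above. These differ by the dressing operators $E^\pm(-(J+\bJ),\uz)$ and the monomial $z^{-p\al(0)}\z^{-\p\al(0)}$, but on the specific mode $(\tfrac{c}{3}-1,\tfrac{\bc}{3}-1)$ they collapse: $E^+(-(J+\bJ),\uz)$ fixes $\phi(\ep)$ because $J(n)\phi(\ep)=\bJ(n)\phi(\ep)=0$ for $n\geq 1$, the monomial acts as $z^{c/3}\z^{\bc/3}$ on $\phi(\ep)$, and $E^-(-(J+\bJ),\uz)$ contributes only strictly positive powers of $z,\z$ that cannot reach the lowest admissible mode, since Lemma \ref{lem_NS_inequality}(1) forbids any non-zero element of $F^{0}$ from having negative conformal weight. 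Once this collapse is verified, the invariance computation transfers directly to the $\Om_F$-product, and Theorem \ref{thm_chiral_period} closes the argument.
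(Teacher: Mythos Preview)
Your proof is correct and follows essentially the same route as the paper's: for $(2)\Rightarrow(1)$ both you and the paper set $v_+=\ep$, $v_-=\phi(\ep)$, verify the lattice-like conditions via Lemma~\ref{lem_primary_inequality}, and use the invariant bilinear form to show $v_+(\tfrac{c}{3}-1,\tfrac{\bc}{3}-1)v_-\in\C^\times\va$ before invoking Theorem~\ref{thm_chiral_period}. For $(1)\Rightarrow(2)$ the paper simply cites Theorem~\ref{thm_chiral_period} to extract $v_+\in F_{c/6,\bc/6}^{J+\bJ}$ from condition~(3) there, whereas you reach the same conclusion via Proposition~\ref{prop_flow_dim} and the module isomorphism; these are interchangeable.

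One clarification: the ``main obstacle'' you raise is not actually present. The product $v_+(\tfrac{c}{3}-1,\tfrac{\bc}{3}-1)v_-$ in Theorem~\ref{thm_chiral_period}(3) is the original $F$-product, not the $\Om_F$-product---the mode indices $(\tfrac{c}{3}-1,\tfrac{\bc}{3}-1)$ already signal this, and the proof of $(2)\Rightarrow(3)$ there computes it via $\tY_\sft$ on $F(0)\cong F$. So your invariance calculation \emph{directly} establishes condition~(3) without any reconciliation needed. Your final paragraph correctly verifies that the $F$-mode and the $\Om_F$-mode $(-1,-1)$ agree on this pair of vectors, but this step is superfluous for the argument; the paper's proof likewise stops at the $F$-product computation (see~\eqref{eq_star_normalize}).
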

\begin{proof}
Assume (1) holds.
Then, there is a non-zero vector $v \in F_{\frac{c}{6},\frac{\bc}{6}}^{J+\bJ}$ by Proposition \ref{thm_chiral_period}.
By Proposition \ref{prop_cohomology_primary}, $F_{\frac{c}{6},\frac{\bc}{6}}^{J+\bJ} = H^\top(F)$. Hence, (2) holds.

Assume (2) holds. Then, by Proposition \ref{prop_cohomology_primary} and Lemma \ref{lem_primary_inequality}, there is a non-zero vector 
$v_+ \in \Om_F^{J+\bJ}$ such that $L(-1)v_+= J(-1)v_+$ and $\Ld(-1)v_+ = \bJ(-1)v_+$.
Set $v_-=\phi(v_+)$, where $\phi$ is the anti-linear involution. Then, by definition, $v_- \in \Om_{F}^{-J-\bJ}$ and $L(-1)v_-= -J(-1)v_-$ and $\Ld(-1)v_- = -\bJ(-1)v_-$.
Since
\begin{align}
\begin{split}
\langle v_+,v_+ \rangle &= (\phi(v_+),v_+)=(v_-, v_+(-1,-1)\va)\\
&=(-1)^{s+2s^2} (v_+((J,J)_l-1,(\bJ,\bJ)_r-1)v_-,\va)
\end{split}
\label{eq_star_normalize}
\end{align}
where $s = \frac{c}{6}-\frac{\bc}{6}$,
$v_+((J,J)_l-1,(\bJ,\bJ)_r-1)v_- \neq 0$.
Since $v_+((J,J)_l-1,(\bJ,\bJ)_r-1)v_- \in (\Om_F)_{0,0}^{0} \subset F_{0,0} =\C\va$, $v_+((J,J)_l-1,(\bJ,\bJ)_r-1)v_- = A\va$ with some non-zero $A\in \C$.
Hence, by Theorem \ref{thm_chiral_period}, (1) holds.
\end{proof}

\begin{cor}
Let $F$ be a unitary $N=(2,2)$ full vertex operator superalgebra.
Assume that any irreducible $F$-module is isomorphic to $F$ as an $F$-module.
Then, $H^\top(F) \neq 0$.
\end{cor}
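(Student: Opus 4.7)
The plan is to prove the nonvanishing of $H^\top(F,d_B)$ directly, using the spectral flow. By Propositions \ref{prop_NS_inequality} and \ref{prop_cohomology_primary}, $H^\top(F,d_B)$ is naturally identified with $F_{c/6,\bc/6}^{J+\bJ}$, and Proposition \ref{prop_flow_dim} (specialized to $\la=\mu=1$, $\al=J+\bJ$, $h=c/6$, $\h=\bc/6$) gives
\[
\dim F_{c/6,\bc/6}^{J+\bJ} = \dim F_{0,0}^{0}(1).
\]
So it suffices to prove that the weight-$(0,0)$, charge-$0$ subspace of the spectral-flow module $F(1)$ is nonzero.

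First I will observe that $F(1)$ is an ordinary (untwisted) $F$-module: by \eqref{eq_fermi_integer} we have $\exp\bigl(2\pi i(J(0)-\bJ(0))\bigr) = \bigl((-1)^F\bigr)^2 = \id_F$, so by Remark \ref{rem_just_module} together with Proposition \ref{prop_chiral_twist} the spectral flow $F(1)$ is a genuine $F$-module. It is moreover nonzero, since by construction $\va\otimes e_{-J-\bJ}\in \Om_F^0\otimes M_{H,p}(-J-\bJ) = F^{0}(1)$ is nonzero.

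Next I will extract an irreducible subquotient of $F(1)$ via a standard Zorn argument: pick any nonzero $v\in F(1)$, form the cyclic $F$-submodule $F\cdot v$, and choose a maximal $F$-submodule $N\subsetneq F\cdot v$ among those not containing $v$. Zorn's lemma applies because the union of an increasing chain of such submodules still excludes $v$. Then $(F\cdot v)/N$ is irreducible, so by hypothesis $(F\cdot v)/N\cong F$ as $F$-modules. Since $L(0),\Ld(0),J(0),\bJ(0)$ are realized as zero-modes of elements of $F$, any such isomorphism preserves the $\R^2\times H$-grading, so the vacuum $\va\in F_{0,0}^{0}$ lifts to a nonzero class in the $(0,0;0)$-graded piece of $(F\cdot v)/N$. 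This forces $(F\cdot v)_{0,0}^{0}\neq 0$ and hence $F_{0,0}^{0}(1)\neq 0$; combined with the dimension identity above, this yields $F_{c/6,\bc/6}^{J+\bJ}\neq 0$, that is, $H^\top(F)\neq 0$.

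The only real obstacle is the bookkeeping step of checking that the abstract $F$-module isomorphism provided by the hypothesis respects the $\R^2\times H$-grading; this follows at once from the fact that all grading operators are zero-modes of algebra elements, but must be flagged explicitly since a priori an $F$-module isomorphism need not preserve a grading not determined by the algebra. Notably, neither a unitary structure nor semisimplicity of $F(1)$ itself is needed beyond what is already encoded by Proposition \ref{prop_flow_dim}.
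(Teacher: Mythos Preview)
Your proof is correct. The paper states the corollary without proof, immediately after Theorem~\ref{thm_unitary_period}, presumably intending the reader to deduce it from that theorem: $F(1)$ and $F(-1)$ are untwisted $F$-modules, so by hypothesis they should be isomorphic to $F$, whence condition~(1) of Theorem~\ref{thm_unitary_period} holds. The trouble with that reading is that the hypothesis concerns only \emph{irreducible} modules, and nothing in the paper establishes that the spectral flows $F(\pm 1)$ are themselves irreducible; so the one-line deduction has a gap.

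Your route sidesteps this neatly. Rather than trying to prove $F(1)\cong F$, you use Proposition~\ref{prop_flow_dim} to reduce $H^{\top}(F)\neq 0$ to the single graded-dimension statement $F(1)_{0,0}^{0}\neq 0$, and then a Zorn/subquotient argument suffices: any irreducible subquotient of $F(1)$ is $\cong F$, so it has nonzero $(0,0;0)$-component, and this lifts back to the cyclic submodule. This is strictly weaker than $F(1)\cong F$ and hence easier to verify; it is also the minimal input one actually needs. One small cosmetic point: to ensure the cyclic submodule $F\cdot v$ is graded (so that ``submodule'' makes sense in the category of Definition~\ref{def_twisted}), you should choose $v$ homogeneous---your own candidate $\va\otimes e_{-J-\bJ}$ works perfectly. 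With that in place the argument is complete.
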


We end this section by looking at periodicity for $F(\la,\mu)$. Physically, there is no guarantee in general that $F(\la,\mu)$ is periodic in both chiral and anti-chiral directions. In fact, the Landau-Ginzburg model has no such periodicity. However, it is known that $F$ has such periodicity in the case of the supersymmetric sigma model associated with a Calabi-Yau manifold. 
We characterize such periodicity by the cohomology ring $H(F,d_B)$.

\begin{lem}
Let $F$ be a unitary $N=(2,2)$ full vertex operator algebra and $\eta \in C^{J,0}(F)$ a non-zero vector.
Then, there is $\eta_- \in \Om_F^{-J,0}$ such that $L(-1)\eta_- = J(-1)\eta_-$, $\Ld(-1)\eta_-=0$ and $\eta_-((J,J)_l-1,-1)\eta =\va$. Moreover, $J(0)$ has only integer eigenvalues.
\end{lem}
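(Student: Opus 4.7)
The natural candidate is $\eta_- := c\, \phi(\eta)$ for an appropriate scalar $c$, where $\phi$ is the anti-linear involution. Since $\phi(J)=-J$ and $\phi(\bJ)=-\bJ$ and $\phi$ intertwines all modes, $\phi(\eta)$ has $J(0)$-eigenvalue $-(J,J)_l$ and $\bJ(0)$-eigenvalue $0$, and annihilation by positive Heisenberg modes is preserved, so $\phi(\eta)\in\Om_F^{-J}$. Since $\phi$ also fixes $\om,\omb$, the conformal weight remains $(\tfrac{c}{6},0)$, whence $\Ld(-1)\phi(\eta)=0$ by Lemma~\ref{lem_chiral_volume}. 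Applying $\phi$ to the lattice-like identity $L(-1)\eta = J(-1)\eta$ (which holds by Lemma~\ref{lem_primary_inequality}) gives $L(-1)\phi(\eta)=-J(-1)\phi(\eta)$, i.e.\ the lattice-like condition $L(-1)\eta_-=(p(-J))(-1)\eta_-$ for $\eta_-\in\Om_F^{-J}$ in the sense of Definition~\ref{def_lattice_like} (there appears to be a sign typo in the statement).

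Next I would pin down the product $\eta_-((J,J)_l-1,-1)\eta$. Apply the invariance formula \eqref{eq_bilinear_inv} to the pair $(\phi(\eta),\eta)$: since $L(1)\eta=\Ld(1)\eta=0$ by primarity and Lemma~\ref{lem_NS_inequality}, and $L(0)\eta=\tfrac{c}{6}\eta$, $\Ld(0)\eta=0$, the formula collapses to
\begin{align*}
(\phi(\eta),\eta) \;=\; (-1)^{s+2s^2}\bigl(\eta((J,J)_l-1,-1)\phi(\eta),\,\va\bigr),\qquad s=\tfrac{c}{6}.
\end{align*}
A short computation using the skew-symmetry of Proposition~\ref{prop_translation}(4) — combined with the fact that only the $k=l=0$ term of $\exp(zD+\z\D)$ survives, by positivity of conformal weights — identifies $\eta((J,J)_l-1,-1)\phi(\eta)$ with $\phi(\eta)((J,J)_l-1,-1)\eta$ up to a sign $(-1)^{|\eta|^2+c/3}$ which, inspecting cases $c/6\in\Z$ vs $c/6\in\tfrac12+\Z$, equals $+1$. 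Thus $\phi(\eta)((J,J)_l-1,-1)\eta$ lies in $F_{0,0}=\C\va$ and equals $(-1)^{s+2s^2}\langle\eta,\eta\rangle\cdot\va$, which is nonzero by positive-definiteness. Rescaling $\phi(\eta)$ by the reciprocal constant yields $\eta_-$ with $\eta_-((J,J)_l-1,-1)\eta=\va$.

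Finally I would upgrade this to the integer-eigenvalue statement by passing to the generalized full vertex algebra $\Om_F$ of Theorem~\ref{thm_vacuum}. A direct computation with the modified vertex operator \eqref{eq_modified_vertex}, using that $\eta$ is primary (so $E^+(J,\uz)\eta=\eta$) and that positivity of conformal weights kills the $E^-$-correction terms, shows that the ``constant'' coefficient $\eta_-(-1,-1)\eta$ inside $\Om_F$ coincides with $\eta_-((J,J)_l-1,-1)\eta$ in $F$, and is therefore $\va$. Hence $\eta,\eta_-\in A_{\Om_F}=\ker L_\Om(-1)\cap\ker\Ld_\Om(-1)$ (by the lattice-like property) and they are mutually inverse in the algebra $A_{\Om_F}$ of Section~\ref{sec_const}. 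Proposition~\ref{prop_app_invertible} then gives that $\Psi_\eta:\Om_F\to\Om_F$ is a linear isomorphism, so $\eta\cdot a\neq 0$ whenever $a\neq 0$. By Lemma~\ref{lem_vacuum_like} applied to each non-zero $a\in\Om_F^\al$, we conclude $(J,\al)_c=(J,\al)_l\in\Z$ for every $\al\in H$ with $\Om_F^\al\neq 0$. Since $F\cong\bigoplus_\al M_{H,p}(\al)\otimes\Om_F^\al$ as a Heisenberg module and $J(0)$ acts by $(J,\al)_l$ on the $\al$-component, all $J(0)$-eigenvalues are integers.

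The main subtle point is the sign bookkeeping in the skew-symmetry step and the related Koszul/$(L(0)-\Ld(0))$ factors appearing in \eqref{eq_bilinear_inv}; once this is handled carefully (splitting on the parity of $c/6$), everything else follows by the same template as Theorem~\ref{thm_unitary_period}, with $J+\bJ$ replaced throughout by $J$ and ``both-sided'' lattice-likeness replaced by the one-sided condition afforded by Lemma~\ref{lem_chiral_volume}.
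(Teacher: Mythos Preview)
Your proposal is correct and follows essentially the same route as the paper: set $\eta_-$ to be a scalar multiple of $\phi(\eta)$, use the invariance formula together with positive-definiteness of $\langle\,,\,\rangle$ to see that $\phi(\eta)((J,J)_l-1,-1)\eta$ is a nonzero multiple of $\va$, then invoke Lemma~\ref{lem_vacuum_like} and Proposition~\ref{prop_app_invertible} in $\Om_F$ to force $(J,\al)_l\in\Z$ for every charge $\al$ that occurs. Your observation about the sign in the statement is right (compare condition~(3) of Theorem~\ref{thm_chiral_period}); and your skew-symmetry detour, while correct, is not needed---the paper simply applies invariance with the roles of the two $\eta$'s interchanged (equivalently, uses symmetry of the invariant bilinear form) to land directly on $\phi(\eta)((J,J)_l-1,-1)\eta$.
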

\begin{proof}
By Lemma \ref{lem_chiral_volume} and Lemma \ref{lem_primary_inequality},
$\eta$ satisfies $L(-1) \eta = J(-1)\eta$ and $\Ld(-1)\eta =0$.
By $\eta \in F_{\frac{(J,J)}{2},0}$ and (FO1), we have $(J,J)_l = \frac{c}{3} \in \Z$.
Since
\begin{align*}
0&\neq \langle \eta,\eta \rangle = (\phi(\eta),\eta) = (-1)^{s(\eta)+2s(\eta)^2}(\va, \phi(\eta)((J,J)_l-1,-1)\eta),
\end{align*}
by setting $\eta_- = \frac{(-1)^{s(\eta)+2s(\eta)^2}}{ \langle \eta,\eta \rangle} \phi(\eta)$, $\eta_-((J,J)_l-1,-1)\eta =\va$ holds (see the proof of Theorem \ref{thm_chiral_period}), and
$\exp(2\pi i J(0)) =\id_F$ follows from Proposition \ref{lem_vacuum_like} and Proposition \ref{prop_app_invertible}.
\end{proof}
Similarly to Theorem \ref{thm_chiral_period}, we have:\begin{prop}\label{prop_period_cc}
Let $F$ be a unitary $N=(2,2)$ full vertex operator superalgebra. The following conditions are equivalent:
\begin{enumerate}
\item
For any $n\in \Z$, $\exp(2\pi i n J(0)) =\id_F$ and $F(n,0)$ is isomorphic to $F(0,0)$ as $F$-modules;
\item
$H^{J,0}(F,d)$ is non-zero.
\end{enumerate}
Moreover, if one of the above conditions are satisfied, then $c \in 3\Z$, $\dim H^{J,0}(F) = \dim (\Om_{F})_{\frac{(J,J)_l}{2},0}^{J,0} =1$ and $F(\la,\rho)$ is isomorphic to $F(\la+n,\rho)$ as $\exp(2\pi i(\la J(0)-\rho \bJ(0)))$-twisted modules
for any $\la,\rho \in \R$ and $n\in\Z$
 and
\begin{align*}
\dim F_{h,\h}^\al = \dim F_{h-n(\al,J)+\frac{n^2 c}{6},\h}^{\al-nJ}.
\end{align*}
for any $n\in\Z$.
\end{prop}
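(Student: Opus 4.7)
The proof will closely parallel those of Theorem \ref{thm_chiral_period} and Theorem \ref{thm_unitary_period}, but restricted to the chiral (holomorphic) direction.

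\textbf{Step 1: (2) implies (1).} The plan is to apply the preceding lemma to produce lattice-like vectors $\eta \in \Om_F^{J,0}$ and $\eta_- \in \Om_F^{-J,0}$ satisfying $\eta_-(\tfrac{c}{3}-1,-1)\eta = \va$, and to note that the lemma gives $\exp(2\pi i J(0)) = \id_F$ with $c \in 3\Z$. The lattice-like condition identifies $\eta,\eta_- \in A_{\Om_F}$ inside the generalized full vertex algebra $\Om_F$ via the characterization after Definition \ref{def_lattice_like}. Then $\eta_-\cdot\eta = \va$ (the $(-1,-1)$-product in $\Om_F$), so Proposition \ref{prop_app_invertible} ensures that $\Psi_\eta$ and $\Psi_{\eta_-}$ are mutually inverse $\Om_F$-module isomorphisms. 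For $n\ge 0$, form iterated products $\eta_n = \eta^{\cdot n}$ and $\eta_{-n} = \eta_-^{\cdot n}$ in $\Om_F$; Proposition \ref{prop_app_invertible} again implies $\eta_n \cdot \eta_{-n}$ is a non-zero scalar multiple of $\va$, which we normalize to $\va$ by rescaling $\eta_n$.

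\textbf{Step 2: Construction of the module isomorphism.} Following the construction \eqref{eq_period_isom} in the proof of Theorem \ref{thm_chiral_period}, define for any $\rho \in \R$ and $n \in \Z$ the map
\begin{align*}
f_{n,\rho} = \Psi_{\eta_{-n}} \otimes \id_{G_{H,p}} : F(n,\rho) = \bigoplus_{\al\in H} \Om_F^\al \otimes M_{H,p}(\al - nJ -\rho\bJ) \longrightarrow F(0,\rho).
\end{align*}
By Proposition \ref{prop_vac_right}, $\Psi_{\eta_{-n}}$ is an $\Om_F$-module homomorphism, and $\id_{G_{H,p}}$ respects the generalized vertex operators on $G_{H,p}$; hence $f_{n,\rho}$ intertwines the action of $F(0,0) \cong F$. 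Since $\Psi_{\eta_{-n}}$ is a linear isomorphism, so is $f_{n,\rho}$, giving the isomorphism $F(n,\rho)\cong F(0,\rho)$ of $\exp(2\pi i(\la J(0)-\rho\bJ(0)))$-twisted modules. Specializing $\rho = 0$ yields (1).

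\textbf{Step 3: (1) implies (2) and the remaining statements.} Apply Proposition \ref{prop_flow_dim} with $\la=1$, $\mu=0$, $\al = J$, $h=\tfrac{c}{6}$, $\h=0$:
\begin{align*}
\dim F_{c/6,0}^{J} \;=\; \dim F_{0,0}^{0}(1,0) \;=\; \dim F_{0,0}^{0} \;=\; 1,
\end{align*}
where the second equality uses the assumed isomorphism $F(1,0)\cong F(0,0)$ and the third equality uses (FO0). A non-zero vector $\eta$ in this one-dimensional space is c-c primary by Proposition \ref{prop_NS_inequality}, and hence represents a non-zero class in $H^{J,0}(F,d_B)$ via Proposition \ref{prop_cohomology_primary}. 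This same computation gives $\dim H^{J,0}(F,d) = \dim (\Om_F)_{(J,J)_l/2,0}^{J,0} = 1$. The statement $c \in 3\Z$ comes from combining $(J,J)_l = c/3$ with the integrality of $J(0)$-eigenvalues supplied by the lemma. The dimension formula for $F_{h,\h}^\al$ follows by combining the isomorphism $F(n,0) \cong F(0,0)$ with Proposition \ref{prop_flow_dim}.

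\textbf{Main obstacle.} The substantive point is to verify that the iterated products $\eta_n \cdot \eta_{-n}$ in $\Om_F$ are non-zero, so that the normalization $\eta_n \cdot \eta_{-n} = \va$ makes sense and the $\Psi$-maps are genuine isomorphisms. This is precisely what Proposition \ref{prop_app_invertible} provides once we have a single identity $\eta \cdot \eta_- = \va$ with both factors in $A_{\Om_F}$. Beyond this, the argument is a verbatim adaptation of Theorem \ref{thm_chiral_period}, with $v_\pm$ replaced by $\eta,\eta_-$ and the isomorphism constructed only in the chiral variable while $G_{H,p}$ acts trivially on the anti-chiral slot; the compatibility of $f_{n,\rho}$ with the twisted $F$-module structure then follows from the tensor decomposition of the vertex operator $\tY_\sft$ on $\Om_F \otimes G_{H,p}$.
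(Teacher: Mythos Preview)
Your proof is correct and follows exactly the approach the paper intends: the paper's own proof is the single line ``Similarly to Theorem \ref{thm_chiral_period}, we have:'', and you have spelled out that analogy in detail, invoking the preceding lemma for the lattice-like vectors $\eta,\eta_-$ and then running the $\Psi$-isomorphism construction on the chiral slot only. One small remark: in Step~2 you build $f_{n,\rho}:F(n,\rho)\to F(0,\rho)$, but the proposition asks for $F(\la+n,\rho)\cong F(\la,\rho)$ for arbitrary $\la$; the same map $\Psi_{\eta_{-n}}\otimes\id_{G_{H,p}}$ works verbatim since $\Psi_{\eta_{-n}}$ shifts $\Om_F^\al\to\Om_F^{\al-nJ}$ independently of the $M_{H,p}$ label, so this is a one-line addendum.
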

Hence, we have:
\begin{thm}\label{thm_CY}
Let $F$ be a unitary $N=(2,2)$ full vertex operator superalgebra. The following conditions are equivalent:
\begin{enumerate}
\item
All of 
$H^{J,0}(F,d)$, $H^{0,\bJ}(F,d)$ and $H^{J,\bJ}(F,d)$ are non-zero.
\item
Two of 
$H^{J,0}(F,d)$, $H^{0,\bJ}(F,d)$ and $H^{J,\bJ}(F,d)$ are non-zero.
\item
For any $n, m \in \Z$, $\exp(2\pi i (n J(0)+m\bJ(0))) =\id_F$ and $F(n,m)$ is isomorphic to $F(0,0)$ as $F$-modules.
\end{enumerate}
Moreover, if one of the above conditions are satisfied, then
\begin{align*}
\dim F_{h,\h}^\al = \dim F_{h-n(\al,J)+\frac{n^2 c}{6},\h-m(\al,\bJ)+\frac{m^2 c}{6}}^{\al-nJ-m\bJ}.
\end{align*}
for any $n,m \in \Z$.
\end{thm}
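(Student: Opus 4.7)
My plan is to prove the circle $(3) \Rightarrow (1) \Rightarrow (2) \Rightarrow (1) \Rightarrow (3)$, treating the two nontrivial steps $(2) \Rightarrow (1)$ and $(1) \Rightarrow (3)$ uniformly via the algebra structure of the ``lattice-like'' subspace $A_{\Om_F} = \ker L_\Om(-1) \cap \ker \Ld_\Om(-1)$ of the generalized full vertex algebra $\Om_F$. The implication $(3) \Rightarrow (1)$ is immediate by specializing to $(n,m) = (1,0), (0,1), (1,1)$ and applying Proposition \ref{prop_period_cc}, its mirror (obtained by swapping the roles of $J$ and $\bJ$, or equivalently applying Proposition \ref{prop_period_cc} to the mirror algebra $\tilde{F}$), and Theorem \ref{thm_unitary_period}. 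The implication $(1) \Rightarrow (2)$ is trivial.

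For $(2) \Rightarrow (1)$, I will use the standard dictionary: by Proposition \ref{prop_cohomology_primary}, Lemma \ref{lem_primary_inequality}, and Lemma \ref{lem_chiral_volume}, nonzero elements of $H^{J,0}(F,d)$, $H^{0,\bJ}(F,d)$, $H^{J,\bJ}(F,d)$ correspond to nonzero lattice-like vectors $\eta \in A_{\Om_F}^{J}$, $\bar\eta \in A_{\Om_F}^{\bJ}$, $\ep \in A_{\Om_F}^{J+\bJ}$ respectively. Since $L_\Om(-1)$ and $\Ld_\Om(-1)$ are derivations of the $(-1,-1)$-product, $A_{\Om_F}$ is closed under this product, so products of lattice-like vectors are lattice-like. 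Moreover, the argument in the proofs of Theorem \ref{thm_unitary_period} and Proposition \ref{prop_period_cc} shows that for any nonzero lattice-like $v \in A_{\Om_F}^{\al}$ (with $\al \in \{J, \bJ, J+\bJ\}$), the anti-linear involution $\phi$ produces (after normalization) a lattice-like $v_- \in A_{\Om_F}^{-\al}$ with $v \cdot v_- = \va$, so $v$ is invertible in the sense of Proposition \ref{prop_app_invertible}. Thus in Case~A (assuming $H^{J,0}$ and $H^{0,\bJ}$ nonzero), the product $\eta \cdot \bar\eta \in A_{\Om_F}^{J+\bJ}$ admits $\bar\eta_- \cdot \eta_-$ as an inverse, hence is nonzero; in Case~B (assuming $H^{J,0}$ and $H^{J,\bJ}$ nonzero), $\eta_- \cdot \ep \in A_{\Om_F}^{\bJ}$ is nonzero since multiplying by $\eta$ recovers $\ep$; Case~C is symmetric.

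For $(1) \Rightarrow (3)$, pick nonzero lattice-like $\eta \in A_{\Om_F}^{J}$ and $\bar\eta \in A_{\Om_F}^{\bJ}$ together with their inverses $\eta_-, \bar\eta_-$. For any $n,m \in \Z_{\geq 0}$, define
\begin{align*}
v_{n,m} = \eta^n \cdot \bar\eta^m, \qquad v_{-n,-m} = \eta_-^n \cdot \bar\eta_-^m,
\end{align*}
with products taken in $\Om_F$; these are nonzero lattice-like vectors in $A_{\Om_F}^{nJ + m\bJ}$ and $A_{\Om_F}^{-nJ - m\bJ}$ satisfying $v_{n,m} \cdot v_{-n,-m} = \va$ (up to a nonzero scalar, absorbed by rescaling), and analogous vectors are constructed for arbitrary signs of $n, m$. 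Then, exactly as in the proof of Theorem \ref{thm_chiral_period}, the map $\Psi_{-n,-m}: a \mapsto a(-1,-1) v_{-n,-m}$ is an $\Om_F$-module isomorphism by Propositions \ref{prop_vac_right} and \ref{prop_app_invertible}, and tensoring with $\id_{G_{H,p}}$ yields an $F$-module isomorphism $F(n,m) \cong F(0,0)$ in analogy with \eqref{eq_period_isom}. That $F(n,m)$ is untwisted, i.e., $\exp(2\pi i (nJ(0) + m\bJ(0))) = \id_F$, follows because Proposition \ref{prop_period_cc} applied separately in the two directions forces $(\al,J)_l, (\al,\bJ)_r \in \Z$ whenever $F^\al \neq 0$; this in turn ensures the condition in Remark \ref{rem_just_module}. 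The dimension formula then follows from Proposition \ref{prop_flow_dim} with $(\la,\mu) = (n,m)$.

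The main obstacle will be the careful bookkeeping of Koszul signs in the invertibility argument and in verifying $v_{n,m} \cdot v_{-n,-m} = \va$ for general $(n,m)$, which requires iterating Proposition \ref{prop_app_invertible} while tracking the parity contributions $(\ga,\ga)_c + |e_\ga| \in 2\Z$; these are however formal and follow from the structure already established in Section \ref{sec_const}.
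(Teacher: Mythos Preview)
Your proof is correct, but takes a more constructive route than the paper for the implication $(2) \Rightarrow (1)$. The paper's argument is a one-line transfer via the dimension formula: once $H^{J,0}(F,d) \neq 0$, Proposition \ref{prop_period_cc} gives
\[
\dim F_{0,\frac{(\bJ,\bJ)_r}{2}}^{0,\bJ} = \dim F_{\frac{(J,J)_l}{2},\frac{(\bJ,\bJ)_r}{2}}^{J,\bJ} = \dim H^{J,\bJ}(F,d),
\]
so if in addition $H^{J,\bJ}(F,d) \neq 0$ then $H^{0,\bJ}(F,d) \neq 0$ automatically; the other cases are symmetric. This sidesteps any direct manipulation in $A_{\Om_F}$. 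Your argument instead \emph{constructs} the missing cohomology class as a product $\eta \cdot \bar\eta$ (or $\eta_- \cdot \ep$, etc.) of lattice-like vectors, invoking Proposition \ref{prop_app_invertible} for nonvanishing. Both work; the paper's is shorter because the dimension identity has already been established, while yours makes the algebraic mechanism visible and would be preferable if one later wants the explicit generator.

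For $(1) \Leftrightarrow (3)$ the two approaches coincide in substance: the paper simply cites Proposition \ref{prop_period_cc} (applied in each direction, using the ``Moreover'' clause that $F(\la,\rho) \cong F(\la+n,\rho)$ for all $\rho$), whereas you unfold that citation by building $v_{n,m} = \eta^n \cdot \bar\eta^m$ and the map $\Psi_{-n,-m}$ by hand. One minor simplification available to you: in Case~A you do not need to exhibit an inverse for $\eta \cdot \bar\eta$; it suffices that left multiplication by $\eta$ is injective (Proposition \ref{prop_app_invertible}) and $\bar\eta \neq 0$. This avoids the sign bookkeeping you flag as an obstacle.
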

\begin{proof}
Assume (2). In the case of $H^{J,0}(F,d)\neq 0$ and $H^{J,\bJ}(F,d)\neq 0$, by Proposition \ref{prop_period_cc},
\begin{align*}
\dim F_{0,\frac{(\bJ,\bJ)_r}{2}}^{0,\bJ} = \dim F_{(J,\bJ)+c/6,\frac{(\bJ,\bJ)_r}{2}}^{J,\bJ}
=\dim F_{\frac{(J,J)_l}{2},\frac{(\bJ,\bJ)_r}{2}}^{J,\bJ} = \dim H^{J,\bJ}(F,d).
\end{align*}
Hence, (1) holds. The other cases can similarly be obtained.
By Proposition \ref{prop_period_cc}, (1) and (3) are equivalent.
\end{proof}

\subsection{Relation with Lie theoretical spectral flow}
\label{sec_Lie_flow}
Let $F$ be an $N=(2,2)$ full vertex operator superalgebra.
By Proposition \ref{prop_chiral_twist}, $F(\la,\mu)$ is an $\exp(2\pi i ( \la J(0)- \mu \bJ(0)))$-twisted modules,
and thus, by Proposition \ref{prop_twist_susy}, the twisted $N=(2,2)$ superconformal algebras $\g_\la^{N=2} \oplus \g_{\mu}^{N=2}$ acts on $F(\la,\mu)$.
It is important to note that this (vertex algebra theoretical) twisted action is completely different from the
one which obtained by the pull-back of the Lie algebra isomorphism \ref{def_spectral_Lie}
\begin{align*}
U_\eta: \ga_\la^{N=2} \rightarrow \g_{\la+\eta}^{N=2}.
\end{align*}
In this section, we will study the relation between them.

Let $D_{\la,\mu}: F(0,0) \rightarrow F(\la,\mu)$ be a linear map defined by
\begin{align}
\begin{split}
D_{\la,\mu}&(v_\al \otimes J(-i_1-1)\cdots J(-i_k-1) \bJ(-j_1-1)\cdots \bJ(-j_l-1)e_{\al})\\
&= v_\al \otimes J(-i_1-1)\cdots J(-i_k-1) \bJ(-j_1-1)\cdots \bJ(-j_l-1)e_{\al-\la J-\mu\bJ)}
\end{split}
\label{eq_Dla}
\end{align}
for $\al\in H$, $v_\al \in \Om_F^\al$ and $i_1,\dots,i_k,j_1,\dots,j_l \geq 0$.
Note that this map is just a linear map that does not reflect the module structures.


Let $\la,\mu \in \R$.
For any $\al \in H$ and $h,\h \in \R$, let $C_{h,\h}^\al(F(\la,\mu))$ be the subspace of $F(\la,\mu)$ consisting of vectors $v$ such that:
\begin{align}
\begin{split}
\begin{cases}
U_{\la}(L_n) v=0=U_{\mu}(\Ld_n)v & (n\geq 1),\\
U_{\la}(J_n) v=0=U_{\mu}(\bJ_n)v & (n\geq 1),\\
U_\la(G_r^\pm) v=0=U_{\mu}(\bG_r^\pm)v & (r \geq \ft \text{ and }r \in \ft+\Z),\\
U_\la(G_{-\ft}^+) v=0=U_{\mu}(\bG_{-\ft}^+)v,\\
U_\la(L(0)) v = hv, \quad U_{\mu}(\Ld(0)) v = \h v,\\
U_\la(J(0))v = (\al,J)_l v,\quad U_{\mu}(\bJ(0))v = (\al,\bJ)_r v.
\end{cases}
\end{split}
\label{eq_twist_primary}
\end{align}
Here $U_\la(G_r^\pm)$ etc. are thought to be elements of $\g_{\la}^{N=2}\oplus \g_{\mu}^{N=2}$, and its action on $F(\la,\mu)$ is given by Proposition \ref{prop_twist_susy}.
Note that $C_{h,\h}^\al(F(0,0))$ coincides with the space of c-c primary vectors (Definition \ref{def_full_primary}).

To see what this definition means, we will examine the case of $\la=\ft=\mu$.
In this case, $F(\ft)=F(\ft,\ft)$ is a module of the $N=(2,2)$ Ramond algebras $\g_\ft^{N=2} \oplus \g_\ft^{N=2}$.
Then, $C_{h,\h}^\al(F(\ft))$ coincides with the vector space called the {\it Ramond vacuum} defined for the module of $N=(2,2)$ Ramond algebra.
By definition of $U_\ft$, 
$v \in F(\ft)$ is in $C_{h+\ft(J,\al)_l+\frac{c}{24},\h+\ft(\bJ,\al)_r+\frac{\bc}{24}}^{\al+\ft J+\ft \bJ}(F(\ft))$ if and only if it satisfies the following conditions:
\begin{align}
\begin{split}
\begin{cases}
L_n v=0=\Ld_n v & (n\geq 1),\\
J_n v=0=\bJ_n v & (n\geq 1),\\
G_r^\pm v=0=\bG_r^\pm v & (r \geq 0 \text{ and }r \in \Z),\\
L(0) v = hv, \quad \Ld(0) v = \h v,\\
J_0 v = (\al,J)_l v,\quad \bJ_0 v = (\al,\bJ)_r v.
\end{cases}
\end{split}
\label{eq_Ramon_primary}
\end{align}
Set
\begin{align*}
R_{h,\h}^\al =C_{h+\ft(J,\al)_l+\frac{c}{24},\h+\ft(\bJ,\al)_r+\frac{\bc}{24}}^{\al+\ft J+\ft \bJ}\left(F\left(\ft\right)\right),  
\end{align*}
whose element is called a {\it Ramond vacuum} in physics (see \cite{LVW}).


As we have already emphasized, $D_{\la,\mu}:F(0,0) \rightarrow F(\la,\mu)$ in \eqref{eq_Dla} is just a linear map, 
but we see that it induces a linear isomorphism between $C_{h,\h}^\al(F(\la,\mu))$. In particular, the Ramond vacuum and the c-c primary vectors are isomorphic as linear spaces.
%
\begin{lem}\label{lem_S_Ramon}
Let $\al \in H$ and $v_\al  \in C_{h,\h}^\al(F(0,0))$. Then, $D_{\la,\mu}(v_\al)$ is in $C_{h,\h}^{\al}(F(\la,\mu))$.
\end{lem}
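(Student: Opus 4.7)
The plan is to verify every condition in \eqref{eq_twist_primary} by direct computation on the explicit vector $D_{\la,\mu}(v_\al)$. First, I identify this vector. Since $v_\al$ is c-c primary in $F=F(0,0)$, the vanishing $J_n v_\al = \bJ_n v_\al = 0$ for $n\geq 1$ (cf.\ Definition \ref{def_full_primary} and Proposition \ref{prop_NS_inequality}) tells us that $v_\al$ is a Heisenberg lowest weight vector. Under the full $\mathcal{H}$-decomposition $F = \bigoplus_{\be\in H}\Om_F^\be\otimes M_{H,p}(\be)$ this means $v_\al$ sits as the pure tensor $v_\al \otimes e_\al$, and hence by \eqref{eq_Dla},
\[
D_{\la,\mu}(v_\al) = v_\al \otimes e_{\al-\la J-\mu\bJ} \;\in\; \Om_F^\al \otimes M_{H,p}(\al-\la J-\mu\bJ) \;\subset\; F(\la,\mu).
\]

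For the bosonic generators I use the decomposition $\om=\om_\Om+\om_{H_l}$, $\omb=\omb_\Om+\omb_{H_r}$ (with $J,\bJ$ purely Heisenberg) from Theorem \ref{thm_vacuum}. Since the twisted vertex operator on $F(\la,\mu)$ factorises as $\tY_\sft = \hY_\Om \otimes \hY_\std$, each of the modes $L(n), J(n), \Ld(n), \bJ(n)$ splits additively into an $\Om_F$-summand and a Heisenberg summand. For $n\geq 1$ the $\Om_F$-summand annihilates $v_\al$ (the c-c primary conditions force $v_\al$ to be a Virasoro/Heisenberg lowest-weight vector also inside $\Om_F$) while the Heisenberg summand annihilates the vacuum $e_{\al-\la J-\mu\bJ}$. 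For $n=0$ a short weight computation using $(J,J)_l=c/3$ gives
\begin{align*}
L(0)D_{\la,\mu}(v_\al) &= \bigl(h-\la(\al,J)_l+\tfrac{\la^2 c}{6}\bigr) D_{\la,\mu}(v_\al),\\
J(0)D_{\la,\mu}(v_\al) &= \bigl((\al,J)_l-\tfrac{\la c}{3}\bigr) D_{\la,\mu}(v_\al),
\end{align*}
and substituting into the formulas $U_\la(L_0)=L_0+\la J_0+\tfrac{c\la^2}{6}$, $U_\la(J_0)=J_0+\tfrac{c\la}{3}$ of Definition \ref{def_spectral_Lie} produces the required eigenvalues $h$ and $(\al,J)_l$. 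The anti-holomorphic bosonic conditions are symmetric under $\la\leftrightarrow\mu$, $J\leftrightarrow\bJ$.

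The main step is the fermionic generators $U_\la(G_r^+)=G_{r+\la}^+$, $U_\la(G_s^-)=G_{s-\la}^-$ (and their bar analogues). Here I exploit the observation that $\tau^\pm \in F_{3/2,0}^{\pm 3J/c}$ are themselves c-c primary, so by the first paragraph they sit in $\Om_F^{\pm 3J/c}$ as pure tensors $\tau^\pm \otimes e_{\pm 3J/c}$, giving
\[
\tY_\sft(\tau^\pm,\uz)(v_\al\otimes e_{\al-\la J-\mu\bJ}) \;=\; \hY_\Om(\tau^\pm,\uz)v_\al \;\otimes\; \hY_\std(e_{\pm 3J/c},\uz)\,e_{\al-\la J-\mu\bJ}.
\]
By (GFV1) the $\Om_F$-factor carries an overall factor $z^{\mp 3(\al,J)_l/c}$ coming from $(\pm 3J/c,\al)_c=\pm 3(\al,J)_l/c$, while by \eqref{eq_standard_vertex} the Heisenberg factor contributes $z^{\pm 3(\al,J)_l/c \mp \la}$; the $(\al,J)_l$-dependent parts cancel, leaving $z^{\mp\la}$, which matches the $\la$-twisted mode indexing $r\in \pm\la+\ft+\Z$. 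The identical cancellation at $\la=\mu=0$ reads $Y(\tau^\pm,\uz)v_\al=\sum_r G_r^\pm v_\al\cdot z^{-r-3/2}$ on $F$, and comparing coefficients term by term yields the intertwining identities
\[
G_{r+\la}^+\,D_{\la,\mu}(v_\al) = D_{\la,\mu}(G_r^+ v_\al), \qquad G_{s-\la}^-\,D_{\la,\mu}(v_\al) = D_{\la,\mu}(G_s^- v_\al), \qquad r,s\in\ft+\Z.
\]
Since $v_\al$ is c-c primary, $G_r^+ v_\al = 0$ for $r\geq -\ft$ and $G_s^- v_\al=0$ for $s\geq\ft$, giving precisely the vanishing required by \eqref{eq_twist_primary}; the anti-holomorphic generators $\bG_r^\pm$ are treated identically using $\btau^\pm\in\Om_F^{\pm 3\bJ/\bc}$ and $\mu$ in place of $\la$. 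The main obstacle is the bookkeeping of $z$-factors in this last step---specifically, verifying that the cancellation of the generalized monodromy between $\hY_\Om$ and $\hY_\std$ precisely matches the $\la$-twisted module structure of Proposition \ref{prop_chiral_twist}---after which all conditions of \eqref{eq_twist_primary} reduce to the explicit formulas above.
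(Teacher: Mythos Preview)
Your proof is correct and follows essentially the same route as the paper: identify $D_{\la,\mu}(v_\al)=v_\al\otimes e_{\al-\la J-\mu\bJ}$, split the bosonic generators via $\om=\om_\Om+\om_{H_l}$ to get the $U_\la$-eigenvalues, and for the fermions use the factorisation $\tY_\sft(\tau^\pm,\uz)=\hY_\Om(\tau^\pm,\uz)\otimes\hY_\std(e_{\pm 3J/c},\uz)$ together with the $z^{\mp\la}$ cancellation to reduce to the untwisted $G_r^\pm$-vanishing on $v_\al$. The paper phrases the last step as the identity $\tY_\sft(\tau^\pm,\uz)D_{\la,\mu}(v_\al)=z^{\mp\la}D_{\la,\mu}(Y_F(\tau^\pm,\uz)v_\al)$, which is exactly your intertwining relation read coefficientwise.

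One small correction: your claim that $\tau^\pm$ are c-c primary is false (for instance $G_{1/2}^-\tau^+=\tau^-(1)\tau^+=-J\neq 0$, and indeed $h=\tfrac{3}{2}\neq\tfrac{1}{2}=\tfrac{(\al,J)_l}{2}$ contradicts Proposition \ref{prop_NS_inequality}). What you actually need, and what the paper invokes, is only that $\tau^\pm\in\Om_F^{\pm 3J/c}$, i.e.\ $J(n)\tau^\pm=\bJ(n)\tau^\pm=0$ for $n\geq 1$; this Heisenberg-primarity is immediate from the OPE relations in Definition \ref{def_N2} and from $\tau^\pm\in\ker\Ld(-1)$.
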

\begin{proof}
We regard $v_\al$ as an element in $\Om_F^\al$.
Then, $D_{\la,\mu}(v_\al \otimes e_\al) = v_\al \otimes e_{\al - \la J-\mu \bJ}$.
It is clear that $L_n D_{\la,\mu}(v_\al)=0=\Ld_n= D_{\la,\mu}(v_\al)$ and $J_n D_{\la,\mu}(v_\al)=0 =\bJ_n D_{\la,\mu}(v_\al)$ for any $n\geq 1$ and $J_0 D_{\la,\mu}(v_\al) = ((\al,J)_l-\la (J,J)_l)D_{\la,\mu}(v_\al)$, $\bJ_0 D_{\la,\mu}(v_\al) = ((\al,\bJ)_r-\mu (\bJ,\bJ)_r)D_{\la,\mu}(v_\al)$ and
\begin{align*}
L(0) D_{\la,\mu}(v_\al) &=(L_\Om(0) v_\al) \otimes e(\al-\la J-\mu\bJ)) +  v_\al \otimes L_{H}(0) e(\al-\la J-\mu\bJ))\\
&= ((h-\frac{(p\al,p\al)_l}{2})+\frac{(p\al-\la J,p\al-\la J)_l}{2}) D_{\la,\mu}(v_\al)\\
&=(h-\la(\al,J)_l+ \frac{c\la^2}{6}) D_{\la,\mu}(v_\al)
\end{align*}
and similarly $\Ld(0)D_{\la,\mu}(v_\al) = (\h-\mu(\al,\bJ)_r+ \frac{\bc \mu^2}{6}) D_{\la,\mu}(v_\al)$.
Hence,
\begin{align*}
U_\la(J_0)D_{\la,\mu}(v_\al) = (J_0 +\la \frac{c}{3}) D_{\la,\mu}(v_\al) =(\al,J)_l v_\al
\end{align*}
and
\begin{align*}
U_\la(L_0)D_{\la,\mu}(v_\al) &= (L_0 +\la J_0+ \frac{c \la^2}{6}) D_{\la,\mu}(v_\al)\\
&=((h-\la(\al,J)_l+ \frac{c\la^2}{6})+\la( (\al,J)_l - \la \frac{c}{3}) + \frac{c \la^2}{6})D_{\la,\mu}(v_\al) = h D_{\la,\mu}(v).
\end{align*}
By $\tau^\pm \in \Om_F^{\pm \frac{3}{c}J}$ and $\btau^\pm \in \Om_F^{\pm \frac{3}{\bc}\bJ}$, we have
\begin{align*}
\tY_\sft(\tau^\pm,\uz) D_{\la,\mu}(v_\al)= \hY_\Om(\tau^\pm,\uz)v_\al \otimes \hY_\std(e(\pm \frac{3}{c}J),z) e(\al-\la J-\mu\bJ)).
\end{align*}
Hence, we have
\begin{align*}
\tY_\sft(\tau^+,\uz) D_{\la,\mu}(v_\al) &= \hY_\Om(\tau^+,z)v \otimes z^{(\frac{3}{c}J,\al-\la J)_l} E^-(\frac{3}{c}J,z) e(\al+\frac{3}{c}J-\la J-\mu\bJ))\\
&= z^{-(\frac{3}{c}J,\la J)_l}D_{\la,\mu}\Bigl(Y_{F}(\tau^+,\uz)v_\al \Bigr)\\
&= z^{-\la} D_{\la,\mu}\Bigl(\sum_{r \leq -\frac{3}{2}} (G_r^{+,0} v_\al) z^{-\frac{3}{2}-r} \Bigr)\\
&=D_{\la,\mu}(G_{-\frac{3}{2}}^{+,0} v_\al) z^{-\la}+ D_{\la,\mu}(G_{-\frac{5}{2}}^{+,0} v_\al) z^{1-\la}+ \cdots,
\end{align*}
where $Y_F(\bullet,\uz)$ is the original vertex operator on $F(0,0)$
and $G_r^{+,0}$ is the original action of $\g_0^{N=2}$ on $F(0,0)$.
Since the action of $U_{\la}(G_r^+) = G_{r+\la}^+$ on $F(\la,\mu)$ is given by the coefficient of $z^{-\frac{3}{2}-r-\la}$ in $\tY_\sft(\tau^+,\uz)$ (see Proposition \ref{prop_twist_susy}), $U_\la(G_r^+) D_{\la,\mu}(v)=0$ for any $r \geq -\ft$.
Similarly, we have
\begin{align*}
Y_\sft(\tau^-,\uz) D_{\la,\mu}(v_\al) &= \hY_\Om(\tau^-,\uz)v_\al \otimes z^{(-\frac{3}{c}J,\al-\la J)_l} E^-(-\frac{3}{c}J,z) e(\al-\la J-\mu\bJ))\\
&= z^{\la} D_{\la,\mu}\Bigl(Y_{F}(\tau^-,\uz)v_\al \Bigr)\\
&= z^{\la} D_{\la,\mu}\Bigl(\sum_{r \leq -\frac{1}{2}} (G_r^{-,0} v_\al) z^{-\frac{3}{2}-r} \Bigr)=
D_{\la,\mu}(G_{-\frac{1}{2}}^{-,0} v_\al) z^{\la-1}+ D_{\la,\mu}(G_{-\frac{3}{2}}^{-,0} v_\al) z^{\la}+\cdots.
\end{align*}
Hence, $U_\la(G_r^-)D_{\la,\mu}(v_\al)=0$ for any $r \geq \ft$.
\end{proof}

Conversely, let $\al \in H$ and $v \in C_{h,\h}^{\al}(F(\la,\mu))$.
Then, there exists unique $T_{\la,\mu}(v) \in \Om_F^\al$ such that $v = T_{\la,\mu}(v) \otimes e_{\al-\la J -\mu \bJ)} \in \Om_F^\al \otimes M_H(\al-\la J-\mu \bJ)$.
\begin{lem}\label{lem_T_Ramon}
The vector $T_{\la,\mu} (v)$ is in $C_{h,\h}^{\al}(F)$.
\end{lem}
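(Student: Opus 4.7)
My plan is to invert the computation of Lemma \ref{lem_S_Ramon}. The key observation is that the identities established there, such as
\begin{align*}
\tY_\sft(\tau^+,\uz)\, D_{\la,\mu}(w) &= z^{-\la}\, D_{\la,\mu}\bigl(Y_F(\tau^+,\uz)\,w\bigr),\\
\tY_\sft(\tau^-,\uz)\, D_{\la,\mu}(w) &= z^{\la}\, D_{\la,\mu}\bigl(Y_F(\tau^-,\uz)\,w\bigr),
\end{align*}
and the analogous formulas for $\bar\tau^{\pm}$, $\om$, $\bar\om$, $J$, $\bar J$, were derived for an arbitrary $w \in \Omega_F^\al$ (nothing in those calculations used that $w$ is primary). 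Therefore they can be read as intertwining relations between the action of $\g_0^{N=2}\oplus \g_0^{N=2}$ on $F=F(0,0)$ and the action of $\g_\la^{N=2}\oplus \g_\mu^{N=2}$ on $F(\la,\mu)$. Since $D_{\la,\mu}$ is injective on each $\Om_F^\al$ (it is the tensor-product linear map with the weight-shift on the Heisenberg factor), transporting the annihilation conditions \eqref{eq_twist_primary} across these intertwiners will give precisely the c-c primary conditions of Definition \ref{def_full_primary} on $T_{\la,\mu}(v)$.

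The first step is to pin down the form of $v$. Since $U_\la$ fixes $J_n$, $\bar J_n$ for $n\ge 1$ and $L(n),\Ld(n)$ involve only $J,\bar J$-corrections at $n=0$, the conditions $U_\la(J_n)v=0=U_\mu(\bJ_n)v$ for $n\ge 1$ read as $J_n v=0=\bJ_n v$, i.e.\ $v$ is a Heisenberg lowest-weight vector. Hence $v\in \bigoplus_{\beta} \Om_F^\beta\otimes \C e_{\beta-\la J-\mu\bar J}$. The eigenvalue conditions $U_\la(J(0))v=(\al,J)_l v$ and $U_\mu(\bar J(0))v=(\al,\bar J)_r v$ amount to $J_0 v=(\al-\la J,J)_l v$ and $\bar J_0 v=(\al-\mu\bar J,\bar J)_r v$, which single out the summand $\beta=\al$. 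This shows $v=T_{\la,\mu}(v)\otimes e_{\al-\la J-\mu\bar J}$ with a uniquely determined $T_{\la,\mu}(v)\in \Om_F^\al$.

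The second step is to verify each of the primary conditions on $T_{\la,\mu}(v)$. Using the intertwining formulas above with $w=T_{\la,\mu}(v)$, one obtains coefficient-wise
\begin{align*}
U_\la(G_r^+)\,v &= D_{\la,\mu}\bigl(G_r^{+,0}\,T_{\la,\mu}(v)\bigr),\qquad U_\la(G_r^-)\,v = D_{\la,\mu}\bigl(G_r^{-,0}\,T_{\la,\mu}(v)\bigr),
\end{align*}
and analogously for $\bar G_r^\pm$, for all $r\in \ft+\Z$. The hypothesis $v\in C_{h,\h}^\al(F(\la,\mu))$ therefore yields $G_r^{+,0}\,T_{\la,\mu}(v)=0$ for $r\ge -\ft$ and $G_r^{-,0}\,T_{\la,\mu}(v)=0$ for $r\ge \ft$, and similarly on the antiholomorphic side, after observing that $D_{\la,\mu}$ is injective on each weight component $\Om_F^{\al+\epsilon J+\epsilon'\bar J/\cdots}$. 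For the Virasoro modes, the decomposition $\om=\om_\Om+\om_{H_l}$ (and similarly for $\bar\om$) combined with the Heisenberg highest-weight property of $v$ shows that $L_\Om(n)\,T_{\la,\mu}(v)=0$ for $n\ge 1$, and the remaining weight equation $L(0)T_{\la,\mu}(v)=\frac{(\al,J)_l}{2}T_{\la,\mu}(v)$ follows from the $U_\la(L_0)$-eigenvalue condition by exactly the shift computation done in Lemma \ref{lem_S_Ramon}. Altogether $T_{\la,\mu}(v)$ satisfies all the defining relations of $C_{h,\h}^\al(F)$.

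The main technical hazard is to keep straight the two different actions: the twisted action of $\g_\la^{N=2}\oplus \g_\mu^{N=2}$ on $F(\la,\mu)$ and the original action on $F(0,0)$, together with the pullback by $U_\la\oplus U_\mu$. The careful bookkeeping of how $\om$ and $\bar\om$ split into an $\Om_F$-part and a Heisenberg Sugawara part, and how this split interacts with the shift of the Heisenberg highest weight from $\al$ to $\al-\la J-\mu\bar J$, is the step most prone to sign and index errors; once this is done correctly, the reversal of Lemma \ref{lem_S_Ramon}'s argument is mechanical.
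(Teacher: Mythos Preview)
Your proposal is correct and follows essentially the same approach as the paper: the paper's proof also just inverts the computation of Lemma \ref{lem_S_Ramon}, writing $Y_{F}(\tau^\pm,\uz)\,T_{\la,\mu}(v)=z^{\pm\la}\,D_{\la,\mu}^{-1}\bigl(\tY_\sft(\tau^\pm,\uz)\,v\bigr)$ and reading off the vanishing of the modes. Your first step (using the Heisenberg annihilation conditions to pin down $v=T_{\la,\mu}(v)\otimes e_{\al-\la J-\mu\bJ}$) makes explicit something the paper states without justification; one small slip is that the $L(0)$-eigenvalue you should recover is $h$, not $\frac{(\al,J)_l}{2}$, but the shift computation from Lemma \ref{lem_S_Ramon} gives exactly this.
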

\begin{proof}
The proof is the same as in Lemma \ref{lem_S_Ramon}. 
\begin{align*}
Y_{F}(\tau^\pm,\uz) T_{\la,\mu}(v) &= \hY_\Om(\tau^+,z) T_{\la,\mu}(v) \otimes Y_\std(e_{\pm \frac{3}{c}J},\uz)e_{\al}\\
&=z^{\pm (\frac{3}{c}J, \la J)} D_{\la,\mu}^{-1}(\hY_\sft(\tau^\pm,\uz) v)\\
&=
\begin{cases}
z^\la D_{\la,\mu}^{-1}\Bigl(\sum_{r \leq \la-\frac{3}{2}} G_r^{+}v z^{-\frac{3}{2}-r}\Bigr),\\
z^{-\la} D_{\la,\mu}^{-1}\Bigl(\sum_{r \leq \la-\ft} G_r^{-}v z^{-\frac{3}{2}-r}\Bigr),
\end{cases}
\\
&= \begin{cases}
D_{\la,\mu}^{-1}(G_{\la-\frac{3}{2}}^+ v) + D_{\la,\mu}^{-1}(G_{\la-\frac{5}{2}}^+ v) z+\dots,\\
D_{\la,\mu}^{-1}(G_{\la-\frac{1}{2}}^- v) z^{-1}+ D_{\la,\mu}^{-1}(G_{\la-\frac{3}{2}}^+ v) +\dots.
\end{cases}
\end{align*}
Hence, the assertion holds.
\end{proof}

By Lemma \ref{lem_S_Ramon} and \ref{lem_T_Ramon}, we have:
\begin{prop}\label{prop_chiral_bijection}
For any $\la,\mu \in \R$, $\al \in H$ and $h,\h \in \R$, the linear maps $D_{\la,\mu}: C_{h,\h}^\al(F) \rightarrow C_{h,\h}^{\al}(F(\la,\mu))$ and $C_{h,\h}^{\al}(F(\la,\mu)) \rightarrow C_{h,\h}^\al(F)$ are mutually inverse.
\end{prop}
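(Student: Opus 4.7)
The plan is to combine Lemma~\ref{lem_S_Ramon} and Lemma~\ref{lem_T_Ramon} once we verify that $T_{\la,\mu}$ is well-defined on all of $C_{h,\h}^\al(F(\la,\mu))$.

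First, I would check that every $v \in C_{h,\h}^\al(F(\la,\mu))$ actually lies in the tensor factor $\Om_F^\al \otimes \C e_{\al-\la J-\mu\bJ}$ appearing in the direct sum decomposition \eqref{eq_spectral_twist}, so that the formula $v = T_{\la,\mu}(v)\otimes e_{\al-\la J-\mu\bJ}$ uniquely determines $T_{\la,\mu}(v)\in\Om_F^\al$. By Definition~\ref{def_spectral_Lie}, for $n\geq 1$ we have $U_\la(J_n)=J_n$ and $U_\mu(\bJ_n)=\bJ_n$, so the conditions in \eqref{eq_twist_primary} force $J_n v=\bJ_n v=0$ for $n\geq 1$. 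In the decomposition $F(\la,\mu)=\bigoplus_{\be\in H}\Om_F^\be\otimes M_{H,p}(\be-\la J-\mu\bJ)$, the Heisenberg lowest-weight subspace is exactly $\bigoplus_{\be\in H}\Om_F^\be\otimes \C e_{\be-\la J-\mu\bJ}$ by the representation theory of affine Heisenberg Lie algebras. The charge condition $U_\la(J(0))v=(\al,J)_l v$, $U_\mu(\bJ(0))v=(\al,\bJ)_r v$ then pins down $\be=\al$, giving the desired form.

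Second, by Lemma~\ref{lem_S_Ramon}, $D_{\la,\mu}$ restricts to a linear map $C_{h,\h}^\al(F)\to C_{h,\h}^\al(F(\la,\mu))$, and by Lemma~\ref{lem_T_Ramon} together with the well-definedness above, $T_{\la,\mu}$ restricts to a linear map $C_{h,\h}^\al(F(\la,\mu))\to C_{h,\h}^\al(F)$. It then suffices to verify that these two maps are mutually inverse; but this is essentially tautological from the definitions: if $w\in\Om_F^\al\subset F(0,0)$ (regarded as a summand), then $D_{\la,\mu}(w)=w\otimes e_{\al-\la J-\mu\bJ}$ by \eqref{eq_Dla}, and applying $T_{\la,\mu}$ returns $w$; conversely $D_{\la,\mu}(T_{\la,\mu}(v))=T_{\la,\mu}(v)\otimes e_{\al-\la J-\mu\bJ}=v$ for any $v$ in the image.

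There is no real obstacle beyond keeping the bookkeeping straight; the heart of the argument is the twisted Heisenberg lowest-weight analysis, which is handled by the standard affine Heisenberg representation theory already invoked in the construction of $\Om_F$ in Section~\ref{sec_str_h}.
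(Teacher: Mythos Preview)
Your proposal is correct and follows essentially the same approach as the paper, which simply states ``By Lemma~\ref{lem_S_Ramon} and \ref{lem_T_Ramon}'' and leaves the mutual inverse check implicit. You additionally spell out why $T_{\la,\mu}$ is well-defined (the Heisenberg lowest-weight argument), which the paper asserts without justification just before Lemma~\ref{lem_T_Ramon}; this is a genuine detail worth recording, and your argument for it is sound.
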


By the spectral flow, the action of $N=(2,2)$ superconformal algebra generally undergoes a complicated change due to the vertex operator $E(\al,\uz)$ \eqref{eq_lattice_vertex1}. However, when restricted on primary vectors, this change is consistent with the spectral flow in the sense of Lie algebra.

An important consequence of Proposition \ref{prop_chiral_bijection} is the following:
\begin{cor}\label{cor_Ramon_weight}
Let $F$ be a unitary $N=(2,2)$ full vertex operator superalgebra.
If $F_{h,\h}^\al(\ft,\ft)\neq 0$, then $h \geq \frac{c}{24}$ and $\h \geq \frac{\bc}{24}$.
Moreover, $v\in R_{h,\h}^\al(F)$ if and only if $(h,\h)=(\frac{c}{24},\frac{\bc}{24})$.
\end{cor}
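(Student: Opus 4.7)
The strategy is to pull the statement back from the Ramond-twisted sector $F(\ft,\ft)$ to the original NS-sector $F=F(0,0)$, where the unitarity inequalities of Lemma \ref{lem_NS_inequality} and the primary-vector characterization of Proposition \ref{prop_NS_inequality} are directly available. The two key tools are Proposition \ref{prop_flow_dim} (which identifies dimensions of weight spaces under spectral flow) and Proposition \ref{prop_chiral_bijection} (which identifies primary subspaces).

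For the weight bound, I would substitute $\la=\mu=\ft$ in Proposition \ref{prop_flow_dim}: any non-zero $v\in F_{h,\h}^{\al}(\ft,\ft)$ then yields a non-zero $w\in F_{h',\h'}^{\al'}$ inside $F$ with
\[
\al'=\al+\tfrac12 J+\tfrac12\bJ,\qquad h'=h+\tfrac12(\al,J)_l+\tfrac{c}{24},\qquad \h'=\h+\tfrac12(\al,\bJ)_r+\tfrac{\bc}{24},
\]
after solving the linear relation using $(J,J)_l=c/3$ and $(J,\bJ)=0$. Lemma \ref{lem_NS_inequality}(1) gives $h'\geq|(\al',J)_l|/2$, and since $(\al',J)_l=(\al,J)_l+c/6$, splitting on the sign of $(\al,J)_l+c/6$ and using in the negative case the companion inequality $h'\geq -(\al',J)_l/2$ yields $h\geq c/24$ in both cases, with strict inequality unless $(\al,J)_l+c/6\geq 0$ and $w$ saturates the $G^-_{1/2}$-bound. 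The identical argument in the anti-chiral direction gives $\h\geq\bc/24$.

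For the ``moreover'' statement, Proposition \ref{prop_chiral_bijection} provides an isomorphism
\[
R_{h,\h}^{\al} = C_{h+\frac12(\al,J)_l+\frac{c}{24},\,\h+\frac12(\al,\bJ)_r+\frac{\bc}{24}}^{\al+\frac12 J+\frac12\bJ}\!\bigl(F(\tfrac12)\bigr)\;\cong\;C_{h+\frac12(\al,J)_l+\frac{c}{24},\,\h+\frac12(\al,\bJ)_r+\frac{\bc}{24}}^{\al+\frac12 J+\frac12\bJ}(F),
\]
so the non-triviality of $R_{h,\h}^{\al}$ is controlled by the c-c primary subspace of $F$ at charge $\al+\ft J+\ft\bJ$. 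By Proposition \ref{prop_NS_inequality}, a c-c primary vector of that charge lives at conformal weight exactly $\ft(\al+\ft J+\ft\bJ,J)_l=\ft(\al,J)_l+c/12$ and the analogous value for $\Ld(0)$. Matching these against the $R$-indexing forces $h+\ft(\al,J)_l+c/24=\ft(\al,J)_l+c/12$, i.e., $h=c/24$, and likewise $\h=\bc/24$.

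The main obstacle is purely bookkeeping: carefully tracking how the $L(0)$, $J(0)$ eigenvalues of a vector in $F(\ft,\ft)$ relate to the $U_{1/2}(L(0))$, $U_{1/2}(J(0))$ eigenvalues used in the definition of $C$ and of $R_{h,\h}^{\al}$, and ensuring that the weight shifts produced by spectral flow conspire correctly in both the inequality and the equality characterization. No additional inequality or structural input beyond Propositions \ref{prop_flow_dim}, \ref{prop_chiral_bijection}, \ref{prop_NS_inequality} and Lemma \ref{lem_NS_inequality} is required.
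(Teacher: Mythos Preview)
Your approach is correct and essentially identical to the paper's: both transfer between the NS sector $F$ and the Ramond sector $F(\tfrac12,\tfrac12)$ via the spectral-flow weight shift and then invoke Lemma~\ref{lem_NS_inequality} together with Propositions~\ref{prop_NS_inequality} and~\ref{prop_chiral_bijection}. The only cosmetic difference is direction: the paper applies $D_{\frac12,\frac12}$ from $F$ to $F(\tfrac12,\tfrac12)$ and uses just the one-sided bound $h\geq\tfrac12(\alpha,J)_l$, which avoids your case split on the sign of $(\alpha,J)_l+c/6$, but the content is the same.
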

\begin{proof}
Note that  $D_{\ft,\ft}(a) \in F_{h-\frac{(\al,J)_l}{2}+\frac{c}{24},\h-\frac{(\al,\bJ)_r}{2}+\frac{\bc}{24}}^{\al-\ft(J+\bJ)}(\ft,\ft)$ for any $a \in F_{h,\h}^\al$.
Since $F$ is unitary, $F_{h,\h}^\al \neq 0$ implies $h \geq \frac{(\al,J)_l}{2}$ and $\h \geq \frac{(\al,\bJ)_r}{2}$
by Lemma \ref{lem_NS_inequality}.
The assertion follows from Proposition \ref{prop_chiral_bijection} and \eqref{eq_Ramon_primary}.
\end{proof}

%

%
%

Assume that $F$ is a unitary $N=(2,2)$ full VOA
and $\ep \in C^{J+\bJ}(F)$ is a volume form with $\langle \ep,\ep \rangle=1$.
Let $f_+:F(1)\rightarrow F(0)$ be the $F$-module isomorphism.
Let $a \in C_{\frac{(\al,J)_l}{2},\frac{(\al,\bJ)_r}{2}}^\al(F)_{cc}$ be a c-c primary vector.
By Lemma \ref{lem_S_Ramon}, $D_{1,1}a \in C_{\frac{(\al,J)_l}{2},\frac{(\al,\bJ)_r}{2}}^\al(F(1,1))$, which satisfies the following conditions by \eqref{eq_twist_primary}:
\begin{align}
\begin{split}
\begin{cases}
L_n D_{1,1}a=0=\Ld_n D_{1,1}a & (n\geq 1),\\
J_n D_{1,1}a=0=\bJ_n D_{1,1}a & (n\geq 1),\\
G_r^\pm D_{1,1}a=0=\bG_r^\pm D_{1,1}a & (r \geq \ft \text{ and }r \in \ft + \Z),\\
G_{-\ft}^- D_{1,1}a=0=\bG_{-\ft}^-D_{1,1}a,\\
L(0) D_{1,1}a = \frac{(J-\al,J)_l}{2}D_{1,1}a, \quad \Ld(0) D_{1,1}a =
\frac{(\bJ-\al,\bJ)_r}{2}D_{1,1}a,\\
J_0 D_{1,1}a = (\al-J-\bJ,J)_l D_{1,1}a,\quad \bJ_0 D_{1,1}a = (\al-J-\bJ,\bJ)_r D_{1,1}a.
\end{cases}
\end{split}
\label{eq_shift_primary}
\end{align}
Since $f_+:F(1,1) \rightarrow F(0,0)$ is an $F$-module homomorphism, the image of $D_{1,1}v$ by $f_+$ is a-a primary vectors. Thus, we have:
\begin{align*}
f_+ \circ D_{1,1}: C_{h,\h}^\al(F)_{cc} \rightarrow C_{h,\h}^{\al-J-\bJ}(F)_{aa}.
\end{align*}
This map can be written more explicitly as follows:

Since we normalize $\ep$ by $\langle \ep, \ep \rangle =1$, by \eqref{eq_star_normalize},
$\ep(\frac{c}{3}-1,\frac{\bc}{3}-1)\phi(\ep) = (-1)^{s_F+2s_F^2}$ with $s_F = \frac{c}{6}-\frac{\bc}{6} \in \ft\Z$.
Hence, by the proof of Theorem \ref{thm_chiral_period}, we have
\begin{align*}
f_+(D_{1,1} a) &= (\Psi_{-1} \otimes \id ) (a \otimes e_{\al-J-\bJ})\\
&=(-1)^{s_F+2s_F^2} (a (-1,-1)_\Om \phi(\ep)) \otimes e_{\al-J-\bJ} \in F(0),
\end{align*}
where $a (-1,-1)_\Om \phi(\ep)$ is the product of the generalized full vertex algebra $\Om_F$.
We will rewrite $(a (-1,-1)_\Om \phi(\ep)) \otimes e_{\al-J-\bJ}$ by the product of full vertex algebra.
Namely, by
\begin{align}
\begin{split}
Y_F(a,\uz) \phi(\ep) &= Y_F(a \otimes e_\al,\uz) \phi(\ep)\otimes e_{-J-\bJ}\\
& = (\hY_\Om(a,\uz) \phi(\ep)) \otimes (\hY_\std(e_\al,\uz)e_{-J-\bJ})\\
&=z^{-(J,\al)_l}\z^{-(\bJ,\al)_r} (\hY_\Om(a,\uz) \phi(\ep)) \otimes (1+p\al z+\p\al\z+\dots) e_{\al-J-\bJ},
\end{split}
\label{eq_relate_full_gen}
\end{align}
we have $f_+(D_{1,1} a) = (-1)^{s_F+2s_F^2} a((J,\al)_l-1,(J,\al)_r-1)\phi(\ep)$,
which is the product of full vertex algebra $F$.
Hence, we have:
\begin{prop}\label{prop_shift_primary}
Let $F$ be a unitary $N=(2,2)$ full VOA
and $\ep \in C^{J+\bJ}(F)$ a volume form with $\langle \ep,\ep \rangle=1$.
Then, we have the linear isomorphism:
\begin{align*}
C_{h,\h}^\al(F)_{cc} \rightarrow C_{h,\h}^{\al-J-\bJ}(F)_{aa},\quad a \mapsto a((J,\al)_l-1,(J,\al)_r-1)\phi(\ep).
\end{align*}
\end{prop}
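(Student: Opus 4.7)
The strategy is to realize the map $a\mapsto a((J,\al)_l-1,(\bJ,\al)_r-1)\phi(\ep)$ as the composition
\[
C_{h,\h}^{\al}(F)_{cc}\xrightarrow{\,D_{1,1}\,}C_{h,\h}^{\al}(F(1,1))\xrightarrow{\,f_+\,}C_{h,\h}^{\al-J-\bJ}(F)_{aa},
\]
where $f_+:F(1,1)\to F(0,0)$ is the $F$-module isomorphism produced by Theorem \ref{thm_unitary_period} / Theorem \ref{thm_CY}; the hypothesis that $\ep\in C^{J+\bJ}(F)$ exists guarantees the periodicity of the spectral flow in the direction $(1,1)$, and by the proof of Theorem \ref{thm_chiral_period} the isomorphism can be written as $\Psi_{-1}\otimes\mathrm{id}$, where $\Psi_{-1}:\Om_F\to\Om_F$ is multiplication by $\phi(\ep)$ in the generalized full vertex algebra $\Om_F$ (after normalizing via $\langle\ep,\ep\rangle=1$).

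First, I would show that $f_+\circ D_{1,1}$ has the claimed target. By Lemma \ref{lem_S_Ramon}, $D_{1,1}a\in C_{h,\h}^{\al}(F(1,1))$, i.e.\ it satisfies the twisted primary conditions \eqref{eq_twist_primary} with $\la=\mu=1$. Using the explicit form of the Lie algebraic spectral flow $U_1$ (Definition \ref{def_spectral_Lie}), these conditions translate, as written out in \eqref{eq_shift_primary}, into the a-a primary conditions for $D_{1,1}a$ viewed inside the $F$-module $F(1,1)$ together with the shifted weights $(h,\h)$ and charge $\al-J-\bJ$. Since $f_+$ is an $F$-module isomorphism, it preserves the a-a primary conditions verbatim and preserves $L(0),\Ld(0),J(0),\bJ(0)$-eigenvalues, so $f_+(D_{1,1}a)\in C_{h,\h}^{\al-J-\bJ}(F)_{aa}$.

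Next, I would compute the explicit formula. Since $\ep\in \Om_F^{J+\bJ}$ is lattice-like with $\ep\cdot\phi(\ep)=\va$ by \eqref{eq_star_normalize}, the map $\Psi_{-1}$ sends $a\otimes e_\al\in F(1,1)$ to $\bigl(a\cdot_\Om\phi(\ep)\bigr)\otimes e_{\al-J-\bJ}$, where $\cdot_\Om$ denotes the $(-1,-1)$-product in $\Om_F$. The remaining task is to convert $a\cdot_\Om\phi(\ep)$ into a product in the original full vertex algebra. This is precisely equation \eqref{eq_relate_full_gen}: comparing the $z,\z$-expansions of $Y_F(a,\uz)\phi(\ep)$ and $\hY_\Om(a,\uz)\phi(\ep)$ on both sides, the factor $z^{-(J,\al)_l}\z^{-(\bJ,\al)_r}$ exactly identifies
\[
a\cdot_\Om\phi(\ep)\;=\;a\bigl((J,\al)_l-1,(\bJ,\al)_r-1\bigr)\phi(\ep).
\]
Up to the overall sign $(-1)^{s_F+2s_F^2}$ coming from the normalization of $\ep$ (which is irrelevant to the existence of a linear isomorphism), this gives the claimed formula.

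Finally, bijectivity is automatic: $D_{1,1}$ is a linear isomorphism between $C_{h,\h}^\al(F)_{cc}$ and $C_{h,\h}^\al(F(1,1))$ by Proposition \ref{prop_chiral_bijection}, and $f_+$ is an isomorphism of $F$-modules, so the composition is a linear bijection. The main bookkeeping obstacle is the conversion between the two product structures (generalized vs.\ full), together with tracking the shifts of conformal weight and charge through $U_1$; but this is exactly what the proof of Theorem \ref{thm_chiral_period} and the computation \eqref{eq_relate_full_gen} already make transparent.
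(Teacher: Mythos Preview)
Your proposal is correct and follows essentially the same approach as the paper: realize the map as $f_+\circ D_{1,1}$, use Lemma~\ref{lem_S_Ramon} and \eqref{eq_shift_primary} to land in the a-a primary subspace, implement $f_+$ via $\Psi_{-1}\otimes\id$ from the proof of Theorem~\ref{thm_chiral_period}, and convert the generalized product into the full product via \eqref{eq_relate_full_gen}. The only minor imprecision is that $\ep\cdot_\Om\phi(\ep)=(-1)^{s_F+2s_F^2}\va$ rather than $\va$, but you already flag this sign as irrelevant to the isomorphism statement.
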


Similarly, we have:
\begin{prop}\label{prop_shift_serre}
Let $F$ be a unitary $N=(2,2)$ full VOA
and $\eta \in C^{J}(F)$ a holomorphic volume form with $\langle \eta,\eta \rangle=1$.
Then, we have the linear isomorphism:
\begin{align*}
C_{h,\h}^\al(F)_{cc} \rightarrow C_{h,\h}^{\al-J}(F)_{ac},\quad a \mapsto a((J,\al)_l-1,-1)\phi(\eta).
\end{align*}
\end{prop}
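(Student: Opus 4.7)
The plan is to follow the strategy of Proposition \ref{prop_shift_primary} essentially verbatim, but with the pair $(1,1)$ of spectral flow parameters replaced by $(1,0)$, the B-volume form $\ep$ replaced by the holomorphic volume form $\eta$, and the target $C(F)_{aa}$ replaced by $C(F)_{ac}$ (since only the chiral sector gets twisted, only the chiral condition flips from ``c'' to ``a'').

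First, since $H^{J,0}(F,d_B) \neq 0$ by assumption, Proposition \ref{prop_period_cc} gives an $F$-module isomorphism $f_+ : F(1,0) \to F(0,0)$, constructed via $\Psi_{-1}\otimes \id_{G_{H,p}}$ where $\Psi_{-1}(\cdot) = (\cdot)(-1,-1)_{\Om}\,\phi(\eta)$ (the $(-1,-1)$-product being taken inside the generalized full vertex algebra $\Om_F$). That $\phi(\eta)$ is the correct inverse element follows because $\eta \in \Om_F^J$ is lattice-like (Lemma \ref{lem_primary_inequality}), hence $\phi(\eta) \in \Om_F^{-J}$ is also lattice-like by the defining compatibility of $\phi$ with $L(-1),\Ld(-1),J(-1),\bJ(-1)$ (and $\Ld(-1)\phi(\eta)=0$ by Lemma \ref{lem_chiral_volume}); the normalization $\eta(\tfrac{c}{3}-1,-1)\phi(\eta) = \pm \va$ comes from $\langle \eta,\eta\rangle = 1$ exactly as in Theorem \ref{thm_unitary_period}.

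Next, given $a \in C^\al_{h,\bar h}(F)_{cc}$, apply $D_{1,0}: F(0,0) \to F(1,0)$ to obtain $D_{1,0}a \in C^\al_{h,\bar h}(F(1,0))$ by Lemma \ref{lem_S_Ramon} (which only uses that $\la=1$, $\mu=0$). Unpacking the primary conditions \eqref{eq_twist_primary} through the Lie-theoretic spectral flow automorphism $U_1 \oplus U_0$, one checks that $D_{1,0}a$ satisfies
\begin{align*}
G_r^{\pm} D_{1,0}a = 0\ (r\geq \tfrac12,\ r\in \tfrac12+\Z),\qquad G^-_{-\frac12} D_{1,0}a = 0,\\
\bG_r^{\pm} D_{1,0}a = 0\ (r\geq \tfrac12),\qquad \bG^+_{-\frac12}D_{1,0}a=0,
\end{align*}
plus the usual annihilation by positive modes of $L,\Ld,J,\bJ$; in other words, pushing $D_{1,0}a$ through the $F$-module isomorphism $f_+$ lands it in $C^{\al-J}_{h,\bar h}(F)_{ac}$ with the prescribed weight shift.

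Finally, to read off the explicit formula, one computes $f_+(D_{1,0}a) = (\Psi_{-1}\otimes\id)(a\otimes e_{\al-J}) = \bigl(a(-1,-1)_\Om\,\phi(\eta)\bigr)\otimes e_{\al-J}$ (up to a sign fixed by the normalization, which one absorbs into the definition of $\eta$ just as $(-1)^{s_F+2s_F^2}$ is absorbed in Proposition \ref{prop_shift_primary}), and then uses the dictionary \eqref{eq_relate_full_gen} between the generalized and the ordinary full vertex operator products to convert $a(-1,-1)_\Om\,\phi(\eta)$ into $a((J,\al)_l-1,-1)\phi(\eta)$, where the index $-1$ in the anti-chiral slot is forced by $\mu=0$ (so no $\z$-shift appears in \eqref{eq_relate_full_gen}) and the index $(J,\al)_l-1$ arises from the chiral factor $z^{-(J,\al)_l}$. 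The inverse map is obtained analogously using $\eta$ in place of $\phi(\eta)$ and the isomorphism $f_+^{-1}$. The only substantive step is verifying the compatibility of $\phi$ with the generalized-vertex-algebra product structure on $\Om_F$ (so that $\phi(\eta)$ really serves as the inverse of $\eta$ in $A_{\Om_F}$); everything else is a mechanical transcription of the $(1,1)$-case.
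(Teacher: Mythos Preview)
Your proposal is correct and follows essentially the same approach as the paper: the paper's proof is a terse two-line computation showing $a((J,\al)_l-1,-1)\phi(\eta) = (a(-1,-1)_\Om\,\phi(\eta))\otimes e_{\al-J}$ via the dictionary \eqref{eq_relate_full_eta}, then invokes Proposition~\ref{prop_period_cc}; your write-up spells out the intermediate steps (the role of $D_{1,0}$, the a-c primary conditions, and the construction of $f_+$ via $\Psi_{-1}$) that the paper leaves implicit by analogy with Proposition~\ref{prop_shift_primary}.
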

\begin{proof}
Since
\begin{align}
\begin{split}
Y_F(a,\uz) \phi(\eta) &= Y_F(a \otimes e_\al,\uz) (\phi(\eta)\otimes e_{-J})\\
& = (\hY_\Om(a,\uz) \phi(\eta)) \otimes (\hY_\std(e_\al,\uz)e_{-J})\\
&=z^{-(J,\al)_l}(\hY_\Om(a,\uz) \phi(\eta)) \otimes (1+p\al z+\p\al\z+\dots) e_{\al-J},
\end{split}
\label{eq_relate_full_eta}
\end{align}
$a((J,\al)_l-1,-1)\phi(\eta) = (a(-1,-1)_\Om \phi(\eta)) \otimes e_{\al-J}$. Hence, the assertion follows from Proposition \ref{prop_period_cc}.
\end{proof}

\subsection{Construction of Poincar\'e duality and T-duality}
\label{sec_Hodge_Serre}
Let $F$ be an $N=(2,2)$ unitary full vertex operator superalgebra with the anti-linear involution $\phi$.
The purpose of this section is to show the following propositions:
\begin{prop}\label{prop_star_primary}
\begin{enumerate}
\item
Assume that there is a non-zero vector $\ep \in F_{\frac{c}{6},\frac{\bc}{6}}^{J+\bJ}$ with $\langle \ep,\ep \rangle =1$. 
Let $*:F \rightarrow F$ be the anti-linear map  (\eqref{eq_full_star_def}) given by
\begin{align*}
*a = (-1)^{s(\al)+2s(\al)^2}  \phi(a)((\al,J)_l-1,(\al,\bJ)_r-1)\ep,
\end{align*}
where $s(\al) = \frac{(\al,J)_l}{2}-  \frac{(\al,\bJ)_r}{2} \in \ft\Z$.
Then, for any $a\in C^\al(F)_{cc}$, $*a$ is in $C^{J+\bJ-\al}(F)_{cc}$ and $**a = (-1)^{|a|+|a||\ep|} a$.
In particular, this gives a linear isomorphism:
\begin{align*}
H^{p,q}(F,d_B) \rightarrow H^{d-p,d-q}(F,d_B)
\end{align*}
with respect to the grading in Definition \ref{def_hodge_number}.
\item
Assume that there is a non-zero vector $\eta \in F_{\frac{c}{6},0}^{J}$ with $\langle \eta,\eta \rangle =1$. 
Let $*_c:F \rightarrow F$ be the anti-linear map  (\eqref{eq_full_star_def}) given by
\begin{align*}
*_c a =\phi(a)((\al,J)_l-1,-1)\eta.
\end{align*}
Then, for any $a\in C^\al(F)_{cc}$, $*_c a$ is in $C^{J-\al}(F)_{ca}$ and
for any $b\in C^\be(F)_{ca}$, $*_c b$ is in $C^{J-\be}(F)_{cc}$. Moreover, $*_c*_c a = (-1)^{t_F+2t_F^2} a$ and $*_c *_c b= (-1)^{t_F+2t_F^2}b$ with $t_F = c/6\in\ft\Z$. In particular, this gives a linear isomorphism:
\begin{align*}
H^{p,q}(F,d_B) \rightarrow H^{d-p,q}(F,d_A)
\end{align*}
with respect to the grading in Definition \ref{def_hodge_number}.
\end{enumerate}
\end{prop}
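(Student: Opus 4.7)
The strategy is to realize both $*$ and $*_c$ as compositions of the anti-linear involution $\phi$ of Definition \ref{def_full_susy} with a ``multiplication by $\ep$'' (respectively ``by $\eta$'') map coming from the inverse spectral flow of Theorems \ref{thm_unitary_period} and \ref{thm_CY}, and then to reduce the double-star identity to the $\Om_F$-module inversion of Proposition \ref{prop_app_invertible}. Since $a\in C^\al(F)_{cc}$ and $\ep,\eta$ are all lattice-like by Lemma \ref{lem_primary_inequality}, the products in question can be transported into the generalized full vertex algebra $\Om_F$ where the calculation becomes transparent.

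\textbf{Step 1 (image of $*$ and $*_c$).} For Part 1, since $\phi(a)\in C^{-\al}(F)_{aa}\subset F^{-\al}_{(\al,J)_l/2,\,(\al,\bJ)_r/2}$ by Proposition \ref{prop_all_top_twist}, and $\ep\in F^{J+\bJ}_{c/6,\bc/6}$, a direct additivity-of-weights computation gives
\begin{align*}
\phi(a)((\al,J)_l-1,(\al,\bJ)_r-1)\ep\;\in\;F^{J+\bJ-\al}_{(J+\bJ-\al,J)_l/2,\,(J+\bJ-\al,\bJ)_r/2}.
\end{align*}
Proposition \ref{prop_NS_inequality} then upgrades this weight match directly to the conclusion $*a\in C^{J+\bJ-\al}(F)_{cc}$. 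Part 2 proceeds identically: for $a\in C^\al(F)_{cc}$ one computes that $\phi(a)((\al,J)_l-1,-1)\eta$ sits in the weight space $F^{J-\al}_{(J-\al,J)_l/2,\,(\al,\bJ)_r/2}$, which matches the weights of a $c$-$a$ primary of charge $J-\al$, and the analogue of Proposition \ref{prop_NS_inequality} for $c$-$a$ primaries forces membership in $C^{J-\al}(F)_{ca}$. The symmetric statement $*_c:C^\be(F)_{ca}\to C^{J-\be}(F)_{cc}$ is identical, using that $\phi$ exchanges $c$-$a$ and $a$-$c$ up to the sign of Remark \ref{rem_sign_strange}.

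\textbf{Step 2 (computation of $**a$ and $*_c*_c$).} Expanding the definition of $*$ twice and using $\phi^2=\id$ together with the fact that $\phi$ is anti-linear and the scalar $c(\al)=(-1)^{s(\al)+2s(\al)^2}$ is real, one obtains
\begin{align*}
**a\;=\;c(\al)\,c(J+\bJ-\al)\;[a((\al,J)_l-1,(\al,\bJ)_r-1)\phi(\ep)]\bigl((J+\bJ-\al,J)_l-1,(J+\bJ-\al,\bJ)_r-1\bigr)\ep.
\end{align*}
Translating each iterated $F$-product into $\Om_F$ via the lattice formula \eqref{eq_relate_full_gen} (applicable because $a,\ep,\phi(\ep)$ are all lattice-like) yields, up to a tracked sign, $(a\cdot_{\Om_F}\phi(\ep))\cdot_{\Om_F}\ep$. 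By Proposition \ref{prop_vac_right} this rebrackets to $a\cdot(\phi(\ep)\cdot\ep)$. Since $\ep\cdot\phi(\ep)\in A_{\Om_F}^0\subset F_{0,0}\cap\ker L(-1)\cap\ker\Ld(-1)=\C\va$, one has $\ep\cdot\phi(\ep)=A\va$ for a scalar $A$, which the normalization $\langle\ep,\ep\rangle=1$ and \eqref{eq_star_normalize} fix as $A=(-1)^{s_F+2s_F^2}$. The skew-symmetry of Lemma \ref{lem_vacuum_like} then gives $\phi(\ep)\cdot\ep=(-1)^{|\ep|+(J+\bJ,J+\bJ)_c}A\va=(-1)^{|\ep|+2s_F}A\va$. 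Substituting and simplifying the total exponent using $s(J+\bJ-\al)=s_F-s(\al)$ collapses to $(-1)^{|a|+|a||\ep|}$. Part 2 is identical with $\eta$ in place of $\ep$: Lemma \ref{lem_chiral_volume} ensures that the antiholomorphic side acts trivially on $\eta$ and $\phi(\eta)$, so only the chiral flow $F(\mp 1,0)\cong F(0)$ of Theorem \ref{thm_CY} is involved, the chiral analogue of \eqref{eq_star_normalize} gives $\eta\cdot\phi(\eta)=(-1)^{t_F+2t_F^2}\va$ with $t_F=c/6$, and the same $\Om_F$-telescoping produces $*_c*_c=(-1)^{t_F+2t_F^2}\id$ on both $C(F)_{cc}$ and $C(F)_{ca}$.

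\textbf{Main obstacle.} The substantive difficulty is the bookkeeping of signs, not the underlying algebra: one must simultaneously track the Koszul factor $(-1)^{|a||b|}$ and the monodromy factor $(-1)^{(\al,\be)_c}$ of the generalized skew-symmetry \eqref{eq_skew_general}/Lemma \ref{lem_vacuum_like}, the asymmetric $\phi(\btau^+)=-\btau^-$ sign of Remark \ref{rem_sign_strange}, and the definition-built-in normalization $(-1)^{s(\al)+2s(\al)^2}$. A secondary but routine point is to establish the ``reverse direction'' of Proposition \ref{prop_shift_primary}, i.e.\ that multiplication by $\ep$ in $\Om_F$ yields the isomorphism $C^\be(F)_{aa}\to C^{\be+J+\bJ}(F)_{cc}$ used implicitly above; this is obtained by running the spectral-flow argument of Section \ref{sec_Lie_flow} with the module isomorphism $F(-1,-1)\cong F$ of Theorem \ref{thm_unitary_period} in place of $F(1,1)\cong F$, and analogously for the chiral flow $F(-1,0)\cong F$ in Part 2.
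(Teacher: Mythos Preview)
Your proposal is correct and follows essentially the same route as the paper: both reduce the computation of $**a$ to the $\Om_F$-identity $(a\cdot_\Om\phi(\ep))\cdot_\Om\ep = a\cdot_\Om(\phi(\ep)\cdot_\Om\ep)$ via Lemma \ref{lem_star_calc}/Proposition \ref{prop_vac_right}, and both fix the scalar $\phi(\ep)\cdot_\Om\ep$ from the normalization $\langle\ep,\ep\rangle=1$ and \eqref{eq_star_normalize}. The one organizational difference is in Step 1: the paper proves $*a\in C^{J+\bJ-\al}(F)_{cc}$ by exhibiting $*$ (up to sign) as the composition \eqref{eq_comp_star} of the spectral-flow isomorphism of Proposition \ref{prop_shift_primary} with $\phi$, whereas you land in the correct weight space by direct arithmetic and invoke Proposition \ref{prop_NS_inequality}; both are valid, and your route is arguably more economical since bijectivity follows anyway from $**=\pm\id$ in Step 2.

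One small inaccuracy: your parenthetical ``applicable because $a,\ep,\phi(\ep)$ are all lattice-like'' overstates what is needed and what is true. A general $c$-$c$ primary $a\in C^\al(F)_{cc}$ is a Heisenberg highest-weight vector (hence lies in $\Om_F^\al$) but is \emph{not} lattice-like unless $\al=J+\bJ$; Lemma \ref{lem_star_calc} only requires the \emph{second} argument to be lattice-like, which is precisely what $\ep$ and $\phi(\ep)$ (resp.\ $\eta,\phi(\eta)$) satisfy by Lemma \ref{lem_primary_inequality}. This does not affect the argument, but you should correct the justification.
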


The following lemma follows from the definition:
\begin{lem}\label{lem_anti_flip}
The anti-linear involution $\phi$ gives an isomorphism from $C_{\frac{(\al,J)_l}{2},\frac{(\al,J)_r}{2}}^\al(F)_{cc}$ onto ${C_{\frac{(\al,J)_l}{2},\frac{(\al,J)_r}{2}}^{-\al}}(F)_{aa}$
and $C_{\frac{-(\al,J)_l}{2},\frac{(\al,J)_r}{2}}^\al(F)_{ac}$ onto ${C_{\frac{-(\al,J)_l}{2},\frac{(\al,J)_r}{2}}^{-\al}}(F)_{ca}$, respectively.
\end{lem}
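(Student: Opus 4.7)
The plan is to read off from Definition~\ref{def_full_susy} how the anti-linear involution $\phi$ intertwines with the superconformal modes, and then check that every defining condition of a c-c (resp.\ a-c) primary vector is sent by $\phi$ to the corresponding defining condition of an a-a (resp.\ c-a) primary vector of negated $H$-weight. Using $\phi(a(r,s)b)=\phi(a)(r,s)\phi(b)$ together with $\phi(\om)=\om$, $\phi(\omb)=\omb$, $\phi(J)=-J$, $\phi(\bJ)=-\bJ$, $\phi(\tau^\pm)=\tau^\mp$, $\phi(\btau^\pm)=-\btau^\mp$, one immediately gets
\begin{align*}
\phi L(n)&=L(n)\phi,\quad \phi\Ld(n)=\Ld(n)\phi,\\
\phi J(n)&=-J(n)\phi,\quad \phi\bJ(n)=-\bJ(n)\phi,\\
\phi G_r^\pm&=G_r^\mp\phi,\quad \phi\bG_r^\pm=-\bG_r^\mp\phi
\end{align*}
as operators on $F$, for all $n\in\Z$ and $r\in\ft+\Z$.

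Given these intertwining relations, for $v\in C_{\frac{(\al,J)_l}{2},\frac{(\al,\bJ)_r}{2}}^\al(F)_{cc}$ I would verify: the anti-linearity of $\phi$ preserves the $L(0),\Ld(0)$-eigenvalues, so the conformal weights $(\frac{(\al,J)_l}{2},\frac{(\al,\bJ)_r}{2})$ are unchanged; the sign flips on $J(0),\bJ(0)$ send the $H$-weight $\al$ to $-\al$; the lowest-weight annihilation conditions $L(n)v=\Ld(n)v=J(n)v=\bJ(n)v=0$ and $G_r^\pm v=\bG_r^\pm v=0$ ($n,r\geq 1$, $r\in\ft+\Z$) are preserved since $\phi$ maps each such operator to another operator of the same positive-mode type. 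The crucial step is the two remaining conditions: $G_{-\ft}^-\phi(v)=\phi(G_{-\ft}^+ v)=0$ and $\bG_{-\ft}^-\phi(v)=-\phi(\bG_{-\ft}^+ v)=0$, which are precisely the a-a primary conditions. Hence $\phi$ sends $C^\al(F)_{cc}$ into $C^{-\al}(F)_{aa}$ with the same conformal weights, and using $\phi^2=\id_F$ the same argument in reverse supplies the inverse anti-linear bijection.

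For the second half I would run the identical verification starting from $v\in C_{\frac{-(\al,J)_l}{2},\frac{(\al,\bJ)_r}{2}}^\al(F)_{ac}$, i.e.\ $G_{-\ft}^- v=0$ and $\bG_{-\ft}^+ v=0$. The conformal weights of $\phi(v)$ are $(\frac{-(\al,J)_l}{2},\frac{(\al,\bJ)_r}{2})=(\frac{(-\al,J)_l}{2},\frac{(-\al,\bJ)_r}{2})$ after the sign flip, which matches what c-a primary of weight $-\al$ requires by Lemma~\ref{lem_NS_inequality} and Proposition~\ref{prop_NS_inequality}; the defining annihilations become $G_{-\ft}^+\phi(v)=\phi(G_{-\ft}^- v)=0$ and $\bG_{-\ft}^-\phi(v)=-\phi(\bG_{-\ft}^+ v)=0$, which are the c-a primary conditions at weight $-\al$. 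Again $\phi^2=\id_F$ yields bijectivity.

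There is no real obstacle in this lemma; the only subtle point is the asymmetry $\phi(\tau^\pm)=\tau^\mp$ versus $\phi(\btau^\pm)=-\btau^\mp$ from Definition~\ref{def_full_susy} (cf.\ Remark~\ref{rem_sign_strange}), but since every appearance of $\bG_{-\ft}^\pm$ in the primary conditions is being applied to a vector it kills, the extra minus sign is absorbed into $0$. Thus the whole lemma reduces to bookkeeping with the tables of $\phi$-intertwiners displayed above.
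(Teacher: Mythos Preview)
Your proof is correct and is precisely the routine verification the paper has in mind; the paper itself gives no argument beyond ``follows from the definition'' (and had already recorded the same fact without proof in Proposition~\ref{prop_all_top_twist}). Your explicit tabulation of the intertwining relations $\phi G_r^\pm=G_r^\mp\phi$, $\phi\bG_r^\pm=-\bG_r^\mp\phi$, etc., and the observation that the stray sign from $\phi(\btau^\pm)=-\btau^\mp$ is harmless because it multiplies a zero vector, is exactly the intended bookkeeping.
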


Now, the proof of Proposition \ref{prop_star_primary} (1) can be seen from the fact that $*$-operator is the composition of the following isomorphisms:
\begin{align}
\begin{split}
C^\al&(F)_{cc}\\
&\downarrow {}_{f_+ \circ D_{1,1}\text{ in Proposition \ref{prop_shift_primary}}}\\
C^{\al-J-\bJ}&(F)_{aa}\\
&\downarrow {}_{\phi \text{ in Lemma \ref{lem_anti_flip}}}\\
C^{J+\bJ-\al}&(F)_{cc}.
\end{split}
\label{eq_comp_star}
\end{align}
Hence, it suffices to show that $**a=(-1)^{|a|+|a||\ep|}a$.

Since $s(J+\bJ-\al)=\frac{(J-\al,J)_l}{2} - \frac{(\bJ-\al,\bJ)_r}{2} =s_F - s(\al)$ with $s_F = \frac{c}{6}-\frac{\bc}{6}$,
\begin{align*}
(-1)^{s(\al)+2s(\al)^2}(-1)^{s(J+\bJ-\al)+2s(J+\bJ-\al)^2}&= (-1)^{s_F+2s_F^2+4s(\al)^2-4s(\al)s_F}=(-1)^{s_F+2s_F^2} (-1)^{|a|+|a||\ep|},
\end{align*}
where $|a|$ and $\ep$ are the parities.

\begin{align}
\begin{split}
**a &=(-1)^{s(\al)+2s(\al)^2}*(\phi(a)((\al,J)_l-1,(\al,\bJ)_r-1)\ep)\\
&=(-1)^{s_F+2s_F^2} (-1)^{|a|+|a||\ep|}\phi\left(\phi(a)((\al,J)_l-1,(\al,\bJ)_r-1)\ep)\right) ((J-\al,J)_l-1,(\bJ-\al,\bJ)_r-1)\ep)\\
&=(-1)^{s_F+2s_F^2} (-1)^{|a|+|a||\ep|}\left(a((\al,J)_l-1,(\al,\bJ)_r-1)\phi(\ep))\right) ((J-\al,J)_l-1,(\bJ-\al,\bJ)_r-1)\ep).
\end{split}
\label{eq_star_star}
\end{align}

\begin{lem}\label{lem_star_calc}
Let $a \in \Om_F^\al$ and $b \in \Om_F^\be$ such that $L(-1)v= p\be(-1)v$ and $\Ld(-1)v=\p\be(-1)v$.
Then, $a(-(\al,\be)_l-1,-(\al,\be)_r-1)v \in \Om_F^{\al+\be}$ and
\begin{align*}
a(-(\al,\be)_l-1,-(\al,\be)_r-1)v = (a(-1,-1)_{\Om}v) \otimes e_{\al+\be}
\end{align*}
as an element of the tensor product generalized full vertex algebra $\Om_F \otimes G_{H,p}$.
\end{lem}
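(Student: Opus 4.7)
The strategy is a direct computation inside the ambient generalized full vertex algebra $\Om_F \otimes G_{H,p}$, in which $F$ embeds diagonally as $\bigoplus_\al \Om_F^\al \otimes M_{H,p}(\al)$. Under this identification $a \in \Om_F^\al$ corresponds to $a \otimes e_\al$ and $v \in \Om_F^\be$ to $v \otimes e_\be$, and the full vertex operator of $F$ factorises as
\begin{align*}
Y_F(a,\uz)\,v \;=\; \bigl(\hY_\Om(a,\uz)\,v\bigr) \otimes \bigl(\hY_\std(e_\al,\uz)\,e_\be\bigr).
\end{align*}
I will analyse each tensor factor and then read off the relevant mode.

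Using the explicit formula \eqref{eq_standard_vertex}, together with the fact that the positive modes $p\al(n,-1), \p\al(-1,n)$ ($n \geq 1$) annihilate the Heisenberg highest weight vector $e_{\al+\be}$, the second factor simplifies to
\begin{align*}
\hY_\std(e_\al,\uz)\,e_\be \;=\; z^{(\al,\be)_l}\,\z^{(\al,\be)_r}\, E^-(\al,\uz)\,e_{\al+\be},
\end{align*}
which equals $z^{(\al,\be)_l}\z^{(\al,\be)_r}$ multiplied by a power series in only non-negative integer powers of $z,\z$, with constant term $e_{\al+\be}$. For the first factor, the hypothesis on $v$ is, by the remark following Definition \ref{def_lattice_like}, precisely the statement that $v \in A_{\Om_F}^\be$ in the sense of \eqref{eq_A_om}, so I may apply Lemma \ref{lem_vacuum_like} (with $e_\ga$ replaced by $v$): when $v \cdot a \neq 0$,
\begin{align*}
\hY_\Om(a,\uz)\,v \;=\; (-1)^{|a||v|+(\al,\be)_c}\; v \cdot \exp\!\bigl(L_\Om(-1)z + \Ld_\Om(-1)\z\bigr) a,
\end{align*}
which again is a power series with only non-negative integer powers of $z,\z$ and whose constant term is $a(-1,-1)_\Om v$. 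The degenerate case $v \cdot a = 0$ forces both sides of the claimed identity to vanish by the same skew-symmetry reasoning, so one may assume the above.

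Multiplying the two factors, $Y_F(a,\uz)v$ equals $z^{(\al,\be)_l}\z^{(\al,\be)_r}$ times a bivariate series in non-negative integer powers of $z,\z$; extracting the coefficient of $z^{(\al,\be)_l}\z^{(\al,\be)_r}$---which by definition is the mode $a(-(\al,\be)_l-1,-(\al,\be)_r-1)v$---is now simply a product of the constant terms of the two factors and yields $(a(-1,-1)_\Om v) \otimes e_{\al+\be}$, as required. The containment $a(-1,-1)_\Om v \in \Om_F^{\al+\be}$ is immediate from axiom (GFV6). The main delicacy I anticipate will be careful bookkeeping of Koszul signs in the tensor product of generalized full vertex superalgebras and verifying that the monodromy factors arising from $\hY_\Om$ (controlled by $(-,-)_c$ on $H$) cancel exactly against those from $\hY_\std$ (controlled by $-(-,-)_c$); this cancellation is precisely what makes the diagonal embedding $F \hookrightarrow \Om_F \otimes G_{H,p}$ into an untwisted full vertex operator superalgebra, and it is what ensures that no fractional powers of $z,\z,|z|^\R$ survive to obstruct the extraction of the stated mode.
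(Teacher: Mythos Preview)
Your proposal is correct and follows essentially the same approach as the paper: both use the tensor factorisation $Y_F = \hY_\Om \otimes \hY_\std$ on the diagonal embedding and exploit that the lattice-like vector $v$ has constant vertex operator in $\Om_F$. The only cosmetic difference is the order of operations---the paper first applies skew-symmetry at the level of $F$ (so that $v$ becomes the argument of $Y$, making $\hY_\Om(v,-\uz)=v\cdot_\Om$ manifest) and then factorises, whereas you factorise first and then invoke Lemma \ref{lem_vacuum_like} (which is precisely skew-symmetry inside $\Om_F$) on the $\Om_F$-factor; the resulting computations and the final identification $(-1)^{|a||v|+(\al,\be)_c}\,v\cdot_\Om a = a(-1,-1)_\Om v$ are identical.
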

\begin{proof}
Since
\begin{align*}
Y(a,\uz)v &= (-1)^{|a||v|}\exp(L(-1)z+\Ld(-1)\z)Y(v,-\uz)a\\
&=(-1)^{|a||v|}\exp(L(-1)z+\Ld(-1)\z)(Y_\Om(v,-\uz)\otimes Y_\std(e_\be,-\uz))(a \otimes e_\al)\\
&= (-1)^{|a||v|}(-1)^{(\al,\be)_c} z^{(\al,\be)_l}\z^{(\al,\be)_r}
\exp(L(-1)z+\Ld(-1)\z) (v\cdot_{\Om}a) \otimes E^-(\be,-\uz)e_\al,
\end{align*}
by Lemma \ref{lem_vacuum_like}, we have
\begin{align*}
a(-(\al,\be)_l-1,-(\al,\be)_r-1)v 
&= (-1)^{|a||v|}(-1)^{(\al,\be)_c} (v\cdot_{\Om}a) \otimes e_{\al+\be}\\
&= (a(-1,-1)_{\Om}v) \otimes e_{\al+\be}.
\end{align*}
\end{proof}

Hence, by Lemma \ref{lem_star_calc} and Proposition \ref{prop_vac_right},
\begin{align*}
\eqref{eq_star_star} &=(-1)^{s_F+2s_F^2} (-1)^{|a|+|a||\ep|}
\left(\left(a(-1,-1)_\Om \phi(\ep)\right)(-1,-1)\ep \right) \otimes e_{\al}\\
&=(-1)^{s_F+2s_F^2} (-1)^{|a|+|a||\ep|} a(-1,-1)_\Om \left( \phi(\ep)(-1,-1)\ep \right) \otimes e_{\al}\\
&=(-1)^{|a|+|a||\ep|}(a(-1,-1)_\Om \vac) \otimes e_{\al}=(-1)^{|a|+|a||\ep|}a.
\end{align*}
Hence, Proposition \ref{prop_star_primary} (1) holds.
We will show (2). Similarly to \eqref{eq_comp_star}, we have:
\begin{align}
\begin{split}
\begin{array}{ll}
C^\al(F)_{cc}& C^\be(F)_{ca} \\
\downarrow {}_{f_{1,0} \circ D_{1,0}\text{ in Proposition \ref{prop_shift_serre}}} & \downarrow {}_{f_{1,0} \circ D_{1,0}}\\
C^{\al-J}(F)_{ac} & C^{\be-J}(F)_{aa}\\
\downarrow {}_{\phi \text{ in Lemma \ref{lem_anti_flip}}} & \downarrow {}_{\phi} \\
C^{J-\al}(F)_{ca} & C^{J-\be}(F)_{cc}.
\end{array}
\end{split}
\label{eq_comp_star2}
\end{align}
Let $a \in C^\al(F)_{cc}$. 
Since $1= \langle \eta, \eta \rangle = (-1)^{t_F+2t_F^2} \langle \phi(\eta)(\frac{c}{3}-1,1)\eta  ,\vac \rangle$ with $t_F = \frac{c}{6}$,
by Lemma \ref{lem_star_calc}, similarly to \eqref{eq_star_star}, we have:
\begin{align*}
*_c*_c a &= \left((a(\al,J)_l-1,-1)\phi(\eta)\right)((J-\al,J)_l-1,-1)\eta \\
&= (a(-1,-1)_\Om (\phi(\eta)(-1,-1)_\Om \eta))\otimes e_{\al}\\
&= (-1)^{t_F+2t_F^2}a.
\end{align*}
Hence, (2) holds.

Now, we prove Theorem \ref{thm_full_frob} and Theorem \ref{thm_chiral_star}.
\begin{proof}[proof of Theorem \ref{thm_full_frob}]
All except (6) immediately follow from previous discussions. We will show (6). Let $a \in C^\al(F)$ and $b\in C^{J+\bJ-\al}(F)$.
By \eqref{eq_unitary_inv}, we have:
\begin{align*}
(a,b)_\ep &= \langle \ep, a (-1,-1) b \rangle
= (-1)^{(h_a-\h_a)+2(h_a-\h_a)^2}
\langle \phi(a)((\al,J)_l-1,(\al,\bJ)_r-1)\ep, b \rangle\\
&=\langle *a , b \rangle.
\end{align*}
Hence, the assertion holds.
\end{proof}

\begin{proof}[proof of Theorem \ref{thm_chiral_star}]
(1) follows from Proposition \ref{prop_period_cc}
an (2) from Proposition \ref{prop_star_primary}.
We claim that for any $r\in\R$
\begin{align*}
C_{\frac{cr}{6},0}^{rJ}(F)_{cc} = C_{\frac{cr}{6},0}^{rJ}(F)_{ca},
\end{align*}
which follows from Lemma \ref{lem_chiral_volume}.
Hence, $*_c$ gives an isomorphism:
\begin{align*}
H^{p,0}(F,d_B) = C_{\frac{p}{2},0}^{\frac{3p}{c}J}(F)_{cc} \cong  C_{\frac{c}{6}-\frac{p}{2},0}^{\frac{c-3p}{c}J}(F)_{ca}= C_{\frac{c}{6}-\frac{p}{2},0}^{\frac{c-3p}{c}J}(F)_{cc} = H^{d-p,0}(F,d_B).
\end{align*}
\end{proof}

\subsection{Holomorphic twist and Witten genus}
\label{sec_genus}
Let $F$ be a unitary $N=(2,2)$ full vertex operator superalgebra.
In this section we introduce an invariant called the Witten genus of $N=(2,2)$ SCFT.
Recall that the spectral flow $F(\ft)$ of $F$ is a module of the Ramond algebras $\g_\ft^{N=2}\oplus \g_\ft^{N=2}$, which is called the {\bf Ramond sector} in physics.
A good mathematical feature of the Ramond sector is the existence of the conformal weight $(0,0)$ operators $G_0^\pm$ and $\bG_0^\pm$.
Since $(G_0^+)^2 =0$, we have the following chain complex:
\begin{align}
\cdots \overset{\bG_0^+}{\rightarrow} F_{h,\h}^{\al-\bJ}\left(\ft\right) \overset{\bG_0^+}{\rightarrow} F_{h,\h}^\al\left(\ft\right) \overset{\bG_0^+}{\rightarrow} F_{h,\h}^{\al+\bJ}\left(\ft\right) \overset{\bG_0^+}{\rightarrow}\cdots.
\label{eq_chain_complex_G}
\end{align}
Note that by Corollary \ref{cor_Ramon_weight}, $F_{h,\h}^\al\left(\ft\right)=0$ unless $h \geq \frac{c}{24}$ and $\h \geq \frac{\bc}{24}$.
Since 
\begin{align*}
\bG_0^+ \bG_0^- v = [\bG_0^+, \bG_0^-]_+ v = (L(0) -\frac{\bc}{24})v
\end{align*}
for any $v \in \ker \bG_0^+$, we have:
\begin{lem}\label{lem_exact_G}
If $\h > \frac{\bc}{24}$, then \eqref{eq_chain_complex_G} is exact.
\end{lem}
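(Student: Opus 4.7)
The plan is a textbook chain-homotopy argument of the same kind used to prove Hodge decompositions or to compute BRST cohomology. The key input is Proposition \ref{prop_twist_susy}: on the Ramon sector $F\left(\ft\right)=F\left(\ft,\ft\right)$, the twisted $N=2$ superconformal algebras $\g_\ft^{N=2}\oplus \g_\ft^{N=2}$ act, so in particular the anti-holomorphic modes $\bG_0^\pm$ (which in $\g_\ft^{N=2}$ have index in $\ft+\ft+\Z=\Z$) are well-defined operators on $F\left(\ft\right)$. Specializing the commutation relation of Definition \ref{def_susy_Lie} at $r=s=0$ gives
\begin{align*}
[\bG_0^+,\bG_0^-]_+ \;=\; \Ld(0) + \frac{\bc}{6}\left(0-\frac{1}{4}\right) \;=\; \Ld(0) - \frac{\bc}{24},
\end{align*}
which is exactly the identity already recorded in the paragraph preceding the statement.

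Given this, the proof is a one-line contracting homotopy. Let $v\in F_{h,\h}^\al\left(\ft\right)$ satisfy $\bG_0^+ v=0$ with $\h>\bc/24$, and set
\begin{align*}
w \;=\; \frac{1}{\h-\bc/24}\,\bG_0^- v.
\end{align*}
Then
\begin{align*}
\bG_0^+ w \;=\; \frac{1}{\h-\bc/24}\Bigl([\bG_0^+,\bG_0^-]_+ v - \bG_0^-\bG_0^+ v\Bigr) \;=\; \frac{\Ld(0)-\bc/24}{\h-\bc/24}\,v \;=\; v,
\end{align*}
using $\Ld(0)v=\h v$ and $\bG_0^+ v=0$. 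Hence $v\in \im \bG_0^+$, which is exactness at position $\al$.

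It remains only to check that $w$ lies in the correct graded piece of the complex, but this is automatic: $\bG_0^-$ preserves the conformal bi-weight $(h,\h)$ and shifts the $H$-grading by $-\frac{3}{\bc}\bJ$, matching (up to the normalization of the labelling in \eqref{eq_chain_complex_G}) the arrow into $F_{h,\h}^{\al-\bJ}$. There is no genuine obstacle here: the argument is mechanical once Proposition \ref{prop_twist_susy} is in hand; the hypothesis $\h>\bc/24$ is used precisely to make the scalar $\h-\bc/24$ invertible, and Corollary \ref{cor_Ramon_weight} explains why this is the sharp threshold, since at $\h=\bc/24$ the Ramon ground states sit in $\ker(\Ld(0)-\bc/24)$ and are exactly where the cohomology of $\bG_0^+$ can be non-trivial.
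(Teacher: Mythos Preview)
Your argument is correct and is exactly the paper's own: the text immediately preceding the lemma records the identity $\bG_0^+\bG_0^- v=(\Ld(0)-\bc/24)v$ for $v\in\ker\bG_0^+$, which is precisely the contracting-homotopy step you wrote out in full. Your remark about the grading shift (by $\frac{3}{\bc}\bJ$ rather than $\bJ$) is also apt---the labelling in \eqref{eq_chain_complex_G} is schematic.
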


Recall that $F= \bigoplus_{\al \in H} \Om_F^\al \otimes M_{H,p}(\al)$ and
\begin{align*}
F^0 &= \bigoplus_{\substack{\al \in H \\ (\al,J)_l-(\al,\bJ)_r \in 2\Z}} \Om_F^\al \otimes M_{H,p}(\al)\\
F^1 &= \bigoplus_{\substack{\al \in H \\ (\al,J)_l-(\al,\bJ)_r \in 1+2\Z}} \Om_F^\al \otimes M_{H,p}(\al).
\end{align*}
Then, define $\Z_2$-grading on $F(\ft)$ as follows:
\begin{align*}
F\left(\ft\right)^0 &= \bigoplus_{\substack{\al \in H \\ (\al,J)_l-(\al,\bJ)_r \in 2\Z}} \Om_F^\al \otimes M_{H,p}(\al-\ft(J+\bJ))\\
F\left(\ft\right)^1 &= \bigoplus_{\substack{\al \in H \\ (\al,J)_l-(\al,\bJ)_r \in 1+2\Z}} \Om_F^\al \otimes M_{H,p}(\al-\ft(J+\bJ))
\end{align*}
and the linear map $(-1)^F:F\left(\ft\right) \rightarrow F\left(\ft\right)$ by the $\Z_2$-grading.
With respect to this $\Z_2$-grading, $\bG_0^+$ is an odd differential.
By Proposition \ref{prop_flow_dim}, we have:
\begin{align}
\dim F_{h+\frac{c}{24},\h+\frac{\bc}{24}}^{\al-\ft(J+\bJ)}\left(\ft\right)= \dim F_{h+\ft(\al,J)_l,\h+\ft(\al,\bJ)_r}^{\al}.
\label{eq_Ramon_dim2}
\end{align}

In physics, the Witten index is defined by 
\begin{align}
\begin{split}
&\ind(F)(y,q,\bar{q})\\
&=
\mathrm{tr}|_{F(\ft)} ((-1)^F y^{J(0)}q^{L(0)-\frac{c}{24}}\bar{q}^{\Ld(0)-\frac{\bc}{24}})\\
&=\sum_{h,\h\in\R,\al\in H} \dim (F^0)_{h,\h}^\al\left(\ft\right)y^{(\al,J)_l}q^{h-\frac{c}{24}}\bar{q}^{\h-\frac{\bc}{24}} - \sum_{h,\h\in\R,\al\in H} \dim (F^1)_{h,\h}^\al\left(\ft\right)y^{(\al,J)_l}q^{h-\frac{c}{24}}\bar{q}^{\h-\frac{\bc}{24}}.
\end{split}
\label{eq_ind_phys}
\end{align}
In order to see that \eqref{eq_ind_phys} is well-defined, i.e., each coefficient of $y$ and $q$ is a finite sum, it suffices to show that:
\begin{align*}
\sum_{s \in \R}
 \dim F_{h,\h}^{\al + s\bJ}\left(\ft\right) <\infty,
\end{align*}
which follows from the following lemma and \eqref{eq_Ramon_dim2}:
\begin{lem}\label{lem_ind_well}
Let $\al \in H$, $a,b \in \R_{\geq 0}$ and $v \in F_{a+\frac{(\al,J)_l}{2},b+ \frac{(\al,\bJ)_r}{2}}^\al$ be a non-zero vector. Then,
\begin{align*}
\frac{1}{a+1}(2a+\frac{c}{3}((a+2)^2-\frac{1}{4})) &\geq (\al,J)_l\\
\frac{1}{b+1}(2b+\frac{\bc}{3}((b+2)^2-\frac{1}{4})) &\geq (\al,\bJ)_r.
\end{align*}
In particular, for any $a,b \in \R_{\geq 0}$,
\begin{align*}
\sum_{\al \in H} \dim F_{a+\frac{(\al,J)_l}{2},b+\frac{(\al,\bJ)_r}{2}}^\al < \infty.
\end{align*}
\end{lem}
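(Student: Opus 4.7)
The plan is to bound $(\al, J)_l$ (and symmetrically $(\al, \bJ)_r$) in terms of the excess weight $a$ (resp.\ $b$) by exploiting unitarity through the superpartners of $v$, and then to deduce the finiteness claim from the resulting boundedness.

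For the charge bound, I would fix a half-integer $r \in \ft + \Z_{>0}$ and use the SUSY commutator in \eqref{eq_susy} together with Proposition \ref{prop_unitary_adjoint} to compute
\begin{align*}
\|G_{-r}^+ v\|^2 + \|G_r^- v\|^2 = \Bigl\langle v, [G_r^-, G_{-r}^+]_+\, v\Bigr\rangle = \Bigl(h - r(\al, J)_l + \tfrac{c}{6}(r^2 - \tfrac{1}{4})\Bigr)\|v\|^2.
\end{align*}
Non-negativity of the left-hand side, combined with $v \neq 0$ and the substitution $h = a + (\al, J)_l/2$, then yields
\begin{align*}
(\al, J)_l \leq \frac{2a + \frac{c}{3}(r^2 - \frac{1}{4})}{2r - 1}
\end{align*}
for every half-integer $r > \ft$. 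Choosing $r$ suitably (for instance the smallest half-integer with $r - \ft \geq a + 1$) and simplifying should produce the claimed upper bound $\tfrac{1}{a+1}\bigl(2a + \tfrac{c}{3}((a+2)^2 - \tfrac{1}{4})\bigr)$. The anti-holomorphic bound on $(\al, \bJ)_r$ will follow by the identical argument applied to $\bG_r^\pm$.

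For the finiteness assertion, the same computation with $G_{-r}^-$ and $G_r^+$—using $[G_r^+, G_{-r}^-]_+ = L(0) + rJ(0) + \tfrac{c}{6}(r^2 - \tfrac{1}{4})$—will give a matching lower bound on $(\al, J)_l$, so $|(\al, J)_l|$ (and analogously $|(\al, \bJ)_r|$) is bounded by a constant depending only on $a, b, c, \bc$. Since $\al \in H_l \oplus H_r$ is determined by the pair $((\al, J)_l, (\al, \bJ)_r)$ (nondegeneracy of the bilinear forms), only $\al$ in a bounded rectangle can contribute. Consequently the total weights $h + \h = a + b + \ft((\al, J)_l + (\al, \bJ)_r)$ of contributing pieces all lie below some explicit constant $M = M(a, b, c, \bc)$, so
\begin{align*}
\bigoplus_{\al \in H} F_{a + (\al, J)_l/2,\, b + (\al, \bJ)_r/2}^\al \hookrightarrow \bigoplus_{h + \h \leq M} F_{h, \h},
\end{align*}
and the right-hand side is finite-dimensional by (FO2). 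The main obstacle will be purely cosmetic: verifying that a specific half-integer $r$ yields the precise rational function $(2a + \tfrac{c}{3}((a+2)^2 - \tfrac{1}{4}))/(a+1)$ rather than a comparable one; the conceptual content—unitarity plus SUSY bounds the charge by a polynomial in the conformal weight, forcing finite-dimensionality in any fixed weight strip—is the standard BPS-type argument already used in Lemma \ref{lem_NS_inequality} and Lemma \ref{lem_primary_inequality}.
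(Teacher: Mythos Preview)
Your approach is essentially the paper's: both compute $\langle v,[G_{-r}^+,G_r^-]_+v\rangle$ via unitarity to bound $(\al,J)_l$ by a rational function of $r$ and then specialize $r$; the only cosmetic difference is that the paper first observes $G_r^- v=0$ for $r>a+1$ (via Lemma~\ref{lem_NS_inequality}) before invoking $\|G_{-r}^+v\|^2\geq 0$, whereas you use the sum $\|G_{-r}^+v\|^2+\|G_r^-v\|^2\geq 0$ directly, which yields the identical inequality. Your remark that the exact constant is cosmetic is apt, and your finiteness deduction via (FO2) is correct (note also that the lower bound $(\al,J)_l\geq -a$ already follows from Lemma~\ref{lem_NS_inequality}, so the separate $G_{-r}^-$ computation is optional).
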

\begin{proof}
$v \in F_{a+\frac{(\al,J)_l}{2},b+ \frac{(\al,\bJ)_r}{2}}^\al$ with $a,b \in \R_{\geq 0}$. Let $r \in \ft+\Z$ with $r > a+1$.
Then, $G_{r}^- v \in F_{a-r+\frac{(\al,J)_l}{2},b+ \frac{(\al,\bJ)_r}{2}}^{\al-\frac{3}{c}J}=0$.
Hence, we have:
\begin{align*}
0 &\leq \langle G_{-r}^+ v, G_{-r}^+ v \rangle=
\langle v, G_r^-G_{-r}^+ v \rangle \\
&= \langle v, [G_{-r}^+,G_r^-]_+ v \rangle =
 \langle v, (L(0)-rJ(0)+\frac{c}{6}(r^2-\frac{1}{4})) v \rangle\\
 &=(a+\frac{(\al,J)_l}{2} - \frac{r(\al,J)}{2}+\frac{c}{6}(r^2-\frac{1}{4})) \langle v,v\rangle
\end{align*}
and $\frac{1}{r-1}(2a+\frac{c}{3}(r^2-\frac{1}{4})) \geq (\al,J)_l$. Hence, the assertion holds.
\end{proof}
Hence, \eqref{eq_ind_phys} is well-defined and by Lemma \ref{lem_exact_G}, the terms in \eqref{eq_ind_phys} should all cancel each other except for $\h=\frac{\bc}{24}$. Thus we obtain the following definition:
\begin{dfn}\label{def_index}
The Witten index of a unitary $N=(2,2)$ full vertex operator superalgebra $F$ is the formal power series in $y,q$ given by
\begin{align*}
\ind_F(y,q)
&=\sum_{h\in\R,\al\in H} \dim (F^0)_{h,\frac{\bc}{24}}^\al\left(\ft\right)y^{(\al,J)_l}q^{h-\frac{c}{24}} - \sum_{h \in\R,\al\in H} \dim (F^1)_{h,\frac{\bc}{24}}^\al\left(\ft\right)y^{(\al,J)_l}q^{h-\frac{c}{24}}.
\end{align*} 
\end{dfn}
Note that by \eqref{eq_Ramon_dim2}, we have:
\begin{align}
\begin{split}
\ind_F(y,q)&= \sum_{h\in \R,\al \in H} (-1)^{(\al,J)_l-(\al,\bJ)_r} \dim F_{h+\frac{c}{24},\frac{\bc}{24}}^{\al-\ft(J+\bJ)}\left(\ft\right) y^{(J,\al)_l-\frac{c}{6}} q^{h}\\
&= y^{-\frac{c}{6}} \sum_{h\geq 0, \al \in H} (-1)^{(\al,J)_l-(\al,\bJ)_r} \dim F_{h+\frac{(\al,J)_l}{2},\frac{(\al,\bJ)_r}{2}}^\al y^{(J,\al)_l} q^{h}.
\end{split} \label{eq_ind_q_only}
\end{align}
By Lemma \ref{lem_primary_inequality}, $\al$ in the sum \eqref{eq_ind_q_only} runs through $\frac{\bc}{3} \geq (\al,\bJ)_r \geq 0$. 
By (FO1) and Definition \ref{def_N22} (3), $h$ runs through $\Z_{\geq 0}$ in \eqref{eq_ind_q_only}.
Thus, we have
\begin{align*}
\ind_F(y,q) \in \Z[[q]][y^\R].
\end{align*}
Denote the constant term for $q$ in the formal power series $\ind(F)(y,q)$ by $\ind(F)(y,0)$.

The following proposition is obvious:
\begin{prop}\label{prop_Witten_Hirz}
The $q \to 0$ limit of the Witten index satisfies
\begin{align*}
\ind_F(y,0) =y^{-\frac{c}{6}}\sum_{\al \in H} (-1)^{(\al,J)_l-(\al,\bJ)_r} y^{(J,\al)} C^{\al}(F)_{cc}=y^{-\frac{c}{6}}\chi_y^B(F),
\end{align*}
where $\chi_y^B(F)$ is the Hirzebruch genus of $F$ (Definition \ref{def_hodge_number}).
\end{prop}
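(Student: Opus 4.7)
The plan is to extract the $q \to 0$ limit directly from the formula \eqref{eq_ind_q_only}, which expresses $\ind_F(y,q)$ as a sum over $h \in \Z_{\geq 0}$ and $\al \in H$, and then identify the $h = 0$ term with the Hirzebruch genus via the primary-vector realization of the cohomology ring established in Section~\ref{sec_inequality}.

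First I would observe that since the formal variable $q$ appears as $q^h$ with $h \in \Z_{\geq 0}$ in \eqref{eq_ind_q_only}, taking the constant term for $q$ kills all terms except $h=0$, yielding
\begin{align*}
\ind_F(y,0) = y^{-\frac{c}{6}} \sum_{\al \in H} (-1)^{(\al,J)_l - (\al,\bJ)_r} \dim F_{\frac{(\al,J)_l}{2}, \frac{(\al,\bJ)_r}{2}}^\al\, y^{(\al,J)_l}.
\end{align*}
Next, by Proposition~\ref{prop_NS_inequality}, any non-zero vector in $F_{\frac{(\al,J)_l}{2}, \frac{(\al,\bJ)_r}{2}}^\al$ is c-c primary, so the inclusion $C^\al(F)_{cc} \hookrightarrow F_{\frac{(\al,J)_l}{2}, \frac{(\al,\bJ)_r}{2}}^\al$ is an equality, and thus $\dim F_{\frac{(\al,J)_l}{2}, \frac{(\al,\bJ)_r}{2}}^\al = \dim C^\al(F)_{cc}$.

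Finally, I would reindex the sum by writing $\al = \tfrac{3p}{c}J + \tfrac{3q}{\bc}\bJ$, so that $(\al,J)_l = p$ and $(\al,\bJ)_r = q$, and by Proposition~\ref{prop_cohomology_primary} combined with Definition~\ref{def_hodge_number}, $\dim C^\al(F)_{cc} = \dim H^{p,q}(F,d_B) = h_{p,q}^B(F)$. The parity $(-1)^{(\al,J)_l - (\al,\bJ)_r} = (-1)^{p-q}$ matches exactly the sign in $\chi_y^B(F)$, giving
\begin{align*}
\ind_F(y,0) = y^{-\frac{c}{6}} \sum_{p,q} (-1)^{p-q} h_{p,q}^B(F)\, y^p = y^{-\frac{c}{6}} \chi_y^B(F).
\end{align*}

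There is no real obstacle here: the statement is essentially a bookkeeping consequence of the Ramon/NS identification via spectral flow (already encoded in passing from the definition of $\ind_F$ to \eqref{eq_ind_q_only} using Proposition~\ref{prop_flow_dim}) together with the primary-vector description of $H(F,d_B)$. The only point requiring a brief justification is that the sum over $\al$ in the $h=0$ slice is actually a finite sum — this follows from Lemma~\ref{lem_primary_inequality}, which bounds $(\al,J)_l \leq c/3$ and $(\al,\bJ)_r \leq \bc/3$ on $C^\al(F)_{cc}$, so only finitely many $\al \in H$ contribute.
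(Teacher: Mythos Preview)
Your proposal is correct and matches the paper's approach: the paper simply declares the proposition ``obvious'' immediately after deriving \eqref{eq_ind_q_only}, so you have supplied exactly the bookkeeping the paper omits (set $h=0$, invoke Proposition~\ref{prop_NS_inequality} to identify $F_{\frac{(\al,J)_l}{2},\frac{(\al,\bJ)_r}{2}}^\al$ with $C^\al(F)_{cc}$, and match with Definition~\ref{def_hodge_number}). Your remark on finiteness via Lemma~\ref{lem_primary_inequality} is also in line with the paper's observation just before the proposition.
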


For any $n \in \ft\Z_{\geq 0}$, set
\begin{align*}
V(F)_n^\al &= F_{n,\frac{(\al,\bJ)_r}{2}}^\al\\
V(F)_n &= \bigoplus_{\al \in H} V(F)_n^\al.
\end{align*}
We will show that $V(F)=\bigoplus_{n \in \ft\Z_{\geq 0}}V(F)_n$ is an $N=2$ vertex operator algebra which is isomorphic to the \textbf{holomorphic twist} of $F$ (also called \textbf{half-twist} by \cite{Kap05}).
Recall that $\bG_{-\ft}^+=\btau^+(0)$ is a superderivation \eqref{eq_super_derivation_d}, which is a holomorphic twist in the sense of \cite{ES} (see Section \ref{sec_class_twist}).

Since $\frac{d}{d\z}Y(a,\uz)b \in \im \btau^+(0)$ holds by \eqref{eq_ker_dz} for any $a,b\in \ker \btau^+(0)$, we have an induced linear map:
\begin{align*}
\bar{Y}(\bullet,z):H(F,\btau^+(0)) \rightarrow \End H(F,\btau^+(0))[[z^\pm]],\quad a \mapsto \bar{Y}(a,z) =\sum_{n\in \Z}a(n,-1)z^{-n-1},
\end{align*}
where $H(F,\btau^+(0)) = \ker \btau^+(0)/\im \btau^+(0)$.
Since $\om,\tau,\pm,J \in \ker \btau^+(0)$, as in Proposition \ref{prop_constant_chiral} and the proof of \cite[Proposition 3.12]{M1}, we have:
\begin{lem}
\label{lem_hol_twist}
$(H(F,\btau^+(0)),\bar{Y}(\bullet,z))$ is an $N=2$ vertex operator superalgebra.
\end{lem}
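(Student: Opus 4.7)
The plan is to follow the standard pattern by which a differential on a vertex algebra produces a cohomology vertex algebra, but adapted to the full-VOA setting where one must also verify loss of $\bar{z}$-dependence. First, I would establish that the induced operator $\bar{Y}(\bullet,z)$ on $H(F,\btau^+(0))$ really does land in $\End(H(F,\btau^+(0)))[[z^\pm]]$, with no $\bar{z}$-dependence. The key identity, exactly parallel to \eqref{eq_ker_dz}, is that for any $a,b \in \ker \btau^+(0)$,
\begin{align*}
\tfrac{d}{d\bar{z}} Y(a,\uz)b
 = [\Ld(-1),Y(a,\uz)]b
 = [[\bG_{-\ft}^+,\bG_{-\ft}^-]_+,Y(a,\uz)]b
 \in (\im \btau^+(0))((z,\bar{z},|z|^{\R})),
\end{align*}
using that $\btau^+(0)$ is an odd superderivation and $\btau^+(0)a = 0$. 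Hence $\frac{d}{d\bar z}\bar Y(\bullet,z)=0$ on cohomology, and the formula $\bar Y(a,z)=\sum_{n}a(n,-1)z^{-n-1}$ is unambiguous.

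Next I would verify the vertex superalgebra axioms (V1)--(V5) of Definition \ref{def_vertex_superalgebra}. Axioms (V1)--(V3) descend tautologically from the full-vertex counterparts in $F$, and (V5) from \eqref{eq_L0_cov} together with $\om \in \ker\btau^+(0)$. For the Jacobi-locality axiom (V4), I would mimic the strategy of Proposition \ref{prop_constant_chiral}: for $a,b,c \in \ker\btau^+(0)$ and $u \in F^\vee$ with $u \circ \btau^+(0)=0$, the real analytic matrix coefficient $\mu(z_1,z_2) \in C^\om(Y_2(\C))$ provided by (FV3) has all $\bar z_i$-derivatives lying in the image of $u\circ\btau^+(0)$ and hence vanishing on the cohomology representatives; this forces $\mu(z_1,z_2) \in \C[z_1^\pm,z_2^\pm,(z_1-z_2)^\pm]$, which is exactly the shape required in (V4).

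Then I would install the $N=2$ structure. The vectors $\om,\tau^\pm,J$ all lie in $\ker \Ld(-1)$, so by Proposition \ref{prop_translation}(7) they commute with every $\bG^\pm_r$; in particular they are annihilated by $\btau^+(0)$ and descend to $H(F,\btau^+(0))$. Their OPEs with each other involve only holomorphic generators, so the relations \eqref{eq_susy_ope} of Definition \ref{def_N2} are preserved verbatim, yielding an $N=2$ vertex operator superalgebra of central charge $c$ on the cohomology.

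The step I expect to require the most care is verifying the grading conditions (4) and (5) of Definition \ref{def_conformal} on $H(F,\btau^+(0))$. Here I would use the analog of Lemma \ref{lem_full_concentrate}: for $v \in F_{h,\bar h}^\al$ with $\btau^+(0)v=0$ the identity $[\btau^+(0),\bG_{\ft}^-]_+ = \Ld(0)-\ft \bJ(0)$ gives
\begin{align*}
v = \tfrac{1}{\bar h - \ft(\al,\bJ)_r}\, \btau^+(0)\bG_{\ft}^- v
\end{align*}
whenever $\bar h \neq \ft(\al,\bJ)_r$; hence every cohomology class has a representative in $V(F)_h = \bigoplus_{\al\in H} F_{h,\ft(\al,\bJ)_r}^\al$. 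Finite-dimensionality of each $V(F)_h$, and boundedness from below, then follow from the bound $h \geq \ft(\al,J)_l$ and $(\al,J)_l \leq \frac{c}{3}(h+O(1))$ of Lemma \ref{lem_ind_well} together with (FO2) and (FO3). The subtlety to watch is that $L(0)$-eigenvalues on $H(F,\btau^+(0))$ may only lie in $\ft\Z$ after the natural identification with $V(F)$, so I would identify the resulting $\ft\Z$-grading with the one required by Definition \ref{def_conformal}, which completes the verification that $(H(F,\btau^+(0)),\bar Y,\om,\tau^\pm,J)$ is an $N=2$ vertex operator superalgebra.
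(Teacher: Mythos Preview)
Your proposal is correct and follows the same approach as the paper's one-line proof, which simply refers to Proposition \ref{prop_constant_chiral} and \cite[Proposition 3.12]{M1}; you have essentially unpacked what those references contain. One small slip: for the finite-dimensionality of $V(F)_h$ you want the anti-chiral bound on $(\al,\bJ)_r$ from Lemma \ref{lem_ind_well} (or Lemma \ref{lem_primary_inequality}), since it is $\h=(\al,\bJ)_r/2$ that varies while $h$ is fixed, not the $J$-direction bound you quote.
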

Since $V(F) \subset \ker \btau^+(0)$ by Lemma \ref{lem_NS_inequality}, we have the induced linear map:
\begin{align*}
\Psi: V(F) \rightarrow H(F,\btau^+(0)),
\end{align*}
which is injective.
By the proof of Lemma \ref{lem_chiral_concentrate}, $\Psi$ is surjective. Hence, $\Psi$ is a linear isomorphism.
For $a\in V(F)_n^\al$ and $b \in V(F)_m^\be$,  define products for $k \in \Z$ by
\begin{align}
a(k)b = a(k,-1)b \in F_{n+m-k-1,\frac{(\al+\be,\bJ)_r}{2}}^{\al+\be} = V(F)_{n+m-k-1}^{\al+\be}.
\label{eq_vertex_pro_twist}
\end{align}
Then, $\Psi$ preserves the product, and thus, $V(F)$ inherits a $\Z$-graded vertex algebra structure by \eqref{eq_vertex_pro_twist}.
Hence, we have:
\begin{prop}\label{prop_holomorphic_twist}
$\Psi: V(F) \rightarrow H(F,\btau^+(0))$ is an isomorphism of $N=2$ vertex operator superalgebras, and
\begin{align*}
\ind_F(y,q) 
&= y^{-\frac{c}{6}}\sum_{h\geq 0,\al \in H}(-1)^{(\al,J)_l-(\al,J)_r} \dim V(F)_{h+\frac{(\al,J)_l}{2}}^\al y^{(\al,J)_l}q^{h}\\
&=\mathrm{tr}|_{V(F)} (-1)^F y^{J(0)-\frac{c}{6}}q^{L(0)-\ft J(0)}.
\end{align*}
\end{prop}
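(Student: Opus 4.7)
The statement has two parts, and most of the scaffolding is already in place in the text just above the proposition. The plan is to (i) promote the linear isomorphism $\Psi$ to an $N=2$ VOA isomorphism by checking that it intertwines the vertex operators and preserves the $N=2$ generators $\om,\tau^\pm,J$, and (ii) rewrite the formula \eqref{eq_ind_q_only} in terms of $V(F)$.

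For (i), I would first observe that $\om,\tau^\pm,J$ all lie in $F^{\al}_{*,0}$ and hence in $V(F)$; indeed $\om \in F_{2,0}^0$, $J \in F_{1,0}^0$, and by \eqref{eq_wt_susy_current} $\tau^\pm \in F_{3/2,0}^{\pm 3J/c}$, and each has $\bJ(0)$-weight matching $(\al,\bJ)_r/2 = 0$. Next I would verify that the product \eqref{eq_vertex_pro_twist} actually lands in $V(F)$: for $a\in V(F)_n^\al$ and $b \in V(F)_m^\be$, the conformal weights give $a(k,-1)b \in F_{n+m-k-1,(\al+\be,\bJ)_r/2}^{\al+\be}$, which is $V(F)_{n+m-k-1}^{\al+\be}$ by definition. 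Since $V(F) \subset \ker \btau^+(0)$ by Lemma \ref{lem_NS_inequality}, and since $\btau^+(0)$ is an even superderivation, the vertex algebra axioms for $(V(F),Y)$ follow from those of $\ker \btau^+(0)$, while $\Psi$ is tautologically multiplicative because the induced product on $H(F,\btau^+(0))$ arises from precisely the $a(n,-1)$-operators surviving from $\ker \btau^+(0)$. The fact that $\Psi$ is a linear isomorphism is already established in the excerpt, so the $N=2$ VOA structure transports across $\Psi$.

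For (ii), I would start from \eqref{eq_ind_q_only}:
\begin{align*}
\ind_F(y,q) = y^{-\frac{c}{6}}\sum_{h\geq 0, \al \in H}(-1)^{(\al,J)_l-(\al,\bJ)_r}\dim F_{h+\frac{(\al,J)_l}{2},\frac{(\al,\bJ)_r}{2}}^{\al}\, y^{(\al,J)_l} q^{h},
\end{align*}
and simply substitute the definition $V(F)_n^\al = F_{n,(\al,\bJ)_r/2}^\al$ with $n = h + (\al,J)_l/2$. This yields the first displayed expression. For the trace reformulation, I would use that on $V(F)_n^\al$ the operator $L(0)$ acts by $n$ and $J(0)$ acts by $(\al,J)_l$, while the fermion number on $F^\al$ equals $(-1)^{(\al,J)_l - (\al,\bJ)_r}$ by condition (3) of Definition \ref{def_N22}, i.e.\ \eqref{eq_fermi_integer}. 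Then the substitution $n = h + (\al,J)_l/2$ turns $q^{L(0) - J(0)/2}$ into $q^h$ and $y^{J(0) - c/6}$ into $y^{(\al,J)_l - c/6}$, matching the sum term by term.

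I do not expect a serious obstacle. The only point needing a small amount of care is that the products defined by $a(k)b = a(k,-1)b$ on $V(F)$ really satisfy the vertex algebra axioms (in particular the Jacobi/Borcherds identity involving only integer modes). This follows from Proposition \ref{prop_translation}(6) applied to vectors $v \in \ker \Ld(-1)$, together with the fact that $V(F) \subset \ker \btau^+(0)$ ensures that all modes $a(n,-1)$ for $n \in \Z$ preserve the subspaces where $\h = (\al,\bJ)_r/2$ (the lower bound provided by Lemma \ref{lem_NS_inequality} forces all $\z$-powers to vanish), so no information is lost in passing from the full vertex operator $Y(a,\uz)$ to its constant-in-$\z$ part $\bar{Y}(a,z)$.
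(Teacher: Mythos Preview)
Your main argument is correct and matches the paper's approach exactly: the paper states this proposition with ``Hence, we have:'' because the preceding text already does all the work (Lemma~\ref{lem_hol_twist} gives the $N=2$ VOA structure on $H(F,\btau^+(0))$, the linear isomorphism $\Psi$ is established, and the products $a(k)b=a(k,-1)b$ on $V(F)$ match the induced products on the quotient), and your part (ii) is a straightforward substitution from \eqref{eq_ind_q_only}.

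Two small corrections. First, $\btau^+(0)$ is an \emph{odd} superderivation, not even, since $\btau^+\in F_{0,3/2}$. Second, and more substantively, your final paragraph is both unnecessary and slightly off. Proposition~\ref{prop_translation}(6) requires $\Ld(-1)a=0$, which elements of $V(F)^\al$ with $(\al,\bJ)_r>0$ do not satisfy. And the claim that ``all $\z$-powers vanish'' is not literally true in $F$: for $a,b\in V(F)$ the inequality in Lemma~\ref{lem_NS_inequality} only kills the terms $a(r,s)b$ with $s>-1$ (negative $\z$-powers), while terms with $s<-1$ (positive $\z$-powers) can survive; they simply do not land in $V(F)$. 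The point is that these extra terms lie in $\im\btau^+(0)$ by \eqref{eq_ker_dz}, so they vanish in the \emph{quotient} $H(F,\btau^+(0))$---but $V(F)$ is a subspace of $F$, not a quotient, so you cannot argue directly on $V(F)$. This does not damage your proof, since the transport argument via $\Psi$ in your main body of (i) is already complete and does not rely on this paragraph.
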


Since $H(F,\btau^+(0))$ is an $N=2$ VOA, we can consider the  twist of it by $G_{-\ft}^+=\tau^+(0)$,
which is a unital supercommutative algebra by $(-1)$-th product by Proposition \ref{prop_chiral_ring}.
The following proposition easily follows from the results in Section \ref{sec_inequality}:
\begin{prop}\label{prop_double}
Let $F$ be a unitary $N=(2,2)$ full VOA. Then,
\begin{align*}
H(F,d_B) &\cong H(H(F,\btau^+(0)),\tau^+(0)),\\
H(F,d_A) &\cong H(H(F,\btau^-(0)),\tau^+(0))
\end{align*}
as $\R^2$-graded supercommutative algebras.
\end{prop}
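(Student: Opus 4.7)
The plan is to show that in both cases, the two sides are canonically identified with the $\R^2$-graded supercommutative algebra $C(F)_{cc}$ of c-c primary vectors, with the multiplication inherited from the $(-1,-1)$-th product of $F$.

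Consider first the $B$-twist. By Proposition \ref{prop_cohomology_primary} together with Proposition \ref{prop_isom_ring}, the left-hand side $H(F,d_B)$ is canonically identified, as an $\R^2$-graded supercommutative algebra, with $C(F)_{cc} = \bigoplus_{\al\in H} C^\al(F)_{cc}$. For the right-hand side, I first invoke Proposition \ref{prop_holomorphic_twist} to identify $H(F,\btau^+(0))$ with the $N=2$ vertex operator superalgebra $V(F) = \bigoplus_{n,\al} F_{n,\frac{(\al,\bJ)_r}{2}}^\al$, whose products are the restrictions of those of $F$ via \eqref{eq_vertex_pro_twist}. Next, I apply Lemma \ref{lem_chiral_concentrate} to the $N=2$ VOA $V(F)$ with $Q = \tau^+(0) = G_{-\ft}^+$: any $Q$-closed vector in $V(F)_n^\al$ with $n \neq \frac{(\al,J)_l}{2}$ is $Q$-exact, so cohomology representatives live in $V(F)_{\frac{(\al,J)_l}{2}}^\al = F_{\frac{(\al,J)_l}{2},\frac{(\al,\bJ)_r}{2}}^\al$. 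By Proposition \ref{prop_NS_inequality}, this subspace is precisely $C^\al(F)_{cc}$, and the inherited $(-1)$-th product on cohomology agrees with the $(-1,-1)$-th product from Proposition \ref{prop_full_ring}.

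For the $A$-twist, the cleanest route is via the mirror algebra $\tilde{F}$ of Definition \ref{def_mirror}. Since the mirror fixes $\tau^\pm$ but swaps $\btau^+ \leftrightarrow \btau^-$, we have $d_B^{\tilde{F}} = \tau^+(0)+\btau^-(0) = d_A^F$ and $\btau^+_{\tilde{F}}(0) = \btau^-_F(0)$. Applying the already-established $B$-twist isomorphism to $\tilde{F}$ then gives
\begin{align*}
H(F,d_A) = H(\tilde{F},d_B) \cong H(H(\tilde{F},\btau^+_{\tilde{F}}(0)),\tau^+(0)) = H(H(F,\btau^-(0)),\tau^+(0))
\end{align*}
as $\R^2$-graded supercommutative algebras.

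The main obstacle I expect is the bookkeeping needed to verify that the $(-1)$-th product on $H(V(F),\tau^+(0))$, which is a priori defined only with respect to the holomorphic vertex operator of the $N=2$ VOA $V(F)$, agrees on representatives with the $(-1,-1)$-th product of $F$ that defines the ring structure on $C(F)_{cc}$. This amounts to unwinding \eqref{eq_vertex_pro_twist} together with Proposition \ref{prop_constant_chiral}, and observing that on c-c primary vectors the $(-1)$-th coefficient captures the whole product. The compatibility with the $\R^2$-grading is then immediate, since $J(0)$ and $\bJ(0)$ both continue to act on $V(F)$ and commute with $\tau^+(0)$ and $\btau^+(0)$.
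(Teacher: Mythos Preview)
Your proposal is correct and follows essentially the approach the paper indicates: the paper simply says Proposition~\ref{prop_double} ``easily follows from the results in Section~\ref{sec_inequality}'', and your argument does exactly that, identifying both sides with $C(F)_{cc}$ via Propositions~\ref{prop_cohomology_primary}, \ref{prop_isom_ring}, \ref{prop_NS_inequality}, and \ref{prop_holomorphic_twist}, with the mirror trick for the $A$-twist. The one step you should make explicit for completeness is injectivity of the map $C^\al(F)_{cc}\to H(V(F),\tau^+(0))$: if $v=G_{-\ft}^+w$ with $w\in V(F)_{\frac{(\al,J)_l-1}{2}}^{\al-\frac{3}{c}J}$, then Lemma~\ref{lem_NS_inequality}(2) applied in $F$ forces $G_{-\ft}^+w=0$, hence $v=0$.
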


\begin{rem}
There are two construction of an $N=2$ VOA associated with a Calabi-Yau manifold $X$:
\begin{enumerate}
\item
The chiral de Rham complex $\Om_X^{\mathrm{ch}}$;
\item
The holomorphic-twist of the supersymmetric sigma model $H(F_X,\bG_{-\ft}^+)$.
\end{enumerate}
The chiral de Rham complex $\Om_X^{\mathrm{ch}}$ is a mathematically well-defined object \cite{MSV,BHS}, but the definition of the sigma model $F_X$ is not known (there are often cases where $F_X$ can be defined by hand, and such examples are treated in Section \ref {sec_example}).
The chiral de Rham complex $\Om_X^{\mathrm{ch}}$ depends only on the complex structure of $X$, while $F_X$ depends on the K\"{a}hler structure and a $B$-field on $X$. Kapustin predicted that the large volume limit of $H(F_X,\bG_{-\ft}^+)$ would be $\Om_X^{\mathrm{ch}}$ \cite{Kap05}.
\end{rem}

\begin{rem}\label{rem_non_simple}
Set
\begin{align*}
I = \bigoplus_{\substack{h \geq 0,\al \in H\\ (\al,\bJ)>0}} F_{h,\frac{(\al,\bJ)}{2}}^{\al}.
\end{align*}
Then, it is clear that $I$ is an ideal of $H(F,\btau^+(0)
)$. Hence, $H(F,\btau^+(0))$ is not a simple VOA in general.
\end{rem}

Hereafter, we assume that
\begin{itemize}
\item
$H^{J}(F,d_B) \neq 0$ and $H^{\bJ}(F,d_B) \neq 0$, and set $d=\frac{c}{3}, \bar{d}=\frac{\bc}{3} \in \Z$,
\end{itemize}
and study $\ind_F(y,q)$.
By Proposition \ref{prop_period_cc}, the eigenvalues of $J(0)$ and $\bJ(0)$ are integers. Hence, by \eqref{eq_ind_q_only}, we have:
\begin{align}
\ind_F(y,q)&= y^{-d/2} \sum_{\substack{h, k,l \in \Z \\ h\geq 0, \bar{d} \geq l \geq 0}} (-1)^{k+l}
\dim F_{h+\frac{k}{2},\frac{l}{2}}^{\frac{3k}{c}J+\frac{3l}{\bc}\bJ} y^{k} q^{h},
\label{eq_ind_discrete}
\end{align}

By Proposition \ref{prop_period_cc}, 
$\dim F_{h,\frac{(\al,\bJ)}{2}}^{\al} = \dim F_{h-n(\al,J)+\frac{n^2 c}{6},\frac{(\al,\bJ)}{2}}^{\al-nJ}$ for any $n\in \Z$.
Hence, by setting $\al'=\al-nJ$ and $h' = h-n(\al,J)+\frac{n^2c}{6}$, we have
\begin{align*}
\ind_F(y,q)&=y^{-\frac{c}{6}}\sum_{h,\al \in H} (-1)^{(\al,J)_l-(\al,\bJ)_r} \dim F_{h-n(\al,J)+\frac{n^2 c}{6},\frac{(\al,\bJ)}{2}}^{\al+nJ}y^{(J,\al)_l} q^{h-\frac{(\al,J)}{2}}\\
&=(-1)^{2n\frac{c}{6}}y^{-\frac{c}{6}}\sum_{h',\al' \in H} (-1)^{(\al',J)_l-(\al',\bJ)_r} \dim F_{h',\frac{(\al',\bJ)}{2}}^{\al'}
y^{(J,\al'+nJ)_l} q^{h'+n(\al'+nJ,J)-n^2 \frac{c}{6}- \frac{(\al'+nJ,J)}{2}}\\
&=(-1)^{n\frac{c}{3}}y^{n\frac{c}{3}}q^{tn^2} (q^ny)^{-\frac{c}{6}} \sum_{h',\al' \in H} (-1)^{(\al',J)_l-(\al',\bJ)_r} \dim F_{h',\frac{(\al',\bJ)}{2}}^{\al'}
(q^ny)^{(J,\al')_l}q^{h'-\frac{(\al',J)}{2}}\\
&=(-1)^{n\frac{c}{3}}y^{n\frac{c}{3}}q^{tn^2}\ind_F(q^ny,q).
\end{align*}
Hence, we have:
\begin{prop}
\label{prop_ind_periodicity}
Assume $H^{\frac{c}{3},0}(F,d_B) \neq 0$ and $H^{0,\frac{\bc}{3}}(F,d_B) \neq 0$. Then, the following properties hold:
\begin{enumerate}
\item
$\ind (F)(y,q) \in y^{-\frac{c}{6}}\C[[q]][y^\pm]$;
\item
$\ind(F)(yq^n,q) = (-1)^{2n \frac{c}{6}}q^{-\frac{c}{6} n^2}y^{-2n \frac{c}{6}} \ind(F)(y,q)$ holds for any $n\in \Z$.
\end{enumerate}
\end{prop}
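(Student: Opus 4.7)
The plan is to deduce both assertions essentially from the chain of equalities already displayed in the paragraphs immediately preceding the statement, once the hypotheses are used to activate the full two-parameter spectral flow.

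First I would invoke Theorem \ref{thm_CY} under the hypothesis: since $H^{c/3,0}(F,d_B) \neq 0$ and $H^{0,\bc/3}(F,d_B) \neq 0$, condition (2) of that theorem is satisfied, so $F(n,m) \cong F(0,0)$ as $F$-modules for every $n,m \in \Z$. In particular, $c, \bc \in 3\Z$, the eigenvalues of $J(0)$ and $\bJ(0)$ are integers, and we obtain the dimension identity
\[
\dim F_{h,\h}^\al \;=\; \dim F_{h - n(\al,J)_l + \frac{n^2 c}{6},\, \h - m(\al,\bJ)_r + \frac{m^2 \bc}{6}}^{\al - nJ - m\bJ}
\]
for all $n,m\in \Z$. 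This is the single substantive input.

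For (1), I would start from the discrete expansion \eqref{eq_ind_discrete}, which is now justified since $J(0)$ has integer spectrum; the only non-integer factor is the overall $y^{-c/6}$. At each fixed order $q^h$, the $y$-exponents $k$ that can contribute are constrained by $\dim F_{h+k/2,\,l/2}^{(3k/c)J+(3l/\bc)\bJ}\neq 0$ with $0\leq l\leq \bar{d}$, and Lemma \ref{lem_ind_well} shows this set of $k$ is finite. Since the $y$-support of each $q$-coefficient is thus a finite subset of $\Z$, the series $y^{c/6}\ind_F(y,q)$ lies in the Laurent-polynomial-valued formal power series ring as claimed.

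For (2), I would carry out exactly the reindexing shown just before the statement: apply the spectral flow identity with shift $n$ in the $J$-direction to the expansion \eqref{eq_ind_q_only}, collect the resulting $q$- and $y$-factors, and recognize the inner sum as $\ind_F(q^n y, q)$. This gives
\[
\ind_F(y,q) \;=\; (-1)^{n c/3}\, y^{n c/3}\, q^{c n^2/6}\, \ind_F(q^n y, q),
\]
and then substituting $y \mapsto y q^{-n}$ and rearranging yields the stated formula
\[
\ind_F(y q^n, q) \;=\; (-1)^{n c/3}\, q^{-c n^2/6}\, y^{-n c/3}\, \ind_F(y,q).
\]

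I do not expect any real obstacle: once Theorem \ref{thm_CY} is in force there is no further conceptual work, only bookkeeping. The only subtlety worth checking carefully is that the sign $(-1)^{nc/3}$ is well defined, which requires $c/3 \in \Z$ (already secured in the first step), and that the reindexing $\al \mapsto \al + nJ$, $h \mapsto h - n(\al,J)_l + n^2 c/6$ is a bijection of the summation set preserving the constraint $h \geq 0$ in the NS-sector parametrization --- both of which follow from the fact that the spectral flow on the summand $F(0,0) = F$ is an $F$-module isomorphism, not merely a dimension-preserving map.
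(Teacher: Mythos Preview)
Your proposal is correct and follows essentially the same approach as the paper: the argument is precisely the reindexing computation displayed in the text immediately preceding the proposition, and part (1) is just the observation that \eqref{eq_ind_discrete} has integer $y$-exponents (up to the $y^{-c/6}$ prefactor) with the finiteness already established earlier via Lemma \ref{lem_ind_well}. The only cosmetic difference is that the paper cites Proposition \ref{prop_period_cc} (applied once in the $J$-direction) rather than Theorem \ref{thm_CY}, but the content used is the same dimension identity.
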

By setting $q=\exp(2\pi i \tau),y=\exp(2\pi i z)$, we will see that 
Proposition \ref{prop_ind_periodicity} is nothing but the part of the modularity of weak Jacobi forms of index $\frac{c}{6}$ (see Definition \ref{def_weak_jacobi}).

\section{Relation with complex geometry and examples}\label{sec_example}

In this section we compare the structures obtained above with the corresponding
structures in complex geometry, and we examine several examples.
Section~4.1 reviews the basic geometric invariants of Calabi-Yau manifolds.
In Section~4.2 we formulate the cohomological properties that are expected to hold
for full VOAs arising from the sigma models associated with Calabi-Yau manifolds.
Sections~4.3 and~4.4 verify these expectations for abelian varieties and for a
special K3 surface, while Section~4.5 discusses the simplest Landau-Ginzburg
model, which lies outside the Calabi-Yau-type class.

\subsection{Invariants of Calabi-Yau manifolds in geometry}\label{sec_cpx_geom}
The properties of $N = (2,2)$ supersymmetric conformal field theories described in Theorem \ref{thm_chiral_star} are related to the properties of  Calabi-Yau manifolds. In this section, we briefly review  invariants of a Calabi-Yau manifold. The contents of this section are all standard
 (see for example \cite{Huy,Gri,HBJ,EZ,Hori}). 
 
Let $X$ be a compact K\"{a}hler manifold.
Then, the deRham cohomology $H^\bullet(X,\C)$ admits the Hodge decomposition \cite{Huy}:
\begin{align*}
H^k(X,\C) = \bigoplus_{p+q=k} \mathrm{Harm}^{p,q}(X),
\end{align*}
where  $\mathrm{Harm}^{p,q}(X)$ is a vector space of Harmonic $(p,q)$-forms such that:
\begin{enumerate}
\item
$\mathrm{Harm}^{p,q}(X)$ is isomorphic to the sheaf cohomology $H^q(X,\Om_X^p)$, where $\Om_X^p$ is the sheaf of holomorphic $p$-forms;
\item
the pairing
\begin{align*}
\mathrm{Harm}^{p,q}(X) \times \mathrm{Harm}^{n-p,n-q}(X) \rightarrow \C,\quad(\al,\be) \mapsto \int_X \al \wedge \be
\end{align*}
is non-degenerate, and thus, the integral gives an $\R^2$-graded supercommutative Frobenius algebra structure on $H(X,\C)$.
\end{enumerate}

\begin{dfn}\label{def_Hirzebruch}
The Hodge numbers of $X$ is defined as
\begin{align*}
h_{p,q}(X) = \dim_\C \mathrm{Harm}^{p,q}(X)
\end{align*}
and the Hirzebruch $\chi_y$ genus of $X$ as
\begin{align*}
\chi_y(X)= \sum_{p,q}(-1)^{p+q} \dim_\C H^{p,q}(X) y^p.
\end{align*}
\end{dfn}

Note that $\chi_y(X) = \sum_{p\geq 0} \chi(X,\Om_X^p)y^p$, where $\chi(X,\Om_X^p)$ is the Euler number of the sheaf cohomology.
Hence, according to the Hirzebruch-Riemann-Roch theorem, we have (see for example \cite[Corollary 5.1.4]{Huy} or \cite{HBJ}):
\begin{prop}\label{prop_HRR}
Let $X$ be a compact complex manifold of dimension $d$.
Let $\{\ga_i\}_{i=1,\dots,d}$ denote the formal Chern roots of the holomorphic tangent bundle $T_X$. Then,
\begin{align*}
\chi_y(X) = \int_X \Pi_{i=1}^d(1+y e^{-\ga_i}) \frac{\ga_i}{1-e^{-\ga_i}}.
\end{align*}
\begin{enumerate}
\item
(Arithmetic genus)
\begin{align*}
\chi_{y=0}(X) = \sum_{q}(-1)^{q} h_{0,q}(X) =\chi(X,O_X) = \int_X \mathrm{td}(X).
\end{align*}
\item
(Euler number) If $X$ is a compact K\"{a}hler manifold, then
\begin{align*}
\chi_{y=1}(X) = \sum_{p,q}(-1)^{p+q} h_{p,q}(X) = \int_X c_d(X)=e(X).
\end{align*}
\item
(Signature) If $X$ is a complex even dimensional compact K\"{a}hler manifold, then
\begin{align*}
\chi_{y=-1}(X) = \sum_{p,q}(-1)^{q} h_{p,q}(X) = \int_X \Pi_{i=1}^d \ga_i \mathrm{coth}\left(\frac{\ga_i}{2}\right) =\si(X),
\end{align*}
which is the signature of the underlying real $2d$-dimensional manifold.
\end{enumerate}
\end{prop}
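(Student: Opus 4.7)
The statement is a classical computation: each piece is a direct application of the Hirzebruch--Riemann--Roch theorem combined with the splitting principle, followed by small algebraic identities for the three specializations. So the plan is to assume HRR as a black box and organize the bookkeeping.

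The first step is to rewrite $\chi_y(X)$ in terms of Euler characteristics of sheaves. Since $\mathrm{Harm}^{p,q}(X) \cong H^q(X,\Om_X^p)$, we have $\chi(X,\Om_X^p) = \sum_q (-1)^q h_{p,q}(X)$, so $\chi_y(X) = \sum_p y^p \chi(X,\Om_X^p)$ (up to the sign conventions in Definition~\ref{def_Hirzebruch}). Applying HRR gives $\chi(X,\Om_X^p) = \int_X \mathrm{ch}(\Om_X^p)\,\mathrm{td}(X)$. Using the splitting principle, if $T_X$ has formal Chern roots $\ga_1,\dots,\ga_d$, then $T_X^*$ has roots $-\ga_i$, and $\Om_X^p = \extp^p T_X^*$ has Chern character given by the $p$-th elementary symmetric polynomial in $e^{-\ga_1},\dots,e^{-\ga_d}$. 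The generating identity $\sum_p y^p\, e_p(x_1,\dots,x_d) = \prod_i(1+y x_i)$ with $x_i = e^{-\ga_i}$ collapses the sum over $p$ to
\begin{align*}
\chi_y(X) = \int_X \prod_{i=1}^d (1+y e^{-\ga_i})\,\mathrm{td}(X), \quad \mathrm{td}(X) = \prod_{i=1}^d \frac{\ga_i}{1-e^{-\ga_i}},
\end{align*}
which is the main formula (with the sign of $y$ absorbed in the convention of Definition~\ref{def_Hirzebruch}).

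For the three specializations, the calculation is purely algebraic after noting that only the degree-$2d$ component of the integrand contributes. At $y=0$, the factor $\prod(1+y e^{-\ga_i})$ becomes $1$ and the integrand reduces to $\mathrm{td}(X)$, giving the arithmetic genus $\chi(X,O_X)$. For the Euler number at $y=1$ (or $y=-1$ depending on sign convention), one uses the identity $(1\mp e^{-\ga})\frac{\ga}{1-e^{-\ga}} = \ga$ so that $\prod_i \ga_i = c_d(X)$, yielding $\int_X c_d(X) = e(X)$ by the Gauss--Bonnet--Chern formula. For the signature at the opposite sign, the identity $\frac{(1+e^{-\ga})\ga}{1-e^{-\ga}} = \ga\coth(\ga/2)$ produces the Hirzebruch $L$-class up to normalization, and the Hirzebruch signature theorem identifies $\int_X \prod_i \ga_i \coth(\ga_i/2) = \si(X)$.

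The main obstacle, and really the only non-formal step, is the signature identification at $y=\pm 1$: the equality of $\sum_{p,q}(-1)^q h_{p,q}(X)$ with the topological signature $\si(X)$ is nontrivial and relies on the Hodge index theorem for compact K\"ahler manifolds, together with the fact that the intersection form on middle cohomology is reconstructed from the Hodge decomposition. I would either cite this directly or sketch it: the $(-1)^q$-eigenspace decomposition of $H^{2k}(X,\R)$ under the Hodge $*$-operator matches the $\pm$-eigenspace decomposition of the cup-product form on $H^d(X,\R)$ when $d = 2k$. The other two specializations are immediate from the general formula and do not require further input beyond HRR and Gauss--Bonnet--Chern.
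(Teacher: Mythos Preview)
Your sketch is correct and follows the standard argument, but note that the paper does not actually prove this proposition: it is stated as a known result, prefaced only by the sentence ``Hence, according to the Hirzebruch--Riemann--Roch theorem, we have (see for example \cite[Corollary 5.1.4]{Huy} or \cite{HBJ})'' with no further argument. Your outline is essentially what one finds in those references, so there is nothing to compare; you have simply supplied the proof that the paper chose to omit.

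One small remark on bookkeeping: with the paper's sign convention $\chi_y(X)=\sum_{p,q}(-1)^{p+q}h_{p,q}(X)y^p$, one has $\chi_y(X)=\sum_p(-y)^p\chi(X,\Om_X^p)$, so the integrand should read $\prod_i(1-ye^{-\ga_i})\frac{\ga_i}{1-e^{-\ga_i}}$ rather than $(1+ye^{-\ga_i})$; this is why you correctly observed that the Euler-number specialization uses the identity $(1-e^{-\ga})\frac{\ga}{1-e^{-\ga}}=\ga$ at $y=1$ and the signature uses $(1+e^{-\ga})\frac{\ga}{1-e^{-\ga}}=\ga\coth(\ga/2)$ at $y=-1$. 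The paper's displayed formula appears to have a sign slip here, which you already flagged parenthetically.
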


Finally, we review the elliptic genus of a compact complex manifold $X$ \cite{Gri}.
The elliptic genus is a generalization of the Hirzebruch genus with two formal variables $q,y$, and if the first Chern class $c_1(X)=0$ of $X$ is equal to $0$, then the formal series is a weak Jacobi form.


Let $X$ be a compact complex manifold of complex dimension $d$ and $T_X$ be the holomorphic tangent bundle of $X$. One defines formal power series
\begin{align*}
\mathbb{E}_{q,y} = \bigotimes_{n\geq 0} \extp_{-y^{-1}q^n}T_X \otimes \bigotimes_{n\geq 1}
\extp_{-yq^n}T_X^* \otimes \bigotimes_{n\geq 1} S_{q^n}(T_X \oplus T_X^*),
\end{align*}
where $\wedge^k$ and $S^k$ denote the $k$-th exterior product and $k$-th symmetric tensor product
respectively and $T_X^*$ denotes the dual bundle.
The elliptic genus of the complex manifold
is the holomorphic Euler characteristic of this formal power series with vector bundle coefficients. This definition is standard in physical literature (see, for example, \cite{W,EOTY,Grimm}):
\begin{dfn}\label{def_elliptic_genus}
The elliptic genus of $X$ is defined as follows
\begin{align*}
\mathrm{Ell}(X;q,y) = y^{\frac{d}{2}} \int_X \mathrm{ch}(\mathbb{E}_{q,y})\mathrm{td}(X)
\end{align*}
where $\mathrm{td}(X)$ is the Todd class of $X$ (see \cite{HBJ}).
\end{dfn}

\begin{dfn}\label{def_weak_jacobi}
A holomorphic function $\phi(\tau,z)$ on $\mathbb{H}\times \C$ is called a {\bf weak Jacobi form} of weight $k \in \ft\Z$ and index $t \in \ft \Z$ 
if $\phi$ satisfies the equations
\begin{align*}
\phi\left(\frac{a\tau+b}{c\tau+d},\frac{z}{c\tau+d}\right)=(c\tau+d)^k e^{2\pi i t cz^2/(c\tau+d)}\phi(\tau,z)
\end{align*}
for any $\begin{pmatrix}
a & b \\
c & d \\
\end{pmatrix} \in \mathrm{SL}_2(\Z)$ and
\begin{align*}
\phi(\tau,z+\la\tau+\mu) = (-1)^{2t(\la+\mu)}e^{-2 \pi i t (\la^2\tau+2\la z)} \phi(\tau,z)
\end{align*}
for any $\la,\mu \in \Z$ and has Fourier expansions of the following type:
\begin{align}
\phi(\tau,z) = \sum_{n\geq 0, l \in t+\Z} f(n,l)\exp(2\pi i (n\tau+lz)).
\label{eq_Jacobi_fourier}
\end{align}
We denote by $J_{k,t}$ the space of all weak Jaboci forms of weight $k$ and index $t$ (see \cite{EZ}).
\end{dfn}
Note that a {\it Jacobi form} of weight $k$ and index $t$ is a function $\phi$ in $J_{k,t}$ which has Fourier expansions of the following type:
\begin{align*}
\phi(\tau,z) = \sum_{n\geq 0, l^2 \leq 4mn} f(n,l)\exp(2\pi i (n\tau+lz)).
\end{align*}

\begin{prop}\cite[Proposition 1.2]{Gri}
\label{prop_jacobi}
Let $X$ be a compact complex manifold of dimension $d$. Then, the following properties hold:
\begin{enumerate}
\item
If the first Chern class $c_1(X)$ is equal to zero over $\R$, then its elliptic genus $\mathrm{Ell}(X;q,y)$ is a weak Jacobi form of weight $0$ and index $\frac{d}{2}$.
\item
If $X$ is a K\"{a}hler manifold, then the constant term of $\mathrm{Ell}(X;q,y)$ is essentially the Hirzebruch $\chi_y$-genus, that is,
\begin{align*}
\lim_{q\to 0} \mathrm{Ell}(X;q,y) = y^{-\frac{d}{2}}\chi_y(X).
\end{align*}
\end{enumerate}
\end{prop}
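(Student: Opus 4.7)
The plan is to prove both parts by reducing the elliptic genus to an explicit integral over $X$ of a characteristic class built from the Jacobi theta function $\theta_1(\tau,z)$, then reading off the Jacobi transformation laws from those of $\theta_1$.

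First, for part (1), I would apply the Hirzebruch--Riemann--Roch theorem to write $\mathrm{Ell}(X;q,y) = y^{d/2}\int_X \mathrm{ch}(\mathbb{E}_{q,y})\mathrm{td}(X)$, and then use the splitting principle with Chern roots $\gamma_1,\dots,\gamma_d$ of $T_X$ to factor the integrand as a product over $i$. The key identity is the Jacobi triple product
\begin{align*}
\theta_1(\tau,z) = -iq^{1/8}(y^{1/2}-y^{-1/2})\prod_{n\geq 1}(1-q^n)(1-yq^n)(1-y^{-1}q^n),
\end{align*}
which allows the formal product expansion of $\mathrm{ch}(\mathbb{E}_{q,y})$, built out of the $\wedge_{-y^{-1}q^n}T_X$, $\wedge_{-yq^n}T_X^*$, and $S_{q^n}(T_X\oplus T_X^*)$ factors, to be repackaged into a ratio of $\theta_1$'s. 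After this repackaging (and after absorbing the Todd class $\prod_i \gamma_i/(1-e^{-\gamma_i})$ into the denominator $\theta_1(\tau,-\gamma_i/(2\pi i))$), one obtains, up to a universal prefactor involving $\eta(\tau)$,
\begin{align*}
\mathrm{Ell}(X;q,y) = \int_X \prod_{i=1}^d \gamma_i \, \frac{\theta_1(\tau, z-\gamma_i/(2\pi i))}{\theta_1(\tau, -\gamma_i/(2\pi i))}.
\end{align*}
Establishing this closed form is the substantive bookkeeping step; it is a term-by-term matching between Chern characters of symmetric/exterior powers and the triple product.

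Next, the modular and elliptic transformation properties descend directly from those of $\theta_1$. Under $\tau \mapsto -1/\tau$, $\theta_1$ picks up a factor $(-i\tau)^{1/2}\exp(\pi i z^2/\tau)$; in the ratio above, the $\tau$-prefactors cancel (giving weight $0$), while the quadratic exponentials produce $\exp(\pi i(z-\gamma_i/(2\pi i))^2/\tau - \pi i(\gamma_i/(2\pi i))^2/\tau)$. The $\gamma_i$-linear cross term integrates against $c_1(X)=0$ and so drops out, while the pure $z^2$ term, summed over the $d$ Chern roots, is exactly the index-$d/2$ automorphy factor $\exp(\pi i d z^2/\tau)$. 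The elliptic transformation $z \mapsto z+\lambda\tau+\mu$ is handled identically via $\theta_1(\tau,z+\lambda\tau+\mu) = (-1)^{\lambda+\mu}\exp(-\pi i \lambda^2 \tau - 2\pi i \lambda z)\theta_1(\tau,z)$, with the spurious $c_1(X)$-linear anomaly again killed by hypothesis. The Fourier expansion \eqref{eq_Jacobi_fourier} is read off from the defining infinite product for $\mathbb{E}_{q,y}$, which involves only non-negative powers of $q$.

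Finally, for part (2), every tensor factor in $\mathbb{E}_{q,y}$ indexed by $n\geq 1$ limits to the trivial line bundle as $q\to 0$, leaving only the $n=0$ contribution $\wedge_{-y^{-1}}T_X$. Hirzebruch--Riemann--Roch then gives
\begin{align*}
\lim_{q\to 0}\mathrm{Ell}(X;q,y) = y^{d/2}\int_X\mathrm{ch}(\wedge_{-y^{-1}}T_X)\mathrm{td}(X) = y^{d/2}\sum_{p=0}^d(-y^{-1})^p \chi(X,\wedge^p T_X),
\end{align*}
and on a K\"ahler manifold this collapses to $y^{-d/2}\chi_y(X)$ after identifying $\chi(X,\wedge^p T_X)$ with $\chi(X,\Omega_X^{d-p})$ via Serre duality (noting that the $\chi_y$-genus is a topological invariant and tracking the sign conventions of Definition \ref{def_Hirzebruch}). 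The hard part of the entire argument is the first step, the closed-form theta-function identity for the integrand; once it is in hand, both (1) and (2) follow by essentially formal manipulations of $\theta_1$ and a $q\to 0$ limit.
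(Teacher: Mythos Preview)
The paper does not prove this proposition; it is quoted from \cite[Proposition~1.2]{Gri} and used as a black box, so there is no in-paper argument to compare against. Your outline for part~(1) is exactly the standard proof (and essentially Gritsenko's): use the splitting principle and the Jacobi triple product to rewrite the integrand as $\prod_i \gamma_i\,\theta_1(\tau,z-\gamma_i/2\pi i)/\theta_1(\tau,-\gamma_i/2\pi i)$, then inherit the Jacobi transformation law from that of $\theta_1$, with the $c_1$-linear anomalies killed by the hypothesis $c_1(X)=0$.

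Your argument for part~(2), however, has a real gap. You assert that on a K\"ahler manifold one can identify $\chi(X,\wedge^p T_X)$ with $\chi(X,\Omega_X^{d-p})$ ``via Serre duality,'' but Serre duality only gives $\chi(X,\wedge^p T_X)=(-1)^d\chi(X,\Omega_X^p\otimes K_X)$, while the natural pairing gives $\wedge^p T_X\cong\Omega_X^{d-p}\otimes K_X^{-1}$; either way one needs $K_X$ trivial, not merely K\"ahler. Concretely, for $X=\mathbb{P}^1$ one has $\lim_{q\to 0}\Ell(\mathbb{P}^1;q,y)=y^{1/2}(\chi(\mathcal{O})-y^{-1}\chi(\mathcal{O}(2)))=y^{1/2}-3y^{-1/2}$, whereas $y^{-1/2}\chi_y(\mathbb{P}^1)=y^{1/2}+y^{-1/2}$, so no sign-tracking rescues the step. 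The clean fix is to stay with the Chern-root integral throughout: rewriting $\prod_i(1-y^{-1}e^{\gamma_i})$ in terms of $\prod_i(1+ye^{-\gamma_i})$ introduces an extra factor $e^{c_1(X)}$ in the integrand, which is $1$ exactly when $c_1(X)=0$ --- the regime the paper and \cite{Gri} actually care about, and which should be read as implicit in part~(2).
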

Note that Proposition \ref{prop_Witten_Hirz} is a direct analogue of Proposition \ref{prop_jacobi} (3).

\begin{dfn}\label{def_CY_manifold}
A Calabi-Yau $d$-fold is a compact $d$-dimensional K\"{a}hler manifold $X$ such that the canonical bundle $K_X = \Om_X^d$ is trivial.
\end{dfn}
\begin{rem}\label{rem_CY}
Important consequences that follow immediately from the definition include, for example:
\begin{enumerate}
\item
The first Chern class $c_1(X)$ is zero;
\item
$H^q(X,O_X) \cong H^{d-q}(X,O_X)^*$ by the Serre duality with $K_X=1$;
\item
$h_{d,0} =1$, that is, the existence of the holomorphic volume form $\C = \Gamma(X,\Om_X^d)$.
\end{enumerate}
\end{rem}

In summary, we get the following:
\begin{prop}\label{prop_CY_geometry}
Let $X$ be a Calabi-Yau $d$-fold. Then, the following properties hold:
\begin{itemize}
\item
$h_{d,0}=h_{0,d} = h_{0,0} = h_{d,d}=1$;
\item
$h_{p,p} \geq 1$ for any $d\geq p \geq 0$ (by the existence of the K\"{a}hler form);
\item
$h_{p,q}=h_{d-p,d-q}$ for any $p,q\in \Z$;
\item
$h_{p,0}=h_{d-p,0}$ for any $p\in \Z$;
\item
The elliptic genus $\mathrm{Ell}(X;q,y)$ is a weak Jacobi form of weigh $0$ and index $\frac{d}{2}$.
\end{itemize}
\end{prop}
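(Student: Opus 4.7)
The plan is to verify each assertion by combining classical Hodge theory, Serre duality, and the CY condition $K_X \cong O_X$ (with a trivializing section $\Om \in H^0(X, K_X)$). First, $h_{0,0} = 1$ follows from compactness and connectedness, while $h_{d,0} = \dim H^0(X, K_X) = \dim H^0(X, O_X) = 1$ since $K_X$ is trivial. The Hodge symmetry $h_{p,q} = h_{q,p}$ gives $h_{0,d} = h_{d,0} = 1$, and $h_{d,d} = h_{0,0} = 1$ follows from Serre duality (or equivalently from the CY Serre duality proved in the next step). For $h_{p,p} \geq 1$, observe that the K\"ahler form $\om$ satisfies $\int_X \om^d > 0$, so $[\om]^p \in H^{p,p}(X)$ is nonzero for every $0 \leq p \leq d$.

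Next I would establish the CY Serre duality $h_{p,q} = h_{d-p,d-q}$. By ordinary Serre duality on a compact complex manifold, $H^q(X, \Om_X^p) \cong H^{d-q}(X, (\Om_X^p)^\ast \otimes K_X)^\ast$. The CY condition supplies two ingredients: $K_X \cong O_X$, and, via contraction with $\Om$, a sheaf isomorphism $(\Om_X^p)^\ast \cong \wedge^p T_X \cong \Om_X^{d-p}$. Combining these gives $H^q(X, \Om_X^p) \cong H^{d-q}(X, \Om_X^{d-p})^\ast$, yielding the claim after Hodge symmetry.

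For $h_{p,0} = h_{d-p,0}$, the approach is to exhibit a perfect pairing on holomorphic forms induced by $\Om$. Explicitly, the wedge product
\begin{align*}
H^0(X, \Om_X^p) \otimes H^0(X, \Om_X^{d-p}) \to H^0(X, K_X) = \C \cdot \Om
\end{align*}
must be shown non-degenerate; equivalently, via the sheaf isomorphism $\wedge^{d-p} T_X \cong \Om_X^p$, one identifies $H^0(X, \Om_X^{d-p})$ with $H^0(X, \wedge^p T_X)$ and then with the dual of $H^0(X, \Om_X^p)$ through the natural pairing furnished by $\Om$. Finally, the elliptic genus claim is immediate from Proposition \ref{prop_jacobi}: since $K_X$ is trivial, $c_1(X) = 0$ over $\R$, hence $\mathrm{Ell}(X;q,y)$ is a weak Jacobi form of weight $0$ and index $d/2$, and its constant $q$-term recovers the Hirzebruch $\chi_y$-genus.

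The main obstacle is the identity $h_{p,0} = h_{d-p,0}$: it is not a formal consequence of the Hodge symmetry $h_{p,q} = h_{q,p}$ and the CY Serre duality $h_{p,q} = h_{d-p,d-q}$ combined, which only close to $h_{p,0} = h_{d-p,d}$ and $h_{d-p,0} = h_{p,d}$ without linking the two. The remaining step genuinely requires the non-degeneracy of the wedge pairing above, which uses the holomorphic volume form $\Om$ in an essential way and is verified in the concrete CY examples (abelian varieties, K3 surfaces, simply connected strict Calabi-Yau manifolds) treated later in the paper.
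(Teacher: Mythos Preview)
The paper does not prove this proposition: Section~\ref{sec_cpx_geom} is explicitly a review of standard complex-geometric material, and the statement is introduced with ``In summary, we get the following'' and left without argument. So there is no paper-side proof to compare against; what matters is whether your argument stands on its own.

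Most of your steps are fine, and you correctly isolate the one genuinely nontrivial claim, $h_{p,0}=h_{d-p,0}$. However, your proposed justification has a gap. You reduce to showing that the wedge pairing
\[
H^0(X,\Omega_X^p)\otimes H^0(X,\Omega_X^{d-p})\longrightarrow H^0(X,K_X)\cong\C
\]
is nondegenerate, and then say this ``is verified in the concrete CY examples \dots\ treated later in the paper.'' That is not a proof for general Calabi--Yau $d$-folds. The sheaf-level identification $\Omega_X^{d-p}\cong(\Omega_X^p)^\ast$ coming from $\Omega$ only gives $H^0(X,\Omega_X^{d-p})\cong H^0(X,(\Omega_X^p)^\ast)$, and there is no reason \emph{a priori} that global sections of a dual bundle compute the dual of global sections; equivalently, nondegeneracy of the pairing on global sections is precisely the statement you are trying to prove, so the argument as written is circular.

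What actually closes the gap is Yau's theorem together with the Bochner principle: a compact K\"ahler manifold with $K_X$ trivial carries a Ricci-flat K\"ahler metric, and on such a metric every holomorphic $p$-form is parallel. Hence $H^0(X,\Omega_X^p)$ is identified with the holonomy-invariants $(\wedge^p T_x^\ast)^{\mathrm{Hol}}$ for $\mathrm{Hol}\subseteq SU(d)$. The wedge pairing $\wedge^p(\C^d)^\ast\times\wedge^{d-p}(\C^d)^\ast\to\C$ is a perfect pairing of $\mathrm{Hol}$-representations (the target is the trivial representation since $\mathrm{Hol}\subseteq SU(d)$), and for a reductive group a perfect $G$-pairing restricts to a perfect pairing on invariants. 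This yields the nondegeneracy you need, hence $h_{p,0}=h_{d-p,0}$ for all compact K\"ahler $X$ with trivial canonical bundle. Alternatively one can invoke the Beauville--Bogomolov decomposition and check the identity on tori, irreducible hyperk\"ahler manifolds, and strict Calabi--Yau factors separately. Either way, the input is genuinely transcendental (Yau's theorem), not merely the holomorphic volume form.
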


Finally, we conclude this section by presenting the Hodge numbers and elliptic genus of Calabi-Yau manifolds of dimension $1$ and $2$.

\begin{exa}[Torus]
A Calabi-Yau $1$-fold is a complex torus; the Hodge numbers of the torus are
\begin{align}
\begin{array}{ccc}
& h^{0,0} \\
h^{1,0} && h^{0,1}\\
& h^{1,1}&
\end{array}
\quad =\quad
\begin{array}{ccc}
& 1 &\\ 1 && 1 \\ & 1
\end{array}
\end{align}
and $\chi_y(torus) = \Ell(torus;q,y) = 0$.
In particular, for complex $d$-dimensional torus, the elliptic genus is zero.
\end{exa}

\begin{exa}[K3 surface]\label{example_K3}
A Calabi-Yau $2$-fold is a 2-torus or K3 surface; the Hodge numbers of the K3 surface are
\begin{align}
\begin{array}{ccccc}
&& h^{0,0} \\
& h^{1,0} && h^{0,1}\\
h^{2,0} & & h^{1,1} & & h^{0,2}\\
& h^{2,1} & & h^{1,2} \\
&& h^{2,2}
\end{array}
\quad =\quad
\begin{array}{ccccc}
&& 1 \\ & 0 && 0 \\ 1 & & 20 & & 1 \\ & 0 & & 0 \\ && 1
\end{array}
\end{align}
and
\begin{align*}
\chi_y(K3) = 2+20y + 2y^2,\quad\quad \Ell(K3;q,y) = 2 \phi_{0,1}(\tau,z),
\end{align*}
where 
\begin{align}
\phi_{0,1}(\tau,z) =(y^{-1}+10+y) + q(10y^{-2}-64y^{-1}+108-64y+10y^2)+q^2(\dots)
\end{align}
is the unique, up to a constant, weak Jacobi form of weight 0 and index 1 \cite{Gri}, that is, 
\begin{align}
J_{0,1} = \C \phi_{0,1}(\tau,z).
\label{eq_Jacobi_unique}
\end{align}
\end{exa}

\subsection{CY type and Hodge-theoretic mirror symmetry}
\label{sec_conj}
The class of unitary $N=(2,2)$ full VOAs is much broader than the class expected to
arise from Calabi--Yau sigma models.
Motivated by the characterization of periodicity in terms of top-degree classes,
by the resulting Poincar\'e duality and T-duality, and by the geometric definition
of Calabi--Yau manifolds, we introduce the following condition for
full VOAs of geometric origin.
%
\begin{dfn}\label{def_CY_VOA}
We call a unitary $N=(2,2)$ full vertex operator superalgebra $F$ of \textbf{CY type} if the following conditions are satisfied:
\begin{enumerate}
\item
$c=\bar{c}=3d$;
\item
$H^{d,0}(F,d_A)$, $H^{0,d}(F,d_A)$ and $H^{d,d}(F,d_A)$ are non-zero vector spaces;
\item
$H^{d,0}(F,d_B)$, $H^{0,d}(F,d_B)$ and $H^{d,d}(F,d_B)$ are non-zero vector spaces.
\end{enumerate}
\end{dfn}
\begin{rem}
In an earlier version, the definition of CY type included the existence of a
symplectic form in addition to the conditions above. For mirror-symmetric purposes,
however, this is too restrictive: 
the class defined in this way is not closed under
taking mirror algebras, since the existence of a symplectic form is not preserved by
the mirror involution in general.
Geometrically, rigid Calabi-Yau manifolds
already show the issue: one should not expect mirrors to remain
within the ordinary class of Calabi-Yau manifolds in such a restrictive sense
\cite{CDGP,Witten}. For this reason, in the present paper we do not include the
existence of a symplectic form in the definition of CY type.
\end{rem}

\begin{prop}\label{prop_equiv_CY}
Assume that $c=\bar{c}$ and set $d=c/3$. Then, the following three conditions are equivalent:
\begin{enumerate}
\item
$F$ is of CY type.
\item
Two of $H^{d,0}(F,d_A)$, $H^{0,d}(F,d_A)$ and $H^{d,d}(F,d_A)$ are non-zero vector spaces.
\item
Two of $H^{d,0}(F,d_B)$, $H^{0,d}(F,d_B)$ and $H^{d,d}(F,d_B)$ are non-zero vector spaces.
\end{enumerate}
Moreover, if these equivalent conditions hold, then $d$ is a positive integer.
\end{prop}
\begin{proof}
Assume that (3) holds. Then, by Theorem \ref{thm_CY}, all of $H^{d,0}(F,d_A)$, $H^{0,d}(F,d_A)$ and $H^{d,d}(F,d_A)$ are non-zero vector spaces. By Proposition \ref{prop_period_cc}, $d=\frac{c}{3} \in \Z_{\geq 0}$. Hence, by Theorem \ref{thm_chiral_star}, (1) holds.
\end{proof}

\begin{conj}\label{conj_sigma}
Let $X$ be a compact Calabi-Yau manifold of complex dimension $d$. Then, there is a unitary $N=(2,2)$ full vertex operator algebra $F_X$ of central charge $({3d},{3d})$ such that:
\begin{enumerate}
\item
$F_X$ is of CY type;
\item
The $A$-Hodge numbers of $F_X$ coincide with the Hodge numbers in K\"{a}hler geometry:
$h_{p,q}^A(F_X) = h_{p,q}(X)$;
\item
The Witten index coincides with the elliptic genus
\begin{align*}
\mathrm{Ind}_{F_X}(q,y) = \Ell(X;q,y).
\end{align*}
\end{enumerate}
\end{conj}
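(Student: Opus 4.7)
The plan is to construct $F_X$ directly for tractable classes of $X$ and then extend by deformation and gluing; thereafter the four properties are shown using tools already assembled in the paper. For $X$ an abelian variety I would use the explicit lattice construction of Section \ref{sec_example_abelian}, and for $X$ a hypersurface or complete intersection in a toric Fano I would invoke the Landau-Ginzburg/Calabi-Yau correspondence: form the full-VOA analogue of the Gepner model from the defining potential $W$ as in Section \ref{sec_LG}, take an orbifold by the natural cyclic symmetry, and identify the central charge as $(3d,3d)$. For a general Calabi-Yau $X$, the strategy would be to upgrade this to a family by degenerating $X$ to a large-complex-structure limit inside a toric stratum where the construction is already available, producing a family of full VOAs over the deformation base. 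Rigidifying the deformation theory using the $N=(2,2)$ superconformal subalgebra (a Bogomolov-Tian-Todorov analogue for full VOAs) and patching via the factorization-algebra structure of \cite{ES} would then, in principle, propagate $F_X$ across the CY moduli space.

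Once $F_X$ is in hand the four claims should follow from results already established. The CY-type condition reduces to exhibiting a holomorphic volume form and a Kähler class inside $H(F_X,d_B)$: the former corresponds to the non-vanishing of $H^{d,0}(F_X,d_B)$, which enters Theorem \ref{thm_CY} and forces the full spectral-flow periodicity; the Kähler class supplies the element $\om$ whose powers are non-zero by positivity inherited from unitarity. The Hodge-number equality $h_{p,q}^A(F_X)=h_{p,q}(X)$ would be obtained by identifying the $A$-twist chiral ring with Dolbeault cohomology, for instance via Proposition \ref{prop_double} combined with the Kapustin large-volume limit linking $H(F_X,\btau^+(0))$ to the chiral de Rham complex, and then transferring back through Hodge and T-duality (Theorems \ref{thm_full_frob} and \ref{thm_chiral_star}). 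The quantum-cohomology identification of $H(F_X,d_A)$ follows by interpreting three-point functions of $c$-$c$ primaries as Gromov-Witten invariants, a calculation that is essentially forced once the primary fields are matched with harmonic forms. The elliptic-genus identity in turn follows from Proposition \ref{prop_Witten_Hirz}, which recovers the Hirzebruch $\chi_y$-genus in the $q\to 0$ limit, together with the modular transformation properties of Proposition \ref{prop_ind_periodicity} and the fact that weak Jacobi forms of low index form a small space (as in \eqref{eq_Jacobi_unique}).

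The decisive obstacle is the very first step: no rigorous definition of the sigma-model full VOA exists for a generic Calabi-Yau, because the underlying path integral has resisted rigorization for decades. The Landau-Ginzburg and toric route covers only $X$ admitting a quasi-homogeneous embedding, and even there the orbifold analysis has not been carried out in the non-chiral full setting developed here. Deformation theory dictates how $F_X$ should move with $X$ but gives no initialization on components of the CY moduli space disconnected from the toric stratum. I would therefore expect that a general proof of Conjecture \ref{conj_sigma} must go through a genuinely new construction --- for example a factorization-algebra model of the smooth sigma model in the sense of Costello-Gwilliam, or a bridge from derived-category or Fukaya-category data directly to a full VOA --- and that the overwhelming majority of the work lies in producing $F_X$ at all, with verification of properties (1)--(4) being comparatively routine given the machinery of Sections \ref{sec_ring} and \ref{sec_sec_flow}.
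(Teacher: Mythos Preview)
The statement you are attempting to prove is labeled \textbf{Conjecture} in the paper, not Theorem, and the paper contains no proof of it in general. Immediately after stating it, the author writes ``We will prove the existence of full VOAs which satisfies the conjecture in Section \ref{sec_example_abelian} and Section \ref{sec_example_K3} for the case that $X$ is an abelian variety and a K3 surface.'' That is the full extent of the paper's claim: explicit verification for two classes of examples (complex tori via Proposition \ref{prop_sigmaVOA_torus} and one special K3 via Theorem \ref{thm_K3}), with the general case left open as a conjecture motivated by physics.

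Your proposal is therefore not really a proof attempt but a research program, and you yourself correctly identify the decisive gap in your final paragraph: there is no rigorous construction of $F_X$ for a general Calabi-Yau $X$. The earlier paragraphs---Gepner-model orbifolds for hypersurfaces, deformation along the moduli space, a Bogomolov--Tian--Todorov analogue for full VOAs, factorization-algebra gluing---are all speculative steps none of which currently exists in the literature at the level of full VOAs. Several of them (e.g.\ orbifolds of irrational full VOAs, deformation theory producing families of unitary $N=(2,2)$ full VOAs) are open problems in their own right, as the paper itself notes in Section \ref{sec_example_K3}. So the honest assessment is that your write-up accurately diagnoses why the statement is a conjecture rather than a theorem, but it does not constitute a proof, and the paper does not purport to give one either.
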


\begin{rem}\label{rem_conj_ring}
Witten further conjectured that $H(F_X,d_A)$ should recover the small quantum
cohomology of $X$ as a ring \cite{Witten}. More precisely, the small quantum
cohomology ring is naturally defined over a Novikov-type coefficient ring, and the
physical ring associated with the sigma model is expected to arise from this formal
family after specialization to the K\"ahler parameters. From a mathematical point of
view, the convergence of such a specialization is a highly nontrivial issue; see, for
example, \cite{Iri}.
On the $B$-side, the corresponding cohomology ring is expected to be described in
terms of the Hochschild cohomology of the derived category of $X$, or more generally
of its $B$-field twist \cite{Kont,Costello}.
\end{rem}

We will prove the existence of full VOAs which satisfy the conjecture in Section \ref{sec_example_abelian} and Section \ref{sec_example_K3} for the case that $X$ is an abelian variety and a K3 surface.
By combining Theorem \ref{thm_full_frob}, Theorem \ref{thm_chiral_star} and Proposition \ref{prop_ind_periodicity}, we have the following result:
\begin{thm}\label{thm_CY_VOA}
Let $F$ be a unitary $N=(2,2)$ full vertex operator superalgebra of CY type.
Set $h_{p,q} = \dim H^{p,q}(F,d_B)$ for $p,q\in \R$.
Then, the following properties hold:
\begin{enumerate}
\item
The mirror algebra $\tilde{F}$ is also of CY type;
\item
$h_{p,q}=0$ unless $p,q \in \Z_{\geq 0}$;
\item
$h_{0,0}=h_{d,0}=h_{0,d}=h_{d,d}=1$;
\item
$h_{p,q} = h_{d-p,d-q}$ for any $p,q\in\Z$;
\item
$h_{p,0} = h_{d-p,0}$ and $h_{0,p} = h_{0,d-p}$ for any $p\in \Z$;
\item
$H^{p,q}(F,d_A) \cong H^{d-p,q}(F,d_B)$.
\end{enumerate}
Moreover, if we assume
\begin{align*}
\ind_F(y,q)&= y^{-d/2} \sum_{\substack{h, k,l \in \Z \\ h\geq 0,\bar{d} \geq l \geq 0}} (-1)^{k+l}
\dim F_{h+\frac{k}{2},\frac{l}{2}}^{\frac{3k}{c}J+\frac{3l}{\bc}\bJ} y^{k} q^{h}
\end{align*}
is absolutely convergent for any $|q|<1$ and $y\in \C$, then, by setting $q=\exp(2\pi i \tau)$ and $y=\exp(2\pi i z)$, $\phi_F(\tau,z) =\ind_F(\exp(2\pi i z),\exp(2\pi i \tau))$ is a holomorphic function on $\mathbb{H} \times \mathbb{C}$ such that:
\begin{align*}
\phi_F(\tau+1,z) &= \phi_F(\tau,z)\\
\phi_F(\tau,z+1) &= (-1)^d \phi_F(\tau,z)\\
\phi_F(\tau,z+\tau) &= (-1)^d \exp(-\pi i d (\tau+2z))\phi_F(\tau,z).
\end{align*}
If we further assume that
\begin{align*}
\phi_F\left(\frac{-1}{\tau},\frac{z}{\tau}\right) = \exp\left( \pi i d\frac{z^2}{\tau}\right)\phi_F(\tau,z),
\end{align*}
then $\phi_F(\tau,z)$ is a weak Jacobi form of weight $0$ and index $d/2$.
\end{thm}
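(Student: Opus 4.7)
The plan is to derive items (2)--(7) and the pre-Jacobi-form part essentially as a corollary of Theorems \ref{thm_full_frob} and \ref{thm_chiral_star} together with Proposition \ref{prop_period_cc} and Theorem \ref{thm_CY}, and then to verify the transformation laws of $\phi_F$ from Proposition \ref{prop_ind_periodicity}. The CY type hypothesis provides a $B$-volume form, a holomorphic volume form, and the K\"ahler-like class $\omega$, which is exactly the input that these structural theorems require.

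First I would unpack the consequences of CY type. Condition (2) of Definition \ref{def_CY_VOA} gives $H^{d,0}(F,d_B)\neq 0$ and $H^{0,d}(F,d_B)\neq 0$; combined with Theorem \ref{thm_CY} this forces $H^{d,d}(F,d_B)\neq 0$ and the periodicity of the spectral flow in both parameters, hence by Proposition \ref{prop_period_cc} the eigenvalues of $J(0)$ and $\bJ(0)$ are integers. This yields item (2). Item (3) is then immediate: $h_{0,0}=1$ by Remark \ref{rem_h00}, Hodge duality (Theorem \ref{thm_full_frob}) gives $h_{d,d}=1$, and the one-dimensionality of $(\Omega_F)_{\frac{(J,J)_l}{2},0}^{J}$ from Proposition \ref{prop_period_cc} (and its anti-chiral mirror) gives $h_{d,0}=h_{0,d}=1$. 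Item (4) is just condition (3) of CY type since $\omega^k\in H^{k,k}$ is forced to be non-zero. Items (5) and (6) are exactly the Hodge and T-dualities of Theorem \ref{thm_full_frob} and Theorem \ref{thm_chiral_star}, applied to both $F$ and its mirror (for the $h_{0,p}=h_{0,d-p}$ half), and item (7) is the T-duality isomorphism.

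The main structural step is item (1): the mirror $\tilde F$ is again CY type. Here I would use the identification $H^{p,q}(\tilde F,d_B)\cong H^{p,q}(F,d_A)$, which is built into Definition \ref{def_mirror}. By T-duality (Theorem \ref{thm_chiral_star}) one has
\[
H^{d,0}(\tilde F,d_B)\cong H^{d,0}(F,d_A)\cong H^{0,0}(F,d_B)=\C\vac\neq 0,
\]
and $H^{0,d}(\tilde F,d_B)\cong H^{0,d}(F,d_A)\cong H^{d,d}(F,d_B)\neq 0$, yielding condition (2) of CY type for $\tilde F$. For condition (3), one transports the K\"ahler class: if $\omega\in H^{1,1}(F,d_B)$ satisfies $\omega^k\neq 0$ for all $k\leq d$, then its image $\tilde\omega\in H^{1,1}(\tilde F,d_B)\cong H^{1,1}(F,d_A)$ under the chain of isomorphisms has the same property, because $H(F,d_A)$ and $H(F,d_B)$ sit inside the same supercommutative algebra structure on $C(F)$ after twisting by the anti-linear involution $\phi$ of Proposition \ref{prop_all_top_twist}; the critical point is that $\phi$ is an algebra isomorphism sending $C(F)_{cc}\to C(F)_{aa}$ multiplicatively. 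I expect this compatibility of $\phi$ with the $(-1,-1)$-product on c-c and a-a primaries to be the main technical check; it follows from Definition \ref{def_full_susy} together with Proposition \ref{prop_isom_ring} applied to both twists.

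Finally, for the Jacobi form part, I would argue as follows. Periodicity $\phi_F(\tau+1,z)=\phi_F(\tau,z)$ follows from the expansion \eqref{eq_ind_discrete}, which only involves integer powers of $q$ once $c\in 3\Z$ by item (2). The two translation laws
\[
\phi_F(\tau,z+1)=(-1)^d\phi_F(\tau,z),\qquad \phi_F(\tau,z+\tau)=(-1)^d e^{-\pi i d(\tau+2z)}\phi_F(\tau,z)
\]
are exactly the $n=0$ and $n=1$ instances of Proposition \ref{prop_ind_periodicity} (with the sign $(-1)^{d}$ arising from the spin-structure factor $(-1)^{2n\cdot c/6}=(-1)^{nd}$), together with integrality of $(\al,J)_l$. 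Combined with the assumed $S$-transformation $\phi_F(-1/\tau,z/\tau)=e^{\pi i d z^2/\tau}\phi_F(\tau,z)$, these generate the full $\mathrm{SL}_2(\Z)\ltimes \Z^2$-action, so $\phi_F\in J_{0,d/2}$ provided the Fourier expansion has the shape \eqref{eq_Jacobi_fourier}. The latter is clear from $\ind_F(y,q)\in y^{-d/2}\Z[[q]][y^\pm]$ (Proposition \ref{prop_ind_periodicity} (1)) and the holomorphicity of $\phi_F$ from the convergence assumption. The only subtlety I anticipate is tracking signs carefully in the translation law, where the parity $(-1)^{(\al,J)_l-(\al,\bJ)_r}$ in \eqref{eq_ind_discrete} must be reconciled with the standard Jacobi sign $(-1)^{2t(\la+\mu)}$; this is a bookkeeping computation using $t=d/2$ and $c=\bc$.
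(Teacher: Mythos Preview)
Your argument for items (2)--(7) and for the quasi-periodicity of $\phi_F$ is correct and matches the paper's own indication, which simply says that the theorem follows by combining Theorem~\ref{thm_full_frob}, Theorem~\ref{thm_chiral_star} and Proposition~\ref{prop_ind_periodicity}; you have just made those deductions explicit. Likewise, for item (1) you correctly verify conditions (1) and (2) of Definition~\ref{def_CY_VOA} for $\tilde F$ via T-duality.

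There is, however, a genuine gap in your treatment of condition (3) (the K\"ahler-type class) for $\tilde F$. You propose to transport $\omega\in H^{1,1}(F,d_B)=C(F)_{cc}$ to $H^{1,1}(\tilde F,d_B)$ using the anti-linear involution $\phi$, on the grounds that ``$\phi$ is an algebra isomorphism sending $C(F)_{cc}\to C(F)_{aa}$ multiplicatively.'' But $H(\tilde F,d_B)$ is identified with $C(F)_{ca}$, not $C(F)_{aa}$: the B-twist of the mirror is $G_{-1/2}^{+}+\bar G_{-1/2}^{-}$, i.e.\ the A-twist of $F$. By Proposition~\ref{prop_all_top_twist}, $\phi$ interchanges $cc\leftrightarrow aa$ and $ca\leftrightarrow ac$; it never relates $C(F)_{cc}$ to $C(F)_{ca}$. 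So your transport mechanism does not land in the right ring. The linear isomorphisms $*_c$ of Theorem~\ref{thm_chiral_star} do go from $C_{cc}$ to $C_{ca}$, but they shift the bidegree by $(p,q)\mapsto(d-p,q)$ and are not ring maps (the image of $\omega\cdot\omega$ and the product of the images live in different bidegrees), so they cannot carry the relation $\omega^k\neq 0$ across either. In short, none of the maps actually available to you gives an algebra isomorphism $H(F,d_B)\to H(\tilde F,d_B)$, and producing an element of $H^{1,1}(\tilde F,d_B)$ with nonvanishing $d$-th power requires a separate argument. The paper itself gives no proof of this step beyond the one-line ``by combining'' remark, so the issue is not resolved there either; but your specific justification for it is incorrect as written.
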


Combining Conjecture \ref{conj_sigma} with T-duality, one recovers the expected
Hodge-theoretic mirror relation for Calabi-Yau sigma models. Hence, we
have:
\begin{cor}\label{cor_mirror_CY}
Let $X$ and $Y$ be Calabi-Yau $d$-folds. Assume that there are $N=(2,2)$ full vertex operator superalgebras $F_X$ and $F_{Y}$ which satisfy Conjecture \ref{conj_sigma}, respectively.
Assume that $F_X$ and $F_{Y}$ are the mirror algebras, that is, $\hat{F_X} \cong F_{Y}$ as unitary $N=(2,2)$ full VOAs. Then, $h_{p,q}(X) = h_{d-p,q}(Y)$ for any $p,q\in \Z$.
\end{cor}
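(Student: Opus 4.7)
The proof is essentially a diagram chase that assembles the three ingredients already prepared: Conjecture \ref{conj_sigma}, the T-duality of Theorem \ref{thm_CY_VOA}, and the mirror construction of Corollary \ref{cor_mirror}. First I would note that Conjecture \ref{conj_sigma}(1) guarantees both $F_X$ and $F_{\hat X}$ are of CY type, so the structural dualities of Theorem \ref{thm_CY_VOA} apply. Conjecture \ref{conj_sigma}(2) then identifies the $A$-Hodge numbers of each sigma model with the Hodge numbers of the underlying Calabi-Yau manifold:
\begin{align*}
h_{p,q}(X) = h_{p,q}^{A}(F_X), \qquad h_{p,q}(\hat X) = h_{p,q}^{A}(F_{\hat X}).
\end{align*}

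The core of the argument is the chain of isomorphisms of $\R^2$-graded vector spaces
\begin{align*}
H^{p,q}(F_X, d_A)
\overset{(\mathrm{i})}{\cong} H^{d-p,q}(F_X, d_B)
\overset{(\mathrm{ii})}{\cong} H^{d-p,q}(\hat F_X, d_A)
\overset{(\mathrm{iii})}{\cong} H^{d-p,q}(F_{\hat X}, d_A),
\end{align*}
where (i) is the T-duality isomorphism of Theorem \ref{thm_CY_VOA}(7) (equivalently Theorem \ref{thm_chiral_star}(2)), whose use requires precisely the CY-type hypothesis on $F_X$; (ii) is the interchange of $A$- and $B$-twists under passage to the mirror algebra encoded in Corollary \ref{cor_mirror}, which at the level of gradings uses that $\bJ_{\mathrm{new}}=-\bJ$ in Definition \ref{def_mirror} to match $\tfrac{3p}{c}J+\tfrac{3q}{\bc}\bJ$ with $\tfrac{3p}{c}J_{\mathrm{new}}-\tfrac{3q}{\bc}\bJ_{\mathrm{new}}$; and (iii) is induced by the hypothesised isomorphism $\hat F_X \cong F_{\hat X}$ of unitary $N=(2,2)$ full VOAs, which preserves the conformal and supersymmetric data and hence their $A$-twists and $\R^2$-gradings. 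Taking dimensions and splicing in the two instances of Conjecture \ref{conj_sigma}(2) gives
\begin{align*}
h_{p,q}(X) = h_{p,q}^{A}(F_X) = h_{d-p,q}^{B}(F_X) = h_{d-p,q}^{A}(\hat F_X) = h_{d-p,q}^{A}(F_{\hat X}) = h_{d-p,q}(\hat X),
\end{align*}
which is the desired equality for all $p,q\in\Z$ (for indices outside $\{0,1,\dots,d\}$ both sides vanish by Theorem \ref{thm_CY_VOA}(2) on one side and the Hodge decomposition on the other).

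Since every step is either a hypothesis or a theorem already established in the paper, there is no substantive mathematical obstacle \emph{internal} to this corollary; the genuine difficulty is packaged into Conjecture \ref{conj_sigma} (the existence and properties of the sigma-model full VOA), which is not proved in general. The only local book-keeping that requires attention is the alignment of the $\R^2$-gradings in step (ii), where one must verify that the sign flip on $\bJ$ built into the mirror construction exactly matches the convention $H^{p,q}(F,d_A) = H^{\frac{3p}{c}J-\frac{3q}{\bc}\bJ}(F,d_A)$ versus $H^{p,q}(F,d_B) = H^{\frac{3p}{c}J+\frac{3q}{\bc}\bJ}(F,d_B)$; this is immediate from the definitions but is the one place where sloppiness would produce an incorrect shift of indices.
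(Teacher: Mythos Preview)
Your proof is correct and follows essentially the same chain of isomorphisms the paper sketches in the Introduction (the corollary itself is stated without a formal proof block): Conjecture~\ref{conj_sigma}(2) on both ends, T-duality $H^{p,q}(F_X,d_A)\cong H^{d-p,q}(F_X,d_B)$ in the middle, the tautological identification $H^{\bullet,\bullet}(F,d_B)=H^{\bullet,\bullet}(\hat F,d_A)$ from the mirror construction, and the assumed isomorphism $\hat F_X\cong F_{\hat X}$. Note that the statement as printed has the apparent typo $h_{d-p,q}(X)$ where $h_{d-p,q}(\hat X)$ is intended; your argument correctly lands on $h_{d-p,q}(\hat X)$.
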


\subsection{Supersymmetric sigma model on abelian varieties}\label{sec_example_abelian}

Let $d \in \Z_{>0}$.
The supersymmetric sigma model with complex $d$-dimensional torus $T_d$ (abelian varieties) as target is the tensor product of a Narain CFT of central charge $(2d,2d)$ and $d$ chiral and $d$ anti-chiral free fermions \cite{KO}.

Narain CFTs were constructed in \cite{M1} as a deformation family of full vertex operator algebras, and their unitarity was shown in \cite{AMT}.
However, \cite{M1,AMT} only treat the bosonic case,
and the sign must be treated carefully in order to extend these results to the fermionic case.
In this section, using the notion of a unitary generalized full vertex algebra formulated in Appendix A, we show that the full vertex operator superalgebra corresponding to abelian varieties has the structure of an $N=(2,2)$ full VOA of CY type, and compute the Witten genus and the cohomology rings.
Note that the deformation family of algebras correspond to the deformation of complex and Kahler structures of the torus. Readers interested in the relationship to these geometric structures are referred to \cite{KO}.

%

Let $H = \R^{3d,3d}$ be the real vector space equipped with non-degenerate symmetric bilinear form $(-,-)$ with signature $(3d,3d)$.
Hereafter of this section, we denote a vector in $\R^{3d,3d}$ by $u=(a_1,\dots,a_{3d};b_1,\dots,b_{3d}) \in \R^{3d}$. The square norm of $u$ is given by
\begin{align*}
(u,u) = \sum_{i=1}^{3d} a_i^2 -  \sum_{i=1}^{3d} b_i^2.
\end{align*}
Let $p:\R^{3d,3d} \rightarrow \R^{3d}$ be the projection given by $u \mapsto (a_1,\dots,a_{3d})$. Then, $p$ is a projection onto a maximal positive definite subspace. Set $\bar{p}=1-p$.
Set 
\begin{align*}
\al_i &= (\underbrace{0,\dots,1,\dots,0}_{i\text{-th place}}; 0,\dots,0) \in \R^{3d,3d}\\
\ga_j &= (\underbrace{0,\dots,0}_{2d 0s}, \underbrace{0,\dots,1,\dots,0}_{j\text{-th place}}; 0,\dots,0)\in \R^{3d,3d}\\
\bal_i &= (\underbrace{0,\dots,0}_{3d 0s}; \underbrace{0,\dots,1,\dots,0}_{i\text{-th place}}, 0,\dots,0)\in \R^{3d,3d}\\
\bar{\ga}_j &= (\underbrace{0,\dots,0}_{3d 0s};\underbrace{0,\dots,0}_{2d 0s}, \underbrace{0,\dots,1,\dots,0}_{j\text{-th place}})\in \R^{3d,3d},
\end{align*}
for $i \in \{1,\dots,2d\}$ and $j \in \{1,\dots,d\}$.

Let $L\subset \R^{3d,3d}$ be a subgroup such that:
\begin{enumerate}
\item[L1)]
$L$ is free abelian group of rank $6d$;
\item[L2)]
$L$ is an {\bf integral lattice}, that is, $(\al,\be) \in \Z$ for any $\al,\be \in L$;
\item[L3)]
$L$ is {\bf unimodular}, that is, if $\ga \in \R^{3d,3d}$ satisfies $(\al,\ga)\in\Z$ for any $\al\in L$,
then $\ga \in L$.
\end{enumerate}

In Proposition \ref{prop_unitary_Narain} in appendix, we construct a full vertex operator algebra 
\begin{align*}
F_{L,p} = \bigoplus_{\al \in H}\C e_\al \otimes M_{H,p}(\al)\subset \C[\hat{L}]\otimes G_{H,p},
\end{align*}
which is unitary.
We will take a specific family of $L \hookrightarrow \R^{3d,3d}$ and introduce $N=(2,2)$ structure on $F_{L,p}$.

\begin{rem}
An integral unimodular lattice  $L$ is called {\bf odd} (resp. {\bf even}) if there exists an element $\al \in L$ such that $(\al,\al)$ is an odd integer (resp. otherwise).
It is well-known that an even or odd indefinite unimodular lattice is unique up to isomorphism.
More specifically, if $L$ is odd, then
\begin{align*}
L \cong I_{3d} \oplus -I_{3d}
\end{align*}
and if $L$ is even, then
\begin{align*}
L \cong \tw_{3d,3d}.
\end{align*}
(see for example \cite{Serre}). In our case, however, the embedding of $L$ into $\R^{3d,3d}$ is important, which is not unique at all and gives a family (of full vertex operator superalgebras) parametrized by the Grassmannian
\begin{align*}
\mathrm{Aut}\,L \backslash \mathrm{O}(3d,3d;\R) / \mathrm{O}(3d;\R)\times \mathrm{O}(3d;\R),
\end{align*}
where $\mathrm{O}(3d,3d;\R)$ is the real orthogonal group and $\mathrm{Aut}\,L$ is the automorphism group of the lattice (see \cite{M1}).
\end{rem}

Let $M \subset \R^{2d,2d}$ be a subgroup such that:
\begin{itemize}
\item
$M$ is an even unimodular lattice of rank $4d$.
\end{itemize}
Define $M\oplus I_{d,d} \subset \R^{3d,3d}$ be the subgroup generated by $L$ and $\{\ga_j,\bar{\ga}_j\}_{j=1,\dots,d}$. Then, $M\oplus I_{d,d}$ satisfies (L1), (L2) and (L3).
Then, $F_{M\oplus I_{d,d},p}$ is a unitary full vertex operator superalgebra with the anti-linear involution $\phi$ defined by Proposition \ref{prop_unitary_Narain}.
Note that $\phi$ satisfies
\begin{align*}
\phi(e_{\al})&=e_{-\al}
\end{align*}
for any $\al \in M\oplus I_{d,d}$.

\begin{prop}\label{prop_sigmaVOA_torus}
The unitary vertex operator superalgebra $(F_{M\oplus I_{d,d},i},\phi)$ inherits a unitary $N=(2,2)$ structure by
\begin{align*}
\om &= \frac{1}{2} \sum_{i=1}^{2d} \al_i(-1)\al_i + \ft\sum_{i=1}^d \ga_i(-1)\ga_i,&\omb &= \frac{1}{2} \sum_{i=1}^{2d} \bal_i(-1)\bal_i + \ft\sum_{i=1}^d \bar{\ga}_i(-1)\bar{\ga}_i\\
J &= \sum_{i=1}^d \ga_i, &\bJ &= \sum_{i=1}^d \bar{\ga}_i\\
\tau^+ &= \sum_{i=1}^d (\al_{2i-1}+i\al_{2i})(-1) e_{\ga_i}, &\tau^- &= \sum_{i=1}^d (-\al_{2i-1}+i\al_{2i})(-1) e_{-\ga_i}\\
\btau^+ &= \sum_{i=1}^d (i \bal_{2i-1}-\bal_{2i})(-1) e_{\bar{\ga}_i}, &\btau^- &= \sum_{i=1}^d (-i \bal_{2i-1}-\bal_{2i})(-1) e_{-\bar{\ga}_i}.
\end{align*}
\end{prop}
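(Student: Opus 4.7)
The plan is to verify the conditions of Definition \ref{def_N22} together with the unitarity compatibility \eqref{eq_unitary_susy_full} one at a time for the proposed vectors. Throughout, I will exploit the factorization $F_{M\oplus I_{d,d},p} = F_{M,p}\otimes V_{\Z^d \oplus (-\Z^d)}$, where the second factor is the standard even/odd lattice VOA built from the diagonal pairs $(\ga_i;\bar{\ga}_j)$. Concretely, $e_{\ga_i}$ and $e_{-\ga_i}$ behave like a charged free fermion pair (with $(\ga_i,\ga_i)=1$), while $e_{\bar\ga_j},e_{-\bar\ga_j}$ provide the anti-chiral analogue.

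First I would verify the gradings. By Proposition \ref{prop_unitary_Narain}, $e_\al \in F_{(p\al,p\al)/2,\,(\p\al,\p\al)/2}$, so $e_{\pm\ga_i}\in F_{1/2,0}$ and thus $(\al_{2i-1}+i\al_{2i})(-1)e_{\ga_i}\in F_{3/2,0}$; similarly for $\btau^\pm\in F_{0,3/2}$. The Heisenberg vectors $\om$ and $\omb$ clearly live in $F_{2,0}$ and $F_{0,2}$, and $J,\bJ$ in $F_{1,0}$, $F_{0,1}$. A direct computation of $\omega(3,-1)\omega$ using the orthonormality of $\{\al_i\}\cup\{\ga_j\}$ gives central charges $c=2d+d=3d$ and $\bc=3d$, so that $(J,J)_l=\sum_i(\ga_i,\ga_i)=d=c/3$ and analogously $(\bJ,\bJ)_r=\bc/3$, consistent with the normalization of Section \ref{sec_def_fullVOA}.

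Next, the essential part is to check the superconformal OPEs \eqref{eq_susy_ope} separately for the chiral data $(\om,\tau^\pm,J)$ and for $(\omb,\btau^\pm,\bJ)$. Since $\D\om=\D\tau^\pm=\D J=0$ (the chiral vectors involve no $\bal_i,\bar\ga_j$ modes and no $\bar{p}$-part of lattice vectors), these computations reduce to vertex superalgebra OPEs in $\ker\D$. Using the standard lattice vertex algebra formula
\[
Y(e_{\ga_i},z)e_{-\ga_j}=\epsilon(\ga_i,-\ga_j)\,z^{-(\ga_i,\ga_j)}E^-(\ga_i,z)E^+(\ga_i,z)e_{\ga_i-\ga_j},
\]
I will expand $\tau^+(z)\tau^-(w)$. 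Only diagonal terms $i=j$ contribute singular parts; the relevant cocycle can be chosen so that $\epsilon(\ga_i,-\ga_i)=1$. The $\frac{1}{(z-w)^3}$ coefficient assembles $(\al_{2i-1}+i\al_{2i},-\al_{2i-1}+i\al_{2i})=-2$ from the bosonic contraction and $\ft$ from the $\ga_i$ cocycle, summing over $i$ to give $c/3=d$. The $\frac{1}{(z-w)^2}$ coefficient produces $\ga_i=J$ (summed), and the simple pole recovers $2\om+\pa J$ after recombining Heisenberg bilinears with $\ga_i(-1)\ga_i$. This is the standard physics computation (see \cite{LVW}) and yields \eqref{eq_susy_ope}. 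The vanishing $\tau^\pm(m)\tau^\pm=0$ follows because $(\ga_i,\ga_j)\geq 0$ for all $i,j$ makes $Y(e_{\ga_i},z)e_{\ga_j}$ regular, while the $J(0)$-charges of $\tau^\pm$ come immediately from $J(0)e_{\pm\ga_i}=\pm e_{\pm\ga_i}$ and the vanishing of $J(n)$ on the $\al$-bosons. The anti-chiral side is entirely analogous after tracking that the $i$-factors in $\btau^\pm$ give the correct signs in $\btau^+(0)\btau^-$.

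Finally I would verify the global conditions of Definition \ref{def_N22}(3) and the unitarity compatibility. The eigenvalues of $J(0)$ on $e_\al\otimes M_{H,p}(\al)$ are $(J,\al)=(\sum_i\ga_i,\al)$, hence integers because $M\oplus I_{d,d}$ is an integral lattice, and similarly for $\bJ(0)$; moreover $(J(0)-\bJ(0))e_\al=((\al,\al)_l-(\al,\al)_r)\cdot\{\ldots\}$ which is an integer with parity equal to $(\al,\al)\bmod 2$, matching the fermion number defined after (FO1). This establishes \eqref{eq_fermi_integer}. For the unitarity identities in Definition \ref{def_full_susy}, one uses the explicit form of $\phi$ from Proposition \ref{prop_unitary_Narain}: $\phi(e_\al)=e_{-\al}$ and, on Heisenberg generators, $\phi(h(-n)v)=-h(-n)\phi(v)$ for $h\in H_l\oplus H_r$ (the sign ensures $\phi(\om)=\om$ and $\phi(\omb)=\omb$). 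Hence $\phi(J)=-J$ and $\phi(\bJ)=-\bJ$ are immediate, and a direct check on monomials gives $\phi(\tau^\pm)=\tau^\mp$, $\phi(\btau^\pm)=-\btau^\mp$; the extra minus in the anti-chiral identity is exactly the $i$-factor reshuffling in $\btau^\pm$ (cf.\ Remark \ref{rem_sign_strange}). The main delicate point that will require the most care is the precise choice of 2-cocycle for the odd lattice $M\oplus I_{d,d}$ so that all the signs appearing in $\tau^+(0)\tau^-$, $\btau^+(0)\btau^-$ and in the anti-linear compatibility $\phi(\btau^+)=-\btau^-$ are simultaneously consistent; this is a standard but bookkeeping-heavy exercise that I would defer to the appendix.
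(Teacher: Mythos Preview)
Your approach is essentially the paper's: verify the unitarity compatibility $\phi(\tau^\pm)=\tau^\mp$, $\phi(\btau^\pm)=-\btau^\mp$ directly from $\phi(h)=-h$, $\phi(e_\al)=e_{-\al}$, and check the OPE relations \eqref{eq_susy_ope} by lattice/Heisenberg computations. The paper's own proof is in fact much terser than yours (it leaves the OPE verification entirely to the reader).

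There is, however, one concrete error. You write that ``the relevant cocycle can be chosen so that $\epsilon(\ga_i,-\ga_i)=1$''. In this construction the cocycle is \emph{not} a free choice: it is the specific $\ep$ of \eqref{eq_def_cocycle2}, and by \eqref{eq_minus_ep} one has $\ep(\ga_i,-\ga_i)=\ep(\ga_i,\ga_i)=(-1)^{((\ga_i,\ga_i)+(\ga_i,\ga_i)^2)/2}=(-1)^{(1+1)/2}=-1$, whereas $\ep(\bar\ga_i,-\bar\ga_i)=(-1)^{(-1+1)/2}=+1$ because $(\bar\ga_i,\bar\ga_i)=-1$ in the lattice form on $\R^{3d,3d}$. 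The paper's proof singles out exactly this computation:
\[
e_{\ga_j}(0)e_{-\ga_j}=-\va,\qquad e_{\bar\ga_j}(0)e_{-\bar\ga_j}=+\va,
\]
and identifies it as the reason the chiral and anti-chiral supercurrents carry different coefficient patterns (and hence why $\phi(\btau^+)=-\btau^-$ with a minus, cf.\ Remark \ref{rem_sign_strange}). If you proceed with $\ep(\ga_i,-\ga_i)=1$ as you wrote, your $\tau^+(2)\tau^-$ computation will produce $-d$ instead of $c/3=d$, and the relations \eqref{eq_susy_ope} will fail. So the ``bookkeeping-heavy exercise'' you propose to defer is not deferred in the paper; it is the one calculation the paper actually performs, and your stated value is the wrong one.
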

\begin{proof}
Since $\phi$ is an anti-linear map, $\phi(J)=-J$, $\phi(\bJ)=-\bJ$, $\phi(\tau^\pm)=\tau^\mp$ and $\phi(\btau^\pm)=-\btau^\mp$.
The difference of $\tau^\pm$ and $\btau^\pm$ comes from
\begin{align*}
e_{\ga_j} (0)e_{-\ga_j} &= (-1)^{\ft((\ga_j,\ga_j)+(\ga_j,\ga_j)^2)}\vac=(-1)^{\ft(1+1)}\vac =-\vac\\
e_{\bar{\ga}_j} (0)e_{-\bar{\ga}_j} &= (-1)^{\ft((\bar{\ga}_j,\bar{\ga}_j)+(\bar{\ga}_j,\bar{\ga}_j)^2)}\vac=(-1)^{\ft(-1+1)}\vac=\vac
\end{align*}
by construction (see \eqref{eq_minus_ep}).
We leave the rest of the calculation to the reader.
\end{proof}

\begin{prop}\label{prop_}
$H^{p,q}(F_{M\oplus I_{d,d},p},d_B)$ is spanned by \begin{align*}
\{e_{\ga_{i_1}+\dots+\ga_{i_p}}\otimes e_{\bar{\ga}_{j_1}+\dots+\bar{\ga}_{j_q}}\}
\end{align*}
with $1 \leq i_1<\cdots <i_p\leq d$ and $1 \leq j_1<\cdots <j_q\leq d$  
and $H(F_{M\oplus I_{d,d},p},d_T)$ isomorphic to the
deRham cohomology of $2d$-dimensional complex torus $H(T_d,\C) = \bigoplus_{p,q}H^{p,q}(T_d,\C)$ with the Hodge decomposition for $T=A,B$.
Moreover, $\ind_{F_{M\oplus I_{d,d},p}}(y,q)=0$.
In particular, $F_{M\oplus I_{d,d},p}$ satisfies Conjecture \ref{conj_sigma} for the $d$-dimensional complex torus.
\end{prop}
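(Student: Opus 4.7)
The plan is to identify the c-c primary vectors of $F_{M\oplus I_{d,d},p}$ explicitly; once this is done, the Hodge numbers, the algebra structure, the identification with the torus cohomology, the Witten-index computation, and the CY-type conditions of Conjecture \ref{conj_sigma} all follow from structural results already proved. First I would verify $(J,J)_l=d$ and $(\bar J,\bar J)_r=d$, so the central charges are $c=\bar c=3d$. By Proposition \ref{prop_cohomology_primary} it suffices to determine $C^\alpha(F)_{cc}$ for each $\alpha\in M\oplus I_{d,d}$, and by Proposition \ref{prop_NS_inequality} the c-c primary condition reduces to $h=(\alpha,J)_l/2$ and $\bar h=(\alpha,\bar J)_r/2$. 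Writing $\alpha=\alpha_M+\sum_i n_i\gamma_i+\sum_j m_j\bar\gamma_j$ with $\alpha_M\in M$, the minimum conformal weight in the $\alpha$-sector is attained uniquely by $e_\alpha$ and equals $\tfrac{1}{2}((p\alpha_M,p\alpha_M)_l+\sum_i n_i^2)$; any Heisenberg raising strictly increases it. Equating to $\tfrac{1}{2}\sum_i n_i$ forces $(p\alpha_M,p\alpha_M)_l+\sum_i n_i(n_i-1)=0$, so $p\alpha_M=0$ and $n_i\in\{0,1\}$; the antiholomorphic side analogously gives $\bar p\alpha_M=0$ and $m_j\in\{0,1\}$, and together these imply $\alpha_M=0$. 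Thus $C^\alpha(F)_{cc}=\C e_\alpha$ exactly for $\alpha=\sum_{i\in S}\gamma_i+\sum_{j\in T}\bar\gamma_j$ with subsets $S,T\subseteq\{1,\ldots,d\}$, which yields the stated basis and $h^B_{p,q}=\binom{d}{p}\binom{d}{q}$.

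Next I would extract the ring structure by applying the standard vertex operator formula \eqref{eq_standard_vertex} on the lattice factor. For c-c primaries indexed by $(S_k,T_k)$, the leading behaviour is
\begin{align*}
Y(e_{\alpha_1},\uz)e_{\alpha_2}=z^{|S_1\cap S_2|}\bar z^{|T_1\cap T_2|}\bigl(\epsilon(\alpha_1,\alpha_2)e_{\alpha_1+\alpha_2}+\text{Heisenberg excitations}\bigr),
\end{align*}
so by Proposition \ref{prop_isom_ring} the cohomological $(-1,-1)$-product vanishes unless $S_1\cap S_2=T_1\cap T_2=\emptyset$, in which case it equals $\epsilon(\alpha_1,\alpha_2)\,e_{\alpha_1+\alpha_2}$ for the lattice $2$-cocycle $\epsilon$. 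Combined with the explicit signs $e_{\gamma_j}(0)e_{-\gamma_j}=-\vac$ and $e_{\bar\gamma_j}(0)e_{-\bar\gamma_j}=\vac$ from Proposition \ref{prop_sigmaVOA_torus} and the odd parities $|e_{\gamma_i}|=|e_{\bar\gamma_j}|=1$, $\epsilon$ becomes antisymmetric within and across the $\gamma$- and $\bar\gamma$-slots. The assignment $e_{\gamma_i}\mapsto dz_i$, $e_{\bar\gamma_j}\mapsto d\bar z_j$ then identifies $H(F,d_B)$ with the exterior algebra on $2d$ odd generators, which is $H^\bullet(T_d,\C)$ with the Hodge bidegrees $H^{p,q}(T_d)=\bigwedge^p\C^d\otimes\bigwedge^q\overline{\C^d}$ preserved. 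For the $A$-twist I would either repeat the analysis on c-a primaries (the antiholomorphic condition becomes $\bar h=-(\alpha,\bar J)_r/2$, i.e.\ $m_j\mapsto -m_j$) or invoke Corollary \ref{cor_mirror}; by symmetry of the torus Hodge diamond under $q\leftrightarrow d-q$, both routes give the same ring.

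For the Witten index, I plan to exploit the factorisation $F=F_M\otimes F^{\mathrm{hol}}_{I_d}\otimes F^{\mathrm{antihol}}_{I_d}$ into three mutually commuting full vertex subalgebras, valid because $M$ is orthogonal to $I_{d,d}$ in $\R^{3d,3d}$ and the two fermion chiralities are orthogonal. Then $\ind_F(y,q)$ factors accordingly, and in the antiholomorphic Ramond sector the zero modes of the $d$ complex fermions $\bar\gamma_j$ generate a rank-$2d$ Clifford algebra whose unique irreducible representation is $2^d$-dimensional with balanced $(-1)^F$-eigenvalues; its $(-1)^F$-trace equals $(1-1)^d=0$, forcing $\ind_F(y,q)=0=\mathrm{Ell}(T_d;q,y)$. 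Finally, Conjecture \ref{conj_sigma} follows from the basis description: $e_{\sum_i\gamma_i}\in H^{d,0}(F,d_B)$ is a holomorphic volume form, $e_{\sum_i\gamma_i+\sum_j\bar\gamma_j}\in H^{d,d}(F,d_B)$ a volume form, and $\omega_K=\sum_i e_{\gamma_i+\bar\gamma_i}\in H^{1,1}(F,d_B)$ satisfies $\omega_K^d\ne 0$, establishing CY type; the Hodge numbers match $h_{p,q}(T_d)=\binom{d}{p}\binom{d}{q}$; the quantum cohomology of $T_d$ equals its ordinary cohomology because $T_d$ admits no nonconstant rational curves; and the Witten index coincides with the elliptic genus. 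The main technical obstacle is tracking Koszul signs across the chiral/antichiral fermion divide so that the lattice cocycle $\epsilon$ on $I_{d,d}$ reproduces the wedge-product signs in $H^\bullet(T_d,\C)$, and this is precisely what the unitary generalized full vertex algebra framework of the appendix is designed to handle.
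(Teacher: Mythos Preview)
Your proposal is correct and follows the same approach as the paper: the explicit determination of c-c primaries via the weight condition $(p\alpha,p\alpha)_l=(p\alpha,J)_l$ (compare the analogous set $C(K_d)$ in Section~\ref{sec_example_K3}), the vanishing of the Witten index via the anti-holomorphic free-fermion factor (the paper's ``$\ind_{\overline{V_\Z}}(y,q)=0$'' is exactly your Clifford zero-mode supertrace $(1-1)^d$), and the identification of quantum with classical cohomology via the absence of nonconstant rational curves. The paper's own proof is extremely terse and leaves the basis description, ring structure, and CY-type verification implicit, so your write-up simply supplies the details the paper omits.
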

\begin{proof}
Since $\ind_{\overline{V_\Z}}(y,q)=0$, $\ind_{F_{M\oplus I_{d,d},p}}(y,q)=0$. Since any holomorphic map from complex projective space $\CP$ to the complex one-dimensional torus $T_1$ is trivial, the quantum cohomology ring $QH(T_d)$ is isomorphic to the deRham cohomology ring $H(T_d,\C)$ as $\C$-algebras.
\end{proof}

\begin{rem}\label{rem_large_volume}
By \cite[Proposition 6.7]{M1}, the holomorphic twist $H(F_{M \oplus I_{d,d} ,p},\bG_{-\ft}^+)$ depends on the embedding 
\begin{align}
M \hookrightarrow \R^{2d,2d}. \label{eq_emb_family}
\end{align}
Thus, the holomorphic twist is not homotopy theoretical invariant of manifolds, but knows the complex / K\"{a}hler geometry \cite{KO}, while \eqref{eq_emb_family} generically produce the same holomorphic twist as in \cite[Proposition 6.7]{M1}, which should correspond to the large volume limit in \cite{Kap05}.
\end{rem}

\subsection{Supersymmetric sigma model on a Kummer surface}\label{sec_example_K3}
A complex $d$-dimensional abelian variety $A$ has the automorphism
\begin{align*}
-1:A \rightarrow A,\quad a \mapsto -a
\end{align*}
which is determined by the group structure. The quotient of $A$ by $-1$ has $2^{2d}$ singular points consisting of
\begin{align}
A[2]=\{a\in A\mid 2a=0\}.\label{eq_Ab_singular}
\end{align}
When $d=2$, this variety is called a Kummer surface and has $16$ singular points, and it admits a crepant resolution, which is a K3 surface (see Example \ref{example_K3}).
Sigma models defined by K3 surfaces are well studied in physics (see \cite{EOTY,W,GHV,Grimm} and references therein).
However, the orbifold constructions for irrational CFTs are not yet available as mathematics, so many of them are only conjectures in mathematics.
If we take a special Abelian variety, the orbifold construction with $-1$ again becomes a lattice full vertex operator algebra, which has a different $N=(2,2)$ structure than the one in Section \ref{sec_example_abelian}.

In this section, we use it to construct a unitary $N=(2,2)$ full VOA satisfying Conjecture \ref{conj_sigma} for the K3 surface. 
The construction of a unitary full VOA for a general K3 surface is left for future work.

%

Let $d \in \Z_{\geq 0}$ be an even integer.
Let $N_{d} \subset \R^{3d,3d}$ be a subgroup generated by
\begin{align*}
\{\pm \al_i \pm \al_j, \pm \bal_i \pm \bal_j, \pm \al_i \pm \bal_j, \ft \sum_{l=1}^{2d} (\al_l+\bal_l), \ga_k,\bga_k
\}_{\substack{i,j = 1,\dots,2d,\\ k = 1,\dots,d}},
\end{align*}
which satisfies (L1), (L2), (L3) and contains $(D_{2d} \oplus \Z^d) \oplus \overline{ (D_{2d}\oplus \Z^d)}$, where $D_{2d}$ is the root lattice of type $D_{2d}$.
Set
\begin{align*}
\delta &= \sum_{k=1}^d (\al_{2k-1}+\bal_{2k-1} +\ga_k +\bga_k) \\
&= (\underbrace{1,0,1,0,\dots,1,0}_{2d},\underbrace{1,\dots,1}_{d};
\underbrace{1,0,1,0,\dots,1,0}_{2d},\underbrace{1,\dots,1}_{d}) \in \R^{3d,3d}
\end{align*}
and
\begin{align*}
J =  \sum_{k=1}^d \ga_k\quad\text{ and }\quad \bJ= \sum_{k=1}^d \bga_k.
\end{align*}
Set
\begin{align*}
N_{d}^0 = \{v \in N_d \mid (v,\delta) \in 2\Z\},
\end{align*}
which is an index $2$ subgroup of $N_d$, and let $K_d$ be a subgroup of $\R^{3d,3d}$ generated by
\begin{align*}
N_d^0 \cup \left\{\ft \delta\right\}.
\end{align*}
Then, by construction, $K_d$ also satisfies (L1), (L2) and (L3), and $J,\bJ \in K_d$ since $d$ is even.

\begin{dfn}\label{def_characteristic}
A vector $r$ in an integral lattice $L$ is called a {\bf characteristic vector} if $(\al,\al) = (\al,r)$ (mod $2$) for any $\al\in L$.
\end{dfn}
\begin{lem}\label{lem_characteristic}
The vector $J+\bJ = \sum_{k=1}^d (\ga_k +\bar{\ga}_k)$ is a characteristic vector of $K_d$.
\end{lem}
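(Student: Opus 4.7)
The claim to verify is that $(\alpha,\alpha) \equiv (\alpha, J+\bJ) \pmod{2}$ for every $\alpha \in K_d$. My plan is to exploit the standard observation that the map $\alpha \mapsto (\alpha,\alpha) \bmod 2$ is additive on $L/2L$ for any integral lattice $L$ (since $(\alpha+\beta,\alpha+\beta) = (\alpha,\alpha) + (\beta,\beta) + 2(\alpha,\beta)$), so being characteristic is a linear condition and it suffices to verify the congruence on a generating set of $K_d$. The same additivity lets me replace any generator by a translate modulo $2K_d$, so I am free to combine generators to put them into $N_d^0$.

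A convenient generating set for $K_d = N_d^0 + \Z\cdot \tfrac{1}{2}\delta$ is obtained by starting from the listed generators of $N_d$ and adjusting: those generators that already pair with $\delta$ in $2\Z$ stay as is, those that pair with $\delta$ in an odd integer are replaced by their sum with a fixed generator outside $N_d^0$ (for example $\gamma_1$), together with $2\gamma_1$, and finally the extra generator $\tfrac{1}{2}\delta$. The preliminary step is therefore to list these generators explicitly and check which lie in $N_d^0$.

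Next I would compute $(\alpha,\alpha)$ and $(\alpha, J+\bJ)$ modulo $2$ for each such generator. The pairings with $J+\bJ$ are particularly easy: since $J+\bJ$ involves only the $\gamma$- and $\bgamma$-coordinates, any $D$-type generator of the form $\pm\alpha_i\pm\alpha_j$, $\pm\bar\alpha_i\pm\bar\alpha_j$, or $\pm\alpha_i\pm\bar\alpha_j$ pairs to $0$ with $J+\bJ$, while its norm is $0$ or $\pm 2$; hence the congruence is trivial. For the half-sum $\tfrac{1}{2}\sum_l(\alpha_l+\bar\alpha_l)$, the norm is $0$ and its pairing with $J+\bJ$ is $0$ (and it lies in $N_d^0$ precisely because $d$ is even, so $(\,\cdot\,,\delta)=d\in 2\Z$). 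For generators involving $\gamma_k$ or $\bgamma_k$, such as $\gamma_k+\gamma_l$, $\gamma_k-\gamma_l$, $\gamma_k\pm\bgamma_l$, or $2\gamma_k$, both sides are again even and the check is immediate; the mixed adjustments like $\alpha_i+\alpha_j+\gamma_1$ give norm $3$ and pairing $1$, again matching. Finally, for $\tfrac{1}{2}\delta$ one computes $(\tfrac{1}{2}\delta,\tfrac{1}{2}\delta) = \tfrac{1}{4}(\delta,\delta) = 0$ and $(\tfrac{1}{2}\delta, J+\bJ) = \tfrac{1}{2}(d-d) = 0$.

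The verification is routine once the generating set is organized, and no single step is truly an obstacle; the only point requiring a little care is ensuring that the chosen generators do generate $K_d$ (not merely $N_d^0$ together with $\tfrac{1}{2}\delta$ in a set-theoretic sense) and, at the cost of a parity argument, that $d$ being even makes the half-sum $\tfrac{1}{2}\sum_l(\alpha_l+\bar\alpha_l)$ land in $N_d^0$ so that it does not have to be combined with an auxiliary element. Since all cases yield matching parities, $J+\bJ$ is a characteristic vector of $K_d$.
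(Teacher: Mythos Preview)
Your argument is correct. The paper's proof follows the same underlying idea but organizes it more economically: rather than explicitly constructing a generating set for $N_d^0$ and checking each generator, the paper first observes that $J+\bJ$ is a characteristic vector for the larger lattice $N_d$ (immediate from the listed generators of $N_d$, exactly as you compute), which then automatically restricts to the sublattice $N_d^0$. Since $K_d = N_d^0 \cup (N_d^0 + \tfrac{1}{2}\delta)$, it remains only to handle the nontrivial coset directly: for $u \in N_d^0$ one expands
\[
(u+\tfrac{1}{2}\delta,\,u+\tfrac{1}{2}\delta) = (u,u) + (u,\delta) + (\tfrac{1}{2}\delta,\tfrac{1}{2}\delta) \equiv (u,u) \equiv (u,J+\bJ) = (u+\tfrac{1}{2}\delta,\,J+\bJ) \pmod 2,
\]
using $(u,\delta)\in 2\Z$ (definition of $N_d^0$), $(\tfrac{1}{2}\delta,\tfrac{1}{2}\delta)=0$, and $(\tfrac{1}{2}\delta,J+\bJ)=0$. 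This bypasses the bookkeeping of adjusting $N_d$-generators into $N_d^0$-generators that your approach requires; conversely, your linearity observation on $L/2L$ makes the reduction to generators fully transparent. Both routes are equally valid and short.
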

\begin{proof}
It is clear that $w$ is a characteristic vector for $N_d$, and thus, for $N_d^0$.
For any $u \in N_d^0$, 
\begin{align*}
(u+\ft\delta,u+\ft\delta)&= (u,u)+2(u,\ft \delta)+(\ft\delta,\ft\delta)\equiv (u,u) \equiv (u,J+\bJ),\quad \text{mod (2)}.
\end{align*}
Since $(J+\bJ, \ft \delta) =0$, the assertion holds.
\end{proof}

Let $F_{K_d,p}$ be the unitary full vertex operator algebra associated with $K_d \hookrightarrow \R^{3d,3d}$ (see Proposition \ref{prop_unitary_Narain}), and $\phi$ be the unitary involution.
By Lemma \ref{lem_characteristic}, $J \in (F_{K_d,p})_{1,0}$ and $\bJ \in (F_{K_d,p})_{0,1}$
satisfies Definition \ref{def_N22} (3).

We note that
\begin{align*}
\pm \al_{2k-1}\pm \al_{2k}\pm \ga_k,\quad \pm \bal_{2k-1}\pm \bal_{2k}\pm \bga_k, \in N_d^0 \subset K_d
\end{align*}
and $e_{\pm \al_{2k-1}\pm \al_{2k}\pm \ga_k} \in (F_{K_d,p})_{\frac{3}{2},0}$, 
$e_{\pm \bal_{2k-1}\pm \bal_{2k}\pm \bga_k} \in (F_{K_d,p})_{0,\frac{3}{2}}$, and thus, they are  candidates for supersymmetric currents $\tau^\pm,\btau^\pm$. In fact, we have:
\begin{prop}\label{prop_K3_unitary}
The unitary full vertex operator algebra $F_{K_{d},p}$ admits the following unitary $N=(2,2)$ structure:
$\om = \frac{1}{2} \sum_{i=1}^{2d} h_i(-1)h_i + \ft\sum_{i=1}^d \ga_i(-1)\ga_i$, $\omb = \frac{1}{2} \sum_{i=1}^{2d} \h_i(-1)\h_i + \ft\sum_{i=1}^d \bar{\ga}_i(-1)\bar{\ga}_i$, $J = \sum_{i=1}^d \ga_i$ and $\bJ = \sum_{i=1}^d \bar{\ga}_i$  as in Proposition \ref{prop_sigmaVOA_torus} and 
\begin{align*}
\tau^+ &= -\sum_{k=1}^d (\frac{1+i}{2} (e_{\al_{2k-1}+\al_{2k}+\ga_k}+e_{-\al_{2k-1}-\al_{2k}-\ga_k})+
\frac{1-i}{2}
(e_{\al_{2k-1}-\al_{2k}+\ga_k}+e_{-\al_{2k-1}+\al_{2k}-\ga_k}))\\
\tau^- &= -\sum_{k=1}^d (\frac{1-i}{2} (e_{\al_{2k-1}+\al_{2k}+\ga_k}+e_{-\al_{2k-1}-\al_{2k}-\ga_k})+
\frac{1+i}{2}
(e_{\al_{2k-1}-\al_{2k}+\ga_k}+e_{-\al_{2k-1}+\al_{2k}-\ga_k}))\\
\btau^+ &= \sum_{k=1}^d (\frac{1-i}{2} (e_{\bal_{2k-1}+\bal_{2k}+\bga_k}+e_{-\bal_{2k-1}-\bal_{2k}-\bga_k})-
\frac{1+i}{2}
(e_{\bal_{2k-1}-\bal_{2k}+\bga_k}+e_{-\bal_{2k-1}+\bal_{2k}-\bga_k}))\\
\btau^- &= -\sum_{k=1}^d (\frac{1+i}{2} (e_{\bal_{2k-1}+\bal_{2k}+\bga_k}+e_{-\bal_{2k-1}-\bal_{2k}-\bga_k})-
\frac{1-i}{2}
(e_{\bal_{2k-1}-\bal_{2k}+\bga_k}+e_{-\bal_{2k-1}+\bal_{2k}-\bga_k})).
\end{align*}
\end{prop}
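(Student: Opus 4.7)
The plan is to verify the OPE relations \eqref{eq_susy_ope} for the chiral data $(\om,\tau^\pm,J)$ and their anti-chiral counterparts, and then confirm the unitarity conditions \eqref{eq_unitary_susy_full} for the anti-linear involution $\phi$ supplied by Proposition \ref{prop_unitary_Narain}. The verification is a direct computation in the lattice full vertex operator superalgebra $F_{K_d,p}$ using the standard OPE $e_{\mu}(n)e_{\nu}\sim\varepsilon(\mu,\nu)e_{\mu+\nu}$, where $\varepsilon$ is the 2-cocycle from Proposition \ref{prop_unitary_Narain}.

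First I would reduce to a single index $k$. The triples $\{\al_{2k-1},\al_{2k},\ga_k\}$ for distinct $k$ span pairwise orthogonal three-dimensional positive definite subspaces of $\R^{3d,3d}$, and the same holds for the barred counterparts. Thus $(\mu,\nu)=0$ for any two lattice vectors drawn from different $k$-sectors, so $e_\mu(n)e_\nu=0$ for all $n\geq 0$. Both sides of each $N=2$ OPE therefore split into pairwise orthogonal per-$k$ contributions, and it suffices to verify the relations one sector at a time and then sum over $k$.

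Fix $k$ and write $\mu_k^{\ep_1\ep_2}=\ep_1\al_{2k-1}+\ep_2\al_{2k}+\ep_1\ga_k$ with $\ep_i\in\{\pm 1\}$. Each has square $3$, so $e_{\mu_k^{\ep_1\ep_2}}\in (F_{K_d,p})_{3/2,0}$ with charge $+1$ under $J(0)$, exactly as $\tau^\pm$ requires. Among distinct pairs,
\begin{align*}
(\mu_k^{++},\mu_k^{--})=(\mu_k^{+-},\mu_k^{-+})=-3,\quad(\mu_k^{++},\mu_k^{-+})=(\mu_k^{--},\mu_k^{+-})=-1,
\end{align*}
while the remaining distinct pairings equal $+1$ and contribute no singular terms. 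There are three sets of identities to verify: (i) $\tau^\pm(m)\tau^\pm=0$ for $m\geq 0$; (ii) $\tau^+(2)\tau^-=c/3$, $\tau^+(1)\tau^-=J$, $\tau^+(0)\tau^-=\om+\ft L(-1)J$; (iii) the analogous anti-chiral identities. For (i), the diagonal pairs $(\mu_k^{++},\mu_k^{--})$ and $(\mu_k^{+-},\mu_k^{-+})$ enter $\tau^+(m)\tau^+$ with coefficients $((1+i)/2)^2=i/2$ and $((1-i)/2)^2=-i/2$, killing the neutral part $\mu+\nu=0$; the cross pairs $(\mu_k^{++},\mu_k^{-+})$ and $(\mu_k^{--},\mu_k^{+-})$ produce $e_{\pm 2\al_{2k}}$ pieces whose cocycle signs must combine to cancel. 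For (ii), the third-order poles of the two $-3$-inner-product diagonals sum, upon summing over $k$, to the central charge $d=c/3$; the order $z^{-2}$ residues recombine into $J$ after cancellation of the $\al_{2k-1}$ and $\al_{2k}$ pieces, and the order $z^{-1}$ residues yield the Sugawara expression $\om+\ft L(-1)J$; the cross pairs again produce only $e_{\pm 2\al_{2k}}$ contributions that must cancel by cocycle. The anti-chiral identities (iii) are checked identically on the barred sector; the sign difference between $\btau^\pm$ and $\tau^\pm$ in front of the coefficients $\tfrac{1\pm i}{2}$ is forced by $(\bga_k,\bga_k)=-1$ and the resulting flipped sign $e_{\bga_k}(0)e_{-\bga_k}=+\vac$, exactly as in Proposition \ref{prop_sigmaVOA_torus}.

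Unitarity is then immediate. By Proposition \ref{prop_unitary_Narain}, $\phi(e_\mu)=\pm e_{-\mu}$ with explicit signs, and anti-linearity swaps $(1+i)/2\leftrightarrow(1-i)/2$; combined with $\mu_k^{-\ep_1,-\ep_2}=-\mu_k^{\ep_1\ep_2}$, this sends $\tau^+\leftrightarrow\tau^-$ and $\btau^\pm\leftrightarrow-\btau^\mp$, matching \eqref{eq_unitary_susy_full}. Condition (3) of Definition \ref{def_N22} follows from Lemma \ref{lem_characteristic}: on $e_\mu$ with $\mu\in K_d$,
\begin{align*}
\exp(\pi i(J(0)-\bJ(0)))e_\mu=(-1)^{(\mu,J-\bJ)}e_\mu=(-1)^{(\mu,J+\bJ)}e_\mu=(-1)^{(\mu,\mu)}e_\mu,
\end{align*}
using that $J+\bJ$ is characteristic, and this agrees with $(-1)^F$ on $F_{K_d,p}$. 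The main obstacle will be fixing a $2$-cocycle on $K_d$ for which the cross-term cancellations invoked in (i) and (ii) hold simultaneously; once such a cocycle is chosen, everything reduces to a finite per-sector check, and the resulting $N=(2,2)$ structure can be independently cross-checked by matching the Witten index to the elliptic genus of a K3 surface stated in Theorem \ref{thm_K3}.
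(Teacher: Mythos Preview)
Your direct-verification strategy matches the paper's: its proof is the single line ``an easy computation,'' and the references to Remarks~\ref{rem_sl2} and~\ref{rem_K3_sigma} supply only the conceptual origin of the formulas---namely that the stated currents are the images of the torus currents of Proposition~\ref{prop_sigmaVOA_torus} under the inner automorphism $U\in\mathrm{SU}(2)^d\times\mathrm{SU}(2)^d$, so the $N=(2,2)$ relations are inherited---rather than an alternative formal argument. Your per-$k$ reduction and identification of the $-3$ and $-1$ inner-product pairs are the right organizing devices, and your caveat that the $2$-cocycle must be pinned down before the cross-term cancellations can be claimed is correct and honest.

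One slip to repair before you carry out the computation: with $\mu_k^{\ep_1\ep_2}=\ep_1\al_{2k-1}+\ep_2\al_{2k}+\ep_1\ga_k$ the $J(0)$-eigenvalue of $e_{\mu_k^{\ep_1\ep_2}}$ is $(\mu_k^{\ep_1\ep_2},J)=\ep_1$, not uniformly $+1$; in particular $\mu_k^{--}$ and $\mu_k^{-+}$ carry charge $-1$. Hence the verification of $J(0)\tau^\pm=\pm\tau^\pm$ in \eqref{eq_susy_ope} does not go through as you describe and must be reconciled with the $\ga_k$-signs in the stated formulas. The $\mathrm{sl}_2$ transport of Remark~\ref{rem_sl2}, applied to the torus current $(\al_{2k-1}+i\al_{2k})(-1)e_{\ga_k}$ (which manifestly has $J$-charge $+1$), is the cleanest way to see what the correct subscripts should be and to avoid this kind of sign bookkeeping. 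A minor point: by the appendix construction the involution satisfies $\phi(e_\mu)=e_{-\mu}$ without any extra sign.
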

\begin{proof}
The assertion follows from an easy computation (see also Remark \ref{rem_sl2} and Remark \ref{rem_K3_sigma}).
\end{proof}

We will show that $F_{K_{d},p}$ satisfies Conjecture \ref{conj_sigma} for the K3 surface when $d=2$ (see Remark \ref{rem_K3_sigma} for the physics motivation for the definition of $F_{K_d,p}$).
\begin{rem}\label{rem_K3_sigma}
Let $F_{M\oplus I_{d,-d},p}$ be the unitary $N=(2,2)$ full vertex operator algebra in Section \ref{sec_example_abelian}.
Let $\si_1$ be the order $2$ automorphism of $F_{M,p}$ such that $e_\al \mapsto e_{-\al}$ for any $\al \in M$ as in 
\eqref{eq_grouplattice_theta}, which can be extended onto $F_{M\oplus I_{d,-d},p}$.
Then,
\begin{align*}
\si_1 \otimes (-1)^F: F_{M\oplus I_{d,-d},p} \rightarrow F_{M\oplus I_{d,-d},p}
\end{align*}
is an order 2 automorphism such that it preserves $\om,\omb,J,\bJ,\tau^\pm,\btau^\pm$.
In physics, in the case of $d=2$, the $\Z_2$-orbifold of $F_{M\oplus I_{d,-d},p}$ associated with $\si_1 \otimes (-1)^F$ is expected to be the sigma model of the crepant resolution of the Kummer surface.

In the case of $M=N_d$, the inner automorphism group of $F_{N_d \oplus I_{d,-d},p}$ is $\mathrm{SO}(4d)\times \mathrm{SO}(4d)$.
In particular, $\si_1 \in \mathrm{SU}(2)^d \times \mathrm{SU}(2)^d$ is an inner automorphism.
Hence, by an easy calculation, we can confirm that $F_{K_d,p}$ is isomorphic to the $\Z_2$-orbifold of $F_{N_d,p}$ associated with $\si_1 \otimes (-1)^F$,
whose $N=(2,2)$ structure is determine by Remark \ref{rem_sl2}.
\end{rem}

\begin{rem}\label{rem_sl2}
Let $\sll_2=\{e,f,h\}$ be the $\mathrm{sl}_2$-triple and $\si_1$ the automorphism of $\mathrm{sl}_2$ given by
\begin{align*}
\si_1 :\{e,f,h\} \mapsto \{f,e,-h\}.
\end{align*}
It is easy to see that 
$\si_1 = U^{-1}\exp(\ft \pi i \ad(h)) U$ with $U=
\begin{pmatrix}
\frac{\sqrt{2}}{2} & \frac{\sqrt{2}}{2}\\
-\frac{\sqrt{2}}{2} & \frac{\sqrt{2}}{2}
\end{pmatrix}
= \exp(\frac{\pi i}{4} (\ad (e-f)))$.
The supersymmetric currents in Proposition \ref{prop_K3_unitary} is determined by Proposition \ref{prop_sigmaVOA_torus} and $U(h)=-(e+f)$.
\end{rem}

Although $\tau^\pm,\btau^\pm$ are complicated, in order to compute the cohomology ring of $F_{K_d,p}$,
it suffices to classify vectors satisfying the following conditions by Proposition \ref{prop_isom_ring}:
\begin{align*}
C(K_d)=\{\be \in K_d \mid (p\be,p\be) = (p\be,J)\quad\text{and}\quad (\p\be,\p\be) = (\p\be,\bJ)\}.
\end{align*}
\begin{lem}\label{lem_Kd_property}
The subset $C(K_d) \subset K_d$ consisting of the following vectors:
\begin{align}
&\{\ga_{\mu_1}+\dots+\ga_{\mu_n}+\bga_{\nu_1}+\dots+\bga_{\nu_m} \} \label{eq_primaryK3_1} \\
\begin{split}
&\ft \sum_{k=1}^d ({\ep_k} \al_{2k-1} + \ga_k + {\bar{\ep}_{k}} \bal_{2k-1}+ \bga_k)\\
&\ft \sum_{k=1}^d ({\ep_k} \al_{2k} + \ga_k + {\bar{\ep}_{k}} \bal_{2k}+ \bga_k)
\end{split}
\label{eq_primaryK3_2}
\end{align}
where $0 \leq n,m \leq d$ with $n+m \in 2\Z$, $1 \leq \mu_1 < \mu_2< \cdots <\mu_n \leq d, 1 \leq \nu_1 < \nu_2< \cdots <\nu_n \leq d$
and $\ep_k,\bar{\ep}_k \in \{1,-1\}$ with $\sum_k (\ep_k+\bar{\ep}_k) \in 2\Z$.
\end{lem}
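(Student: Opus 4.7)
The plan is to analyze $K_d$ by coset decomposition over a reference integer sublattice and then to recognize $C(K_d)$ as the zero locus of a nonnegative quadratic-linear form whose chiral and antichiral parts decouple. Writing a general $\be\in K_d$ as
\begin{align*}
\be=\sum_i a_i\al_i+\sum_k c_k\ga_k+\sum_i\bar a_i\bal_i+\sum_k\bar c_k\bga_k,
\end{align*}
the two defining conditions of $C(K_d)$ become
\begin{align*}
\sum_i a_i^2+\sum_k c_k(c_k-1)=0\quad\text{and}\quad\sum_i\bar a_i^2+\sum_k\bar c_k(\bar c_k-1)=0,
\end{align*}
since $(p\be,p\be)-(p\be,J)=\sum_i a_i^2+\sum_k(c_k^2-c_k)$ and similarly on the antichiral side.

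Next I would describe the lattice structure of $K_d$. Let $\Lambda_0\subset N_d$ be the sublattice generated by $\pm\al_i\pm\al_j$, $\pm\bal_i\pm\bal_j$, $\pm\al_i\pm\bal_j$, $\ga_k$, $\bga_k$; then $\Lambda_0\cong D_{4d}\oplus\Z^{2d}$, and $N_d=\Lambda_0\cup(\Lambda_0+\ft\sum_l(\al_l+\bal_l))$ is its standard spinor extension $D_{4d}^+\oplus\Z^{2d}$. Because $d$ is even, the half-sum vector lies in $N_d^0$, while $\ft\delta\notin N_d$ but $\delta\in N_d^0$; it follows that $K_d/(N_d^0\cap\Lambda_0)\cong(\Z/2)^2$, with four coset representatives $0$, $\ft\sum_l(\al_l+\bal_l)$, $\ft\delta$, and $\ft\sum_l(\al_l+\bal_l)+\ft\delta$. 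On each coset the fractional parts of $(a_i,c_k)$ are fixed.

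I would then minimize the chiral expression on each coset, using the elementary bounds $x^2\geq 0$ and $x(x-1)\geq 0$ for $x\in\Z$ (equalities at $x=0$ and $x\in\{0,1\}$), together with $x^2\geq 1/4$ and $x(x-1)\geq -1/4$ for $x\in\ft+\Z$ (equalities at $x=\pm\ft$ and $x=\ft$). The four cases yield: on the trivial coset, $a_i=0$ and $c_k\in\{0,1\}$, producing the chiral half of \eqref{eq_primaryK3_1}; on the coset of $\ft\sum(\al+\bal)$, the bound $\sum_i a_i^2\geq d/2>0$ rules out solutions; on the coset of $\ft\delta$, the minimizers $a_{2k-1}=\pm\ft$, $a_{2k}=0$, $c_k=\ft$ make the negative contribution from $\sum_k c_k(c_k-1)$ cancel the positive contribution from $\sum_k a_{2k-1}^2$, giving the chiral half of the first vectors in \eqref{eq_primaryK3_2}; on the remaining coset, the symmetric analysis with odd and even $\al$-indices swapped gives the second vectors in \eqref{eq_primaryK3_2}. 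The antichiral half is handled identically.

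Finally I would verify lattice membership. Subtracting off the coset representative, each candidate reduces to an integer vector of the form $-\sum_{k:\ep_k=-1}\al_{j(k)}-\sum_{k:\bar{\ep}_k=-1}\bal_{j(k)}$ with $j(k)\in\{2k-1,2k\}$, and one must check this lies in $N_d^0\cap\Lambda_0$. Both the $\Lambda_0$-condition (via the $D_{4d}$-parity) and the defining condition $(\,\cdot\,,\delta)\in 2\Z$ of $N_d^0$ collapse to the same parity statement on the total number of $-1$'s among the signs $(\ep_k,\bar{\ep}_k)$, which is the condition recorded in the lemma. The main obstacle is essentially bookkeeping: keeping track of the fractional parts across the four cosets and extracting the correct parity constraint; the quantitative content of the proof reduces to pointwise minimization of $x(x-1)$ over cosets of $\Z$ inside $\ft\Z$.
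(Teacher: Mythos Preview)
Your approach is correct and is exactly the natural one; the paper in fact states this lemma without proof (it passes directly from the statement to the remark that the family \eqref{eq_primaryK3_2} has $2^{2d}$ elements and then to Proposition \ref{prop_coho_K3}). Your reduction of the two defining equations to $\sum_i a_i^2+\sum_k c_k(c_k-1)=0$ and its antichiral counterpart is right, and the coset-by-coset minimization over the four classes represented by $0$, $\tfrac12\sum_l(\al_l+\bal_l)$, $\tfrac12\delta$, and their sum is the intended computation.

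One clarification worth recording: your verification in the final step yields the parity constraint that $n_-+\bar n_-$ (the total number of minus signs among the $\ep_k,\bar\ep_k$) is even. This is consistent with the paper's count of $2^{2d}$ vectors in \eqref{eq_primaryK3_2} (namely $2^{2d-1}$ for each of the two types), but as literally written in the statement the condition $\sum_k(\ep_k+\bar\ep_k)\in 2\Z$ is vacuous, since each summand lies in $\{-2,0,2\}$. The intended condition is $\sum_k(\ep_k+\bar\ep_k)\in 4\Z$, which for even $d$ is equivalent to $n_-+\bar n_-\in 2\Z$; your analysis recovers this correctly from both the $D_{4d}$-parity and the $(\,\cdot\,,\delta)\in 2\Z$ condition, which indeed coincide here. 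So your proof is complete and also pins down the correct reading of the constraint.
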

Note that there are $2^{2d}$ vectors of the form \eqref{eq_primaryK3_2} which coincides with the number of singular points \eqref{eq_Ab_singular}.
The following proposition follows from Lemma \ref{lem_Kd_property}:
\begin{prop}\label{prop_coho_K3}
Let $d \in 2\Z_{\geq 0}$. Then, the basis of $H(F_{K_d,p},d_B)$ is given by Lemma \ref{lem_Kd_property} and
\begin{align*}
\dim H^{p,q}(F_{K_d,p}) = 
\begin{cases}
0 & p+q \text{ is odd}\\
\binom{d}{p}\binom{d}{q} & p+q \text{ is even and }(p,q)\neq (\frac{d}{2},\frac{d}{2})\\
\binom{d}{d/2}\binom{d}{d/2} +2^{2d} & (p,q)=(d/2,d/2)
\end{cases}
\end{align*}
and
\begin{align*}
\chi_y(F_{K_d}) &=(1+y)^{d}2^{d-1}+2^{2d}y^{d/2}
\end{align*}
In particular, $F_{K_d,p}$ is of CY type.
\end{prop}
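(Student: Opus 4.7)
The plan is to invoke the general machinery of c-c primary vectors to identify $H(F_{K_d,p}, d_B)$, establish Lemma \ref{lem_Kd_property} by a lattice-theoretic case analysis, count dimensions by bidegree, compute the Hirzebruch genus, and finally verify the CY-type conditions.

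First, by Proposition \ref{prop_cohomology_primary} and Proposition \ref{prop_isom_ring}, $H(F_{K_d,p}, d_B)$ will be identified as the $\R^2$-graded supercommutative algebra of c-c primary vectors $C(F_{K_d,p})_{cc}$, with the $(-1,-1)$-product. For the lattice full VOA $F_{K_d,p} = \bigoplus_{\al \in K_d} \C e_\al \otimes M_{H,p}(\al)$, the $\al$-sector has minimum $L(0)$-weight $(p\al,p\al)/2$, so the c-c primary condition (Proposition \ref{prop_NS_inequality}) forces $(p\al,p\al) \leq (p\al,J)$, and $e_\al$ is the unique primary vector (up to scalar) at the minimal weight when equality is attained. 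I will check that $(p\al,p\al) - (p\al,J) = \sum_i A_i^2 + \sum_k C_k(C_k-1) \geq 0$ for all $\al \in K_d$, by splitting the coordinates into integer and half-integer pieces and bounding each term below by $0$ (integer case, with equality forcing $A_i = 0$ and $C_k \in \{0,1\}$) or by $\pm 1/4$ (half-integer case); a matching analysis on the barred side completes the reduction, showing Heisenberg descendants contribute nothing and that $H(F_{K_d,p},d_B) \cong \bigoplus_{\al \in C(K_d)} \C e_\al$.

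Next I will prove Lemma \ref{lem_Kd_property} by case analysis on $K_d = N_d^0 \sqcup (\ft\delta + N_d^0)$. For $\al \in N_d^0$ the equality $(p\al,p\al) = (p\al,J)$ forces $A_i = 0$ and $C_k \in \{0,1\}$, while $N_d^0$-membership translates into $|A|+|B|$ being even, producing family \eqref{eq_primaryK3_1}. For $\al \in \ft\delta + N_d^0$ I split further according to whether $\al - \ft\delta$ lies in the integer-coordinate or half-integer-coordinate part of $N_d$; in each sub-case the equality constraint on $(p\al,p\al) - (p\al,J)$ pins down $A_{2k-1}$ (resp.\ $A_{2k}$) to $\pm\ft$, the remaining $A_i$ to $0$, and $C_k$ to $\ft$, yielding the two forms of \eqref{eq_primaryK3_2}, with the lattice-membership constraint on the integer remainder cutting the raw sign choices to a total of $2^{2d}$ vectors. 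By Definition \ref{def_hodge_number} the bidegree of $e_\al$ is $((p\al,J),(\p\al,\bJ))$, so family \eqref{eq_primaryK3_1} contributes $\binom{d}{p}\binom{d}{q}$ to $H^{p,q}$ when $p+q$ is even, and family \eqref{eq_primaryK3_2} contributes $2^{2d}$ to $H^{d/2,d/2}$, giving the stated dimension formula. The identity
\begin{equation*}
\sum_{\substack{p,q \geq 0 \\ p+q \in 2\Z}} \binom{d}{p}\binom{d}{q} y^p = \ft\bigl[(1+y)^d(1+1)^d + (1-y)^d(1-1)^d\bigr] = 2^{d-1}(1+y)^d
\end{equation*}
for $d \geq 1$ then yields $\chi_y(F_{K_d}) = 2^{d-1}(1+y)^d + 2^{2d} y^{d/2}$.

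Finally, the CY-type conditions follow quickly: $c = \bc = 3d$ with $d = c/3 \in \Z$; the vectors $e_J \in H^{d,0}$ and $e_{\bJ} \in H^{0,d}$ are nonzero since $J, \bJ \in C(K_d)$; and the candidate K\"ahler class $\om = \sum_{k=1}^d e_{\ga_k + \bga_k} \in H^{1,1}$ satisfies $\om^k \neq 0$ in $H^{k,k}$ for each $1 \leq k \leq d$, because the vectors $\ga_k + \bga_k$ are mutually orthogonal and isotropic (so all classes involved are bosonic), the multinomial expansion of $\om^k$ contains the nonzero class proportional to $e_{\ga_{i_1} + \bga_{i_1} + \dots + \ga_{i_k} + \bga_{i_k}}$ for each choice of indices $i_1 < \dots < i_k$, and the squared terms $e_{2(\ga_k+\bga_k)}$ vanish in cohomology as $2(\ga_k+\bga_k) \notin C(K_d)$. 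The main technical obstacle will be the coset-by-coset enumeration in Lemma \ref{lem_Kd_property}, where one must simultaneously enforce $N_d$-membership, $N_d^0$-membership, and the primary condition on half-integer coordinates; the 2-cocycle sign bookkeeping in the K\"ahler-power calculation is straightforward but must also be tracked carefully to confirm that no cancellation occurs.
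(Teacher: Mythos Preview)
Your proposal is correct and follows essentially the same approach as the paper: reduce to c-c primary vectors via Proposition \ref{prop_isom_ring}, enumerate them via the lattice condition $C(K_d)$, count by bidegree, and check the CY-type axioms. The paper's own proof is extremely terse (it simply asserts ``follows from Lemma \ref{lem_Kd_property}'' and leaves that lemma unproved), so you are in fact filling in details the paper omits. One simplification: you do not need the explicit coordinate inequality $(p\al,p\al)-(p\al,J)\geq 0$ to rule out Heisenberg descendants, since this is already guaranteed by unitarity (apply Lemma \ref{lem_NS_inequality} to the highest-weight vector $e_\al$); the c-c weight $(\al,J)_l/2$ therefore can only be attained at the minimal weight $(p\al,p\al)_l/2$, where the weight space is one-dimensional, so the reduction to $C(K_d)$ is automatic and your coordinate analysis is needed only to \emph{classify} the equality cases, which is exactly Lemma \ref{lem_Kd_property}.
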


By the above proposition, in the case of $d=2$, the Hodge numbers and the algebra structure of $H(F_{K_2,p},d_A)$ coincides with those of K3 surfaces \footnote{The quantum cohomology ring of a K3 surface $Z$ coincides with the deRham cohomology since (1) $Z$ are simply-connected and (2) the dimension of the moduli space of genus zero stable maps to $Z$ is 2.} (see Example \ref{example_K3}).

Thus, to see that $F_{K_2,p}$ satisfies Conjecture \ref{conj_sigma} for K3 surface, it is sufficient to show that the Witten index is a weak Jacobi form of weight 0 index $1$ (see Proposition \ref{prop_Witten_Hirz} and Example \ref{example_K3} \eqref{eq_Jacobi_unique}).

Recall that the theta function is defined by
\begin{align*}
\Theta_{ab}(z,\tau)=\Theta_{ab}(y,q) =\sum_{n \in \Z} e^{\pi i b (n+ \ft a)}q^{\ft(n+\ft a)^2}y^{n+\ft a}
\end{align*}
for any $a,b \in \{0,1\}$ and the eta function by
\begin{align*}
\eta(\tau) =\eta(q)= q^{\frac{1}{24}}\prod_{n=1}^\infty(1-q^n).
\end{align*}
Set $\Theta_{ab}(q) = \Theta_{ab}(1,q)$.
Then, we have:
\begin{thm}\label{thm_K3}
The Witten index of $F_{K_2,p}$ is given by
\begin{align}
\ind_{F_{K_2},p}(y,q)
&= 2\eta(q)^{-6}\left(
\Theta_{00}^2(q)\Theta_{01}^2(q)\Theta_{10}^2(y,q)
+\Theta_{00}^2(q)\Theta_{10}^2(q)\Theta_{01}^2(y,q)
+\Theta_{10}^2(q)\Theta_{01}^2(q)\Theta_{00}^2(y,q)
\right),
\label{eq_index_K3_formula}
\end{align}
which is a weak Jacobi form of weight $0$ and index $1$. In particular, the unitary $N=(2,2)$ full vertex operator superalgebra $F_{K_2,p}$ satisfies Conjecture \ref{conj_sigma} for the K3 surface.
\end{thm}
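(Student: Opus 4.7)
The plan is to compute $\ind_{F_{K_2,p}}(y,q)$ directly from the lattice structure of $K_2$ and then identify the result with the stated theta-function combination.

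\textbf{Step 1: Unroll the definition.} By Definition \ref{def_index} and \eqref{eq_ind_q_only},
\[
\ind_{F_{K_2,p}}(y,q) = y^{-1}\sum_{h\geq 0,\al\in H}(-1)^{(\al,J)_l-(\al,\bJ)_r} \dim F^{\al}_{h+(\al,J)_l/2,(\al,\bJ)_r/2}\, y^{(\al,J)_l} q^{h}.
\]
Using $F_{K_2,p} = \bigoplus_{\al\in K_2}\C e_\al\otimes M_{H,p}(\al)$ and Lemma \ref{lem_exact_G}, the antiholomorphic Heisenberg oscillators cancel in pairs through $\bG_0^+$-exactness, so only the contribution with $\p\al$ minimizing $\Ld(0)$ survives. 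For each $\al\in K_2$ only the $M_{H,p}(\al)$ lowest-weight vector contributes on the antiholomorphic side, while all Heisenberg oscillator modes contribute on the holomorphic side, yielding the factor $\eta(q)^{-6}$ from the six chiral bosonic directions.

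\textbf{Step 2: Reduce to a holomorphic lattice sum.} After reindexing, one arrives at
\[
\ind_{F_{K_2,p}}(y,q) = \eta(q)^{-6}\, y^{-1}\sum_{\al\in K_2}(-1)^{(\al,J+\bJ)}\, y^{(\al,J)}\, q^{(p\al,p\al)/2}.
\]
The sign $(-1)^{(\al,J+\bJ)}$ is well-defined because $J+\bJ$ is a characteristic vector of $K_2$ by Lemma \ref{lem_characteristic}: this is exactly what makes $(\al,\al)\equiv (\al,J+\bJ)\pmod 2$, so the parity of $L(0)-\Ld(0)$ and the lattice exponent cohere.

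\textbf{Step 3: Decompose the lattice sum.} Write $K_2 = N_2^0\sqcup (N_2^0+\tfrac12\delta)$, so $K_2$ fibers over $D_4\oplus\Z^2$ (chiral) tensored with its antichiral copy via $p,\p$. Using $D_4^\ast/D_4\cong (\Z/2)^2$ gives the standard Jacobi--Riemann presentation of $D_4$ theta-characteristics in terms of $\Theta_{ab}$. Writing lattice points as pairs in $\tfrac12 D_4^\ast\oplus \tfrac12\Z^2$ with the gluing condition defining $K_2$, the sum splits into four contributions indexed by the parity of the chiral and antichiral $D_4^\ast/D_4$-classes. Tracking $(-1)^{(\al,J+\bJ)}$ against these parities shows that one of the four terms cancels and the remaining three each factor into $\Theta_{ab}^2(q)\,\Theta_{cd}^2(q)\,\Theta_{ef}^2(y,q)$, after grouping the six chiral bosons $\al_1,\dots,\al_4,\ga_1,\ga_2$ according to their $J$-charge (only the $\ga_k$ carry the $y$-dependence). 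This produces exactly \eqref{eq_index_K3_formula}.

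\textbf{Step 4: Modularity and verification of Conjecture \ref{conj_sigma}.} By Proposition \ref{prop_coho_K3}, $F_{K_2,p}$ is of CY type, so Theorem \ref{thm_CY_VOA} applies; absolute convergence on $\Ha\times\C$ is manifest from the explicit formula, and all Jacobi transformation laws except the $S$-transformation are automatic. The $S$-transformation $(\tau,z)\mapsto(-1/\tau,z/\tau)$ follows from the classical Jacobi modular laws for $\Theta_{ab}/\eta$: under $S$ the triple $(\Theta_{00},\Theta_{01},\Theta_{10})$ permutes up to eighth roots of unity, and the three summands in \eqref{eq_index_K3_formula} are interchanged so as to reproduce the index-$1$ Jacobi cocycle. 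Hence $\ind_{F_{K_2,p}}\in J_{0,1}=\C\phi_{0,1}$ by \eqref{eq_Jacobi_unique}. Comparing the $q\to 0$ constant term via Proposition \ref{prop_Witten_Hirz} with $\chi_y^B(F_{K_2,p})=2+20y+2y^2$ from Proposition \ref{prop_coho_K3} pins the scalar to $2$, matching $\Ell(K3;q,y)=2\phi_{0,1}(\tau,z)$. Combined with Proposition \ref{prop_coho_K3} (Hodge numbers and CY type) and the observation that a K3 surface is simply connected with quantum cohomology equal to its de Rham cohomology, this verifies all four items of Conjecture \ref{conj_sigma} for the K3.

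\textbf{Main obstacle.} The hard part will be Step 3, namely performing the lattice decomposition cleanly while simultaneously tracking the characteristic-vector sign $(-1)^{(\al,J+\bJ)}$ and the $\tfrac12\delta$-shift that distinguishes $K_2$ from $N_2^0$. The combinatorial bookkeeping is what eliminates one of the four naive terms and produces the symmetric three-term combination; without it, one would not land on a scalar multiple of $\phi_{0,1}$, and the identification with $\Ell(K3)$ would break down.
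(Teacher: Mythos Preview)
Your Step 2 formula is incorrect, and the error propagates through the rest of the argument. You claim
\[
\ind_{F_{K_2,p}}(y,q) = \eta(q)^{-6}\, y^{-1}\sum_{\al\in K_2}(-1)^{(\al,J+\bJ)}\, y^{(\al,J)}\, q^{(p\al,p\al)/2},
\]
but this sum is over \emph{all} of $K_2$ and in fact diverges: any $\al\in K_2$ with $p\al=0$ (for example $\al\in D_4\subset\overline{D_4\oplus\Z^2}$ on the antichiral side) contributes the constant term $1$, and there are infinitely many such vectors. The mistake in Step~1 is the sentence ``for each $\al\in K_2$ only the lowest-weight vector contributes on the antiholomorphic side''. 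What the unitarity bound together with \eqref{eq_ind_q_only} actually gives is that the lowest antiholomorphic weight $(\p\al,\p\al)/2$ must \emph{equal} $(\al,\bJ)_r/2$, otherwise the relevant graded piece is zero. So the sum runs only over
\[
P(K_2)=\{\be\in K_2\mid (\p\be,\p\be)=(\p\be,\bJ)\},
\]
and the correct $q$--exponent is $\tfrac12(\be-\tfrac12 J,\be-\tfrac12 J)$, not $(p\al,p\al)/2$; see \eqref{eq_ind_Kd}. This constraint is the entire reason the Witten index is holomorphic in $q$.

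Because of this, your Step~3 decomposition via $D_4^\ast/D_4$ is aimed at the wrong object. The set $P(K_2)$ is not a lattice; it is a finite union of cosets of the \emph{chiral} sublattice $L_2=\ker\p\cap K_2$, indexed by the finitely many antichiral images $\p(P(K_2))$ listed in \eqref{eq_finite_image1}--\eqref{eq_finite_image2}. The paper splits $P(K_2)=P(K_2)^0\sqcup P(K_2)^1$ according to this image, identifies each fiber with a translate of $L_2$ (and uses $L_2^0\cong A_1^6$ for $d=2$), and evaluates the resulting rank-$6$ theta sums via the elementary identities $\sum q^{(2n)^2/2}y^{2n}=\tfrac12(\Theta_{00}+\Theta_{01})$, etc. This is where the three-term symmetric combination in \eqref{eq_index_K3_formula} actually comes from, not from a $D_4$ discriminant-group argument on the full $K_2$. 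Your Step~4 is fine and matches the paper.
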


In the remainder of this section we prove this theorem.
If \eqref{eq_index_K3_formula} is shown, then 
by the transformation formula of the theta functions with respect to $(z,\tau) \mapsto (\frac{z}{\tau},\frac{-1}{\tau})$ \cite{Mum}:
\begin{align*}
\eta\left(\frac{-1}{\tau}\right)^6 &= (-i)^3\tau^3 \eta(\tau)^6\\
\Theta_{00}^2\left(\frac{-1}{\tau},\frac{z}{\tau}\right) &= (-i) \tau \exp\left(2\pi i \frac{z^2}{\tau}\right) \Theta_{00}^2(\tau,z)\\
\Theta_{10}^2\left(\frac{-1}{\tau},\frac{z}{\tau}\right) &= (-i) \tau \exp\left(2\pi i \frac{z^2}{\tau}\right) \Theta_{01}^2(\tau,z)\\
\Theta_{01}^2\left(\frac{-1}{\tau},\frac{z}{\tau}\right) &= (-i) \tau \exp\left(2\pi i \frac{z^2}{\tau}\right) \Theta_{10}^2(\tau,z),
\end{align*}
we have
\begin{align*}
\phi_{F_{K_2},p}\left(\frac{-1}{\tau},\frac{z}{\tau}\right) = \exp\left(2\pi i \frac{z^2}{\tau}\right) \phi_{F_{K_2},p}.
\end{align*}
Thus, by Theorem \ref{thm_CY_VOA}, $\phi_{F_{K_2},p}$ is a weak Jacobi form of weight $0$ index $1$, which coincides with the elliptic genus of K3 surfaces by \eqref{eq_Jacobi_unique}. Thus, it suffices to show that \eqref{eq_index_K3_formula}.
Set
\begin{align*}
P(K_d) &=  \{\be \in K_d \mid (\p\be,\p\be) = (\p\be,\p \bJ)\}.
\end{align*}
Then, by \eqref{eq_ind_discrete}, we have:
\begin{align}
\begin{split}
\phi_{F_{K_d},p}(\tau,z)
&= \eta(\tau)^{-3d}\sum_{\be \in P(K_d)}(-1)^{(\be,J+\bJ)}
q^{\ft((\be,\be)-(J,\be))+\frac{3d}{24}}y^{(J,\be)-d/2}\\
&= \eta(\tau)^{-3d}\sum_{\be \in P(K_d)}(-1)^{(\be,J+\bJ)}
q^{\ft(\be- \ft J ,\be-\ft J)}y^{(J,\be-\ft J)}.
\end{split}
\label{eq_ind_Kd}
\end{align}
Set $L_d = \ker \p \cap K_d =
K_d \cap (\R^{3d}\oplus 0)$,
the intersection with the maximal positive-definite subspace. The following lemma is obvious:
\begin{lem}
The following properties hold for $P(K_d)$:
\begin{enumerate}
\item
If $\al,\be \in P(K_d)$ satisfy $\p(\al)=\p(\be)$.
Then, $\al -\be \in L_d$.
\item
The image of $P(K_d)$ by the map $\p: \R^{3d,3d} \rightarrow \R^{3d}$ is
\begin{align}
\{\bga_{\mu_1}+\dots+ \bga_{\mu_l} \mid 0\leq l \leq d, 1\leq  \mu_1< \cdots <\mu_l \leq d \}\label{eq_finite_image1}
\\
\sum_{k=1}^d (\ep_k \ft \bal_{2k-1}+ \ft \bga_k)\quad \text{ or }\quad
\sum_{k=1}^d (\ep_k \ft \bal_{2k}+ \ft \bga_k),
\label{eq_finite_image2}
\end{align}
with $\ep_k \in \{1,-1\}$ ($k=1,\dots,d$). In particular, the image is a finite set consisting of $2^d+2^{d+1}$ elements.
\item
The fiber of \eqref{eq_finite_image1} is  
\begin{align*}
p^{-1}(\bga_{\mu_1}+\dots+ \bga_{\mu_l}) =
\begin{cases}
L_d & ($l$ \text{ is even}),\\
\ga_1 +L_d & ($l$ \text{ is odd})
\end{cases}
\end{align*}
and \eqref{eq_finite_image2} is
\begin{align*}
p^{-1}(\sum_{k=1}^d (\ep_k \ft \bal_{2k-1}+ \ft \bga_k))&= \sum_{k=1}^d (\ep_k \ft \al_{2k-1}+ \ft \ga_k)+L_d\\
p^{-1}(\sum_{k=1}^d (\ep_k \ft \bal_{2k}+ \ft \bga_k))&= \sum_{k=1}^d (\ep_k \ft \al_{2k}+ \ft \ga_k)+L_d.
\end{align*}
\end{enumerate}
\end{lem}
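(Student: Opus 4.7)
The plan is to reformulate the defining equation of $P(K_d)$ as a sphere-packing problem on the anti-holomorphic part of $\R^{3d,3d}$, and then enumerate lattice points explicitly using the generators of $K_d$. The key algebraic identity is completing the square: writing $\p\be = \sum_i \bar{a}_i \bal_i + \sum_k \bar{c}_k \bga_k$ with $\bar{a}_i,\bar{c}_k \in \tfrac{1}{2}\Z$, the condition $(\p\be,\p\be)_r = (\p\be,\p\bJ)_r$ is equivalent to
\begin{align*}
\sum_{i=1}^{2d} \bar{a}_i^2 + \sum_{k=1}^d \left(\bar{c}_k - \tfrac{1}{2}\right)^2 = \tfrac{d}{4},
\end{align*}
i.e., $\p\be$ lies on a sphere of squared radius $d/4$ centered at $\tfrac{1}{2}\p\bJ$, since $(\p\bJ,\p\bJ)_r = d$.

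For (1), the argument is formal: $K_d$ is an abelian group and $\al-\be \in K_d$; since $\p(\al-\be)=0$, we get $\al - \be \in \ker\p \cap K_d = L_d$. For (2), I would first use the coset decomposition $K_d = N_d^0 \sqcup (\tfrac{1}{2}\delta + N_d^0)$ (which holds because $(\delta,\delta)=0$ gives $\delta \in N_d^0$). In each coset, the allowed parities of $(\bar{a}_i,\bar{c}_k)$ are constrained: inside $N_d^0$, either all $\bar{a}_i \in \Z$ with $\bar{c}_k \in \Z$, or all $\bar{a}_i \in \tfrac{1}{2}+\Z$ (this half-integer option is forced simultaneously on the $\al$- and $\bal$-sides by the spin-type generator $\tfrac{1}{2}\sum_l(\al_l+\bal_l) \in N_d$) again with $\bar{c}_k \in \Z$. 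Shifting by $\tfrac{1}{2}\delta$ adds $\tfrac{1}{2}$ to $\bar{a}_{2k-1}$ and $\bar{c}_k$ but leaves $\bar{a}_{2k}$ unchanged. Combining with the sphere equation and using the sharp bounds $\bar{a}_i^2 \geq \ep_i^2$ and $(\bar{c}_k - \tfrac{1}{2})^2 \geq 1/4$, one gets three mutually exclusive regimes that saturate the $d/4$ budget: (a) the integer case in $N_d^0$ forces $\bar{a}_i=0$ and $\bar{c}_k \in \{0,1\}$, giving the $2^d$ vectors $\bga_{\mu_1}+\cdots+\bga_{\mu_l}$; (b) the $\tfrac{1}{2}\delta$-shifted integer case forces $\bar{a}_{2k-1}=\pm\tfrac{1}{2}$, $\bar{a}_{2k}=0$, $\bar{c}_k=\tfrac{1}{2}$; and (c) the $\tfrac{1}{2}\delta$-shifted half-integer case forces $\bar{a}_{2k-1}=0$, $\bar{a}_{2k}=\pm\tfrac{1}{2}$, $\bar{c}_k=\tfrac{1}{2}$. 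The pure half-integer case in $N_d^0$ is ruled out by $\sum \bar{a}_i^2 \geq 2d \cdot \tfrac{1}{4} = \tfrac{d}{2} > \tfrac{d}{4}$. The total count is $2^d + 2^d + 2^d = 2^d + 2^{d+1}$.

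For (3), by (1) each fiber is a single coset of $L_d$, so it suffices to exhibit one representative of $P(K_d)$ in each fiber. Using $(v,\delta) = \sum_k a_{2k-1} + \sum_k c_k - \sum_k \bar{a}_{2k-1} - \sum_k \bar{c}_k$, one checks that $\bga_{\mu_1}+\cdots+\bga_{\mu_l}$ lies in $N_d^0$ when $l$ is even, whereas for odd $l$ the shift by $\ga_1$ lands in $N_d^0 \subset K_d$. For the half-integer images, explicit representatives are $\sum_k \ep_k \tfrac{1}{2}\al_{2k-1} + \tfrac{1}{2}\sum_k\ga_k + x$ (and analogously for the $\bal_{2k}$-family), which lie in $\tfrac{1}{2}\delta + N_d^0$ because the difference from $\tfrac{1}{2}\delta$ equals $-\sum_{k:\ep_k=-1}(\al_{2k-1}+\bal_{2k-1}) \in N_d^0$. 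The main obstacle is the bookkeeping in (2): one must keep straight four parity cases (integer versus half-integer, unshifted versus $\tfrac{1}{2}\delta$-shifted) and remember that the generator $\tfrac{1}{2}\sum_l(\al_l+\bal_l)$ couples $\bal$-parities to $\al$-parities while $\tfrac{1}{2}\delta$ distinguishes odd from even indices. Once the four cases are laid out, each collapses to a one-line application of the quadratic inequality.
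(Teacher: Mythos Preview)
Your approach is correct and, in fact, more detailed than the paper's: the paper declares this lemma ``obvious'' and gives no argument at all. Completing the square to rewrite $(\p\be,\p\be)=(\p\be,\bJ)$ as the sphere equation
\[
\sum_{i=1}^{2d}\bar a_i^{\,2}+\sum_{k=1}^d\Bigl(\bar c_k-\tfrac12\Bigr)^2=\tfrac{d}{4},
\]
and then running the coset decomposition $K_d=N_d^0\sqcup(\tfrac12\delta+N_d^0)$ with the integer/half-integer parity split on the $\bal$-coordinates, is exactly the natural verification. Your case analysis is complete: the four parity regimes collapse as you say, and the explicit representatives you give in (3) do land in $K_d$ (for the $\bal_{2k}$-family one also subtracts the spin generator $\tfrac12\sum_l(\al_l+\bal_l)$ before checking membership in $N_d^0$, which you allude to but do not spell out).

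Two minor cosmetic points: the phrase ``$\bar a_i^2\ge\ep_i^2$'' should presumably read ``$\bar a_i^2\ge 0$ or $\ge 1/4$ according to parity''; and in the sentence about representatives, the stray ``$+x$'' would be clearer if written as ``$+\,(\text{given }\p\text{-image})$''. Neither affects the correctness of the argument.
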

Set
\begin{align*}
L_d^0 = \{\be \in L_d\mid (\be,\be) \in 2\Z \}.
\end{align*}
Hereafter, we consider the case $d=2$. In this case, the lattice has larger symmetry and the computation is simpler. It is clear that
\begin{align*}
L_2^0 &= \{\be \in \Z^{6} \mid 
(\be,(101000)),(\be,(010100)),(\be,(000011))\in 2\Z\},
\end{align*}
which is isomorphic to $A_1^6 = (\sqrt{2}\Z)^6$ and $2\Z^6 \subset L_2^0$. 
Let $P(K_d)^0$ (resp. $P(K_d)^1$) be the subset of $P(K_d)$ consisting of all the elements whose image of $\p$ is of the form \eqref{eq_finite_image1} (resp. \eqref{eq_finite_image2}).
Then, by
\begin{align*}
\sum_{n\in \Z} q^{\ft(2n)^2}y^{2n}&=
\ft(\Theta_{00}(y,q)+\Theta_{01}(y,q))\\
\sum_{n\in \Z} q^{\ft(2n+1)^2}y^{2n+1}&= \ft(\Theta_{00}(y,q)-\Theta_{01}(y,q))\\
\sum_{n\in \Z} q^{\ft(2n\pm \ft)^2}y^{2n \pm \ft}&=
 \ft(\Theta_{10}(y,q)\pm \frac{1}{i}\Theta_{11}(y,q)),
\end{align*}
We have
\begin{align*}
&\sum_{\be \in P(K_d)^1}(-1)^{(\be,J+\bJ)}
q^{\ft(\be- \ft J ,\be-\ft J)}y^{(J,\be-\ft J)}\\
&= \Theta_{10}^2(q)\left(\Theta_{00}^2(q)+\Theta_{01}^2(q)\right)\left(\Theta_{00}^2(y,q)+\Theta_{01}^2(y,q)\right)
- 
\Theta_{10}^2(q)\left(\Theta_{00}^2(q)-\Theta_{01}^2(q)\right)\left(\Theta_{00}^2(y,q)-\Theta_{01}^2(y,q)\right)\\
&=2\Theta_{10}^2(q)\Theta_{01}^2(q)\Theta_{00}^2(y,q)+2\Theta_{10}^2(q)\Theta_{00}^2(q)\Theta_{01}^2(y,q)
\end{align*}
and similarly,
\begin{align*}
&\sum_{\be \in L_d}(-1)^{(\be,J+\bJ)}
q^{\ft(\be- \ft J ,\be-\ft J)}y^{(J,\be-\ft J)}\\
&= \frac{1}{8} (\Theta_{00}^2(q)+\Theta_{01}^2(q))^2 (\Theta_{10}^2(y,q)-\Theta_{11}^2(y,q)) -
\frac{1}{8}(\Theta_{00}^2(q)-\Theta_{01}^2(q))^2 (\Theta_{10}^2(y,q)+\Theta_{11}^2(y,q))
\end{align*}
and
\begin{align*}
&\sum_{\be \in L_d+\ga_1}(-1)^{(\be,J+\bJ)}
q^{\ft(\be- \ft J ,\be-\ft J)}y^{(J,\be-\ft J)}\\
&= \frac{1}{8} (\Theta_{00}^2(q)+\Theta_{01}^2(q))^2 (\Theta_{10}^2(y,q)+\Theta_{11}^2(y,q))
-
\frac{1}{8}(\Theta_{00}^2(q)-\Theta_{01}^2(q))^2 (\Theta_{10}^2(y,q)-\Theta_{11}^2(y,q)).
\end{align*}
Hence,
\begin{align*}
\ind_{F_{K_2,p}}(y,q) &=2\Theta_{00}^2(q)\Theta_{01}^2(q)\Theta_{10}^2(y,q)
+2\Theta_{00}^2(q)\Theta_{10}^2(q)\Theta_{01}^2(y,q)
+2\Theta_{10}^2(q)\Theta_{01}^2(q)\Theta_{00}^2(y,q).
\end{align*}
Hence, we have \eqref{eq_index_K3_formula}.
Note that since
\begin{align*}
\Theta_{00}(y,q) &= 1+q\Z[[q]][y^\pm],\\
\Theta_{10}(y,q) &= q^{\frac{1}{8}}((y^\ft +y^{-\ft}) +y^\ft q\Z[[q]][y^\pm]),\\
\Theta_{01}(y,q) &= 1+q\Z[[q]][y^\pm],
\end{align*}
we have
\begin{align*}
\lim_{q \to 0} \ind_{F_{K_2,p}}(y,q) =  2(y^\ft+y^{-\ft})^2+8+8 
=2y+20+2y^{-1},
\end{align*}
which is consistent with Proposition \ref{prop_coho_K3} and Example \ref{example_K3}.

\subsection{Landau-Ginzburg model of type $A_2$}
\label{sec_LG}
In this section, we will construct a unitary $N=(2,2)$ full vertex operator algebra associated with one of the simplest non-trivial singularity:
\begin{align*}
x^2+y^2+z^{3}=0,
\end{align*}
which is called the Landau-Ginzburg model \cite{VW}.
The important point is that this full VOA is not of CY type.

Let $V_{\sqrt{3}\Z}$ be the lattice vertex operator algebra associated with the rank one lattice $\Z\al$ with $(\al,\al)=3$. It is well-known that $V_{\sqrt{3}\Z}$ is an $N=2$ vertex operator algebra.
Let
\begin{align*}
M_1^{N=2}= \bigoplus_{k=0,1,2} V_{\sqrt{3}\Z+\frac{k}{\sqrt{3}}}\otimes \overline{V_{\sqrt{3}\Z+\frac{k}{\sqrt{3}}}}
\end{align*}
be the unitary full vertex operator superalgebra constructed in Proposition \ref{prop_unitary_Narain},
which is unitary $N=(2,2)$ as well.
Then, it is easy to show that
all the c-c primary vectors are exhausted by
\begin{align*}
e_{(\frac{1}{\sqrt{3}},\frac{1}{\sqrt{3}})}\in M_1^{N=2}
\end{align*}
and the vacuum $\va$,
while all c-a primary vectors are exhausted just by $\va$. Hence, we have:
\begin{prop}\label{prop_LG2}
$M_1^{N=2}$ is a unitary $N=(2,2)$ full VOA, and
\begin{align*}
H(M_1^{N=2},d_B)&= \C \va \oplus \C e_{(\frac{1}{\sqrt{3}},\frac{1}{\sqrt{3}})} \cong \C[z]/z^2 \\
H(M_1^{N=2},d_A)&\cong \C\va
\end{align*}
with $H^{\frac{1}{3},\frac{1}{3}}(M_1^{N=2},d_B) =\C  e_{(\frac{1}{\sqrt{3}},\frac{1}{\sqrt{3}})}$.
\end{prop}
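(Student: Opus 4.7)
The plan is first to identify all the primary vectors using the unitarity-based grading, then to derive the product structure from the charge bound, and finally to repeat the analysis for the $A$-twist exploiting the diagonal coset structure. Since $V_{\sqrt{3}\Z}$ carries the well-known level $k=1$ $N=2$ minimal model structure of central charge $c=1$ (with $J=\alpha/3$ for $(\alpha,\alpha)=3$, and $\tau^{\pm}$ given by the standard primary fields at the extremes of the Cartan), tensoring with its conjugate (and using Proposition~\ref{prop_unitary_Narain} from the appendix for the unitary Narain full VOA structure) equips $M_1^{N=2}$ with a unitary $N=(2,2)$ full VOA structure of central charge $(c,\bc)=(1,1)$.

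Next I enumerate $C(M_1^{N=2})_{cc}$. By Proposition~\ref{prop_NS_inequality} a c-c primary of weight $\alpha=\lambda J+\mu\bJ$ has $(h,\bh)=(\lambda/6,\mu/6)$, and by Lemma~\ref{lem_primary_inequality} we must have $0\le\lambda,\mu\le 1$. Since $M_1^{N=2}=\bigoplus_{k=0,1,2} V_{\sqrt{3}\Z+k/\sqrt{3}}\otimes\overline{V_{\sqrt{3}\Z+k/\sqrt{3}}}$ is \emph{diagonal} in the three cosets, the primary vectors split as products of chiral c-primaries from $V_{\sqrt{3}\Z+k/\sqrt{3}}$ with their conjugates in $\overline{V_{\sqrt{3}\Z+k/\sqrt{3}}}$. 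In each chiral coset the c-primary condition $h=(\alpha,J)/2$ together with $(\alpha,J)\le 1/3$ forces: (i) for $k=0$ only $\vac$; (ii) for $k=1$ only $e_{1/\sqrt{3}}$ (weight $1/6$, charge $1/3$); (iii) for $k=2$ the lowest weight is $2/3>1/6$, so no primaries. Pairing diagonally gives only $\vac$ and $e_{1/\sqrt{3}}\otimes\overline{e_{1/\sqrt{3}}}$, which by Proposition~\ref{prop_cohomology_primary} exhaust $H(M_1^{N=2},d_B)$.

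For the ring structure, set $z=e_{1/\sqrt{3}}\otimes\overline{e_{1/\sqrt{3}}}$. By Lemma~\ref{lem_primary_product}, the product $z\cdot z$ lives in $C(M_1^{N=2})^{2\alpha}$ with $2\alpha=\tfrac{2}{3}J+\tfrac{2}{3}\bJ$, i.e.\ charges $(2/3,2/3)$. But Lemma~\ref{lem_primary_inequality} bounds the charge by $(c/3,\bc/3)=(1/3,1/3)$, so this component is zero. Hence $z^2=0$, and the isomorphism $H(M_1^{N=2},d_B)\cong\C[z]/(z^2)$ follows, with the $(1/3,1/3)$-graded piece spanned by $z$.

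The $A$-twist case is handled analogously via $C(M_1^{N=2})_{ca}$: a c-a primary of charge $\lambda J+\mu\bJ$ has $(h,\bh)=(\lambda/6,-\mu/6)$ with $0\le\lambda\le 1$ and $-1\le\mu\le 0$. Chiral c-primaries are as above, but anti-chiral a-primaries in $\overline{V_{\sqrt{3}\Z+k/\sqrt{3}}}$ require the opposite sign of the charge: for $k=0$ only $\bar\vac$; for $k=1$ the candidate $\overline{e_{-1/\sqrt{3}}}$ sits in $k=2$ instead (because $-1/\sqrt{3}\equiv 2/\sqrt{3}\bmod\sqrt{3}\Z$), so there is \emph{none}; for $k=2$ one finds $\overline{e_{-1/\sqrt{3}}}$. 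The diagonal pairing then retains only $\vac\otimes\bar\vac$ (the $k=1$ chiral primary has no partner in its own coset, and the $k=2$ anti-chiral primary has no chiral partner). Hence $H(M_1^{N=2},d_A)\cong\C\vac$, finishing the proof. The only subtle point is the bookkeeping in the last paragraph: one must use precisely the fact that the diagonal (A-type) modular invariant kills the mixed-coset pairings that would otherwise produce further $d_A$-primaries, and this diagonal structure is the algebraic shadow of the fact that the $A_2$-singularity Landau--Ginzburg model has no holomorphic volume form.
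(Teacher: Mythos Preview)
Your proof is correct and follows the same approach as the paper, which merely asserts that ``it is easy to show that all the c-c primary vectors are exhausted by $e_{(\frac{1}{\sqrt{3}},\frac{1}{\sqrt{3}})}$ and the vacuum $\va$, while all c-a primary vectors are exhausted just by $\va$''; you have simply filled in the enumeration via Proposition~\ref{prop_NS_inequality} and Lemma~\ref{lem_primary_inequality} that the paper leaves implicit. One small imprecision: in case (iii) the coset $k=2$ actually contains $e_{-1/\sqrt{3}}$ of weight $1/6$, so ``the lowest weight is $2/3$'' should read ``the lowest weight among vectors of non-negative $J$-charge is $2/3$'' (as you correctly use later in the $A$-twist analysis); the conclusion is unaffected.
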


\begin{rem}
\label{rem_Jacobi_ring}
Let $M_k^{N=2}$ be the diagonal model of the unitary $N=2$ VOA of central charge $\frac{3k}{k+2}$.
In physics, it is well-known that 
$M_k^{N=2}$ is the Landau-Ginzburg model of the Kleinian singularity of type $A_{k+1}$ \cite{VW}:
\begin{align*}
x^2+y^2+z^{k+2}=0,
\end{align*}
that is, set $W_k(x,y,z) = x^2+y^2+z^{k+2}$.
Then,
\begin{align*}
H(M_k^{N=2},d_B) &\cong \C[x,y,z]/ \langle \pa_x(W_k),\pa_y(W_k),\pa_z(W_k)\rangle\\
&= \C[x,y,z]/\langle x,y,z^{k+1}\rangle.
\end{align*}
The latter $\C$-algebra is called a Jacobi ring of singularities. 
Since
\begin{align*}
W(\la^\ft x,\la^\ft y, \la^{\frac{1}{k+2}} z) = \la W(x,y,z),
\end{align*}
it is natural to define the grading of $z$ by $\frac{1}{k+2}$ from the singularity theory point of view.
This is the origin of the non-trivial $\R^2$-grading in Proposition \ref{prop_LG2} \cite{VW}.
\end{rem}


\appendix

\section{Unitarity of spectral flow}
In appendix, we define unitarity of a generalized full vertex operator superalgebra (GFV), and show that 
\begin{itemize}
\item
$\Om_F$ in Theorem \ref{thm_vacuum} is unitary if
a full $\mathcal{H}$ vertex operator algebra $(F,H)$ is unitary;
\item
$G_{H,p}$ in Proposition \ref{standard} is unitary GFV if $(H_l,(-,-)_l)$ and $(H_l,(-,-)_l)$ are positive-definite;
\item
A twisted group algebra $\C[\hat{L}]$ associated with an (possible indefinite) integral lattice $L$ is unitary GFV;
\item
A tensor product of unitay GFV is unitary GFV,
\end{itemize}
which imply that the full vertex operator superalgebras in Section \ref{sec_example} is unitary.

\begin{dfn}\label{def_unitary_chiral_susy}
A \textbf{unitary} full $\mathcal{H}$ vertex operator superalgebra is a full $\mathcal{H}$ vertex operator superalgebra $(F,\om,\omb,H_l,H_r)$ which is unitary as a full vertex operator superalgebra by an invariant bilinear form $(\bullet,\bullet)$ and an anti-linear involution $\phi:V \rightarrow V$ such that:
\begin{align}
\phi(h)=-h,\quad \phi(\h) = -\h
\label{eq_H_unitary}
\end{align}
for any $h\in H_l$ and $\h\in H_r$.
\end{dfn}
Note that a unitary $N=(2,2)$ full vertex operator superalgebra is an example of a unitary $\mathcal{H}$-vertex operator superalgebra by $H_l=\R J$ and $H_r=\R \bJ$.

Let $F$ be a unitary full $\mathcal{H}$ vertex operator superalgebra. Hereafter, we will show that $\Om_F$ in Theorem \ref{thm_vacuum} also inherits a unitary structure. It is clear that the restriction of $(-,-)$ on $\Om_F$ is non-degenerate. For any $v \in \Om_F^\al$ and $h\in H_l$, by \eqref{eq_H_unitary}
\begin{align*}
h(n)\phi(v) = \phi(-h(n)v) =
\begin{cases}
0 & n \geq 1\\
-(h,\al) v & n=0.
\end{cases}
\end{align*}
Hence, $\phi$ maps $\Om_F^\al$ onto $\Om_F^{-\al}$. Moreover, for any $a\in \Om_F^\al$ and $b\in \Om_F^\be$,
\begin{align}
\begin{split}
\phi(\hY_\Om(a,\uz)b) &= \phi(E^-(-\al,\uz)Y(a,\uz) E^+(-\al,\uz)z^{-(\al,\be)_l}\z^{-(\al,\be)_r} b)\\
& =E^-(\al,\uz)Y(\phi(a),\uz) E^+(\al,\uz)z^{-(\al,\be)_l}\z^{-(\al,\be)_r} \phi(b)\\
&=\hY_\Om(\phi(a),\uz) z^{-p\al(0)}\z^{-\p\al(0)}z^{-(\al,\be)_l}\z^{-(\al,\be)_r} \phi(b)\\
&=\hY_\Om(\phi(a),\uz)\phi(b).
\end{split}
\label{eq_generalized_auto}
\end{align}
Thus, $\phi$ is an anti-linear automorphism of the generalized full vertex operator algebra $\Om_F$.
Let $a_i \in (\Om_F)_{t_i,\td_i}^{\al_i}$. Recall that $t_i=h_i - \frac{(\al_i,\al_i)_l}{2}$ and $\td_i=\h_i - \frac{(\al_i,\al_i)_r}{2}$, where $(h_i,\h_i)$ is the conformal weight in $F$.
Then, we have
\begin{align}
\begin{split}
&(a_1,\hY_\Om(a_2,\uz)a_3)\\ &= 
(a_1, E^-(-\al_2,\uz) Y(a_2,\uz)E^+(-\al_2,\uz)z^{-p\al_2(0)}\z^{-\p\al_2(0)} a_3)\\
& =z^{-(\al_2,\al_3)_l}\z^{-(\al_2,\al_3)_r} (a_1, Y(a_2,\uz)a_3)\\
&=z^{-(\al_2,\al_3)_l}\z^{-(\al_2,\al_3)_r}
(Y(\exp(L(1)z+\Ld(1)\z) (-1)^{(L(0)-\Ld(0))+2(L(0)-\Ld(0))^2} z^{-2L(0)}\z^{-2\Ld(0)} a_2, \uz^{-1}) a_1, a_3)\\
&=z^{-(\al_2,\al_3)_l}\z^{-(\al_2,\al_3)_r}
z^{-2h_2}\z^{-2\h_2} (-1)^{(h_2-\h_2)+2(h_2-\h_2)^2}\\
&(E^-(\al_2,\uz^{-1}) \hY_\Om(\exp(L(1)z+\Ld(1)\z)a_2, \uz^{-1})E^+(\al_2,\uz^{-1})z^{-p\al_2(0)} \z^{-\p\al_2(0)} a_1, a_3)\\
&=z^{-(\al_2,\al_3+\al_1)_l}\z^{-(\al_2,\al_3+\al_1)_r}
z^{-2t_2-(\al_2,\al_2)_l}\z^{-2\td_2-(\al_2,\al_2)_r}
 (-1)^{(h_2-\h_2)+2(h_2-\h_2)^2}(\hY_\Om(\exp(L(1)z+\Ld(1)\z)a_2, \uz^{-1}) a_1, a_3)\\
&=z^{-2t_2}\z^{-2\td_2} (-1)^{(h_2-\h_2)+2(h_2-\h_2)^2}(\hY_\Om(\exp(L_\Om(1)z+\Ld_\Om(1) \z)a_2, \uz^{-1}) a_1, a_3),
\end{split}
\label{eq_inv_generalized}
\end{align}
where in the last line, we used $L(1) = L_H(1)+L_\Om(1)$ and \eqref{eq_inv_generalized} is zero unless $\al_1+\al_2+\al_3=0$.

\begin{dfn}\label{def_unitary_generalized_full}
Let $(\Om,\hY,H,(-,-)_c)$ be a generalized full vertex operator superalgebra.
\begin{itemize}
\item
A symmetric bilinear form $(\bullet,\bullet):\Om \otimes \Om \rightarrow \C$ is called {\bf invariant} if
\begin{enumerate}
\item
For any $u\in \Om^\al$ and $v\in \Om^\be$, $(u,v)=0$ unless $\al +\be =0$;
\item
For any $a,u,v \in \Om$,
\begin{align}
(u,\hY(a,\uz)v) =(-1)^{n(a)+2n(a)^2} 
 (\hY(\exp(L_\Om(1)z+\Ld_\Om(1)\z) z^{-2L_\Om(0)}\z^{-2\Ld_\Om(0)}a, \uz^{-1})u,v),
\label{eq_full_generalized_inv_def}
\end{align}
where $n_a = t_a-\td_a + \frac{(\al,\al)_c}{2} \in \ft\Z$ with $a\in \Om_{t_a,\td_a}^\al$.
\end{enumerate}
\item
An {\bf anti-linear automorphism} $\phi$ of $\Om$ is an anti-linear map $\phi:\Om \rightarrow \Om$ such that
 $\phi(\va) = \va, \phi(\om)=\om, \phi(\omb)=\omb$ and $\phi(a(r,s)b) = \phi(a)(r,s)\phi(b)$ for any $a,b\in \Om$ and $r,s \in \R$
 and $\phi$ maps $\Om_{t,\td}^\al$ onto $\Om_{t,\td}^{-\al}$ for any $t,\td \in \R$ and $\al \in H$.
\item
Let $\Om$ be a generalized vertex operator superalgebra with an invariant bilinear form $(\bullet,\bullet)$
and $\phi:\Om \rightarrow \Om$ be an anti-linear involution, i.e.\! an anti-linear automorphism of order 2. 
The pair $(\Om,\phi)$ is called \textbf{unitary} if the sesquilinear form $\langle \bullet,\bullet \rangle = (\phi(\bullet),\bullet)$ is positive-definite.
\end{itemize}
\end{dfn}

Then, we have:
\begin{prop}
\label{prop_om_full_unitary}
Let $F$ be a unitary $N=(2,2)$ full vertex operator superalgebra. Then, the associated generalized full vertex operator algebra $\Om_F$ is unitary by the restrictions of the bilinear form and the involution.
\end{prop}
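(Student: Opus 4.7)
The plan is to verify the three conditions in Definition \ref{def_unitary_generalized_full} for the triple $(\Om_F,\,(-,-)|_{\Om_F},\,\phi|_{\Om_F})$ one at a time, each reducing to a routine calculation already (partially) carried out before the statement.

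First, I would check that $\phi$ restricts to an anti-linear automorphism of the generalized full vertex operator superalgebra $\Om_F$. The condition $\phi(h_l)=-h_l$, $\phi(h_r)=-h_r$ for $h_l\in H_l$, $h_r\in H_r$ (from Definition \ref{def_unitary_chiral_susy}) implies that for $a\in \Om_F^\al$ and $n\geq 1$ one has $h_l(n,-1)\phi(a)=\phi(-h_l(n,-1)a)=0$ and $h_l(0,-1)\phi(a)=-(h_l,p\al)_l\phi(a)$, and similarly for $h_r$, so $\phi$ sends $\Om_F^\al$ to $\Om_F^{-\al}$. Compatibility with the modified vertex operator $\hY_\Om$ is exactly the computation \eqref{eq_generalized_auto}. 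It remains to note that $\phi(\vac)=\vac$ is given, and that $\phi(\om_{H_l})=\om_{H_l}$, $\phi(\omb_{H_r})=\omb_{H_r}$ hold because $\phi(h_i^l(-1)h_i^l)=(-h_i^l)(-1)(-h_i^l)=h_i^l(-1)h_i^l$ using \eqref{eq_om_H}; combined with $\phi(\om)=\om$, $\phi(\omb)=\omb$ this gives $\phi(\om_\Om)=\om_\Om$ and $\phi(\omb_\Om)=\omb_\Om$.

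Second, I would verify that the restricted bilinear form satisfies \eqref{eq_full_generalized_inv_def}. For $a\in(\Om_F)^\al_{t_a,\td_a}$ with conformal weight $(h_a,\h_a)$ in $F$, the bidegree shift $t_a=h_a-(p\al,p\al)_l/2$, $\td_a=\h_a-(\p\al,\p\al)_r/2$ yields
\begin{align*}
n(a)=t_a-\td_a+\tfrac{(\al,\al)_c}{2}=h_a-\h_a,
\end{align*}
so the sign $(-1)^{n(a)+2n(a)^2}$ in \eqref{eq_full_generalized_inv_def} matches the sign in \eqref{eq_bilinear_inv}. The computation \eqref{eq_inv_generalized} then reduces the invariance of $(-,-)$ on $F$ to the invariance of its restriction on $\Om_F$; here one uses $L(0)=L_{H_l}(0)+L_\Om(0)$, $\Ld(0)=\Ld_{H_r}(0)+\Ld_\Om(0)$, and crucially that $L_{H_l}(1),\Ld_{H_r}(1)$ annihilate $\Om_F$ (since $h_l(n,-1)a=0=h_r(-1,n)a$ for $n\geq 1$ and $a\in\Om_F$), so that $L(1)$ and $\Ld(1)$ coincide with $L_\Om(1)$ and $\Ld_\Om(1)$ on $\Om_F$. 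The vanishing of $(u,v)$ for $u\in\Om_F^\al$, $v\in\Om_F^\be$ with $\al+\be\neq 0$ follows from $h_l(0,-1)$ and $h_r(-1,0)$ being self-adjoint on $F$, so the invariant bilinear form pairs distinct Heisenberg weights to zero.

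Third, positive-definiteness of the Hermite form $\langle -,-\rangle=(\phi(-),-)$ on $\Om_F$ and the resulting non-degeneracy of $(-,-)|_{\Om_F}$ are immediate, since $\Om_F\subset F$ and the form is already positive-definite on $F$. The one bookkeeping point to be careful about, which I expect to be the main (if minor) obstacle, is keeping the operators $L_\Om(1),\Ld_\Om(1)$ clearly separated from $L(1),\Ld(1)$ in the manipulation of $\exp(L(1)z+\Ld(1)\z)z^{-2L(0)}\z^{-2\Ld(0)}$ during the transition from \eqref{eq_bilinear_inv} to \eqref{eq_full_generalized_inv_def}; once the relations $L(1)|_{\Om_F}=L_\Om(1)|_{\Om_F}$, $\Ld(1)|_{\Om_F}=\Ld_\Om(1)|_{\Om_F}$ and the weight shifts are recorded, the entire proof is a direct reading of \eqref{eq_generalized_auto} and \eqref{eq_inv_generalized}.
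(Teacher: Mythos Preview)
Your proposal is correct and follows essentially the same approach as the paper: the paper's proof consists precisely of the two computations \eqref{eq_generalized_auto} and \eqref{eq_inv_generalized} displayed immediately before Definition \ref{def_unitary_generalized_full}, together with the observation that positivity is inherited from $F$. Your write-up in fact fills in a few details the paper leaves implicit, such as the verification $n(a)=h_a-\h_a$ and the fact that $L_{H_l}(1),\Ld_{H_r}(1)$ annihilate $\Om_F$ so that $L(1)|_{\Om_F}=L_\Om(1)|_{\Om_F}$.
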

Then, similarly to the proof of \cite[Propostion 2.16]{M9}, we have:
\begin{prop}\label{prop_unitary_tensor}
Let $\Om_1$ and $\Om_2$ be generalized full vertex operator algebras equipped with invariant bilinear forms $(-,-)_1$ and $(-,-)_2$. Then, the bilinear form $(-,-)$ on $\Om_1\otimes \Om_2 $ given by
\begin{align*}
(a\otimes b,c\otimes d) = (a,c)_1(b,d)_2,\quad \quad
\text{for $a,c \in \Om_1$ and $b,d\in \Om_2$}
\end{align*}
 is invariant. Moreover, if $\Om_1$ and $\Om_2$ are unitary with the anit-linear involution $\phi_1$ and $\phi_2$, then $\Om_1\otimes \Om_2$ is unitary by $\phi_1 \otimes \phi_2$.
\end{prop}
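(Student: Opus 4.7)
The plan is to verify invariance by a direct computation on pure tensors and then deduce unitarity from a standard positivity argument. First I would recall from \cite{M1} that the vertex operator on $\Om_1 \otimes \Om_2$ is given, up to the charge-lattice monodromy factors, by the Koszul-sign rule
\[
\hY_{12}(a_1 \otimes a_2,\uz)(v_1 \otimes v_2) = (-1)^{|a_2||v_1|}\,\hY_1(a_1,\uz)v_1 \otimes \hY_2(a_2,\uz)v_2,
\]
with charge lattice $H_1 \oplus H_2$ equipped with the direct sum of $(-,-)_{c,i}$, and split translations $L_\Om(k) = L_{\Om_1}(k) \otimes \id + \id \otimes L_{\Om_2}(k)$ (similarly for $\Ld_\Om(k)$). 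Substituting $a = a_1 \otimes a_2$, $u = u_1 \otimes u_2$, $v = v_1 \otimes v_2$ into the invariance axiom \eqref{eq_full_generalized_inv_def}, expanding each vertex operator via the Koszul rule, invoking invariance of $(-,-)_i$ on each slot separately, and reassembling back into a tensor-product vertex operator on the opposite side, both sides collapse to the same product $(\hY_1(\tilde a_1,\uz^{-1})u_1, v_1)_1 \cdot (\hY_2(\tilde a_2,\uz^{-1})u_2, v_2)_2$ up to an overall sign.

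The sign matching is where the content lies. Because the bigrading and the charge square both split additively across the tensor product, one has $n(a_1 \otimes a_2) = n_1(a_1) + n_2(a_2)$, and the elementary congruence
\[
n + 2n^2 \equiv n_1 + 2n_1^2 + n_2 + 2n_2^2 + 4 n_1 n_2 \equiv (n_1 + 2n_1^2) + (n_2 + 2n_2^2) \pmod 2
\]
forces the required prefactor $(-1)^{n+2n^2}$ to factor as the product of the factor-wise invariance signs. The residual Koszul sign produced by expanding $\hY_{12}$ is $(-1)^{|a_2||v_1|}$ on the left and $(-1)^{|\tilde a_2||u_1|} = (-1)^{|a_2||u_1|}$ on the right, and these coincide thanks to the parity relation $|u_i| \equiv |a_i| + |v_i| \pmod 2$, which must hold whenever the corresponding factor of $(-,-)_i$ is non-zero (the evenness of the bilinear forms together with the $\Z_2$-grading in (GFV2)).

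Once invariance is established, $\phi_1 \otimes \phi_2$ is checked to be an anti-linear involution of the tensor-product generalized full vertex superalgebra preserving $\vac$, $\om$, $\omb$, and intertwining all $(r,s)$-products; this is immediate from the corresponding properties of $\phi_1$, $\phi_2$ and the symmetry of the Koszul sign. Positive-definiteness of $\langle -,- \rangle = ((\phi_1 \otimes \phi_2)(-), -)$ then reduces to the standard fact that the super tensor product of two positive-definite Hermite forms is positive-definite: decomposing into bi-homogeneous pieces, the form is block-diagonal, and the Koszul sign on the odd-odd sector is compensated by the $i$-normalized convention for odd inner products in $\sVect$.

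The main obstacle I anticipate is the careful bookkeeping in the sign-matching step, and in particular deciding whether the bilinear form $(a \otimes b, c \otimes d) = (a,c)_1(b,d)_2$ is invariant as literally stated or requires a Koszul correction of the shape $(-1)^{|b||c|}$ built into its definition. My expectation is that the correct and intended convention is the latter (paralleling the Hermite-form convention for $\sVect$), in which case the parity-matching identity above absorbs the extra sign cleanly; with that convention fixed, the remaining computation is a routine if slightly tedious expansion.
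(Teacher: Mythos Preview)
The paper does not actually prove this proposition; it simply says ``similarly to the proof of \cite[Proposition 2.16]{M9}'' and moves on. Your direct-verification strategy is the expected content of such a proof, and you have correctly located the only nontrivial point, namely the sign bookkeeping.

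One correction to your sign analysis: since $n_i \in \tfrac12\Z$ with $2n_i \equiv |a_i| \pmod 2$ by (GFV2), the cross term satisfies $4n_1 n_2 = (2n_1)(2n_2) \equiv |a_1|\,|a_2| \pmod 2$, not $0$ in general. So the congruence you wrote fails precisely when both tensor factors are odd, and the discrepancy is exactly the sign $(-1)^{|a_1||a_2|}$. This is the same obstruction you anticipate in your final paragraph: with the literal definition $(a\otimes b,c\otimes d)=(a,c)_1(b,d)_2$ the invariance identity acquires an extra $(-1)^{|a_1||a_2|}$, which is absorbed if one instead uses the Koszul-twisted pairing $(-1)^{|b||c|}(a,c)_1(b,d)_2$. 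In the paper's applications this never matters, since one factor is always the bosonic $G_{H,p}$ (the paper explicitly notes after Proposition~\ref{standard} that $G_{H,p}$ contains no fermions), and indeed the statement of the proposition says ``algebras'' rather than ``superalgebras''. Your instinct that the general super statement needs the Koszul-corrected form is correct; just replace the erroneous ``$4n_1 n_2 \equiv 0$'' step by the honest identification of this term with the Koszul sign.
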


We will give two examples of unitary generalized full vertex operator superalgebras, which is a generalization of \cite{AMT} showing that the full VOA corresponding to Narain CFT is unitary.

First, we will show that $G_{H,p}$ is unitary (Definition \ref{def_unitary_generalized_full}).
Let $H_l$ and $H_r$ be finite dimensional real vector spaces equipped with non-degenerate symmetric bilinear forms $(-,-)_l$ and $(-,-)_r$. 
In this section, we will assume:
\begin{itemize}
\item
$(-,-)_l$ and $(-,-)_r$ are positive-definite.
\end{itemize}

Recall that $H=H_l\oplus H_r$ and 
\begin{align*}
(-,-)_c: H \otimes H \rightarrow \R, \quad (\al,\be)_c=(p\al,p\be)_l - (\p\al,\p\be)_r, \text{ for }\al,\be \in H,
\end{align*}
where $p:H \rightarrow H_l$ and $\p:H \rightarrow H_r$ are projections,
\begin{align*}
G_{H,p} =\mathrm{Ind}\,\R[H] = \bigoplus_{\al \in H} M_{H,p}(\al),
\end{align*}
the induced representation of affine Heisenberg Lie algebra from the group algebra.
Note that $(G_{H,p},\hY,H,-(-,-)_c)$ is constructed, here, over $\R$.

Note that the conformal weight of $e_\al \in \R[H] \subset G_{H,p}$ is $(\frac{(p\al,p\al)_l}{2},\frac{(\p\al,\p\al)_r}{2})$ and 
\begin{align}
n(e_\al) = (\frac{(p\al,p\al)_l}{2}-\frac{(\p\al,\p\al)_r}{2})+(- \frac{(\al,\al)_c}{2})=0.
\label{eq_null_generalized}
\end{align}

By embedding $G_{H,p}$ into a full vertex algebra
and 
\begin{align}
F_{G_{H,p}} =\bigoplus_{\al \in H} M_{H,p}(\al) \otimes M_{-H,p}(\al)
\label{eq_cancel_zero}
\end{align}
and by the proof of the existence of invariant bilinear form \cite{M9},
we can easily obtain:
\begin{prop}
\label{prop_exist_inv_GHp}
The generalized full vertex operator algebra $G_{H,p}$ has a unique non-degenerate symmetric invariant bilinear form with $(\va,\va)=1$.
\end{prop}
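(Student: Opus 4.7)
The strategy is to adapt the construction of \cite{M9} for invariant bilinear forms on full vertex operator superalgebras to the generalized setting, by embedding $G_{H,p}$ into the auxiliary genuine full vertex operator superalgebra $F_{G_{H,p}}$ where the fractional powers of the formal variables cancel.

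For uniqueness, suppose $B_1, B_2$ are two such forms and let $B := B_1 - B_2$, so that $B(\va, \va) = 0$ and $B$ is invariant. Applying the invariance formula \eqref{eq_full_generalized_inv_def} with $v = \va$, combined with the creation axiom (GFV3) $\hY(a,\uz)\va|_{\uz = 0} = a$, expresses $B(\va, a)$ in terms of coefficients of $B(\hY(\tilde a, \uz^{-1})\va, \va)$, where $\tilde a = \exp(L_\Om(1) z + \Ld_\Om(1)\z)\, z^{-2L_\Om(0)} \z^{-2\Ld_\Om(0)} a$. Since $G_{H,p}$ is spanned by vectors of the form $h_1(n_1) \cdots h_k(n_k) e_\al$ with $h_i \in H$, $n_i \in \Z_{<0}$, $\al \in H$, iterating the argument reduces every $B(u,v)$ to a linear combination of $B(\va,\va) = 0$, proving $B \equiv 0$.

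For existence, I would form the tensor-product generalized full vertex algebra $G_{H,p} \otimes \bar G_{H,p}$, where $\bar G_{H,p}$ denotes the generalized full vertex algebra built on the same underlying data but with charge form $-(-,-)_c$. Its diagonal subspace
\begin{align*}
F_{G_{H,p}} = \bigoplus_{\al \in H} M_{H,p}(\al) \otimes \bar M_{H,p}(\al)
\end{align*}
is closed under the tensor-product vertex operator, and the monodromy factors $z^{(\al,\be)_c}$ from the two tensor factors cancel (so the $\Z_2$-grading of (GFV2) collapses exactly as in \eqref{eq_null_generalized}), making $F_{G_{H,p}}$ a genuine full vertex operator superalgebra. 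By the existence theorem of \cite{M9}, $F_{G_{H,p}}$ admits a unique non-degenerate invariant bilinear form $B_F$ with $B_F(\va\otimes\va, \va\otimes\va) = 1$. To descend to $G_{H,p}$, one uses the canonical isomorphism between the underlying Fock spaces $M_{H,p}(\al) \cong \bar M_{H,p}(\al)$ (these differ only in the bilinear form on $H$, not in the linear structure), together with the Heisenberg-covariant pairing $M_{H,p}(\al) \times M_{H,p}(-\al) \to \C$, to extract the required form $B$ on $G_{H,p}$. Generalized invariance \eqref{eq_full_generalized_inv_def} then follows from the ordinary invariance of $B_F$ combined with the monodromy cancellation used in the construction of $F_{G_{H,p}}$, and non-degeneracy follows from non-degeneracy of $B_F$ together with irreducibility of each $M_{H,p}(\al)$ as a Heisenberg module.

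The main obstacle will be tracking the twist sign $(-1)^{n(a) + 2n(a)^2}$ appearing in \eqref{eq_full_generalized_inv_def}: this sign is trivial on $F_{G_{H,p}}$ (where $n(a) = 0$ on the diagonal), but becomes nontrivial when one descends to $G_{H,p}$, where the power $n(a) \in \tfrac{1}{2}\Z$ is governed by (GFV2). Making the sign match requires careful comparison of the $L_\Om$-, $\Ld_\Om$-actions on $G_{H,p}$ with those on $F_{G_{H,p}}$, and of the cocycle governing the product $e_\al(-1,-1)e_\be = \pm e_{\al+\be}$ used in \eqref{eq_standard_vertex}; once this bookkeeping is carried out, the remaining verifications reduce to direct computation in the Fock spaces.
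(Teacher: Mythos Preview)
Your proposal is correct and takes essentially the same approach as the paper: the paper's entire argument is the sentence preceding the proposition, namely to embed $G_{H,p}$ into the diagonal full vertex algebra $F_{G_{H,p}} = \bigoplus_{\al \in H} M_{H,p}(\al)\otimes M_{-H,p}(\al)$ (your $\bar G_{H,p}$ is the paper's $M_{-H,p}$) where the monodromies cancel, and then invoke the existence/uniqueness result of \cite{M9}. Your additional detail on the uniqueness argument and the sign bookkeeping for $(-1)^{n(a)+2n(a)^2}$ goes beyond what the paper spells out, but is exactly the routine verification the paper is gesturing at with ``we can easily obtain.''
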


Since $\R[H] \rightarrow \R[H],\quad e_\al \mapsto e_{-\al}$ is an $\R$-algebra involution,
it can be extended to an automorphism of the generalized full vertex algebra $\phi:G_{H,p} \rightarrow G_{H,p}$ by
\begin{align*}
\phi: &h_l^1(-n_1-1)\dots h_l^l(-n_l-1)h_r^1(-m_1-1)\dots h_r^k(-m_k-1)e_\al\\
&\mapsto (-1)^{l+k} h_l^1(-n_1-1)\dots h_l^l(-n_l-1)h_r^1(-m_1-1)\dots h_r^k(-m_k-1)e_{-\al}
\end{align*}
Set $G_{H,p}^\C = G_{H,p} \otimes_{\R}\C$, which is naturally a generalized full vertex algebra over $\C$.
For $i=0,1$, set 
$G_i =\{a\in G_{H,p} \mid \phi(a)=(-1)^i a\}$,
which are regarded as subspaces of $G_{H,p}^\C$,
 and
\begin{align*}
\tilde{G} = G_0 \oplus i G_1 \subset G_{H,p}^\C.
\end{align*}
Then, $\tilde{G}$ is a real subalgebra of $G_{H,p}^\C$.
It is clear that $\om, \omb \in G_0 \subset \tilde{G}$
and by the restriction of the invariant bilinear form on $G_{H,p}^\C$, 
$\tilde{G}$ has a invariant symmetric bilinear form:
\begin{align*}
(-,-):\tilde{G} \otimes \tilde{G} \rightarrow \C.
\end{align*}
\begin{prop}\label{prop_positive_definite}
The bilinear form on $\tilde{G}$ is real valued and positive-definite.
\end{prop}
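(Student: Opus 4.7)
The plan is to complexify and recognize $\tilde{G}$ as a real form. Extend $\phi$ to a $\C$-antilinear involution $\sigma$ on $G_{H,p}^\C$ by $\sigma(a\otimes \la)=\phi(a)\otimes \bar\la$ for $a\in G_{H,p}$ and $\la\in\C$. A direct check shows $\sigma$ is an automorphism of the generalized full vertex algebra $G_{H,p}^\C$ and that the $\sigma$-fixed real subspace is exactly $\tilde{G}=G_0\oplus iG_1$. Define the sesquilinear form
\begin{align*}
\langle u,v\rangle=(\sigma(u),v)\qquad(u,v\in G_{H,p}^\C),
\end{align*}
extending $(-,-)$ $\C$-bilinearly. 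On $\tilde{G}$ one has $\sigma=\id$, so $\langle-,-\rangle|_{\tilde{G}}=(-,-)|_{\tilde{G}}$. Hence it suffices to show that $\langle-,-\rangle$ is a positive-definite Hermitian form on $G_{H,p}^\C$: real-valuedness on $\tilde{G}$ then follows automatically, and positive-definiteness transfers.

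To prove positivity I would work on the PBW basis
\begin{align*}
u_{\al,I,J}=h_l^{i_1}(-n_1)\cdots h_l^{i_k}(-n_k)\,h_r^{j_1}(-m_1)\cdots h_r^{j_\ell}(-m_\ell)\,e_\al,
\end{align*}
indexed by $\al\in H$ and multi-indices with $n_s,m_t\geq 1$. The first key computation is the base case $(e_{-\al},e_\al)=1$: apply the invariance formula \eqref{eq_full_generalized_inv_def} to $a=e_\al$, $u=e_{-\al}$, $v=\va$; since $e_\al$ is Heisenberg-lowest-weight (so $L_\Om(1)e_\al=\Ld_\Om(1)e_\al=0$) and $n(e_\al)=0$ by \eqref{eq_null_generalized}, the $z^{-2L_\Om(0)}\z^{-2\Ld_\Om(0)}$ factor exactly cancels the prefactor coming from $\hY_\std(e_\al,\uz^{-1})e_{-\al}$, and matching constant terms yields $(e_{-\al},e_\al)=(\va,\va)=1$. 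Applying the same invariance formula to the Heisenberg currents (weights $(1,0)$ and $(0,1)$, with $n(h_l^i)=1$ and $n(h_r^j)=-1$, so $(-1)^{n+2n^2}=-1$ in both cases) yields the adjoint relations $(u,h_l^i(n)v)=-(h_l^i(-n)u,v)$ and $(u,h_r^j(n)v)=-(h_r^j(-n)u,v)$.

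Using these one shows the $u_{\al,I,J}$ are mutually orthogonal under $\langle-,-\rangle$: distinct $\al$'s are separated by the $H$-grading, and for fixed $\al$ the standard Heisenberg annihilator/creator argument applies, using $h(n)e_\al=0$ for $n\geq 1$. On the diagonal, $\sigma(u_{\al,I,J})=(-1)^{k+\ell}u_{-\al,I,J}$, so
\begin{align*}
\langle u_{\al,I,J},u_{\al,I,J}\rangle
=(-1)^{k+\ell}(u_{-\al,I,J},u_{\al,I,J}).
\end{align*}
Moving each annihilator rightward via the adjoint relations yields $k+\ell$ sign flips that cancel $(-1)^{k+\ell}$; the surviving commutators contribute factors $n_s(h_l^{i_s},h_l^{i_s})_l>0$ and $m_t(h_r^{j_t},h_r^{j_t})_r>0$ (using positive-definiteness of $(-,-)_l$ and $(-,-)_r$), and the final term is $(e_{-\al},e_\al)=1$. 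Thus $\langle u_{\al,I,J},u_{\al,I,J}\rangle$ is a product of positive reals, so $\langle-,-\rangle$ is positive-definite on $G_{H,p}^\C$, and the restriction $(-,-)|_{\tilde{G}}$ is real-valued and positive-definite. The main obstacle is the sign bookkeeping: one must verify that the $(-1)^{n(a)+2n(a)^2}$ prefactor in \eqref{eq_full_generalized_inv_def} produces the uniform adjoint rule $h(n)^*=-h(-n)$ for both chiral and anti-chiral currents, and that the $(-1)^{k+\ell}$ from $\phi$ exactly cancels the signs from the $k+\ell$ adjoint moves, since any residual sign would destroy positivity.
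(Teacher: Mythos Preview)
Your proposal is correct and takes essentially the same approach as the paper: both compute the base case $(e_{-\al},e_\al)=1$ from the invariance formula \eqref{eq_full_generalized_inv_def} (the paper packages this as $(e_\al+e_{-\al},e_\al+e_{-\al})=2$), and then invoke the Heisenberg commutation relations together with the positive-definiteness of $(-,-)_l$ and $(-,-)_r$ to handle the Fock-space descendants. Your passage to the Hermitian form $\langle-,-\rangle=(\sigma(-),-)$ on $G_{H,p}^\C$ is just the standard repackaging of the same computation; the paper instead works directly on the real spanning set $\{i^{k+\ell}h_l^{i_1}(-n_1)\cdots h_r^{j_\ell}(-m_\ell)(e_\al\pm e_{-\al})\}$ of $\tilde G$, which amounts to the same thing.
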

\begin{proof}
We first show that the bilinear form is positive-definite on
\begin{align*}
\R (e_\al+e_{-\al}) \oplus \R i(e_\al-e_{-\al}) \subset G_{H,p}
\end{align*}
for any $\al \in H$.
By \eqref{eq_full_generalized_inv_def} and \eqref{eq_null_generalized}, we have:
\begin{align*}
&(e_\al+e_{-\al}, e_\al+e_{-\al})\\
&=\lim_{z\to 0} (\hY_\std(e_\al+e_{-\al},\uz)\va, e_\al+e_{-\al})\\
&=\lim_{z\to 0} z^{-(p\al,p\al)_l}\z^{-(\p\al,\p\al)_r} ( \va, Y_\std(e_\al+e_{-\al},\uz^{-1})(e_\al+e_{-\al}))\\
&= (\va, e_\al\cdot e_{-\al}+e_{-\al}\cdot e_{\al}))= 2.
\end{align*}
Similarly,
$( i(e_\al-e_{-\al}), i(e_\al-e_{-\al})) =2$ and $( (e_\al+e_{-\al}),i(e_\al-e_{-\al}))=0.$
It is clear that $\tilde{G}$ is spanned as a real vector space by
\begin{align*}
G_\al^+ &= \mathrm{Span}_\R \{ i^{l+k} h_l^1(-n_1-1)\dots h_l^l(-n_l-1)h_r^1(-m_1-1)\dots h_r^k(-m_k-1)(e_\al+e_{-\al})\}\\
G_\al^- &= \mathrm{Span}_\R \{ i^{l+k+1} h_l^1(-n_1-1)\dots h_l^l(-n_l-1)h_r^1(-m_1-1)\dots h_r^k(-m_k-1)(e_\al-e_{-\al})\}.
\end{align*}
Then, $G_\al^{\eta_1}$ and $G_\be^{\eta_2}$ are orthogonal if 
$\eta_1 \neq \eta_2$ or $\al \neq \pm \be$.
Since $(-,-)_l$ and $(-,-)_r$ are positive-definite,
by the commutator relation of affine Heisenberg Lie algebra,
$G_\al^+$ and $G_\al^-$ are both positive-definite.
\end{proof}
There is a unique anti-linear involution $\phi_\std:G_{H,p}^\C \rightarrow G_{H,p}^\C$ whose fixed point real subalgebra is $\tilde{G}$.
Moreover,
\begin{align*}
\phi_\std(h) &= \phi_\std(-i\cdot ih) = i\phi_\std(ih)= -h\\
\phi_\std(e_{\pm\al}) &= \ft \phi_\std((e_\al+e_{-\al})\mp i \cdot i(e_\al-e_{-\al}))= \ft ((e_\al+e_{-\al})\mp (e_\al-e_{-\al})) = e_{\mp \al}
\end{align*}
 Hence, we have:
\begin{prop}\label{prop_GHp_unitary}
Let $H_l$ and $H_r$ be finite dimensional real vector spaces equipped with symmetric bilinear forms $(-,-)_l$ and $(-,-)_r$. Assume that $H_l$ and $H_r$ are positive-definite.
Then, the generalized full vertex algebra $(G_{H,p},\hY,H,-(-,-)_c)$ is unitary with the anti-linear involution $\phi_\std$.
\end{prop}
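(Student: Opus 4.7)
The plan is to reduce the positive-definiteness of the Hermite form on $G_{H,p}$ to the positive-definiteness of the symmetric bilinear form on the real subalgebra $\tilde{G}$ already established in Proposition \ref{prop_positive_definite}. The existence of the invariant bilinear form is granted by Proposition \ref{prop_exist_inv_GHp}, so the two remaining items are: (i) verify that $\phi_\std$ is genuinely an anti-linear involution of the generalized full vertex algebra $G_{H,p}^\C$ (in the sense of Definition \ref{def_unitary_generalized_full}), and (ii) show that $\langle a,b\rangle = (\phi_\std(a),b)$ is positive-definite.

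First I would check that $\phi_\std$, defined as the unique anti-linear map on $G_{H,p}^\C$ with fixed-point real form $\tilde{G}$, is an anti-linear automorphism. The displayed computations $\phi_\std(h) = -h$ and $\phi_\std(e_{\pm\al}) = e_{\mp\al}$ exhibit $\phi_\std$ as the complexification of the real automorphism $\phi$ of $G_{H,p}$ composed with complex conjugation; since $\phi$ already preserves all $(r,s)$-th products (being induced from the order-two $\R$-algebra involution $e_\al \mapsto e_{-\al}$ and extended by sign on Heisenberg modes), the complex-conjugate $\phi_\std$ satisfies $\phi_\std(a(r,s)b) = \phi_\std(a)(r,s)\phi_\std(b)$, and it evidently fixes $\va,\om,\omb$ and swaps $\Om^\al \leftrightarrow \Om^{-\al}$, as required.

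Next, given any $a \in G_{H,p}^\C$, write $a = x + iy$ with $x,y \in \tilde{G}$ (this is the standard decomposition attached to the real form). By Proposition \ref{prop_positive_definite} the restriction of the invariant bilinear form to $\tilde{G}$ is real-valued and positive-definite; in particular it is symmetric, so
\begin{align*}
\langle a,a\rangle = (\phi_\std(a),a) = (x-iy,\,x+iy) = (x,x) + (y,y) + i\bigl((x,y)-(y,x)\bigr) = (x,x)+(y,y) \ \geq\ 0,
\end{align*}
with equality if and only if $x=y=0$, i.e. $a=0$. The same argument applied to $a\pm b$ (and to $a \pm ib$) shows that $\langle -,-\rangle$ is Hermite. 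This proves unitarity.

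I do not expect a genuine obstacle here: the positivity of the bilinear form on $\tilde{G}$ in Proposition \ref{prop_positive_definite} does all the heavy lifting, using only the positive-definiteness of $(-,-)_l$ and $(-,-)_r$ together with the commutator relations of the affine Heisenberg algebra. The one point requiring care is the sign bookkeeping implicit in $n(e_\al) = 0$ from \eqref{eq_null_generalized}, which is what makes \eqref{eq_full_generalized_inv_def} collapse to the clean identity $(\va, e_\al \cdot e_{-\al} + e_{-\al}\cdot e_\al) = 2$ used in Proposition \ref{prop_positive_definite}; once this is in hand, the passage from $\tilde{G}$ to $G_{H,p}^\C$ is purely formal.
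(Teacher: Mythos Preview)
Your proposal is correct and follows essentially the same approach as the paper: the paper assembles the proposition from the preceding discussion (existence of the invariant form, construction of $\phi_\std$ via the real form $\tilde G$, and Proposition \ref{prop_positive_definite}), and you simply make explicit the standard passage from positive-definiteness on the real form $\tilde G$ to positive-definiteness of the sesquilinear form on $G_{H,p}^\C$ via $a=x+iy$.
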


Let $H = \R^{n,m}$ be the real vector space equipped with non-degenerate symmetric bilinear form $(-,-)_\lat$ with signature $(n,m)$.
%
Let $L\subset \R^{n,m}$ be a subgroup such that:
\begin{enumerate}
\item[L1)]
$L$ is free abelian group of rank $n+m$;
\item[L2)]
$L$ is an integral lattice, that is, $(\al,\be)_\lat \in \Z$ for any $\al,\be \in L$.
\end{enumerate}

Let $\{\al_i\}_{i=1,\dots,n+m}$ be a basis of $L$ and $\ep:L\times L \rightarrow \Z_2 = \R^\times$ be a (non-symmetric) bilinear form defined by
\begin{align}
\begin{split}
\ep(\al_i,\al_i)&= (-1)^{((\al_i,\al_i)_\lat +(\al_i,\al_i)_\lat^2)/2}\quad\quad\quad\text{ for all $i$}\\
\ep(\al_i,\al_j)&= (-1)^{(\al_i,\al_j)_\lat + (\al_i,\al_i)_\lat (\al_j,\al_j)_\lat}\quad\quad\text{ if $i>j$}\\
\ep(\al_i,\al_j)&= 1 \quad\quad\quad\quad\quad\quad\quad\quad\quad\quad\quad\text{ if $j > i$}.
\end{split}\label{eq_def_cocycle2}
\end{align}
Then, $\ep(-,-)$ is a 2-cocycle $Z^2(L,\R^\times)$.
Let $\R[\hat{L}]= \bigoplus_{\al \in L} \R e_\al$ be an $\R$-algebra with the multiplication defined by
\begin{align*}
e_\al \cdot e_\be = \ep(\al,\be)e_{\al+\be}
\end{align*}
for $\al,\be\in L$. This algebra is called a \textit{twisted group algebra} introduced in \cite{FLM} (see also \cite{Kac} for the case of odd lattices).
It is easy to show that
\begin{align}
\ep(\al,\be)=\ep(\al,-\be)=\ep(-\al,\be) 
\quad\text{ and }\quad \ep(\al,\al)=(-1)^{((\al,\al)_\lat+(\al,\al)_\lat^2)/2}
\label{eq_minus_ep}\\
\ep(\al,\be)\ep(\be,\al)^{-1} = (-1)^{(\al,\be)+(\al,\al)(\be,\be)}
\label{eq_cocycle}
\end{align}
holds for any $\al,\be \in L$.

The $\R$-algebra $\R[\hat{L}]$ can be viewed as a generalized full vertex operator superalgebra with charge lattice $(H, (-,-)_\lat)$ as follows.
The structure of $H$-graded vector space is naturally determined by embedding $i:L \hookrightarrow H$. 
The $\R\times \R$-grading is trivial, that is, the weight of any vector is $(0,0)$. The vertex operator is given by the left multiplication:
\begin{align*}
Y(e_\al,\uz)e_\be = e_\al \cdot e_\be =\ep(\al,\be)e_{\al+\be}
\end{align*}
By $\ep(\al,0)=\ep(0,\al)=1$, $\R[\hat{L}]$ is a full generalized vertex operator algebra over $\R$ with $e_0$ as the vacuum vector, and the conformal vectors are defined to be $(\om,\omb)=(0,0)$ (see \cite[Section 5.3]{M1} for more detail).

Note that $e_\al$ is odd if and only if $(\al,\al)_\lat$ is an odd integer.
Then, we have:
\begin{lem}
The symmetric bilinear form on $\R[\hat{L}]$ defined by
\begin{align*}
(e_\al,e_\be) = \delta_{\al+\be,0}\quad\quad\text{ for }\al,\be\in L
\end{align*}
is an  invariant bilinear form on the generalized full vertex operator superalgebra.
\end{lem}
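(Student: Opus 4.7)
The plan is to verify the two conditions of Definition \ref{def_unitary_generalized_full} directly. Symmetry of $(e_\al,e_\be) = \delta_{\al+\be,0}$ and condition (1) (vanishing across distinct charge sectors) are immediate from the definition, so the heart of the matter is condition (2), the invariance identity \eqref{eq_full_generalized_inv_def}. I would exploit the very degenerate nature of $\R[\hat L]$ as a generalized full vertex superalgebra: here $\om = \omb = 0$, so $L_\Om(n) = \Ld_\Om(n) = 0$ for all $n$, every vector sits in bidegree $(0,0)$, and $\hY(e_\al,\uz)e_\be = \ep(\al,\be)\,e_{\al+\be}$ has no dependence on $\uz$. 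Consequently the exponential and the $z^{-2L_\Om(0)}\z^{-2\Ld_\Om(0)}$ factors on the right-hand side of the invariance condition collapse to the identity, and \eqref{eq_full_generalized_inv_def} reduces to
\begin{align*}
(e_\ga,\hY(e_\al,\uz)e_\be) = (-1)^{n(e_\al)+2n(e_\al)^2}\,(\hY(e_\al,\uz^{-1})e_\ga,e_\be),
\end{align*}
where $n(e_\al) = (\al,\al)_\lat/2$ since $(-,-)_c = (-,-)_\lat$ on the charge lattice.

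Next I would read off the resulting combinatorial identity. Writing $k = (\al,\al)_\lat$, a short arithmetic check gives $(-1)^{k/2 + 2(k/2)^2} = (-1)^{k(k+1)/2}$, which by \eqref{eq_minus_ep} equals $\ep(\al,\al)$. Evaluating both sides on basis vectors and noting that both vanish unless $\al+\be+\ga = 0$, after substituting $\ga = -\al-\be$ and using $\ep(\al,-\al-\be) = \ep(\al,\al+\be)$ from \eqref{eq_minus_ep}, the invariance condition becomes the single $\{\pm 1\}$-valued identity
\begin{align*}
\ep(\al,\be) = \ep(\al,\al)\,\ep(\al,\al+\be) \qquad (\al,\be \in L).
\end{align*}
The entire invariance statement thus reduces to proving this cocycle identity.

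The final step is to establish the identity, and for this I would appeal to bi-multiplicativity of $\ep$. The asymmetric formula \eqref{eq_def_cocycle2} specifies $\ep$ on pairs of basis vectors; I would extend it to all of $L\times L$ by the standard prescription $\ep(\sum_i m_i\al_i,\sum_j n_j\al_j) = \prod_{i,j}\ep(\al_i,\al_j)^{m_i n_j}$, which is manifestly bi-multiplicative, automatically a $2$-cocycle (any bilinear $\Z_2$-valued pairing is), and which one verifies satisfies \eqref{eq_minus_ep} and \eqref{eq_cocycle}. Bi-multiplicativity in the second argument gives $\ep(\al,\al+\be) = \ep(\al,\al)\ep(\al,\be)$, and combining this with $\ep(\al,\al)^2 = 1$ yields $\ep(\al,\al)\ep(\al,\al+\be) = \ep(\al,\be)$, as required. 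The main obstacle I anticipate is the sign bookkeeping required to verify that the bi-multiplicative extension is consistent with the three-case formula \eqref{eq_def_cocycle2} (the cases $i > j$, $i = j$, $i < j$) and with the symmetries in \eqref{eq_minus_ep}, especially since $L$ may be indefinite and possibly odd; this is standard for twisted group algebras \cite{FLM, Kac}, but the precise conventions need to be tracked carefully in the present setup.
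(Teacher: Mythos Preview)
Your proof is correct and follows essentially the same route as the paper's. Both reduce the invariance identity to the cocycle identity $\ep(\al,\al)\,\ep(\al,\al+\be) = \ep(\al,\be)$, and both invoke the fact (built into the paper's definition of $\ep$ as a \emph{bilinear} form) that $\ep(\al,\al+\be) = \ep(\al,\al)\ep(\al,\be)$; your worry about extending $\ep$ bi-multiplicatively is unnecessary since the paper takes bi-multiplicativity as part of the definition and merely specifies $\ep$ on pairs of basis vectors via \eqref{eq_def_cocycle2}.
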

\begin{proof}
For any $\al,\be \in L$,
$(e_{-\al-\be},e_\al \cdot e_\be)=\ep(\al,\be)$ and
by \eqref{eq_minus_ep} and \eqref{eq_cocycle}
\begin{align*}
(-1)^{((\al,\al)_\lat+ (\al,\al)_\lat^2)/2} (e_\al \cdot e_{-\al-\be},e_\be)
&=(-1)^{((\al,\al)_\lat+ (\al,\al)_\lat^2)/2} \ep(\al,-\al-\be)\\
&=(-1)^{((\al,\al)_\lat+ (\al,\al)_\lat^2)/2} \ep(\al,\al+\be)\\
&= \ep(\al,\be).
\end{align*}
Hence, the assertion holds.
\end{proof}
Define a linear map $g:\R[\hat{L}] \rightarrow \R[\hat{L}]$ by
\begin{align}
g(e_\al)= e_{-\al},\label{eq_grouplattice_theta}
\end{align}
which is an $\R$-algebra automorphism by \eqref{eq_minus_ep},
and thus, an automoprphism of the generalized full vertex operator superalgebra.
One can extend $g$ to the generalized full vertex operator superalgebra $\C[\hat{L}]$ over $\C$,
and set 
\begin{align*}
\R[\hat{L}]_\pm = \{a \in \R[\hat{L}]\mid g(a)=\pm a \}.
\end{align*}
Then, the subspace
\begin{align*}
\C[\hat{L}]_{\Re}=
\mathrm{Span}_\R\{e_\al+e_{-\al},i (e_\al-e_{-\al})\}_{\al \in L} = \R[\hat{L}]_+ \oplus i \R[\hat{L}]_- \subset \C[\hat{L}]
\end{align*}
is a real subalgebra.
Since $\C[\hat{L}]_{\Re} \otimes_\R \C$ is naturally isomorphic to $\C[\hat{L}]$,
there is a unique anti-linear map $\phi_\lat: \C[\hat{L}] \rightarrow \C[\hat{L}]$ such that $\phi_\lat|_{\C[\hat{L}]_{\Re}}$ is the identity map. It is clear that the restriction of the induced bilinear form $(-,-):\C[\hat{L}] \otimes \C[\hat{L}] \rightarrow \C$ on $\C[\hat{L}]_{\Re}$ is positive definite. Hence, we have:
\begin{lem}\label{prop_unitary_groupalgebra}
$\C[\hat{L}]$ is a unitary generalized full vertex operator superalgebra with the symmetric invariant bilinear form $(-,-)$
and the anti-linear involution $\phi_\lat$.
\end{lem}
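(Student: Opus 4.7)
The plan is to verify the three structural requirements of Definition \ref{def_unitary_generalized_full} in turn: that $(-,-)$ is invariant, that $\phi_\lat$ is an anti-linear involution of the generalized full vertex superalgebra, and that $\langle a,b\rangle = (\phi_\lat(a),b)$ is positive-definite. Invariance is essentially the content of the preceding lemma, extended $\C$-bilinearly: since the $\R^2$-grading on $\C[\hat{L}]$ is trivial (every homogeneous $e_\al$ has weight $(0,0)$ and $L_\Om(0)=\Ld_\Om(0)=0$), the invariance relation \eqref{eq_full_generalized_inv_def} reduces to the identity $(e_{-\al-\be}, e_\al \cdot e_\be) = (e_\al \cdot e_{-\al-\be}, e_\be)\cdot (-1)^{n(e_\al)+2n(e_\al)^2}$ (with $n(e_\al)=-\tfrac{(\al,\al)_\lat}{2}$), which is already checked via \eqref{eq_minus_ep} and \eqref{eq_cocycle} in the previous lemma.

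For $\phi_\lat$, I would first observe that $g$ is an $\R$-algebra automorphism of $\R[\hat{L}]$ by \eqref{eq_minus_ep} and is trivially an automorphism of the generalized full vertex algebra structure (since the vertex operator is left multiplication and the conformal vectors vanish). Hence $\R[\hat{L}]_+$ and $i\R[\hat{L}]_-$ are both closed under all products $e_\al\cdot e_\be$ after appropriate combination, so $\C[\hat{L}]_\Re$ is a real subalgebra closed under the vertex operator. Then $\phi_\lat$ is defined as the unique $\C$-anti-linear map whose real fixed points are $\C[\hat{L}]_\Re$; anti-linearity plus $\phi_\lat|_{\C[\hat{L}]_\Re}=\id$ forces $\phi_\lat^2 = \id$ and $\phi_\lat(a(r,s)b)=\phi_\lat(a)(r,s)\phi_\lat(b)$ because the identity holds on $\C[\hat{L}]_\Re$ and both sides are anti-linear in each argument. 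It is immediate that $\phi_\lat$ preserves the $H$-grading: indeed $\phi_\lat(e_\al) = \phi_\lat\bigl(\tfrac{1}{2}(e_\al+e_{-\al}) - i \cdot \tfrac{i}{2}(e_\al - e_{-\al})\bigr) = e_{-\al}$, which sends $\Om^\al$ onto $\Om^{-\al}$ as required.

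For positivity, I would compute the sesquilinear form on the preferred $\R$-basis of $\C[\hat{L}]_\Re$. Since every such basis vector is fixed by $\phi_\lat$, $\langle a,b\rangle = (a,b)$ on $\C[\hat{L}]_\Re$, and a direct computation using $(e_\al,e_\be)=\delta_{\al+\be,0}$ gives, for $\al \neq 0$,
\begin{align*}
\langle e_\al+e_{-\al},\,e_\al+e_{-\al}\rangle &= 2,&
\langle i(e_\al-e_{-\al}),\,i(e_\al-e_{-\al})\rangle &= 2,\\
\langle e_\al+e_{-\al},\,i(e_\be-e_{-\be})\rangle &= 0,&
\langle e_\al+e_{-\al},\,e_\be+e_{-\be}\rangle &= 0 \quad(\al\neq \pm\be),
\end{align*}
together with $\langle e_0,e_0\rangle=1$. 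Hence the basis is orthogonal with positive norms, which yields positive-definiteness on $\C[\hat{L}]_\Re$, and this extends to positivity of the Hermitian form on $\C[\hat{L}]$ since $\langle\cdot,\cdot\rangle$ is the unique sesquilinear extension.

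The main point to be careful about is the compatibility of $g$ (hence of the real form $\C[\hat{L}]_\Re$) with the 2-cocycle $\ep$: one must verify that the combinations $e_\al+e_{-\al}$ and $i(e_\al-e_{-\al})$ are closed under the twisted product, which is exactly where the identity $\ep(\al,\be)=\ep(-\al,-\be)$ in \eqref{eq_minus_ep} intervenes. Beyond this cocycle bookkeeping, every other step is formal, so the lemma reduces to the calculations above.
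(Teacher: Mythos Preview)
Your approach is correct and matches the paper's, which gives essentially no proof beyond ``it is clear'' and relies on the setup preceding the lemma (the invariance lemma, the automorphism $g$, the real form $\C[\hat{L}]_\Re$, and the definition of $\phi_\lat$); you have simply filled in the details the paper leaves implicit. One minor slip: since the charge form on $\C[\hat{L}]$ is $(-,-)_\lat$ itself, $n(e_\al)=+\tfrac{(\al,\al)_\lat}{2}$ rather than $-\tfrac{(\al,\al)_\lat}{2}$, but this does not affect your argument as you correctly defer the invariance check to the preceding lemma.
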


Let $p:\R^{n,m}\rightarrow \R^{n,m}$ be the orthogonal projection such that:
\begin{itemize}
\item
$\ker p$ is positive-definite and $\ker (1-p)$ is negative-definite.
\end{itemize}
Set $H_l =\ker p$ and $H_r =\ker \p$. Then, $H_l,(-,-)_\lat$ and $H_r,-(-,-)_\lat$ are positive-definite.
Set
\begin{align*}
F_{L,i,p} = \bigoplus_{\al \in H}\C e_\al \otimes M_{H,p}(\al),
\end{align*}
which is a subalgebra of the tensor product of generalized full vertex operator superalgebras $\C[\hat{L}]$ and $G_{H,p}$ (see \cite{M1}). 
Then, by Proposition \ref{prop_unitary_tensor}, Proposition \ref{prop_GHp_unitary} and Lemma \ref{prop_unitary_groupalgebra}, we have:
\begin{prop}\label{prop_unitary_Narain}
Let $L \subset \R^{n,m}$ be an integral lattice of rank $n+m$ and $p:\R^{n,m}\rightarrow \R^{n,m}$ be the orthogonal projection such that $\ker(p)$ is the maximal positive-definite subspace of $\R^{n,m}$.
Then, $F_{L,i,p}$ is a unitary full vertex operator superalgebra with the anti-linear involution $\phi=\phi_{\std} \otimes \phi_\lat$.
\end{prop}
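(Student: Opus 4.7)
The strategy is to realize $F_{L,i,p}$ as a sub--full vertex operator superalgebra of the tensor product of two unitary generalized full vertex operator superalgebras, and then check that the restrictions of the invariant bilinear form and of the anti-linear involution $\phi_{\std}\otimes \phi_{\lat}$ preserve $F_{L,i,p}$ and endow it with a unitary structure. Concretely, by Lemma~\ref{prop_unitary_groupalgebra}, $\C[\hat L]$ is unitary with charge lattice $(H,(-,-)_{\lat})$ and involution $\phi_{\lat}$; by Proposition~\ref{prop_GHp_unitary}, since $\ker p=H_l$ is positive-definite for $(-,-)_{\lat}$ and $\ker(1-p)=H_r$ is positive-definite for $-(-,-)_{\lat}$, $G_{H,p}$ is unitary with charge lattice $(H,-(-,-)_c)$ and involution $\phi_{\std}$; and Proposition~\ref{prop_unitary_tensor} then makes $\C[\hat L]\otimes G_{H,p}$ a unitary generalized full vertex operator superalgebra with involution $\phi_{\std}\otimes\phi_{\lat}$.

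The crucial algebraic fact is that on $H$ one has $(-,-)_{\lat}=(-,-)_c$, since both equal $(p\cdot,p\cdot)_l-(\bar p\cdot,\bar p\cdot)_r$. Hence the diagonal embedding $L\hookrightarrow H\oplus H,\ \alpha\mapsto(\alpha,\alpha)$, maps $L$ into the isotropic sublattice of the charge lattice $(H\oplus H,(-,-)_{\lat}\oplus-(-,-)_c)$ of the tensor product. This is precisely what makes $F_{L,i,p}=\bigoplus_{\alpha\in L}\C e_\alpha\otimes M_{H,p}(\alpha)$ an honest full vertex operator superalgebra inside the generalized one (as used in \cite{M1}): monodromies from the two factors cancel exactly on the diagonal summands. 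I would first verify that $F_{L,i,p}$ is stable under $\phi_{\std}\otimes\phi_{\lat}$ --- this is immediate because each factor sends the grading label $\alpha$ to $-\alpha$, so the diagonal $(\alpha,\alpha)$ is sent to $(-\alpha,-\alpha)\in L\oplus(-L)$ which still lies in $F_{L,i,p}$.

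Next I would check that the restriction of the tensor-product invariant bilinear form to $F_{L,i,p}$ satisfies the genuine full-VOA invariance axiom \eqref{eq_bilinear_inv}, not merely the generalized one \eqref{eq_full_generalized_inv_def}. For a homogeneous $a\in (\Om_1\otimes\Om_2)$ supported on the diagonal with grading $(\alpha,\alpha)$ and conformal weight $(h,\bar h)$, the generalized invariance carries the sign $(-1)^{n(a)+2n(a)^2}$ with
\[
n(a)=(h-\bar h)+\tfrac{1}{2}\bigl((\alpha,\alpha)_{\lat}+(-(\alpha,\alpha)_c)\bigr)=h-\bar h,
\]
by the identity $(-,-)_{\lat}=(-,-)_c$. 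Thus the sign reduces to $(-1)^{(L(0)-\bar L(0))+2(L(0)-\bar L(0))^2}$, matching Definition~\ref{def_full_bilinear}. Positive-definiteness and non-degeneracy of $\langle -,-\rangle$ on $F_{L,i,p}$ follow from the corresponding properties on the ambient tensor product, because $F_{L,i,p}$ is a direct-sum component in the orthogonal decomposition of the tensor product into diagonal and off-diagonal $H\oplus H$-graded pieces.

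The main technical obstacle, as flagged above, is tracking the sign factors carefully when passing between the generalized framework (with charge-lattice contributions) and the full-VOA framework (without them), together with keeping the Koszul signs from the tensor product of superalgebras (Definition~\ref{def_Koszul_vertex}) compatible with the anti-linear involution. Once the vanishing of the charge contribution on the diagonal is used to collapse $n(a)$ to $h-\bar h$, everything else is bookkeeping; the unitary structure on $F_{L,i,p}$ then follows by simple restriction from the tensor product.
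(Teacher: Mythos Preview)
Your approach is essentially identical to the paper's: the paper proves this proposition in one line by invoking exactly the three results you name --- Lemma~\ref{prop_unitary_groupalgebra} for $\C[\hat L]$, Proposition~\ref{prop_GHp_unitary} for $G_{H,p}$, and Proposition~\ref{prop_unitary_tensor} for the tensor product --- together with the observation (set up just before the statement) that $F_{L,i,p}$ sits as a subalgebra of $\C[\hat L]\otimes G_{H,p}$. Your additional explanations (cancellation of monodromy on the diagonal via $(-,-)_{\lat}=(-,-)_c$, stability of the diagonal under $\phi_{\std}\otimes\phi_{\lat}$, and the collapse of $n(a)$ to $h-\bar h$ so that the generalized invariance \eqref{eq_full_generalized_inv_def} reduces to the full-VOA invariance \eqref{eq_bilinear_inv}) are correct and simply make explicit what the paper leaves implicit.
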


\noindent
\begin{center}
{\bf Acknowledgements}
\end{center}

I wish to express my gratitude to Kentaro Hori for valuable discussions and
insightful comments on mirror symmetry, and Mayuko Yamashita and Yuji Tachikawa on supersymmetric QFTs, and
Tomohiro Karube and Atsushi Kanazawa on rigid Calabi-Yau manifolds.
I also wish to express my gratitude to Tomoyuki Arakawa, Maxim Kontsevich, Todor Milanov, Yuki Koto, Shintaro Yanagida, Thomas Creutzig,
Masashi Kawahira, Takumi Maegawa, Masahiro Futaki, Christoph Schweigert, Masahito Yamazaki for sharing their knowledge and insights with me. 
I would also like to thank all the members of RIKEN iTHEMS, where I spent three years, especially our assistants and the director Tetsuo Hatsuda for providing an excellent research environment.
This work is supported by Grant-in Aid for Early-Career Scientists (24K16911) and FY2023 Incentive Research Projects (Riken).


\end{document}